\author{
	William Salkeld \\[8pt]
	\small School of Mathematical Sciences\\
	\small University of Nottingham \\
	\small University Park, Nottingham, NG7 2RD \\
	\small william.salkeld@nottingham.ac.uk
}
\numberwithin{equation}{section}
\theoremstyle{plain}
\newtheorem{theorem}{Theorem}[section]
\newtheorem{lemma}[theorem]{Lemma}
\newtheorem{proposition}[theorem]{Proposition}
\newtheorem{corollary}[theorem]{Corollary}
\newtheorem{definition}[theorem]{Definition}
\newtheorem{remark}[theorem]{Remark}
\newtheorem{example}[theorem]{Example}
\newtheorem{assumption}[theorem]{Assumption}
\newtheorem{hypothesis}[theorem]{Hypothesis}
\newcommand{\bE}{\mathbb{E}}
\newcommand{\bF}{\mathbb{F}}
\newcommand{\bN}{\mathbb{N}}
\newcommand{\bP}{\mathbb{P}}
\newcommand{\bR}{\mathbb{R}}
\newcommand{\bU}{\mathbb{U}_I}
\newcommand{\bV}{\mathbb{V}}
\newcommand{\bW}{\mathbb{W}}
\newcommand{\bX}{\mathbb{X}}
\newcommand{\cA}{\mathcal{A}}
\newcommand{\cB}{\mathcal{B}}
\newcommand{\cE}{\mathcal{E}}
\newcommand{\cF}{\mathcal{F}}
\newcommand{\cH}{\mathcal{H}}
\newcommand{\cL}{\mathcal{L}}
\newcommand{\cP}{\mathcal{P}}
\newcommand{\cR}{\mathcal{R}}
\newcommand{\cT}{\mathcal{T}}
\newcommand{\fF}{\mathfrak{F}}
\newcommand{\fH}{\mathfrak{H}}
\newcommand{\fI}{\mathfrak{I}}
\newcommand{\fR}{\mathfrak{R}}
\newcommand{\fh}{\mathfrak{h}}
\newcommand{\fm}{\mathfrak{m}}
\newcommand{\fr}{\mathfrak{r}}
\newcommand{\scE}{\mathscr{E}}
\newcommand{\scF}{\mathscr{F}}
\newcommand{\scG}{\mathscr{G}}
\newcommand{\scH}{\mathscr{H}}
\newcommand{\scL}{\mathscr{L}}
\newcommand{\scM}{\mathscr{M}}
\newcommand{\scN}{\mathscr{N}}
\newcommand{\scP}{\mathscr{P}}
\newcommand{\scQ}{\mathscr{Q}}
\newcommand{\scT}{\mathscr{T}}
\newcommand{\scW}{\mathscr{W}}
\newcommand{\rD}{\mathbf{D}}
\newcommand{\rw}{\mathbf{W}}
\newcommand{\rId}{\mathbf{1}}
\newcommand{\rPi}{\mathbf{\Pi}}
\newcommand{\vertiii}{{\vert\kern-0.25ex \vert\kern-0.25ex \vert}}
\DeclareMathOperator{\Shuf}{Shuf}
\DeclareMathOperator{\spn}{span}
\DeclareMathOperator{\lin}{Lin}
	\tikzstyle{vertex} = [fill, shape=circle,inner sep=2pt,]
	\tikzstyle{edge} = [fill, line width = 0.5pt]
	\tikzstyle{zhyedge1} = [opacity=.5,fill opacity=.5, line cap=round, line join=round, line width=27pt,color=black]
	\tikzstyle{zhyedge2} = [opacity=.5,fill opacity=.5, line cap=round, line join=round, line width=25pt,color=white]
	\tikzstyle{hyedge1} = [opacity=.5,fill opacity=.5, line cap=round, line join=round, line width=27pt]
	\tikzstyle{hyedge2} = [opacity=.5,fill opacity=.5, line cap=round, line join=round, line width=25pt, color=white]
	\tikzstyle{vertexS} = [fill, shape=circle,inner sep=1pt,]
	\tikzstyle{edgeS} = [fill, line width = 0.25pt]
	\tikzstyle{zhyedge1S} = [opacity=.5,fill opacity=.5, line cap=round, line join=round, line width=12pt,color=black]
	\tikzstyle{zhyedge2S} = [opacity=.5,fill opacity=.5, line cap=round, line join=round, line width=10pt,color=white]
	\tikzstyle{hyedge1S} = [opacity=.5,fill opacity=.5, line cap=round, line join=round, line width=12pt]
	\tikzstyle{hyedge2S} = [opacity=.5,fill opacity=.5, line cap=round, line join=round, line width=10pt, color=white]
\title{Coupled bialgebras and Lions trees}
\begin{document}
	
	\maketitle
	
	\begin{abstract} 
		A bialgebra is a module over a ring that is both an associative algebra and a co-associative coalgebra with the product and coproduct additionally satisfying an appropriate commutative relationship. One application of bialgebras is in the study of Taylor expansions where the product describes how for any increment, two terms of the Taylor expansion can be combined together to obtain a higher order term, while the coproduct describes how a term can be decomposed into lower order terms over two adjacent increments. 
		
		Our motivation is the study of higher order Lions-Taylor expansions of functionals on the Wasserstein space of measures and more generally to probabilistic rough paths. The application of iterative Lions derivatives generates a collection of free variables for each term of the Lions-Taylor expansion. When these terms are themselves Lions-Taylor expanded, one needs to additionally differentiate in the free variables which greatly increases the complexity of the expressions. This additional structure typically takes the form of couplings between terms that arise through the application of the operator that corresponds to the coproduct. Hence, we explore the properties of this operation which we call the \emph{coupled coproduct}. 
		
		The purpose of this work is to document the combinatorial properties associated to Lions-Taylor expansions (which are central to many key interpretations of probabilistic rough paths), explore examples of coupled coproducts and more formally describe the \emph{coupled bialgebra}. 
	\end{abstract} 
	
	
	{\bf Keywords:} Probabilistic rough paths, Lions trees, coupled bialgebras
	
	\vspace{0.3cm}
	
	\noindent
	{\bf 2020 AMS subject classifications:}\\
	Primary: 16T10 \quad Secondary: 60L30, 46T20
	
	
	\noindent
	{\bf Acknowledgements}: William Salkeld wishes to thank the London Mathematical Society for the award of an \emph{Early Career Fellowship} (ECF-1920-29) which facilitated this research.
	
	Further, William Salkeld was supported by MATH+ project AA4-2 and by the US Office of Naval Research under the Vannevar Bush Faculty Fellowship N0014-21-1-2887.  
	
	\setcounter{tocdepth}{2}
	\tableofcontents
	
	\section{Introduction}
	
	This paper is the result of the development of several of the ideas found in \cite{2021Probabilistic} and \cite{salkeld2021Probabilistic2} describing the algebraic properties of probabilistic rough paths, the first paper on which is \cite{2019arXiv180205882.2B}. To improve the readability of the subject matter, these unpublished papers were shortened to the subsequent works \cite{salkeld2022ExamplePRP} with several results updated and moved into this work. Additional results were moved into \cite{salkeld2022Lions}. 
	
	This paper acts as a sequel to \cite{salkeld2022ExamplePRP} and \cite{salkeld2022Lions} and provides a more detailed and rigorous study of the coupled Hopf algebras introduced in \cite{2021Probabilistic}. In this work, we will only refer to coupled bialgebras because we do not address the existence of an antipode in the coupled setting and we refer the reader to future work. For a classical reference on bialgebras and Hopf algebras we refer the reader to \cite{cartier2021hopf}. 
	
	The purpose of this paper is to abstract and contextualise the coupled bialgebra structures that arise in the papers \cite{salkeld2022ExamplePRP} and \cite{salkeld2022Lions}. Further, we include several technical results that allow us to interweave the concepts of tagged partitions, Lions forests and coupled pairs that are both necessary and provide insight into the study of probabilistic rough paths and yet we felt also limited the accessibility of this material. 
	
	\subsection{Motivation}
	This work is part of a wider program of research at the intersection between two active fields in mathematics. On the one hand, rough path theory, as first developed in \cite{lyons1998differential}, has proved to be a robust tool in the study of stochastic differential systems, particularly those that are driven by highly oscillatory signals and are thus out of the scope of standard integration theories such as Young integration for regular enough paths or stochastic integration for semimartingales. On the other hand, mean-field theory, which was originally dedicated to the analysis of large weakly interacting particle systems in statistical physics and in fluid mechanics, see \cites{kac1956foundations,McKean1966}, but has grown recently due to a surge of interest in line with the development of a calculus of variations on the space of probability measures and the study of related optimisation problems from transport theory, mean-field control or mean-field games, see for instance \cite{villani2008optimal} for the former and \cites{LionsVideo,CarmonaDelarue2017book1, CarmonaDelarue2017book2} for the latter two. 
	
	The adaptation of the rough path theory to the mean-field setting is a very natural and meaningful question which has already stimulated a series of works in the recent years. The analysis of such \textit{rough mean-field models} goes back to the article \cite{CassLyonsEvolving}. In the latter and subsequent works \cites{Bailleul2015Flows, deuschel2017enhanced}, the interaction between the particles appears solely in the drift and not in the volatility. Whilst this may appear to be an innocent simplification, this restriction reveals a substantial difficulty that is intrinsic to the mean-field setting: the provision of a convenient expansion of the coefficients with respect to the distributional argument.
	
	The appropriate strategy to address this challenge was clarified in a recent contribution \cite{2019arXiv180205882.2B}. At its simplest, the idea is to consider a Taylor expansion of the coefficients using the advances achieved over more than twenty years in the construction of a differential calculus on the Wasserstein space of probability measures. These ideas originate in the work of \cite{Jordan1998variation} on the connection between Fokker-Planck equations and gradient flows on the Wasserstein space. We refer the reader to the monograph \cite{Ambrosio2008Gradient} for a complete overview of the subject. In \cite{2019arXiv180205882.2B}, differential calculus on the Wasserstein space is implemented along Lions' approach, as originally introduced in \cite{LionsVideo} (see also \cite{CarmonaDelarue2017book1}*{Chapter 5}).  The main idea is to lift a differentiable function defined on the Wasserstein space to a function on the Hilbert space of square-integrable random variables and take a Fr\'echet derivative. The lifting operation consists of associating with any probability measure $\mu$ a random variable $X$ which has exactly $\mu$ as its distribution, $X$ being called a representative of $\mu$. Although there may be many choices for $X$, there are in practice canonical representatives for the probability measures under study. Since Lions' derivative is computed as a Fr\'echet derivative on the space of random variables, these canonical representatives appear explicitly in all the related derivatives. 
	
	One strength of the theory of rough paths is that provided enough information about the driving signal is known (taking the form of iterated integrals of the signal with respect to itself that are not known exogenously), integrals driven by the rough signal are known to exist and have an explicit form. The sense of `enough information' is directly captured via an abstract Taylor expansion, also referred to as a \emph{regularity structure}. Thus the rough path of some highly oscillatory signal has a higher order expansion of terms inversely proportional to the regularity. As we see in \cite{salkeld2022Lions}, higher order Lions-Taylor expansions contain a new level of complexity because the application of each Lions derivative generates a new free variable and subsequent differentiation must necessarily capture the regularity within all previously generated free variables. 
	
	As observed in \cites{2021Probabilistic, salkeld2021Probabilistic2} and \cite{salkeld2022Lions}, there are two quantities associated to each term within a Lions-Taylor expansion: the first is the order and corresponds to the number of derivatives (total) taken, while the second is the number of Lions derivatives taken within this iterated derivative. In \cite{salkeld2022Lions}, for a partition sequence $a\in A[0]$ corresponding to a iterated Lions derivative we say $m[a]$ is the number of Lions derivatives with all other derivatives either being in the spatial variable or in the free spatial variables generated by the application of those Lions derivatives. At a more abstract level, in \cites{2021Probabilistic, salkeld2021Probabilistic2, salkeld2022ExamplePRP} each term of the Lions-Taylor expansion is indexed by a Lions forest $T = (\scN, \scE, h_0, H^T) \in \scF_{0, d}[0]$ where the order of the derivative corresponds to $|\scN|$ and the number of free variables corresponds to $|H^T|$. 
	
	This work serves to explore the algebraic and combinatorial properties that arise from these higher order Lions-Taylor expansions. While this work is central to the program of probabilistic rough paths, no rough path theory is needed. 
	
	\subsection{Purpose of this paper}
	Let us take a step back for the moment and address the question \emph{What is a bialgebra?}. Given a ring $(\cR, +, \centerdot)$, a bialgebra $(\cH, \odot, \rId, \triangle, \epsilon)$ is a quintuple containing $\cH$ a module over $\cR$ such that $(\cH, \odot, \rId)$ is an associative, unital algebra;
	\begin{equation*}
		\odot: \cH \otimes \cH \to \cH, 
		\quad
		\rId: \cR \to \cH 
		\quad \mbox{and}\quad
		\odot \circ (\fI \otimes \odot) = \odot \circ (\odot \otimes \fI),
		\quad
		\odot \circ (\fI \otimes \rId) = \centerdot = \odot \circ (\rId \otimes \fI),
	\end{equation*}
	$(\cH, \triangle, \epsilon)$ is a co-associative, co-unital coalgebra;
	\begin{equation*}
		\triangle: \cH \to \cH \otimes \cH, 
		\quad
		\epsilon: \cH \to \cR 
		\quad \mbox{and}\quad
		(\fI \otimes \triangle) \circ \triangle = (\triangle \otimes \fI) \circ \triangle,
		\quad
		\centerdot \circ (\fI \otimes \epsilon) \circ \triangle = \fI = \centerdot \circ (\epsilon \otimes \fI) \circ \triangle,
	\end{equation*}
	and finally the bialgebra identities hold;
	\begin{equation*}
		\triangle \circ \odot = \odot \otimes \odot \circ \mbox{Twist} \circ \triangle \otimes \triangle, 
		\qquad
		\centerdot \circ \epsilon \otimes \epsilon = \epsilon \circ \odot,
		\qquad
		\rId \otimes \rId = \triangle \circ \rId \circ \centerdot,
		\quad\mbox{and}\quad
		\epsilon \circ \rId = \fI
	\end{equation*}
	where $\mbox{Twist}$ is the twist operation and $\fI$ is the identity operation. Notice that the tensor product $\otimes$ is central to each of these identities and the product/coproduct should be thought of as linear operators on the $\cR$-modules
	\begin{equation*}
		\odot: \cH \otimes \cH \to \cH
		\qquad
		\triangle: \cH \to \cH \otimes \cH. 
	\end{equation*}
	The focus of this paper is the scenario where $\triangle$ is replaced by $\Delta: \cH \to \cH \tilde{\otimes} \cH$ where the codomain $\cH \tilde{\otimes} \cH$ is a richer $\cR$-module than the canonical tensor module $\cH \otimes \cH$. At its absolute most basic, a \emph{coupled bialgebra} $(\cH, \odot, \rId, \Delta, \epsilon)$ is a quintuple containing $\cH$ a module over $\cR$ such that $(\cH, \odot, \rId)$ is an associative unital algebra; $(\cH, \Delta, \epsilon)$ satisfies the commutative relationship;
	\begin{equation}
		\label{eq:CommutativeDiagram1}
		(\Delta \tilde{\otimes} \fI) \circ \Delta = (\fI \tilde{\otimes} \Delta) \circ \Delta
		\quad\mbox{and}\quad
		\centerdot \circ (\fI \tilde{\otimes} \epsilon) \circ \Delta = \fI = \centerdot \circ (\epsilon \tilde{\otimes} \fI) \circ \Delta
	\end{equation}
	which can be written as commutative diagrams
	\begin{center}
		\begin{tikzpicture}
			\node at (0,-1) {$\cH \tilde{\otimes} \cH$};
			\node at (4,2) {$\cH \tilde{\otimes} \cH$};
			\node at (0,2) {$\cH$};
			\node at (4,-1) {$(\cH \tilde{\otimes} \cH) \tilde{\otimes} \cH$};
			\node at (4,0) {$\cH \tilde{\otimes} (\cH \tilde{\otimes} \cH)$};
			\node at (4,-0.5) {\rotatebox{90}{$\,=$}};
			\draw[-to](0.75,-1) to (3,-1);
			\draw[-to](4,1.5) to (4,0.5);
			\draw[-to](0,1.5) to (0,-0.5);
			\draw[-to](0.75,2) to (3,2);
			\node at (-0.5,0.5) {$\Delta$};
			\node at (2,2.5) {$\Delta$};
			\node at (2,-1.5) {$\Delta \tilde{\otimes} \fI$};
			\node at (4.5,1) {$\fI \tilde{\otimes} \Delta$};
		\end{tikzpicture}
		\\
		\quad
		\\
		\begin{tikzpicture}
			\node at (0,0) {$\cH$};
			\node at (2,1) {$\cH \tilde{\otimes} \cH$};
			\node at (2,-1) {$\cH \tilde{\otimes} \cH$};
			\node at (6,1) {$\cR \tilde{\otimes} \cH = \cR \otimes \cH$};
			\node at (6,-1) {$\cH \tilde{\otimes} \cR = \cH \otimes \cR$};
			\node at (9,0) {$\cH$};
			\draw[-to](0.25,0.25) to (1.5,0.75);
			\draw[-to](0.25,-0.25) to (1.5,-0.75);
			\draw[-to](2.75,-1) to (4.5,-1);
			\draw[-to](2.75,1) to (4.5,1);
			\draw[-to](7.5,-1) to (8.75,-0.25);
			\draw[-to](7.5,1) to (8.75,0.25);
			\draw[-to](0.5,0) to (8.5,0);
			\node at (4.5,0.25) {$\fI$};
			\node at (0.75,0.75) {$\Delta$};
			\node at (0.75,-0.75) {$\Delta$};
			\node at (3.5,1.5) {$\epsilon \tilde{\otimes} \fI$};
			\node at (3.5,-1.5) {$\fI \tilde{\otimes} \epsilon$};
			\node at (8.25,1) {$\centerdot$};
			\node at (8.25,-1) {$\centerdot$};
		\end{tikzpicture}
	\end{center}
	and finally;
	\begin{equation}
		\label{eq:CommutativeDiagram2}
		\Delta \circ \odot = \odot \tilde{\otimes} \odot \circ \overline{\mbox{Twist}} \circ \Delta \otimes \Delta, 
		\qquad
		\centerdot \circ \epsilon \otimes \epsilon = \epsilon \circ \odot,
		\qquad
		\rId \tilde{\otimes} \rId = \Delta \circ \rId \circ \centerdot,
		\quad\mbox{and}\quad
		\epsilon \circ \rId = \fI
	\end{equation}
	which can be written as commutative diagrams
	\begin{center}
		\begin{tikzpicture}
			\node at (0,0) {$\cH \otimes \cH$};
			\node at (0,-2) {$(\cH \tilde{\otimes} \cH) \otimes (\cH \tilde{\otimes} \cH)$};
			\node at (8,-2) {$(\cH \otimes \cH) \tilde{\otimes} (\cH \otimes \cH)$};
			\node at (8,0) {$\cH \tilde{\otimes} \cH$};
			\node at (4,0) {$\cH$};
			\draw[-to](0.75,0) to (3.5,0);
			\draw[-to](4.5,0) to (7.25,0);
			\draw[-to](0,-0.5) to (0,-1.5);
			\draw[-to](8,-1.5) to (8,-0.5);
			\draw[-to](2,-2) to (6,-2);
			\node at (2,0.5) {$\odot$};
			\node at (6,0.5) {$\Delta$};
			\node at (4,-2.5) {$\overline{\mbox{Twist}}$};
			\node at (-1,-1) {$\Delta \otimes \Delta$};
			\node at (9,-1) {$\odot \tilde{\otimes} \odot$};
		\end{tikzpicture}
		\\
		\quad
		\\
		\begin{tikzpicture}
			\node at (0,0) {$\cH \otimes \cH$};
			\node at (2,0) {$\cH$};
			\node at (0,-2) {$\cR \otimes \cR$};
			\node at (2,-2) {$\cR$};
			\draw[-to](0.75,0) to (1.5,0);
			\draw[-to](0.75,-2) to (1.5,-2);
			\draw[-to](0,-0.5) to (0,-1.5);
			\draw[-to](2,-0.5) to (2,-1.5);
			\node at (1.125,0.5) {$\odot$};
			\node at (1.125,-2.5) {$\centerdot$};
			\node at (-0.5,-1) {$\epsilon \otimes \epsilon$};
			\node at (2.5,-1) {$\epsilon$};
		\end{tikzpicture}
		\qquad
		\begin{tikzpicture}
			\node at (0,0) {$\cR \otimes \cR$};
			\node at (2,-0.5) {$\cR$};
			\node at (2,-2.5) {$\cH$};
			\node at (0,-3) {$\cH \tilde{\otimes} \cH$};
			\node at (0,-0.5) {\rotatebox{90}{$\,=$}};
			\node at (0,-1) {$\cR \tilde{\otimes} \cR$};
			\draw[-to] (0.75,0) to (1.75,-0.5);
			\draw[-to] (2,-1) to (2,-2);
			\draw[-to] (1.75,-2.5) to (0.75,-3);
			\draw[-to] (0,-1.5) to (0,-2.5);
			\node at (1.25,0) {$\centerdot$};
			\node at (1.25,-3) {$\Delta$};
			\node at (2.5,-1.5) {$\rId$};
			\node at (1.25,0) {$\centerdot$};
			\node at (-0.5,-2) {$\rId \tilde{\otimes} \rId$};
		\end{tikzpicture}
		\qquad
		\begin{tikzpicture}
			\node at (0,0) {$\cR$};
			\node at (2,0) {$\cR$};
			\node at (1,2) {$\cH$};
			\draw[-to](0.5,0) to (1.5,0);
			\draw[-to](0.25,0.25) to (0.75,1.75);
			\draw[-to] (1.25,1.75) to (1.75,0.25);
			\node at (1,-0.5) {$\fI$};
			\node at (0,1) {$\rId$};
			\node at (2,1) {$\epsilon$};
		\end{tikzpicture}
	\end{center}
	When formulating these properties in the lexicon of higher order Lions-Taylor expansions as discussed above, the nature of the coupled tensor product is clear given context and care. However, framed as above there is still a lot of work to be done in formalising what is described here. 
	
	Therefore, the question \emph{What is a coupled bialgebra?} can be answered if we can answer the question \emph{What is a coupled tensor product?}. To answer this question, we expound three known examples of coupled bialgebras: the module on which the coefficients of a Lions-Taylor expansion of some smooth function exist (see Section \ref{section:TaylorExpansions}), the module from the regularity structure associated with \emph{weakly geometric probabilistic rough paths} (see Section \ref{section:ProbabilisticRoughPaths}) and the module from the regularity structure associated with \emph{branched probabilistic rough paths} (see Section \ref{section:Lions-Trees}). While it will not be necessary for the reader to understand the distinction between \emph{weakly geometric} and \emph{branched} probabilistic rough paths, in a single sentence: weakly geometric probabilistic rough paths correspond to the \emph{Stratonovich} definition of a stochastic integral while branched probabilistic rough paths correspond to the \emph{It\^o} definition of a stochastic integral. These are distinct definitions of stochastic calculus and both have important applications. 
	
	\subsubsection*{Categorical perspective}
	
	Recall that the \emph{category} $\cR-\mathbf{Mod}$ of all modules of the ring $\cR$ is a \emph{monoidal category} with the tensor product of modules serving as the monoidal functor and the ring $\cR$ serving as the monoidal unit. Given that in this paper the coupled tensor product is only defined for specific modules (where basis elements have an additional poset structure), the author conjectures that there exists a sub-category of $\cR$-modules for which the coupled tensor product forms \emph{monoidal category}, but such a category is not defined here. We can already observe that in this scenario, the monoidal unit will be $\cR$ since we can identify $\cR \tilde{\otimes} \cH$ with $\cR \otimes \cH$ and $\cR \tilde{\otimes} \cR$ with $\cR \otimes \cR$.  
	
	Notice that for \eqref{eq:CommutativeDiagram2} to be satisfied, we require the existence of the morphism
	\begin{equation*}
		\overline{\mbox{Twist}}: (\cH \tilde{\otimes} \cH) \otimes (\cH \tilde{\otimes} \cH) \to (\cH \otimes \cH) \tilde{\otimes} (\cH \otimes \cH)
	\end{equation*}
	which in simple words tells me that ``$\otimes$ should distribute over $\tilde{\otimes}$''. This would suggest that for any sub-category of $\cR$-modules for which $\tilde{\otimes}$ forms a monoidal category, we would additionally expect that the classical tensor product would also form a monoidal category and we would obtain a \emph{duoidal category} since $(\tilde{\otimes}, \cR)$ will be a lax monoidal functor with respect to $(\otimes, \cR)$. 
	
	\subsection{Contribution}
	The central contribution of this work is formalising the concept of the  \emph{coupled tensor product} with the goal of precisely describing coupled bialgebras that arise in the context of higher order Lions-Taylor expansions. This concept first arises in \cite{2021Probabilistic}, but this work greatly expands on this. For the most basic intuition with regard to this, we direct the reader to Section \ref{subsec:Couplings-LionsDeriv} where a coupled tensor product arises in the context of the Lions-Taylor expansions proved in \cite{salkeld2022Lions}
	
	Another contribution of this work is greatly expanding the details of the \emph{coupled bialgebra of Lions words}. First included in \cite{2021Probabilistic} as an example to counter-balance the theory of Lions forests and motivated by the study of \emph{weakly geometric probabilistic rough paths}, many of these results were not explored in detail. These concepts are analogous to the \emph{shuffle}/\emph{deconcatenation} bialgebra on Lyndon words and work addresses these concepts in a lot of detail. This involves a lengthy discussion of \emph{tagged partitions}, which it transpires provide all the same combinatorial properties as partition couplings (the language of \cite{2021Probabilistic}) while also allowing for a more accessible introduction of the coupled tensor product.  
	
	Building on this progress, we consider the module spanned by Lions forests (first introduced in \cite{2021Probabilistic}) as a directed forest paired with a tagged partition that is described as \emph{Lions admissible} (see Definition \ref{definition:Quotient-Partition}). This \emph{coupled bialgebra of Lions forests} is analogous to the \emph{Connes-Kreimer} bialgebra. A key difference between the results of this work and the previous works \cites{2021Probabilistic, salkeld2021Probabilistic2} is the definition of coupled pairs (see Definition \ref{definition:coupled_pair-W} and \ref{definition:coupled_pair-F}) which is equivalent to previous results that used coupled partitions. It is worth highlighting that the set of couplings between two Lions forests is \emph{not} taken to be the set of couplings between the two collections of hyperedges, but rather a subset of couplings which we refer to as \emph{Lions couplings}. This isn't particularly important for calculations relating to \emph{random controlled rough paths} (which is why this detail was not spotted earlier) but is becomes critical when considering \emph{Lions elementary differentials}. 
	
	This work also includes several results relating to the combinatorial properties of Lions forests, see Theorem \ref{theorem:EquivalenceTrees}, Proposition \ref{proposition:CompletionOfTrees} and Proposition \ref{proposition:Coproduct-Equivalence}. Theorem \ref{theorem:EquivalenceTrees} serves to justify the hypergraphic structure of Lions forests since it shows that this is equivalent to allocating a partition sequence to each vertex of a directed graph that has length equal to the number of children of that vertex. This is highly natural since if we were to consider the \emph{Lions elementary differentials} that correspond to the solution of some rough McKean-Vlasov equation, we would see exactly this same structure. Proposition \ref{proposition:CompletionOfTrees} shows us that the collection of Lions forests is closed under completion of the operators $\big( \circledast, \cE, \lfloor \cdot \rfloor \big)$. This facilitates the use of inductive arguments with regard to any statement to do with Lions forests (either included here or related to rough paths). Finally, Proposition \ref{proposition:Coproduct-Equivalence} shows that the coupled coproduct can be defined in terms of its relationship with $\big( \circledast, \cE, \lfloor \cdot \rfloor \big)$. 
	
	\subsection{Partitions instead of labels?}
	
	The well-informed reader might ask ``We already have a fully expounded solution theory for $n$ interacting rough differential equations: in this setting, the solution is uniquely defined by a path on the characters of a Hopf algebra. However, the perspective of probabilistic rough paths is that the solution of a McKean-Vlasov rough differential equation is uniquely defined by a path on the characters of a coupled Hopf algebra. \emph{Where does this coupled tensor product arise in the mean-field limit?}''. 
	
	For collections of $n$ interacting equations, the basis for the associated Hopf algebra carry a labelling which takes values on the set $\{1, ..., n\}$. Therefore, one could naively construct another Hopf algebra for which the basis elements carry a labelling taking values on $\bN$. This approach would allow us to keep track of every possible interaction between every individual within the ensemble. However, the mean-field perspective is that this is the incorrect approach and instead we should utilise the exchangeability of the system. More specifically, we do not want to uniquely identify the basis elements corresponding to an interaction between two specific equations. Instead, we need a way to represent a basis element that captures the way in which a \emph{specific equation interacts with itself} and a distinct basis element that captures the way in which a \emph{specific equation interacts with another unidentified equation} following a symmetry argument that arises through exchangeability. Therefore, we \emph{don't want to label} components of our Hopf algebra basis but we \emph{do want to identify or separate} these components. Our solution was to \emph{remove labels} and \emph{replace them with partitions}. However, labels lead to a very different kind of algebraic structure for the bialgebra which gives rise to the \emph{coupled tensor product}. 
	
	For a more detailed explanation as to the justification of using partitions instead of labels, we direct the reader to \cite{salkeld2022ExamplePRP}. 
	
	\subsubsection*{Commonly used notation}
	
	Let $\bN$ be the set of positive integers and $\bN_{0}=\bN \cup \{0\}$. Let $\bR$ be the field of real numbers and for $d \in \bN$, let $\bR^d$ be the $d$-dimensional vector space over the field $\bR$. 
	
	For modules $U$ and $V$ over a ring $\cR$, we define $U \oplus V$ and $U \otimes V$ be the direct sum and tensor product of two modules. For a topological module $U$, let $\cB(U)$ be the Borel $\sigma$-algebra. Let $(\Omega, \cF, \bP)$ be a probability space and let $L^0(\Omega, \bP; U)$ be the space of measurable mappings $(\Omega, \cF) \mapsto (U, \cB(U))$. 
	
	For a set $\scN$, we write $\scP(\scN)$ for the set of all partitions of the set $\scN$. The set of partitions $\scP(\scN)$ has a partial ordering $\subseteq$ where $P \subseteq Q$ iff $Q$ is finer than $P$:
	\begin{equation*}
		P \subseteq Q \quad \iff \quad \forall q\in Q, \quad \exists p \in P: q\subseteq p. 
	\end{equation*}
	
	\section{Lions-Taylor expansions}
	\label{section:TaylorExpansions}
	
	In this first section, our goal is to take some of the results first proved in \cite{salkeld2022Lions} and develop them with the purpose of introducing the origin of the coupled tensor product. This ``algebraic perspective'' on Lions-Taylor expansions is not present in \cite{salkeld2022Lions} where the focus is instead put on the ``analytic perspective'' (that is, accurate general formulas for Lions-Taylor remainder terms). 
	
	\subsection{Lions-Taylor expansions and coupled structures}
	\label{subsection:TaylorExpansions}
	
	We start by considering how we should represent a Taylor expansion for a function of the form
	\begin{equation*}
		f: (\bR^e)^{\times |I|} \times \cP_2(\bR^e) \to \bR^d. 
	\end{equation*}
	where $I$ is an index set. To answer this, we consider a derivative constructed on the so-called `Wasserstein space' of probability measures with finite second moment. For any $p\geq 1$, let $\cP_p(\bR^e)$ be the set of all probability measures on $\big( \bR^e, \cB(\bR^e) \big)$. This is equipped with the $\bW^{(p)}$-Wasserstein distance defined by:
	\begin{equation}
		\label{eq:WassersteinDistance}
		\bW^{(p)}(\mu,\nu) = \inf_{\Pi \in \cP_{p}(\bR^e \times \bR^e)}
		\biggl( \int_{\bR^e \times \bR^e}
		|x-y|^p
		d \Pi(x,y) \biggr)^{1/p},
	\end{equation}
	the infimum being taken with respect to all the probability measures $\Pi$ on the product space $\bR^e \times \bR^e$ with $\mu$ and $\nu$ as respective $e$-dimensional marginal laws.
	
	\subsubsection*{The Lions derivative}
	\label{subsection:1-Lip_sup_envelope}
	
	For a function $f:\cP_2(\bR^e) \to \bR^d$, we consider the canonical lift $F: L^2(\Omega, \cF, \bP; \bR^e) \to \bR^d$ defined by $F(X) = f( \bP\circ X^{-1})$. We say that $f$ is $L$-differentiable at $\mu$ if $F$ is Fr\'echet differentiable at some point $X$ such that $\mu = \bP\circ X^{-1}$. Denoting the Fr\'echet derivative by $DF$, it is now well known (see for instance \cite{GangboDifferentiability2019} that $DF$ is a $\sigma(X)$-measurable random variable of the form $DF(\mu, \cdot):\bR^e \to \lin(\bR^e, \bR^d)$ depending on the law of $X$ and satisfying $DF(\mu, \cdot) \in L^2\big( \bR^e, \cB(\bR^e), \mu; \lin(\bR^e, \bR^d) \big)$. We denote the $L$-derivative of $f$ at $\mu$ by the mapping $\partial_\mu f(\mu)(\cdot): \bR^e \ni x \to \partial_\mu f(\mu, x) \in \lin(\bR^e, \bR^d)$ satisfying $DF(\mu, \cdot) = \partial_\mu f(\mu, X(\cdot))$. This derivative is known to coincide with the so-called Wasserstein derivative, as defined in for instance \cite{Ambrosio2008Gradient}, \cite{CarmonaDelarue2017book1} and \cite{GangboDifferentiability2019}. As we explained in the introduction, Lions' approach is well-fitted to probabilistic approaches for mean-field models since, very frequently, we have  a `canonical' random variable $X$ for representing the law of a given probability measure $\mu$. 
	
	The second order derivatives are obtained by differentiating $\partial_{\mu} f$ with respect to $x$ (in the standard Euclidean sense) and $\mu$ (in the same Lions' sense). The two derivatives $\nabla_{x} \partial_{\mu} f$ and $\partial_{\mu} \partial_{\mu} f$ are thus very different functions: The first one is defined on ${\cP}_{2}({\bR}^e) \times {\bR}^e$ and writes $(\mu,x) \mapsto \nabla_{x} \partial_{\mu} f(\mu,x)$ whilst the second one is defined on ${\cP}_{2}({\bR}^e) \times {\bR}^e \times {\bR}^e$ and writes $(\mu,x,x') \mapsto \partial_{\mu} \partial_{\mu} f(\mu,x,x')$. The $e$-dimensional entries of $\nabla_{x} \partial_{\mu} f$ and $\partial_{\mu} \partial_{\mu} f$ are called here the \textit{free} variables, since they are integrated with respect to the measure $\mu$ itself. 
	
	In practice, we will be working with measure functionals that are dependent on a multi-variable $(x_{I}, \mu):=\big( (x_{\iota})_{\iota \in I}, \mu) \in (\bR^e)^{\times |I|} \times \cP_2(\bR^e)$ so need to consider a Taylor expansion involving both Lions and spatial derivatives. Unlike with the free variables generated by the application of iterative Lions derivatives where the $m^{th}$ variable $x_m$ can only appear if $\partial_a f$ contains at least $m$ derivatives with respect to $\mu$, the variable $x_I$ can appear in any derivatives of $f$ whenever $f$ is a function of the form $f(x_I, \mu)$. 
	
	This leads us to Definition \ref{def:a} below, the principle of which can be stated as follows for the first and second order Lions derivatives: The derivative symbol $\partial_{\mu}$ can be denoted by $\partial_{1}$ and then the two derivative symbols $\nabla_{x_1} \partial_{\mu}$ and $\partial_{\mu} \partial_{\mu}$ can be respectively denoted by $\partial_{(1,1)}$ and $\partial_{(1,2)}$. Derivatives with respect to the $x_\iota$-component (for $\iota \in I$) are encoded through the inclusion of a ‘$\iota$’. Repeated $\iota$’s thus account for repeated derivatives in the direction of $x_\iota$.
	
	\begin{definition}
		\label{def:a}
		The sup-envelope of an integer-valued sequence $(a_{i})_{i=1,...,n}$ of length $n$ is the non-decreasing sequence $(\max_{i=1,...,k} a_{i})_{k=1,...,n}$. The sup-envelope is said to be $1$-Lipschitz (or just $1$-Lip) if, for any $k \in \{2,...,n\}$, 
		\begin{equation*}
			\max_{i=1,...,k} a_{i} \leq 1+ \max_{i=1,...,k-1} a_{i}.
		\end{equation*}
		We call $A_{n}$ the collection of all ${\bN}$-valued sequences of length $n$, with $a_{1}=1$ as initial value and with a 1-Lip sup-envelope. Thus $A_n$ is the collection of all sequences $(a_k)_{k=1, ..., n} \in A_n$ taking values on $\{1, ..., n\}$ such that 
		\begin{equation*}
			a_1 = 1, \quad a_k \in \Big\{1, ..., 1+ \max_{i=1, ..., k-1} a_i \Big\}. 
		\end{equation*}
		We refer to $A_n$ as the collection of \emph{partition sequences} of length $n$. 
		
		Let $I$ be an index and let $k, n\in \bN_0$. Let $A_{k,n}[I]$ be the collection of all sequences $a' = (a_i')_{i=1, ..., k+n} = \sigma( a \cdot b)$ where $a\in A_n$, $(b) = (b_i)_{i=1, ..., k}$ is the sequence of length $k$ with entries in $I$ and $\sigma$ is a $(k,n)$-shuffle, i.e., a permutation of $\{1, ..., n+k\}$ such that $\sigma(1) < ... < \sigma(k)$ and $\sigma(k+1)<... < \sigma(n+k)$. 
		
		For a given $n \in \bN$, we let 
		\begin{equation*}
			A_{n}[I]= \bigcup_{k=0}^n A_{k,n-k}[I], \quad A^n[I] = \bigcup_{k=0}^n A_k[I] \quad \mbox{and}\quad A[I] = \bigcup_{k\in \bN_0} A_{k}[I].
		\end{equation*}
		We refer to $A[I]$ as the set of \emph{tagged partition sequences}. Given $a\in A_{n}[I]$, we denote 
		\begin{equation*}
			|a| = n \quad \mbox{and} \quad m[a] = \max_{\substack{i=1, ..., n \\ a_i \notin I}} a_i.
		\end{equation*}
		For $a, a'\in A_n[I]$, we say that $a \subseteq a'$ if and only if
		\begin{equation}
    			\label{eq:partial-ordering}
	    		\begin{aligned}
    				\forall \iota \in I \quad &a^{-1}[\iota] \subseteq (a')^{-1}[\iota] \quad \mbox{and}
    				\\
    				&\forall j=1, ..., m[a]\quad \exists j' \in \{1, ..., m[a']\}\cup I \quad \mbox{such that} \quad  a^{-1}[j] \subseteq (a')^{-1}[j'].  
    			\end{aligned}
	    	\end{equation}
	\end{definition}
	We denote $\llbracket a\rrbracket_I$ to be the equivalence class of all sequences such that
	\begin{align*}
		(b_i)_{i=1, ..., n} \in \llbracket a\rrbracket_I \quad \iff \quad &\Big\{ b^{-1} \Big\} = \Big\{ a^{-1}[j]: j \in \big\{ 1, ..., m[a] \big\} \cup I \Big\} \in \scP\big( \{1, ..., n\} \big)
		\\ 
		& \mbox{and}\quad \forall \iota \in I, \quad b^{-1}[\iota] = a^{-1}[\iota]. 
	\end{align*}
	For a sequence $b = (b_i)_{i=1, ..., n}$ and $a\in A_n[I]$ such that $a \subseteq \llbracket b\rrbracket_I$ we define
	\begin{equation}
		\label{eq:K-sequence2}
		\boldsymbol{b}\circ(a) = \Big( b_{a^{-1}[j]} \Big)_{j=1, ..., m[a]}. 
	\end{equation}	
	We define $\scG^I: A[I] \to \bN_0^{\times 2}$ by $\scG^I[a] = \big( |a|, m[a] \big)$. 

	Let $g: \bN_0^{\times 2}  \to \bR$ be a monotone increasing function such that $g(0, 0)\leq 0$. Then we denote 
	\begin{equation*}
		A^{g,-}[I]:= \Big\{ a\in A[I]: g \big( \scG^I[a] \big) \leq 0 \Big\}. 
	\end{equation*}
	
	\begin{proposition}
		\label{proposition:taggedpartition*}
		Let $\scN$ be a finite set and let $I$ be a finite index set (which for clarity should not be confused with $\scN$). For each $\iota\in I$, let $p_\iota \subseteq \scN$ such that the collection of subsets $p_I = (p_\iota)_{\iota\in I}$ are mutually disjoint. Let $P\in \scP\Big( \scN \backslash \big( \bigcup_{\iota\in I} p_\iota \big) \Big)$. Then we say that $\big( p_I, P\big)$ is a tagged partition and we denote the set of all tagged partitions by $\scP(\scN)[I]$.  
		
		The set $\scP(\scN)[I]$ is a poset with partial ordering
		\begin{equation}
			\label{definition:taggedpartition*}
			\big( p_I, P) \subseteq \big( q_I, Q) \quad \iff \quad 
			\begin{aligned}
				& \forall \iota\in I\quad p_\iota \subseteq q_\iota \quad \mbox{and}
				\\
				&\forall p \in P, \exists q \in \Big( Q \cup \bigcup_{\iota\in I} \{q_\iota\} \Big) \quad \mbox{such that} \quad p \subseteq q.
			\end{aligned}
		\end{equation}
		We denote $\fm :\scP(\scN)[I]\to \bN_0$ to be the function such that for $\fm\big( (p_I, P) \big) = |P|$. 
		
		Then there is an isomorphism between
		\begin{equation*}
			\big( A_{|\scN|}[I] , \subseteq, m \big) 
			\quad \mbox{and}\quad
			\big( \scP(\scN)[I], \subseteq, \fm \big). 
		\end{equation*}
	\end{proposition}
	
	\subsubsection*{A Lions-Taylor expansion}
	
	For $n\in \bN$ and $a\in A_n[I]$, we define $\partial_a$, according to the following induction:
	\begin{align*}
		\partial_{(\iota)} =& \nabla_{x_\iota}, \quad \partial_{(1)} = \partial_\mu \quad \mbox{and}
		\\
		\partial_{(a_1, ..., a_{k-1}, a_k)} =&
		\begin{cases} 
			\nabla_{x_\iota} \cdot \partial_{(a_1, ..., a_{k-1})} & \quad a_k =\iota \in I, 
			\\
			\nabla_{x_{a_k}} \cdot \partial_{(a_1, ..., a_{k-1})} & \quad 0 < a_k \leq \max \{ a_1, ..., a_{k-1}\}, 
			\\
			\partial_\mu \cdot \partial_{(a_1, ..., a_{k-1})} & \quad a_k > \max \{a_1, ..., a_{k-1}\}.  
		\end{cases}
	\end{align*}
	
	Let $k, n \in \bN_0$ and let $a\in A_{k, n}[I]$. Let $x_I, y_I \in (\bR^e)^{\times |I|}$ and let $\Pi^{\mu, \nu}$ be a measure on $(\bR^e)^{\oplus 2}$ with marginal distribution $\mu, \nu \in \cP_{n+1}(\bR^e)$. Then we define the operator
	\begin{align}
		\nonumber
		\rD^a& f(x_I, \mu)[ y_I-x_I, \Pi^{\mu, \nu}]
		\\
		\label{eq:rDa}
		&= \underbrace{\int_{(\bR^e)^{\oplus 2}} ... \int_{(\bR^e)^{\oplus 2}} }_{\times m[a]} \partial_a f \Big( x_I, \mu, \boldsymbol{x}_{m\{a\}} \Big) \cdot \bigotimes_{i=1}^{|a|} ( y_{a_i} - x_{a_i})  \cdot d\big( \Pi^{\mu, \nu}\big)^{\times m[a]} \Big( (\boldsymbol{x}, \boldsymbol{y})_{m\{a\}} \Big). 
	\end{align}
	Here, for compact notation we have denoted
	\begin{align*}
		&d\big( \Pi^{\mu, \nu}\big)^{\times m[a]} \Big( (\boldsymbol{x}, \boldsymbol{y})_{m\{a\}}\Big) = d\Pi^{\mu, \nu}(x_1, y_1) \times ... \times d\Pi^{\mu, \nu}(x_{m[a]}, y_{m[a]} ) \quad \mbox{and}
		\\
		& \boldsymbol{x}_{m\{a\}} = (x_1, ..., x_{m[a]}). 
	\end{align*}
	Following \cite{salkeld2022Lions}*{Definition 2.23}, for any $A' \subseteq A[I]$ and function $f:(\bR^e)^{\times |I|} \times \cP_2(\bR^e) \to \bR^d$ we say
	\begin{equation*}
		f \in C_b\big[ A' \big]\big( (\bR^e)^{\times |I|} \times \cP_2(\bR^e); \bR^d\big)
	\end{equation*}
	if for every $a\in A'$, the derivative $\partial_a f: (\bR^e)^{\times |I|} \times \cP_2(\bR^e) \times (\bR^e)^{\times m[a]} \to \lin\big((\bR^e)^{\otimes |a|}, \bR^d \big)$ exist and is bounded and continuous. 
	
	\begin{proposition}
		\label{proposition:classicTay<=>LionsTay*}
		Let $I:=\{0\}$ which we denote by $0$ and let $n, N\in \bN$. Let $f: \bR^e \times \cP_2(\bR^e) \to \bR^d$ and suppose that $f\in C_b\big[A^{n}[0]\big]\big( \bR^e \times \cP_2(\bR^e); \bR^d\big)$. Let $x^{1, N}, ..., x^{N, N} \in \bR^e$ and denote $\boldsymbol{x} = (x^{1, N}, ..., x^{N, N})$. 
		
		We define $\boldsymbol{f}: (\bR^e)^{\oplus N} \to (\bR^d)^{\oplus N}$ by 
		\begin{equation}
			\label{eq:bf-field}
			\boldsymbol{f}\big( \boldsymbol{x} \big) = \boldsymbol{f}\big( (x^{1, N}, ..., x^{N, N}) \big) = \bigoplus_{k=1}^N \overline{f}_k \big( \boldsymbol{x} \big)
		\end{equation}
		where $\overline{f}_k: (\bR^e)^{\oplus N} \to \bR^d$ is defined by
		\begin{equation*}
			\overline{f}_k \big( \boldsymbol{x} \big) = \overline{f}_k \big( (x^{1, N}, ..., x^{N, N}) \big) = f\Big( x^{k, N}, \bar{\mu}_N\big[ \boldsymbol{x}^N \big] \Big), \quad \bar{\mu}_N\big[ \boldsymbol{x} \big] = \sum_{j=1}^N \delta_{x^{j, N}}. 
		\end{equation*}
		Let $\boldsymbol{i} = (i_1, ..., i_n)$ where each $i_j \in \{1, ..., e\}$. Then 
		\begin{equation}
			\label{eq:example:Sums-finer-partitions}
			\nabla_{\boldsymbol{i}} \overline{f}_i \big( \boldsymbol{x} \big) = \sum_{\substack{a\in A_n[0] \\ a\subseteq \llbracket \boldsymbol{i} \rrbracket_i }} \partial_a f\Big( x^{i, N}, \bar{\mu}_N\big[\boldsymbol{x}\big] , \boldsymbol{x}^{\boldsymbol{i}\circ (a), N} \Big)  
		\end{equation}
		where we used the notation 
		\begin{equation*}
			\boldsymbol{x}^{\boldsymbol{i} \circ a, N} = \Big( x^{i_{a^{-1}[1]}, N}, ..., x^{i_{a^{-1}[m[a]]}, N} \Big). 
		\end{equation*}
		
		Additionally for $\boldsymbol{x}, \boldsymbol{y} \in (\bR^e)^{\oplus N}$ we define $\Pi^{\boldsymbol{x}, \boldsymbol{y}} \in \cP_2(\bR^e \oplus \bR^e)$ by 
		\begin{equation}
			\label{eq:proposition:classicTay<=>LionsTay}
			\Pi^{\boldsymbol{x}, \boldsymbol{y}} = \frac{1}{N} \sum_{j=1}^N \delta_{(x^{j, N}, y^{j, N})}. 
		\end{equation}
		
		Then
		\begin{equation}
			\label{eq:proposition:classicTay<=>LionsTay*}
			\boldsymbol{f}\Big( \boldsymbol{y} \Big) = \sum_{a\in A^{n}[0]} \frac{1}{|a|!} \bigoplus_{i=1}^N \rD^a f \Big( x^{i, N}, \bar{\mu}_N\big[ \boldsymbol{x} \big] \Big)\Big[ y^{i, N} - x^{i. N}, \Pi^{\boldsymbol{x}, \boldsymbol{y}} \Big] + \boldsymbol{R}_n^{\boldsymbol{x}, \boldsymbol{y}}\big( \boldsymbol{f} \big). 
		\end{equation}
	\end{proposition}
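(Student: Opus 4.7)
The proof naturally splits into two parts: establishing the intermediate identity \eqref{eq:example:Sums-finer-partitions}, and then using it in a classical Taylor expansion on the Euclidean space $(\bR^e)^{\oplus N}$.

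First I would prove \eqref{eq:example:Sums-finer-partitions} by induction on $n = |\boldsymbol{i}|$. The base case $n=0$ is tautological. For the induction step, I would differentiate $\overline{f}_i(\boldsymbol{x}) = f(x^{i,N}, \bar\mu_N[\boldsymbol{x}])$ with respect to the next component indicated by $\boldsymbol{i}$; via the chain rule this derivative either acts on the first argument $x^{i,N}$ (producing a $\nabla_{x_0}$-derivative, corresponding to appending a $0$ to $a$) or on the empirical measure argument (producing a $\partial_\mu f$ evaluated at some $x^{j,N}$ with a factor $1/N$). When the latter derivative is applied to an already-existing free variable $x^{j,N}$, we obtain a $\nabla_{x_j}$ rather than a new $\partial_\mu$, which corresponds precisely to choosing an index $a_k \leq m[a]$ already used; when it acts on a fresh particle index this corresponds to choosing $a_k = m[a]+1$. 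The 1-Lip sup-envelope condition is then exactly the bookkeeping constraint that no free variable can be referenced before it is first created. Grouping together all terms according to the resulting partition sequence $a$ and recognising $\boldsymbol{x}^{\boldsymbol{i}\circ(a), N}$ as the corresponding free-variable tuple yields the claimed sum over $\{a\in A_n[0] : a \subseteq \llbracket \boldsymbol{i}\rrbracket_i \}$.

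Second, I would apply the standard finite-dimensional Taylor theorem to the smooth map $\boldsymbol{f}:(\bR^e)^{\oplus N}\to (\bR^d)^{\oplus N}$ around $\boldsymbol{x}$ evaluated at $\boldsymbol{y}$. This gives
\begin{equation*}
    \boldsymbol{f}(\boldsymbol{y}) = \sum_{n=0}^{|a|-1} \frac{1}{n!} \sum_{\boldsymbol{i}} \nabla_{\boldsymbol{i}} \boldsymbol{f}(\boldsymbol{x}) \cdot \bigotimes_{k=1}^n (y^{i_k, N}-x^{i_k, N}) + \boldsymbol{R}_n^{\boldsymbol{x}, \boldsymbol{y}}(\boldsymbol{f}),
\end{equation*}
with remainder $\boldsymbol{R}_n^{\boldsymbol{x},\boldsymbol{y}}(\boldsymbol{f})$ in integral form. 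Substituting \eqref{eq:example:Sums-finer-partitions} coordinate-wise into this expansion and swapping the two finite sums leaves, for each fixed $a \in A^n[0]$, a sum of terms of the form $\partial_a f(x^{i, N}, \bar\mu_N[\boldsymbol{x}], \boldsymbol{x}^{\boldsymbol{i}\circ(a), N})\cdot \bigotimes_k (y^{i_k,N}-x^{i_k,N})$ summed over all $\boldsymbol{i}$ compatible with the equivalence class structure of $a$.

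Third, I would recognise this inner sum as the integral defining $\rD^a f$ in \eqref{eq:rDa}. Concretely, the factor of $N^{-m[a]}$ produced by the $m[a]$ Lions differentiations of $\bar\mu_N[\boldsymbol{x}]$ combines with the sum over choices of particle labels for the free variables to give exactly the $m[a]$-fold product measure $(\Pi^{\boldsymbol{x}, \boldsymbol{y}})^{\times m[a]}$ via \eqref{eq:proposition:classicTay<=>LionsTay}, while the increment $(y^{i,N}-x^{i,N})$ corresponding to the $0$'s in $a$ remains as the first increment argument of $\rD^a f$. The $1/n!$ from the Euclidean Taylor expansion becomes the $1/|a|!$ factor, completing the match with \eqref{eq:proposition:classicTay<=>LionsTay*}.

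The principal obstacle is the combinatorics of step one: rigorously verifying that the iterated chain-rule derivatives produce exactly the set of partition sequences $\{a : a \subseteq \llbracket \boldsymbol{i}\rrbracket_i\}$ with multiplicity one, without over- or under-counting. This requires care because the same functional term can arise from several orderings of the differentiations, and the 1-Lip sup-envelope condition is precisely the canonical choice of representative that resolves this ambiguity; writing the induction so that appending to $a$ respects this canonical form is the most delicate step.
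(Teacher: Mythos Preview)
The paper does not actually supply a proof of this proposition: immediately after stating it and the subsequent Theorem~\ref{theorem:LionsTaylor2}, the author writes ``For a full proof \ldots\ we direct the reader to \cite{salkeld2022Lions}.'' So there is no in-paper argument to compare against; the result is imported from the companion article on higher-order Lions--Taylor expansions.

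Your outline is the natural one and matches what one would expect the deferred proof to contain: induction via the chain rule to obtain \eqref{eq:example:Sums-finer-partitions}, then substitution into the ordinary Euclidean Taylor expansion of $\boldsymbol f$ on $(\bR^e)^{\oplus N}$, and finally identification of the resulting particle sums with the empirical integrals defining $\rD^a f$ against $(\Pi^{\boldsymbol x,\boldsymbol y})^{\times m[a]}$. Two small points to tidy up. First, in your displayed Taylor expansion the outer sum is written $\sum_{n=0}^{|a|-1}$, but $a$ is not yet in scope there; you mean a sum over orders $k=0,\dots,n$ (with $n$ the fixed truncation level from the statement). Second, when you speak of the $N^{-m[a]}$ factor arising from $m[a]$ Lions differentiations of $\bar\mu_N$, be explicit that this relies on the empirical measure being normalised (the displayed formula in the statement omits the $1/N$, which is almost certainly a typo given that $\Pi^{\boldsymbol x,\boldsymbol y}$ in \eqref{eq:proposition:classicTay<=>LionsTay} is normalised); your argument is correct under that reading. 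The combinatorial point you flag as the principal obstacle---that the chain-rule tree, once canonicalised by the 1-Lip sup-envelope convention, enumerates $\{a\in A_n[0]:a\subseteq\llbracket\boldsymbol i\rrbracket_i\}$ with multiplicity one---is exactly the substance of the result, and your inductive scheme (append $0$, append an existing label, or open a new label) is the right way to organise it.
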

	
	\begin{theorem}
		\label{theorem:LionsTaylor2}
		Let $f: (\bR^e)^{\times |I|} \times \cP_2(\bR^e) \to \bR^d$ and suppose that $f$ has Lions and spatial derivatives of all orders. Let $g: \bN_0^{\times 2}  \to \bR$ be a monotone increasing function such that $g(0, 0) \leq 0$ and let $n \in \bN$ be the greatest integer such that $g(0, n)\leq 0$. 
		
		For any $\mu, \nu \in \cP_{n+1} (\bR^e)$ with joint distribution $\Pi^{\mu, \nu}$ and any $x_I, y_I \in \bR^e$, we have that
		\begin{align}
			\label{eq:FullTaylorExpansion}
			f(y_I, \nu) 
			= \sum_{a\in A[I]}^{g,-} \frac{1}{|a|!} \cdot \rD^a f\Big( x_I, \mu \Big)\Big[ y_I - x_I, \Pi^{\mu, \nu} \Big]  
			+ R_{g,-}^{(x_I, y_I), \Pi^{\mu, \nu}} \big( f \big). 
		\end{align}
	\end{theorem}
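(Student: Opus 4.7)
My plan is to lift the problem to an empirical (finite-particle) setting, where Proposition~\ref{proposition:classicTay<=>LionsTay*} supplies the Lions-Taylor expansion for free, and then to pass to the limit $N \to \infty$ via the law of large numbers on the Wasserstein space. Because $\scG_{\alpha,\beta}[a] \geq (\alpha \wedge \beta)\,|a|$, the set $A^{\gamma,\alpha,\beta}[0]$ is contained in $A^n[0]$ for every integer $n \geq \gamma/(\alpha \wedge \beta)$, so I would fix such an $n$ at the outset.

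I would then choose random vectors $(X_j, Y_j)_{j \geq 2}$ that are i.i.d.\ with common law $\Pi^{\mu,\nu}$ on an auxiliary probability space and fix $(X_1, Y_1) := (x_0, y_0)$ deterministically. Writing $\boldsymbol{X}_N = (X_1, \dots, X_N)$ and $\boldsymbol{Y}_N = (Y_1, \dots, Y_N)$, the empirical measures $\bar\mu_N[\boldsymbol{X}_N]$, $\bar\mu_N[\boldsymbol{Y}_N]$ and the empirical coupling $\Pi^{\boldsymbol{X}_N, \boldsymbol{Y}_N}$ of \eqref{eq:proposition:classicTay<=>LionsTay} converge almost surely in Wasserstein distance to $\mu$, $\nu$ and $\Pi^{\mu,\nu}$ respectively; the single deterministic particle is negligible in the large-$N$ limit. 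For each $N$ I would apply Proposition~\ref{proposition:classicTay<=>LionsTay*} to the lifted field $\boldsymbol{f}$ of \eqref{eq:bf-field}, perturbing $\boldsymbol{X}_N$ to $\boldsymbol{Y}_N$, and read off the first coordinate to obtain
\begin{equation*}
f\bigl(y_0, \bar\mu_N[\boldsymbol{Y}_N]\bigr) = \sum_{a \in A^n[0]} \frac{1}{|a|!}\, \rD^a f\bigl(x_0, \bar\mu_N[\boldsymbol{X}_N]\bigr)\bigl[y_0 - x_0, \Pi^{\boldsymbol{X}_N, \boldsymbol{Y}_N}\bigr] + \bigl(\boldsymbol{R}_n^{\boldsymbol{X}_N, \boldsymbol{Y}_N}(\boldsymbol{f})\bigr)_{\!1},
\end{equation*}
and I would split the finite sum along $A^n[0] = A^{\gamma,\alpha,\beta}[0] \sqcup \bigl(A^n[0] \setminus A^{\gamma,\alpha,\beta}[0]\bigr)$, keeping the first block as the main expansion and pushing the second block into the remainder.

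The next step is to pass to the limit $N \to \infty$. Continuity of $f$ in the Wasserstein topology gives $f(y_0, \bar\mu_N[\boldsymbol{Y}_N]) \to f(y_0, \nu)$ almost surely, while for each $a \in A^{\gamma,\alpha,\beta}[0]$ continuity of $\partial_a f$ combined with dominated convergence applied to the $m[a]$-fold product integral in \eqref{eq:rDa} yields convergence of $\rD^a f(x_0, \bar\mu_N[\boldsymbol{X}_N])[y_0 - x_0, \Pi^{\boldsymbol{X}_N, \boldsymbol{Y}_N}]$ to $\rD^a f(x_0, \mu)[y_0 - x_0, \Pi^{\mu,\nu}]$. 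The remainder, defined as everything left over on the right-hand side, therefore admits an almost sure limit, and since both sides of the limiting identity are deterministic this yields \eqref{eq:FullTaylorExpansion} with $R_{\gamma,\alpha,\beta}^{(x_0,y_0), \Pi^{\mu,\nu}}(f)$ equal to that limit.

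The hard part will be verifying that the limit remainder produced by this construction coincides with the concrete integral representation of the Lions-Taylor remainder developed in \cite{salkeld2022Lions}. The classical Taylor remainder $\boldsymbol{R}_n^{\boldsymbol{X}_N, \boldsymbol{Y}_N}$ is an integral over a straight-line interpolation of derivatives of the lifted field $\boldsymbol{f}$ of order $n+1$; rewriting its first coordinate in mean-field form, and combining it with the high-order terms absorbed from $A^n[0] \setminus A^{\gamma,\alpha,\beta}[0]$, requires a uniform integrability argument against the evolving empirical coupling, controlled by a growth bound on the iterated Lions-spatial derivatives of order $n+1$ uniformly along the interpolation. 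Once this matching is established the theorem is proved.
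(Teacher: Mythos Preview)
The paper does not supply its own proof of Theorem~\ref{theorem:LionsTaylor2}: immediately after the statement it writes ``For a full proof along with an explicit statement for $R_{\gamma, \alpha, \beta}^{(x_0, y_0), \Pi^{\mu, \nu}}(f)$ which is beyond the scope of this work, we direct the reader to \cite{salkeld2022Lions}.'' There is therefore no in-paper argument to compare your proposal against.

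On your proposal itself: the empirical-approximation route via Proposition~\ref{proposition:classicTay<=>LionsTay*} is a legitimate and genuinely different strategy from the direct Lions-calculus argument one expects in \cite{salkeld2022Lions}, and the skeleton you describe (freeze one particle at $(x_0,y_0)$, sample the rest from $\Pi^{\mu,\nu}$, apply the finite-dimensional expansion, pass to the limit in Wasserstein distance) is sound at the level of the main sum. But you have correctly located the real difficulty. As written, \eqref{eq:FullTaylorExpansion} is close to a tautology if the remainder is simply \emph{defined} as the difference between the two sides; the substance of the theorem lies in the explicit integral representation of $R_{\gamma,\alpha,\beta}$ given in \cite{salkeld2022Lions}. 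Matching your limit remainder to that formula means carrying the classical integral Taylor remainder for $\boldsymbol{f}$ through the limit $N\to\infty$, which requires uniform-in-$N$ integrability of the order-$(n{+}1)$ Lions--spatial derivatives along the linear interpolation and a rewriting of those derivatives in mean-field form via \eqref{eq:example:Sums-finer-partitions} --- essentially the same regularity and structural inputs the direct proof consumes. So the detour through particles buys you the identity but not the remainder formula, which is where the work is. A minor additional point: Proposition~\ref{proposition:classicTay<=>LionsTay*} is itself quoted here without proof, so invoking it does not make your argument self-contained within this paper either.
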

	
	For a full proof along with an explicit statement for $R_{g,-}^{(x_I, y_I), \Pi^{\mu, \nu}} \big( f \big)$ which is beyond the scope of this work, we direct the reader to \cite{salkeld2022Lions}. This result is included to give context to the ideas that we will be exploring in great detail in the coming paper. 
	
	Another natural observation is that the Fr\'echet derivatives are symmetric multi-linear operators. This leads us to the following
	
	\begin{theorem}[Schwarz Theorem for Lions Derivatives]
		\label{thm:Schwarz-Lions}
		Let $a \in A_n[I]$ and let $f: (\bR^e)^{\times |I|} \times \cP_2(\bR^e) \to \bR^d$ such that $\partial_a f$ exists. Let $\Shuf(|a|)$ be the set of permutations on the set $\{1, ..., |a| \}$. 
		
		Then $\forall \sigma\in \Shuf(|a|)$, $v_1, ..., v_{|a|} \in \bR^e$, $\mu\in \cP_2(\bR^e)$ and for every $\iota \in I$ we have $x_\iota, x_1, ..., x_{|a|} \in \bR^e$, 
		\begin{equation}
			\label{eq:proposition:Schwartz-2}
			\partial_{a} f\Big( x_I, \mu, \boldsymbol{x}_{m\{a\}} \Big) \cdot \bigotimes_{i=1}^{|a|} v_i 
			=
			\partial_{\llbracket \sigma[a]\rrbracket_I} f\Big( x_I, \mu, \boldsymbol{x}_{\llbracket \sigma(a)\rrbracket \circ (a)} \Big) \cdot \bigotimes_{i=1}^{|a|} v_{\sigma(i)} . 
		\end{equation}
		where
		\begin{equation*}
			\boldsymbol{x}_{m\{a\}} = \Big( x_1, ..., x_{m[a]} \Big)
			\quad \mbox{and}\quad 
			\boldsymbol{x}_{\llbracket \sigma(a)\rrbracket \circ (a)} = \Big( x_{\sigma(a)_{a^{-1}[1]}}, ..., x_{\sigma(a)_{a^{-1}[m[a]]}} \Big). 
		\end{equation*}
	\end{theorem}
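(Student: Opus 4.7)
My plan is to reduce the identity to the classical Schwarz theorem for iterated Fr\'echet derivatives applied to the canonical lift of $f$. The strategy proceeds in three stages, which I sketch below.

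First, I would reduce to the case where $\sigma$ is an adjacent transposition $(k,k+1)$. Since $\Shuf(|a|)$ is generated by such transpositions, the identity for a general $\sigma$ follows by composition, provided one checks that the map $a \mapsto \llbracket \sigma[a]\rrbracket_0$ is itself compatible with composition of adjacent swaps.

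Second, I would lift $f$ to the Hilbert setting. Define $F : \bR^e \times L^2(\Omega,\bP;\bR^e) \to \bR^d$ by $F(y_0, X) = f(y_0, \bP \circ X^{-1})$ and fix a representative $X$ of $\mu$. The inductive definition of $\partial_a$ combined with the chain rule identifies $\partial_a f(x_0, \mu, \boldsymbol{x}_{m\{a\}}) \cdot \bigotimes_{i=1}^{|a|} v_i$ with an $|a|$-th order mixed Fr\'echet derivative of $F$ at $(x_0, X)$: each position $i$ with $a_i = 0$ contributes a direction in the $\bR^e$-factor equal to $v_i$; each position $i$ with $a_i = 1 + \max_{l<i} a_l$ contributes a fresh $L^2$-direction whose kernel is evaluated at a point mass at $x_{a_i}$ weighted by $v_i$; and each position $i$ that reuses a previous value $a_i = a_{l}$ for some $l<i$ simply reuses the corresponding $L^2$-direction. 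Since the $n$-th Fr\'echet derivative of a sufficiently smooth map between Banach spaces is a symmetric multilinear operator, permuting the order of differentiation together with the associated vectors $v_i$ leaves the value invariant; this is exactly the classical Schwarz theorem.

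Third, I would translate the permutation back into the language of partition sequences. The raw permuted sequence $\sigma[a] = (a_{\sigma(1)}, ..., a_{\sigma(|a|)})$ generically fails to lie in $A[0]$ because the order of first occurrence of the positive labels has shifted under $\sigma$. Relabeling positive values according to their order of first appearance produces the canonical representative $\llbracket \sigma[a]\rrbracket_0 \in A[0]$, and the corresponding reshuffling of the free variables $(x_1, ..., x_{m[a]})$ is precisely the vector $\boldsymbol{x}_{\llbracket \sigma(a)\rrbracket \circ (a)}$ appearing in the statement.

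The principal obstacle is the bookkeeping in this last translation step. For an adjacent transposition $\sigma = (k,k+1)$, one must check three cases depending on whether each of $a_k$ and $a_{k+1}$ encodes a spatial derivative in $x_0$, a derivative in an existing free variable, or the introduction of a fresh Lions direction. The only genuinely delicate case is when $a_k$ introduces a new free variable that $a_{k+1}$ then uses: after the swap, the new variable is introduced one step later than before, and it is precisely this situation that makes the relabeling $\llbracket \cdot\rrbracket_0$ necessary rather than just a literal swap of entries. Writing out this matching, and verifying that the permutation of the tensor factors $v_{\sigma(i)}$ is consistent with the reindexing of free-variable arguments, is tedious but mechanical; once checked for adjacent transpositions, iteration over a reduced word for $\sigma$ delivers the general statement.
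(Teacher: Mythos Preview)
The paper does not actually supply a proof of this theorem: it only prefaces the statement with the sentence ``Another natural observation is that the Fr\'echet derivatives are symmetric multi-linear operators. This leads us to the following'' and then moves on to a remark illustrating the case $|a|=2$. Your second stage---lifting $f$ to $F$ on $\bR^e \times L^2(\Omega,\bP;\bR^e)$ and invoking the symmetry of the iterated Fr\'echet derivative---is precisely the mechanism the paper alludes to, so your approach is fully aligned with the paper's intended argument and in fact supplies the details the paper omits.

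One small caution on your first stage: you note that the general $\sigma$ follows from adjacent transpositions ``provided one checks that the map $a \mapsto \llbracket \sigma[a]\rrbracket_0$ is itself compatible with composition of adjacent swaps.'' This compatibility is not entirely formal, because $\llbracket\cdot\rrbracket_0$ is a normalisation and one must verify that $\llbracket \tau[\llbracket \sigma[a]\rrbracket_0]\rrbracket_0 = \llbracket (\tau\sigma)[a]\rrbracket_0$ together with the matching statement for the free-variable reindexing $\boldsymbol{x}_{\llbracket\cdot\rrbracket\circ(a)}$. This is true---both sides describe the same underlying partition of $\{1,\dots,|a|\}$ with the same tagged block---but it deserves an explicit sentence rather than being left as a proviso, since it is exactly where the bookkeeping you flag in stage three reappears at the inductive step.
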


	\subsubsection*{An iterative Lions-Taylor expansion}
	
	\cite{salkeld2022Lions} also includes results on how one takes a Lions-Taylor expansion of the Lions derivative of some smooth function of on the Wasserstein space of measures: 
	
	Throughout this work, for $a\in A_n[I]$ we will often interchangeably write
	\begin{equation}
		\label{eq:Tag-Part-Seq[a]}
		\begin{aligned}
			&A_n[a] = A_n\Big[ \big\{a^{-1}[j]: j \in I \cup \{1, ..., m[a]\} \big\} \Big]. 
		\end{aligned}
	\end{equation}
	
	Theorem \ref{theorem:LionsTaylor2} admits the following Corollary:
	\begin{corollary}
		\label{corollary:LionsTaylor2}
		Let $g: \bN_0^{\times 2}  \to \bR$ be a monotone increasing function such that $g(0, 0)\leq 0$ and let $n\in \bN_0$ be the greatest integer such that $g(0,n) \leq 0$. Let
		\begin{equation*}
			f: (\bR^e)^{\times |I|} \times \cP_2(\bR^e) \to \bR^d
			\quad\mbox{such that}\quad
			f\in C_b\big[ A^{g,-}[I] \big]\big( (\bR^e)^{\times |I|} \times \cP_2(\bR^e); \bR^d \big)
		\end{equation*}
		For every $\iota \in I$, let $x_\iota, y_\iota \in \bR^e$, let $x_1, y_1, ..., x_{m[a]}, y_{m[a]} \in \bR^e$ and let $\mu,\nu\in \cP_{n+1}(\bR^e)$ with joint distribution $\Pi^{\mu, \nu}$. 
		
		Let $a\in A^{g, -}[I]$ and denote $\scG^a = \scG^{I \cup \{a^{-1}[j]: j=1, ..., m[a]\}}$. Suppose for every $\overline{a}\in A[a]$, we have that $\scG^a(\overline{a}):= \scG\big[ \llbracket a \cdot \overline{a} \rrbracket_{I} \big]$. 
		
		Then we have that
		\begin{align*}
			\partial_a f\Big( y_I, \nu, y_{m\{a\}}\Big)
			=& 
			\sum_{\overline{a}\in A[a]}^{g, -} \frac{ \rD^{\overline{a}} \partial_a f(x_I, \mu, \boldsymbol{x}_{m\{a\}})}{|\overline{a}|!} \big[ y_I - x_I, \Pi^{\mu, \nu}, (\boldsymbol{y} - \boldsymbol{x})_{m\{a\}} \big]
			\\
			&+
			R_{g, -}\Big( \partial_a f\Big)\Big[ (x_I, y_I), \Pi^{\mu, \nu}, (\boldsymbol{x}_{m\{a\}}, \boldsymbol{y}_{m\{a\}}) \Big]. 
		\end{align*}
	\end{corollary}
	For full details of this remainder term along with the proof, we refer the reader to \cite{salkeld2022Lions}. 
	
	\subsection{Couplings between Lions derivatives}
	\label{subsec:Couplings-LionsDeriv}
	
	Returning to the classical theory for a moment, recall that for $x, y\in \bR^e$ and a function $f:\bR^e \to \bR^d$ that is $n$-times differentiable, we can uniquely express the value of $f(y)$ in terms of $f(x)$, $\nabla^i f(x)$ for each $i \in \{1, ..., n\}$ and the remainder term:
	\begin{equation}
		\label{eq:TaylorExpansion1}
		\begin{aligned}
			f(y) =& \sum_{i=0}^n \frac{\nabla^i f(x)}{i!} \cdot (y-x)^{\otimes i} + R_n^{x, y}(f),
			\\
			\nabla^j f(y) =& \sum_{i =0}^{n-j} \frac{\nabla^{i+j} f(x)}{i!} \cdot (y-x)^{\otimes i} + R_{n-j}^{x, y}(\nabla^j f)
		\end{aligned}
	\end{equation}
	The central precept of the theory of regularity structures \cite{hairer2014theory} is that the values of the function $f$ can be expressed in terms of a jet
	\begin{equation*}
		\begin{aligned}
			\Big( f(x), \nabla f(x), ..., \nabla^n f(x) \Big)& 
			\\
			\Big( R_n^{x, y}(f), R_{n-1}^{x, y}(\nabla f), ..., R_0^{x, y}(\nabla^n f) \Big)&
		\end{aligned}
		\in \bigoplus_{i=0}^n \lin\big( (\bR^e)^{\otimes i}, \bR^d \big) = T^n\big( \bR^e, \bR^d \big) 
	\end{equation*}
	where $T^n(\bR^e, \bR^d)$ is the quotient algebra of the tensor algebra $\big( T(\bR^e, \bR^d), \otimes \big)$ 
	\begin{equation*}
		T(\bR^e, \bR^d) = \bigoplus_{i=0}^{\infty} \lin\big( (\bR^e)^{\otimes i}, \bR^d \big) 
		\quad\mbox{by the algebra ideal}\quad 
		I^n(\bR^e, \bR^d) = \bigoplus_{i=n+1}^{\infty} \lin\big( (\bR^e)^{\otimes i}, \bR^d \big). 
	\end{equation*}
	We use the notational convention that
	\begin{equation*}
		\lin\big( (\bR^e)^{\otimes 0}, \bR^d \big) = \bR^d. 
	\end{equation*}
	
	Further, if we Taylor expand each of these functions (again), we get the lattice of vectors
	\begin{equation}
		\label{eq:diagonal-structure}
		\begin{pmatrix}
			f(x), & \nabla f(x), &..., &\nabla^{n-1} f(x), &\nabla^{n}f(x) \\
			\nabla f(x), & \nabla^2 f(x), &..., &\nabla^n f(x), & 0 \\
			\vdots, & \vdots, & \reflectbox{$\ddots$} & & \\
			\nabla^{n-1}f(x), & \nabla^n f(x) &&& \\
			\nabla^n f(x), &0,&...,&&0
		\end{pmatrix}
		\in \bigoplus_{i=0}^n \bigoplus_{j=0}^{n} \lin\big( (\bR^e)^{\otimes (i+j)},\bR^d \big). 
	\end{equation}
	Next, ignoring the truncation for the moment we observe that this tensor module can equivalently be written as 
	\begin{align}
		\nonumber
		T\Big(\bR^e, \bR^d\Big) \otimes T\Big(\bR^e, \bR^d\Big)
		=& 
		\bigoplus_{i=0}^\infty \bigoplus_{j=0}^{\infty} \lin\big( (\bR^e)^{\otimes (i+j)}, \bR^d \big) 
		=
		\bigoplus_{i=0}^\infty \bigoplus_{j=0}^{\infty} \lin\Big( (\bR^e)^{\otimes i}, \lin\big( (\bR^e)^{\otimes j}, \bR^d \big) \Big)
		\\
		\label{eq:TensorProd-construction}
		=& \bigoplus_{i=0}^{\infty} \lin\Big( (\bR^e)^{\otimes i}, \bigoplus_{j=0}^{\infty} \lin\big( (\bR^e)^{\otimes j}, \bR^d \big) \Big)
		= 
		T\Big( \bR^e, T\big( \bR^e, \bR^d \big) \Big)
	\end{align}
	By additionally including the truncation, 
	\begin{align}
		\nonumber
		&T^n\Big(\bR^e, \bR^d\Big) \otimes T^n\Big(\bR^e, \bR^d\Big)
		=
		\bigg( \bigoplus_{i=0}^n \lin\big( (\bR^e)^{\otimes i}, \bR^d \big) \bigg) \otimes \bigg(\bigoplus_{j=0}^n \lin\big( (\bR^e) ^{\otimes j}, \bR^d \big) \bigg)
		\\
		\label{eq:TensorProd-construction2}
		&=\bigoplus_{i=0}^n \bigoplus_{j=0}^{n} \lin\big( (\bR^e)^{\otimes (i+j)}, \bR^d \big)= \bigoplus_{i=0}^{n} \lin\Big( (\bR^e)^{\otimes i}, T^{n}\big( \bR^e, \bR^d\big) \Big) 
		= 
		T^n\Big( \bR^e, T^n\big( \bR^e, \bR^d \big) \Big). 
	\end{align}
	
	\begin{remark}
		The takeaway from Equations \eqref{eq:TensorProd-construction} and \eqref{eq:TensorProd-construction2} is that tensor products of tensor algebras with themselves can be defined in terms of the ``tensor algebra of the tensor algebra''. This is the perspective that we will use when defining the ``coupled tensor product'' below. 
	\end{remark}
	
	In order to reflect our Taylor expansion of the function $f$ in terms of the tensor algebra, we define the function $\bF: \bR^e \to T^n(\bR^e, \bR^d)$ and $\fR: \bR^e \times \bR^e \to T^n(\bR^e, \bR^d)$ by
	\begin{align*}
		\bF(x) &= \Big( f(x), \nabla f(x), ..., \nabla^n f(x) \Big)
		\quad\mbox{so that for $j=0, ..., n$,}\quad
		\Big\langle \bF(x), j \Big\rangle = \nabla^j f(x),
		\\
		\fR^{x,y} &= \Big( R_n^{x, y}(f), R_{n-1}^{x, y}(\nabla f), ..., R_{0}^{x, y}(\nabla^n f) \Big)
		\quad\mbox{so that for $j=0, ..., n$,}\quad
		\Big\langle \fR^{x, y}, j \Big\rangle = R_{n-j}^{x, y}(\nabla^j f). 
	\end{align*}
	Next define the coproduct operation $\triangle: T^n(\bR^e, \bR^d) \to T^n(\bR^e, \bR^d) \otimes T^n(\bR^e, \bR^d)$ for
	\begin{equation*}
		X\in T^n(\bR^e, \bR^d)
		\quad \mbox{by}\quad
		\Big\langle \triangle\big[ X\big], (i_1, i_2)\Big\rangle = \Big\langle X, i_1\Big\rangle \otimes \Big\langle X, i_2 \Big\rangle
	\end{equation*}
	Then Equation \eqref{eq:diagonal-structure} can be interpreted as $\triangle\big[ \bF(x) \big]$. Secondly, the dual space of the finite dimensional space
	\begin{equation*}
		T^n\Big( \bR^e, \bR^d \Big)^* = \bigoplus_{i=0}^n (\bR^e)^{\otimes i}
	\end{equation*}
	since we are treating $T^n(\bR^e, \bR^d)$ as a module over the ring $\bR^d$ endowed with coordinatewise product for a ring multiplication. For $x, y \in \bR^e$ we define $\rPi: \bR^e \times \bR^e \to T(\bR^e, \bR^d)^*$ by
	\begin{equation*}
		\rPi^{x, y} = \sum_{i=0}^n \frac{(y-x)^{\otimes i}}{i!} \in \bigoplus_{i=0}^n (\bR^e)^{\otimes i}. 
	\end{equation*}
	In order to ensure that $\rPi^{x, y}$ remains on the truncated tensor algebra, we also require that
	\begin{equation*}
		\rPi^{x, y}\otimes j = \sum_{i=0}^{n-j} \frac{(y-x)^{\otimes i}}{i!} \in \bigoplus_{i=0}^{n-j} (\bR^e)^{\otimes i}. 
	\end{equation*}
	Then Equation \eqref{eq:TaylorExpansion1} can be equivalently rewritten as
	\begin{equation}
		\label{eq:TaylorExpansion2}
		\Big\langle \bF(y), j \Big\rangle = \Big\langle \triangle \big[ \bF(x) \big], \rPi^{x, y} \otimes j \Big\rangle + \Big\langle \fR^{x, y}, j \Big\rangle. 
	\end{equation}
	
	\subsubsection*{Lions-Taylor expansion}
	Firstly, to keep things simple let us consider when the index set $I = \emptyset$ for the moment. Recalling Theorem \ref{theorem:LionsTaylor2}, for any two measures $\mu, \nu \in \cP_2(\bR^e)$, we can express an $A^{g,-}$-differentiable function in terms of the collection of functions
	\begin{equation*}
		\Big( \partial_a f\big( \mu, (\cdot)_{m\{a\}} \big) \Big)_{a\in A^{g,-}} 
		\in 
		\bigoplus_{a \in A}^{g,-} L^0\Big( (\bR^e)^{\times m[a]}; \lin\big( (\bR^e)^{\otimes |a|}, \bR^d \big) \Big). 
	\end{equation*}
	where $L^0\Big( (\bR^e)^{\times m[a]}; \lin\big( (\bR^e)^{\otimes |a|}, \bR^d \big) \Big)$ is the module of measurable functions from $(\bR^e)^{\times m[a]}$ to the module $\lin\big((\bR^e)^{\times m[a]}, \bR^d\big)$ over the ring $(\bR^d, +, \centerdot)$
	\begin{equation*}
		\Big( (\bR^e)^{\times m[a]}, \mbox{Leb}\big( (\bR^e)^{\times m[a]} \big) \Big)
		\mapsto
		\bigg( \lin\big( (\bR^e)^{\otimes |a|}, \bR^d \big), \cB\Big( \lin\big( (\bR^e)^{\otimes |a|}, \bR^d \big)\Big) \bigg)
	\end{equation*}
	and we use the convention that
	\begin{equation*}
		L^0\Big( (\bR^e)^{\times 0}; \lin\big( (\bR^e)^{\otimes |a|}, \bR^d \big) \Big) = \lin\big( (\bR^e)^{\otimes |a|}, \bR^d \big). 
	\end{equation*}
	By Taylor expanding each of these functions (as above), we get a lattice of measurable functions
	\begin{equation}
		\label{eq:LionsDerivMatrix}
		\begin{pmatrix}
			f, 
			&
			\partial_{(1)}f(x_1), 
			&
			\begin{pmatrix} \partial_{(11)}f(x_1)\\ \partial_{(12)} f(x_{\{1,2\}}) \end{pmatrix}
			&
			\dots
			\\
			\partial_{(1)}f(x_1), 
			& 
			\begin{pmatrix} \partial_{(11)}f(x_1)\\ \partial_{(12)} f(x_{\{1,2\}}) \end{pmatrix} 
			&
			\begin{pmatrix}
			\begin{pmatrix} \partial_{(111)}f(x_1) \\ \partial_{(112)}f(x_{\{1,2\}}) \end{pmatrix}
			\\
			\begin{pmatrix} \partial_{(121)}f(x_{\{1,2\}}) \\ \partial_{(122)}f(x_{\{1,2\}}) \\ \partial_{(123)} f(x_{\{1,2,3\}})\end{pmatrix}
			\end{pmatrix}
			&
			\dots
			\\
			\begin{pmatrix} \partial_{(11)}f(x_1) \\ \partial_{(12)}f(x_{1,2})\end{pmatrix}, 
			& 
			\begin{pmatrix} 
			\begin{pmatrix}
			\partial_{(111)}f(x_1) \\ \partial_{(112)}f(x_{\{1,2\}})
			\end{pmatrix}
			\\
			\begin{pmatrix}
			\partial_{(121)} f(x_{\{1,2\}}) \\ \partial_{(122)} f(x_{\{1,2\}}) \\ \partial_{(123)} f(x_{\{1,2,3\}})
			\end{pmatrix}
			\end{pmatrix} 
			&
			\ddots
			&
			\\
			\vdots  & \vdots  & &\ddots
		\end{pmatrix}. 
	\end{equation}
	The challenge with this lattice is that when we Lions-Taylor expand one term from this series, all the free variables that we differentiate with respect to exist on a single probability space. This is in contrast to two collections of free variables derived from distinct terms from the Lions-Taylor series which exist on distinct probability spaces. Thus, we represent this lattice of measurable functions as
	\begin{equation*}
		\bigoplus_{a\in A}^{g,-} L^0\Bigg( (\bR^e)^{\times m[a]}; \bigoplus_{\overline{a} \in A[a]}^{g, -} L^0\bigg( (\bR^e)^{\times m[\overline{a}]} ; \lin\Big( (\bR^e)^{\otimes |\overline{a}|}, \lin\big( (\bR^e)^{\otimes |a|}, \bR^d \big) \Big) \bigg) \Bigg). 
	\end{equation*}
	
	With Equation \eqref{eq:TensorProd-construction2} in mind, our running hypothesis will be as follows:
	\begin{hypothesis}
		\label{hypothesis:coupledTP}
		We define the \underline{coupled tensor product} to be the operation such that 
		\begin{align*}
			&\bigg( \bigoplus_{a\in A[I]}^{g,-} L^0\Big( \Omega^{\times m[a]}, \bP^{\times m[a]}; \lin\big( (\bR^e)^{\otimes |a|}, \bR^d \big) \Big) \bigg) 
			\tilde{\otimes} 
			\bigg( \bigoplus_{a\in A[I]}^{g,-} L^0\Big( \Omega^{\times m[a]}, \bP^{\times m[a]}; \lin\big( (\bR^e)^{\otimes |a|}, \bR^d \big) \Big) \bigg)
			\\
			&=\bigoplus_{a\in A[I]}^{g,-} L^0\Bigg( \Omega^{\times m[a]}, \bP^{\times m[a]}; \bigoplus_{\overline{a} \in A[a]}^{g, - } L^0\bigg( \Omega^{\times m[\overline{a}]}, \bP^{\times m[\overline{a}]}; \lin\Big( (\bR^e)^{\otimes |\overline{a}|}, \lin\big( (\bR^e)^{\otimes |a|}, \bR^d \big) \Big) \bigg) \Bigg). 
		\end{align*}
	\end{hypothesis}
	
	\begin{remark}
		\label{remark:couplings1}
		Through Equation \eqref{eq:diagonal-structure}, we saw that the basis of the tensor product of the module $T^n(\bR^e, \bR^d)$ with itself is isomorphic to the span of the cartesian product of the basis of $T^n(\bR^e, \bR^d)$ with itself. Indeed, this is clear for any module with an explicit basis. 
		
		By contrast, in \ref{eq:LionsDerivMatrix} we see that the coupled tensor product corresponds to a span of the set of elements
		\begin{equation}
			A^n\tilde{\times} A^n[I]:= \bigsqcup_{a \in A^n[I]} A^n[a]. 
		\end{equation}
		This is a richer basis set than the cartesian product, which is natural because we would expect the Lions-Taylor expansion of the iterated Lions derivative of some smooth measure valued function to have more variables and therefore a more involved Lions-Taylor expansion. 
	\end{remark}
		
	Next, we highlight the link between coupled partition sequences and iterative partition sequences. 
	\begin{proposition}
		\label{proposition:coupling(1)}
		Let $I$ be an index set. Then for any $j, k \in \bN$, there is an isomorphism between the sets
		\begin{equation*}
			A_{j+k}\big[ I \big] \quad \mbox{and}\quad
			A_j \tilde{\times} A_k[I]. 
		\end{equation*}
	\end{proposition}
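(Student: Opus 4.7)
The plan is to construct an explicit bijection by splitting a length-$(j+k)$ sequence at position $j$, and then to recognise each piece of the disjoint union using the second part of Lemma \ref{Lemma:taggedsequence-extendedsequence}. The guiding observation is that the class of partition sequences is prefix-closed: if $c = (c_1, \ldots, c_n) \in A_n$ then the initial segment $(c_1, \ldots, c_m) \in A_m$ for every $m \leq n$, since the initial condition $c_1 = 1$ and the $1$-Lip sup-envelope condition only constrain each $c_i$ in terms of strictly earlier entries. Consequently, for any $\overline{a} \in A_{j+k}[I]$ written as $\overline{a} = \sigma(b \cdot c)$ with $b$ an $I$-valued sequence, $c$ a partition sequence and $\sigma$ a shuffle, the length-$j$ prefix $a := (\overline{a}_1, \ldots, \overline{a}_j)$ decomposes as a sub-shuffle of a prefix of $b$ with a prefix of $c$; by prefix-closure the latter still lies in some $A_\ell$, and hence $a \in A_j[I]$.

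Given this, I would partition $A_{j+k}[I]$ according to its length-$j$ prefix, writing
\begin{equation*}
    A_{j+k}[I] \;=\; \bigsqcup_{a \in A_j[I]} \Big\{ \overline{a} \in A_{j+k}[I] \,:\, \overline{a}_i = a_i \;\; \forall i \in \{1, \ldots, j\} \Big\}.
\end{equation*}
The second part of Lemma \ref{Lemma:taggedsequence-extendedsequence} (applied to $a \in A_j[I]$, where $|a| = j$) identifies each fibre on the right-hand side with $A_k[a]$, since the lemma realises a tagged sequence of length $k$ with tags drawn from the partition classes of $a$ as an extension of $a$ to a length-$(j+k)$ element of $A[I]$. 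Combining these two identifications yields the claimed isomorphism $A_{j+k}[I] \equiv \bigsqcup_{a \in A_j[I]} A_k[a]$.

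The main delicacy is the prefix-closure step, whose justification rests on carefully tracking how the shuffle $\sigma$ interacts with the $1$-Lip sup-envelope condition on $c$. Specifically, one must check that the positions of $c$ that land in the first $j$ slots of $\overline{a}$ do form a genuine prefix of $c$ (immediate from $\sigma$ being a shuffle) and that this prefix inherits both the initial value condition and the $1$-Lip condition (which follows from their purely local nature). Once this is granted, no further combinatorial bookkeeping is required: Lemma \ref{Lemma:taggedsequence-extendedsequence} does the heavy lifting of re-encoding the length-$k$ suffix, together with its distinction between $I$-valued entries, integers $q \leq m[a]$ already present in $a$, and fresh integers $q > m[a]$, as a correctly tagged element of $A_k[a]$.
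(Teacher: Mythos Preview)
Your proposal is correct and follows the same approach as the paper: both arguments split a sequence in $A_{j+k}[I]$ at position $j$, observe that the prefix lies in $A_j[I]$, and identify the fibre over each prefix with $A_k[a]$. The only difference is cosmetic --- you invoke Lemma~\ref{Lemma:taggedsequence-extendedsequence} for the fibre identification, whereas the paper writes out the explicit re-labelling of the suffix (which is precisely the content of that lemma) by hand.
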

	This relates the link between an iterative Lions derivative of order $j+k$ (with $|I|$ tagged variables) and all the ways in which one can take an iterative Lions derivative $a$ of length $j$ (with $|I|$ tagged variables) followed by an iterative Lions derivative of length $k$ with $|I| + m[a]$ tagged variables.
	\begin{proof}
		Let $j, k \in\bN$ and let $I$ be an index set. 
		
		\textit{Step 1.} Suppose that $a\in A_{j+k}[I]$. Then we can write $a$ as the concatenation of two sequences
		\begin{equation*}
			a = \big( (a_1, ..., a_{j}), (a_{j+1}, ..., a_{j+k}) \big). 
		\end{equation*}
		By Definition \ref{def:a}, we get that $a' = (a_1, ..., a_j) \in A_j[I]$. The values of the sequence $(a_{j+1}, ..., a_{j+k})$ are contained in the set
		\begin{align*}
			a_{j+1} \in& I \cup \Big\{ 1, ..., \big( \max_{l=1, ..., j} a_l \big) \Big\} \cup \Big\{ \big( \max_{l=1, ..., j} a_l \big) + 1 \Big\}
			\\
			a_{j+i} \in& I \cup \Big\{ 1, ..., \big( \max_{l=1, ..., j} a_l \big) \Big\} \cup \Big\{ \big( \max_{l=1, ..., j} a_l \big) + 1, ..., \big( \max_{l=1, ..., j} a_l \big)\vee\big( \max_{l=j, ..., j+i-1} a_l \big) + 1 \Big\}. 
		\end{align*}
		As such, we can rewrite this sequence as
		\begin{equation*}
			\overline{a}_{i} = 
			\begin{cases}
				(a')^{-1}[p] \quad&\quad \mbox{if $a_{j+i} = p$ for any $p \in I \cup \{1, ..., m[a']\}$, }
				\\
				a_{j+i}-m[a'] \quad&\quad \mbox{if $a_{j+i}>m[a']$.}
			\end{cases}
		\end{equation*}
		The sequence $(a_{j+1}, ..., a_{j+k})$ is contained in the equivalence class $\llbracket \overline{a} \rrbracket_{I\cup \{(a')^{-1}[p] \}}$ so that 
		\begin{equation*}
			\overline{a} =(\overline{a}_i)_{i = 1, ..., k} \in A_k[a'].
		\end{equation*}
		Thus we identify the sequence $a$ with the pair
		\begin{equation*}
			\big( a', \overline{a} \big) \in A_j[I] \times A_k[a']. 
		\end{equation*}
		
		\textit{Step 2.} Any element 
		\begin{equation*}
			(a', \overline{a}) \in \bigsqcup_{a\in A_{j}[I]} A_{k}[a]
		\end{equation*}
		is a pair of partition sequences, with $a' \in A_{j}[I]$ and $\overline{a} \in A_{k}[a']$. We define the sequence $a = (a_i)_{i=1, ..., j+k}$ by
		\begin{equation*}
			a_i = 
			\begin{cases}
				a'_i \quad&\quad \mbox{if $i \in \{1, ..., j\}$}
				\\
				p \quad &\quad \mbox{if $i\in \{j+1, ..., j+k\}$ and $\overline{a}_{i-j} = (a')^{-1}[p]$}
				\\
				\overline{a}_{i-j}+m[a'] \quad&\quad \mbox{if $i\in \{j+1, ..., j+k\}$ and $\overline{a}_{i-j} \in \{1, ..., m[\overline{a}]\}$}
			\end{cases}
		\end{equation*}
		and we verify that the sequence $a = (a_i)_{i=1, ..., j+k} \in A_{j+k}[I]$. However, 
		\begin{align*}
			a_1 =& a_1' \in I \cup \Big\{ 1 \Big\}, 
			\\
			\mbox{For $i\in \{2, ..., j\}$}\quad a_i =& a_i' \in I \cup \Big\{1, ..., \big( \max_{l=1, ..., i-1} a_l \big) + 1 \Big\} 
		\end{align*}
		and for $i \in \{j+1, ..., j+k\}$
		\begin{align*}
			a_{j+1} =& 
			\begin{cases}
				p \quad&\quad \in \{1, ..., m[a'] \}
				\\
				m[a'] + 1 \quad&\quad \in \{m[a'] + 1\}
			\end{cases}
			\quad \mbox{so that}\quad
			a_{j+1} \in I \cup \Big\{ 1, ..., \big( \max_{l=1, ..., j} a_l \big) + 1 \Big\}, 
			\\
			a_{i} =& 
			\begin{cases}
				p \quad&\quad \in \{1, ..., m[a'] \}
				\\
				m[a'] + \overline{a}_{i-j} \quad&\quad \in \Big\{ m[a'] + (\underset{l=1, ..., i-j}{\max} \overline{a}_l) \Big\}
			\end{cases}
			\\
			\mbox{so that}\quad
			a_{i} \in& I \cup \Big\{ 1, ..., \big( \underset{l=1, ..., i-1}{\max} a_l \big) + 1 \Big\}
		\end{align*}
		and we conclude that $a$ satisfies Definition \ref{def:a}. 
	\end{proof}
	
	\subsubsection*{Abstraction}
	
	We have seen that the application of a Lions derivative yields a function of a free variable and the application of iterated Lions derivatives identified with a partition yields a collection of free variables that are identified with the elements of that partition. When considering the Lions-Taylor expansions of some ``smooth measure functional'' $f$, a natural aspiration would be to embed the Lions derivative $\partial_a f$ into a vector space of appropriate functions but the choice of such a space should be specific to the setting. 
	
	To explain this a little, let $\cP_2(\bR)$ be the set of all measures on $(\bR, \cB)$ that have finite second moments. Suppose that the function $f:\cP_2(\bR) \to \bR$ has a Lions derivative at $\nu \in \cP_2(\bR)$ so that $\partial_\mu f(\nu):\bR \to \bR$. The only thing that we know about $\partial_\mu f(\nu)$ is that it is a measurable function so that
	$$
	\partial_\mu f(\nu) \in L^0\Big( \bR, \cB, \nu; \bR \Big). 
	$$
	Since the ring multiplication over the field $\bR$ is continuous with respect to the standard topology on $\bR$, the space $L^0(\Omega, \cB, \bP; \bR)$ endowed with the topology of weak convergence forms a topological algebra. 
	
	We know that the Fr\'echet derivative of the canonical lift can be seen as square integrable random variable so that
	\begin{equation*}
		\int_{\bR} \Big| \partial_\mu f(\nu) \big( x\big) \Big|^2 d\nu(x)< \infty,
		\quad \mbox{or equivalently}\quad
		\partial_\mu f(\nu) \in L^2\Big( \bR, \cB, \nu; \bR\Big). 
	\end{equation*}
	If we suppose (as is natural when considering Taylor expansions) that the function $\partial_\mu f(\nu):\bR \to \bR$ is bounded then
	$$
	\partial_\mu f(\nu) \in L^\infty\Big( \bR, \cB, \nu; \bR\Big). 
	$$
	If we are interested in considering the second order Lions derivatives of $f$, then we know that we need the function $\partial_\mu f:\cP_2(\bR) \times \bR \to \bR$ to be differentiable in both the measure and free variable. Hence, it would make sense to consider the function $\partial_\mu f(\nu):\bR \to \bR$ as an element of the space of bounded and differentiable function
	$$
	\partial_\mu f(\nu) \in C_b^1\Big( \bR; \bR\Big).	
	$$
	Hence, there is no correct ``collection'' of modules to choose to work with. However, we will require that any modules we work with satisfy certain nesting properties.  
	
	For any $n \in \bN$, we denote
	\begin{equation*}
		L^{\boldsymbol{0}}\Big( \Omega^{\times n}; \cR \Big)= L^{(0, ..., 0)}\Big( \Omega^{\times n}; \cR \Big) 
		\quad\mbox{where}\quad
		\boldsymbol{0} = (\underbrace{0, ..., 0}_{\times n}). 
	\end{equation*}	
	
	With this in mind, we introduce the following notation: 
	\begin{assumption}
		\label{notation:U-(1)}
		Let $(\Omega, \cF)$ be a measurable space and let $(\cR, +, \centerdot)$ be a separable topological ring. We denote $\cB(\cR)$ to be the Borel $\sigma$-algebra, giving us that $(\cR, \cB(\cR))$ is also a measurable space. Let $I$ be an index set. 
		
		We write
		\begin{equation*}
			\bU\big( \Omega; \cR \big) \subseteq L^{\boldsymbol{0}}\Big( \Omega^{\times |I|}; \cR \Big)
		\end{equation*}
		to refer to a module of measurable functions $f:\Omega \to \cR$ with binary operation $\centerdot: \bU(\Omega; \cR) \times \bU(\Omega; \cR) \to \bU(\Omega; \cR)$ that satisfies 
		\begin{equation*}
			(f \centerdot g):\Omega^{\times |I|} \to \cR,
			\quad 
			\big( f\centerdot g \big)(\omega_I) = f(\omega_I) \centerdot g(\omega_I). 
		\end{equation*}
	\end{assumption}
	
	\begin{assumption}
		\label{notation:V-(1)}
		Let $(\Omega, \cF, \bP)$ be a probability space. Let $d \in \bN$ and let $(\cR, +, \centerdot)$ be a separable normed ring which we associate with the Borel $\sigma$-algebra $\cB(\cR)$. 
		
		For any index set $I$, let
		\begin{equation*}
			\Big( \bV^a\big( \Omega; \cR \big) \Big)_{a\in A[I]}, 
		\end{equation*}
		to be a collection of unital $\cR$-modules that satisfy:
		\begin{enumerate}[label=(\ref*{notation:V-(1)}.\roman*)]
			\item 
			\label{enum:notation:V-(1)-1}
			For every $a \in A[I]$, we have 
			\begin{equation*}
				\bV^a\big( \Omega; \cR \big) \subseteq L^{\boldsymbol{0}}\Big( \Omega^{\times m[a]}; \cR \Big); 
			\end{equation*}
			\item 
			\label{enum:notation:V-(1)-2}
			For any $a\in A[I]$ such that $m[a] = 0$, we have that
			\begin{equation*}
				\bV^{a}\big( \Omega; \cR \big) = \cR; 
			\end{equation*}
			\item 
			\label{enum:notation:V-(1)-3}
			For any $a_1, a_2 \in A[I]$, we have that
			\begin{equation*}
				\bV^{a_1}\big( \Omega; \cR \big) \otimes \bV^{a_2}\big( \Omega; \cR \big) \subseteq \bV^{\llbracket (a_1 \cdot a_2) \rrbracket_I }\big(\Omega; \cR \big);
			\end{equation*}
			\item 
			\label{enum:notation:V-(1)-4}
			For every $a\in A[I]$, and for any
			\begin{equation*}
				\hat{a} \in A[I] \quad \mbox{and} \quad \bar{a} \in A[\hat{a}] \quad \mbox{s.t.} \quad a = \big\llbracket (\hat{a} \cdot \bar{a}) \big\rrbracket_I
			\end{equation*}
			we have that
			\begin{equation*}
				\bV^{a}\Big( \Omega; \cR \Big) \subseteq \bV^{\hat{a}}\Big( \Omega; \bV^{\bar{a}}\big( \Omega; \cR \big) \Big). 
			\end{equation*}
		\end{enumerate}
	\end{assumption}
	
	The perspective should be that $\bU(\Omega; \cR)$ may be the collection of measurable functions paired with coordinate multiplication but we are not restricting ourselves to this. 
	
	Similarly, the collection $\big( \bV^a(\Omega, \cR) \big)_{a\in A[I]}$ should be thought of as the collection of modules in which each element of our Taylor expansions exist within. 
	\begin{example}
		Suppose that the function $f: \cP_2(\bR^e) \to \bR^d$ satisfies that for all $\mu \in \cP_2(\bR^e)$, 
		\begin{equation*}
			\forall a \in A, \quad \partial_a f \in C_b^{\infty} \Big( (\bR^e)^{\times m[a]}; \lin\big( (\bR^e)^{\otimes |a|},\bR^d \big) \Big). 
		\end{equation*}
		Then we choose
		\begin{equation*}
			\bV^a(\bR^e, \bR^d) = C_b^{\infty} \Big( (\bR^e)^{\times m[a]}; \lin\big( (\bR^e)^{\otimes |a|},\bR^d \big) \Big)
		\end{equation*}
		and obtain
		\begin{equation*}
			\big( \partial_a f(\mu) \big)_{a\in A} \in \bigoplus_{a\in A} \bV^a\big( \bR^e, \bR^d \big). 
		\end{equation*}
		
		On the other hand, suppose that the function $f:\bR^e \times \cP_2(\bR^e) \to \bR^d$ satisfies that for all $\mu \in \cP_2(\bR^e)$, 
		\begin{align*}
			\mathbb{U}_0(\bR^e, \bR^d) =& C_b^{\infty}\big( \bR^e; \bR^d \big), 
			\\
			\Big( \partial_a f(\cdot, \mu) \Big)_{a\in A[0]} \in& C_b^{\infty}\bigg( \bR^e; \bigoplus_{a\in A[0]} C_b^{\infty}\Big( (\bR^e)^{\times m[a]}, \lin\big( (\bR^e)^{\otimes |a|}, \bR^d \big) \Big) \bigg)
			\\
			=& \mathbb{U}_0\bigg( \bR^e; \bigoplus_{a\in A[0]} \bV^a\Big( \bR^e, \bR^d \Big) \bigg). 
		\end{align*}
		First of all, note that smooth functions are measurable so that \ref{enum:notation:V-(1)-1} is satisfied. Similarly, we use the notational convention that 
		\begin{equation*}
			C_b^{\infty}\Big( (\bR^e)^{\otimes 0}; \bR^d \Big) = \bR^d
		\end{equation*}
		so that \ref{enum:notation:V-(1)-2} is satisfied. Critically, it is not the case that
		\begin{equation*}
			C_b^{\infty}\Big( \bR; \bR \Big) \otimes C_b^{\infty}\Big( \bR; \bR \Big) \neq C_b^{\infty}\Big( \bR \otimes \bR; \bR \Big)
		\end{equation*}
		since there are many smooth functions of two variables that cannot be written in product form. Rather, we do have that
		\begin{equation*}
			C_b^{\infty}\Big( \bR; \bR \Big) \otimes C_b^{\infty}\Big( \bR; \bR \Big) \subseteq C_b^{\infty}\Big( \bR \otimes \bR; \bR \Big)
		\end{equation*}
		which means \ref{enum:notation:V-(1)-3} is satisfied. In the same fashion, 
		\begin{equation*}
			C_b^{\infty}\Big( \bR \otimes \bR; \bR \Big) \subseteq C_b^{\infty}\Big( \bR; C_b^{\infty}\big( \bR ; \bR \big) \Big)
		\end{equation*}
		so that \ref{enum:notation:V-(1)-4} is satisfied. 
	\end{example}
	We use Assumption \ref{notation:V-(1)} to construct a module (analogous to a tensor module) for which we can define a coupled tensor product of:
	\begin{definition}
		For any index set $I$, let $\big( \bV^a(\bR^e; \bR^d) \big)_{a\in A[I]}$ be a collection of $\bR^d$-modules that satisfy Assumption \ref{notation:V-(1)}. We define
		\begin{equation*}
			\cT_I^{g, -}(\bR^e, \bR^d) := \bigoplus_{a\in A[I]}^{g, -} \bV^a\big( \bR^e, \bR^d \big). 
		\end{equation*}
		Further, we define the \emph{coupled tensor product}
		\begin{align*}
			\cT_I^{g, -}(\bR^e, \bR^d) \tilde{\otimes} \cT_I^{g, -}(\bR^e, \bR^d) 
			:&=\bigoplus_{a\in A[I]}^{g, -} \bV^a\Big( \bR^e, \bigoplus_{\overline{a}\in A[a]}^{g, -} \bV^{\overline{a}} \big( \bR^e, \bR^d \big) \Big). 
		\end{align*}
	\end{definition}
	Then $\cT_I^{g, -}(\bR^e, \bR^d)$ is a $\bR^d$-module so that for $X\in \cT_0^{g, -}(\bR^e, \bR^d)$, we write
	\begin{equation*}
		X = \sum_{a\in A[I]}^{g, -} \big\langle X, a\big\rangle
        		\quad\mbox{where}\quad
        		\big\langle X, a\big\rangle \in \bV^a\big( \bR^e, \bR^d \big)
        	\end{equation*}
	while for $X \in \cT_I^{g, -}(\bR^e, \bR^d) \tilde{\otimes} \cT_I^{g, -}(\bR^e, \bR^d)$ we write
	\begin{equation*}
		X = \sum_{a\in A[I]}^{g,-} \sum_{\overline{a} \in A[a]}^{g, -} \Big\langle X, (\overline{a}, a) \Big\rangle
        		\quad\mbox{where}\quad
        		\Big\langle X, (\overline{a},a) \Big\rangle \in \bV^a\Big( \bR^e, \bV^{\overline{a}}\big( \bR^e, \bR^d \big) \Big)
	\end{equation*}

	\begin{definition}
		For any index set $I$, let $\big( \bV^a(\bR^e; \bR^d) \big)_{a\in A[I]}$ be a collection of $\bR^d$-modules that satisfy Assumption \ref{notation:V-(1)}. We define $\Delta: \cT_I^{g,-}(\bR^e, \bR^d) \to \cT_I^{g,-}(\bR^e, \bR^d) \tilde{\otimes} \cT_I^{g,-}(\bR^e, \bR^d)$ by
		\begin{equation*}
			\Big\langle \Delta\big[ X \big], (\overline{a}, a) \Big\rangle = \Big\langle X, \big\llbracket (a \cdot \overline{a}) \big\rrbracket_{I} \Big\rangle. 
		\end{equation*}
		Note, $\Delta$ is well defined due to \ref{enum:notation:V-(1)-4}. 
	\end{definition}
	The operation $\Delta$ is the coupled coproduct that serves the same purpose as the coproduct in Equation \eqref{eq:TaylorExpansion2}. We conclude this section by developing a reformulation of Theorem \ref{theorem:LionsTaylor2} and Corollary \ref{corollary:LionsTaylor2}:
	\begin{theorem}
		\label{Theorem:LionsTaylor2*}
		Let $I$ be an index set, let $f: (\bR^e)^{\times |I|}  \times \cP_2(\bR^e) \to \bR^d$ and suppose that $f$ has Lions and spatial derivatives of all orders. Let $g: (\bN_0)^{\times |I|} \times \bN_0 \to \bR$ be a monotone increasing function such that $g(0_I, 0) \leq 0$ and let $n\in \bN$ be the greatest integer such that $g(0_I, n) \leq 0$. 
		
		Let $\bU(\bR^e; \bR^d)$ satisfy Assumption \ref{notation:U-(1)} and let $\big( \bV^a(\bR^e; \bR^d) \big)_{a\in A[I]}$ be a collection of $\bR^d$-modules that satisfy Assumption \ref{notation:V-(1)}. Suppose that
		\begin{align*}
			&\forall x_I, y_I \in (\bR^e)^{\times |I|} 
			\quad
			\Big( \partial_a f(x_I, \mu)(\cdot) \Big)_{a\in A^{g, -}[I]}, \Big( R_{g,-}^{(x_I, y_I), \Pi^{\mu, \nu}} \big( \partial_a f\big) \Big)_{a \in A^{g, -}[I]} \in \bigoplus_{a\in A[I]}^{g, -} \bV^a\big( \bR^e; \bR^d \big)
			\\
			&\mbox{and}
			\quad
			\Big( \partial_a f(\cdot, \mu)(\cdot) \Big)_{a\in A^{g, -}[I]}, \Big( R_{g,-}^{(\cdot, \cdot), \Pi^{\mu, \nu}} \big( \partial_a f\big) \Big)_{a \in A^{g, -}[I]} \in \bU\bigg( \bR^e; \bigoplus_{a\in A[I]}^{g, -} \bV^a\big( \bR^e; \bR^d \big) \bigg)
		\end{align*}
		Then we define
		\begin{align*}
			\bF:& (\bR^e)^{\times |I|} \times \cP_2(\bR^e) \to \cT_I^{g, -}(\bR^e, \bR^d)
			\\
			\fR:& \big( (\bR^e)^{\times |I|} \times (\bR^e)^{\times |I|} \big) \times \cP_2\big( \bR^e \times \bR^e \big) \to \cT_I^{g, -}(\bR^e, \bR^d)
		\end{align*}
		by
		\begin{align*}
			\bF(x_I, \mu) = \Big( \partial_a f(x_I, \mu)(\cdot) \Big)_{a\in A^{g, -}[I]}
			\quad \mbox{and}\quad
			\fR^{(x_I, y_I), \Pi^{\mu, \nu}} = \Big( R_{g,-}^{(x_I, y_I), \Pi^{\mu, \nu}} \big( \partial_a f\big) \Big)_{a \in A^{g, -}[I]}. 
		\end{align*}
		We define
		\begin{equation*}
			\rPi_I^{(y_0, x_0), \Pi^{\mu, \nu}} = \sum_{a\in A[I]}^{g, -} \frac{1}{|a|!} \bigotimes_{i=1}^{|a|} (y_{a_i} - x_{a_i}) d\big( \Pi^{\mu, \nu} \big)^{\times m[a]}\Big( (\boldsymbol{x}, \boldsymbol{y})_{m\{a\}} \Big). 
		\end{equation*}
		 Then
		 \begin{equation*}
		 	\Big\langle \bF(y_I, \nu), a \Big\rangle = \Big\langle \Delta\big[ \bF(x_I, \mu) \big], \rPi_a^{(x_I, y_I), \Pi^{\mu, \nu}} \otimes a \Big\rangle + \Big\langle \fR^{(x_I, y_I), \Pi^{\mu, \nu}}, a \Big\rangle
		 \end{equation*}
	\end{theorem}
	
	\begin{remark}[A regularity structure perspective]
		The key distinction between Theorem \ref{Theorem:LionsTaylor2*} and Equation \eqref{eq:TaylorExpansion2} is that $\rPi_I^{(y_I, x_I), \Pi^{\mu, \nu}}$ (which here serves as the \emph{model} in the language of regularity structures) is defined \emph{for any choice} of index set $I$. The index set identifies the number of tagged variables, which is natural because different choices of partition sequence $a$ correspond to the element $\langle \bF(y, \nu), a \rangle$ which take their values in very different modules. Again, in the language of regularity structures this is captured within the \emph{structure group} of linear operators that, in the regularity structure of classical Taylor expansions, induces a coproduct on the model space. For Lions-Taylor expansions, the structure group does not induce a coproduct, but rather a coupled coproduct. 
	\end{remark}
	
	\subsection{A motivation from probabilistic rough paths}
	
	To motivate the coupled tensor product that we consider in this section, we briefly describe a heuristic derivation of probabilistic rough paths and rough McKean-Vlasov equations. 
	
	\subsubsection*{Introduction}
	
	First, let us consider the rough differential equation driven by a weakly geometric rough path
	\begin{equation*}
		dX_t = f\big( X_t \big) \cdot dW_t
	\end{equation*}
	where $f: \bR^e \to \lin(\bR^e, \bR^d)$ is differentiable function and $W:[0,1] \to \bR^d$ is an $\alpha$-H\"older continuous path for $\alpha \in (\tfrac{1}{4}, \tfrac{1}{3})$. This is an extension of the work \cite{2019arXiv180205882.2B} which addresses the case $\alpha \in (\tfrac{1}{3}, \tfrac{1}{2})$. Under the hypothesis that the weakly geometric rough exists, the iterated integrals
	\begin{align*}
		\int_s^t W_{s, r} \otimes dW_r, 
		\quad&
		\int_s^t \int_s^r W_{s, u} \otimes dW_u \otimes dW_r
		\quad \mbox{are well defined and}
		\\
		\sup_{s, t\in [0,1]} \frac{\Big| \int_s^t W_{s, r} \otimes dW_r \Big|}{|t-s|^{2\alpha}}< \infty
		\quad&
		\sup_{s, t\in [0,1]} \frac{\Big| \int_s^t \int_s^r W_{s, u} \otimes dW_u \otimes dW_r \Big|}{|t-s|^{3\alpha}} < \infty. 
	\end{align*}
	Thus the increments of the solution can be expressed as
	\begin{align*}
		X_{s, t} =& f\big( X_s \big) \cdot W_{s, t} + \nabla_x f\big( X_s \big) \otimes f\big( X_s \big)\cdot \int_s^t W_{s, r} \otimes dW_r 
		\\
		+& \bigg( \tfrac{1}{2} \nabla_x^2 f\big( X_s \big) \otimes \Big( f\big( X_s \big) \otimes f \big( X_s \big) + f\big( X_s \big) \otimes f \big( X_s \big)^T \Big) + \nabla_x f\big( X_s \big) \otimes \nabla_x f \big( X_s \big) \otimes f\big( X_s \big) \bigg)
		\\
		& \cdot \int_s^t \int_s^r \int_s^u dW_{v} \otimes dW_u \otimes dW_r + O\big( |t-s|^{4 \alpha} \big). 
	\end{align*}
	Further, by Taylor expanding the functions $f$ and $\nabla_x f \otimes f$, we obtain that
	\begin{align*}
		f\big( X_{\cdot} \big)_{s, t} =& \nabla_x f\big( X_s \big) \otimes f \big( X_s \big) \cdot W_{s, t}
		\\
		+& 
		\bigg( \nabla_x f \big(X_s \big) \otimes \nabla_x f\big( X_s \big) \otimes f \big( X_s \big) + \tfrac{1}{2} \nabla_x^2 f \big( X_s \big) \otimes \Big( f\big( X_s \big) \otimes f\big( X_s \big) + f\big( X_s \big) \otimes f\big( X_s \big)^T \Big) \bigg)
		\\
		& \cdot \int_s^t \int_s^r dW_u \otimes dW_r 
		+ 
		O\Big( |t-s|^{3 \alpha} \Big)
		\\
		\nabla_x f\big( X_{\cdot} \big) \otimes &f\big( X_{\cdot} \big)_{s, t} 
		\\
		=&\bigg( \nabla_x f \big(X_s \big) \otimes \nabla_x f\big( X_s \big) \otimes f \big( X_s \big) + \tfrac{1}{2} \nabla_x^2 f \big( X_s \big) \otimes \Big( f\big( X_s \big) \otimes f\big( X_s \big) + f\big( X_s \big) \otimes f\big( X_s \big)^T \Big) \bigg)
		\\
		&\cdot W_{s, t} + O\big( |t-s|^{2\alpha} \big) 
	\end{align*}
	Therefore, we describe the solution in terms of a jet of the form
	\begin{align*}
		&\bX: [0,1] \to \bigoplus_{i=0}^3 \lin\Big( (\bR^d)^{\otimes i} , \bR^e \Big)
		\\
		&\bX_t = \bigg( X_t, f\big( X_t \big), \nabla_x f\big( X_t \big) \otimes f\big( X_t \big), 
		\\
		&\qquad\qquad \nabla_x f\big( X_t \big) \otimes \nabla_x f\big( X_t \big) \otimes f \big( X_t \big) + \tfrac{1}{2} \nabla_x^2 f\big( X_t \big) \otimes \Big( f\big( X_t \big) \otimes f\big( X_t \big) + f\big( X_t \big) \otimes f\big( X_t \big)^T \Big) \bigg). 
	\end{align*}
	On the other hand, the rough path
	\begin{align*}
		\rw:[0,1]_{\leq}^{\times 2} \to \bigoplus_{i=0}^3 (\bR^d)^{\otimes i}, 
		\quad
		\rw_{s, t}= \bigg( 1, W_{s, t} , \int_s^t W_{s, r} \otimes dW_r, \int_s^t \int_s^r W_{s, u} \otimes dW_u \otimes dW_r \bigg)
	\end{align*}
	acts on this jet to give
	\begin{equation*}
		\Big\langle \bX_{t}, i \Big\rangle = \Big\langle \triangle\big[ \bX_s \big], \rw_{s,t} \otimes i \Big\rangle + O \Big( |t-s|^{\alpha(4-|i| )} \Big). 
	\end{equation*}
	and $\triangle$ is the deconcatenation coproduct. 
	
	\subsubsection*{Probabilistic rough paths}
	
	Motivated by applying the same techniques to derive a similar expression for rough McKean-Vlasov equation (as is explored in more detail in \cite{2019arXiv180205882.2B} and \cite{salkeld2022ExamplePRP}), we obtain that the solution of the rough McKean-Vlasov equation
	\begin{equation*}
		dX_t = f\big( X_t, \cL_t^X \big) \cdot dW_t
	\end{equation*}
	has increments that can be approximated by
	\begin{align}
		\nonumber
		X_{s, t} &= f\big( X_s, \cL_s^X \big) \cdot W_{s, t} 
		\\
		\nonumber
		&+ \partial_{0} f\big( X_s, \cL_s^X \big) \otimes f\big( X_s, \cL_s^X \big) \cdot \int_s^t W_{s, r} \otimes dW_r 
		\\
		\nonumber
		&+ \tilde{\bE}\bigg[ \partial_{1} f\big( X_s, \cL_s^X \big) \big( \tilde{X}_s \big) \otimes f\big( \tilde{X}_s, \cL_s^X \big) \cdot \int_s^t \tilde{W}_{s, r} \otimes dW_r \bigg]
		\\
		\nonumber
		&+ \tfrac{1}{2} \partial_{00} f\big( X_s, \cL_s^X \big) \otimes \Big( f\big( X_s, \cL_s^X \big) \otimes f \big( X_s, \cL_s^X \big) \Big) \cdot \int_s^t W_{s, r} \otimes W_{s, r} \otimes dW_r 
		\\
		\nonumber
		&+ \tfrac{1}{2}\tilde{\bE}\bigg[ \partial_{01} f\big( X_s, \cL_s^X \big)\big(\tilde{X}_s \big) \otimes \Big( f\big( X_s, \cL_s^X \big) \otimes f \big( \tilde{X}_s, \cL_s^X \big) \Big) \cdot \int_s^t W_{s, r} \otimes \tilde{W}_{s,r} \otimes dW_r \bigg]
		\\
		\nonumber
		&+ \tfrac{1}{2} \tilde{\bE} \bigg[ \partial_{10} f\big( X_s, \cL_s^X \big)\big(\tilde{X}_s \big) \otimes \Big( f\big( \tilde{X}_s, \cL_s^X \big) \otimes f \big( X_s, \cL_s^X \big) \Big) \cdot \int_s^t \tilde{W}_{s, r} \otimes W_{s,r} \otimes dW_r \bigg]
		\\
		\nonumber
		&+ \tfrac{1}{2} \tilde{\bE}\bigg[ \partial_{11} f\big( X_s, \cL_s^X \big)\big(\tilde{X}_s \big) \otimes \Big( f\big( \tilde{X}_s, \cL_s^X \big) \otimes f \big( \tilde{X}_s, \cL_s^X \big) \Big) \cdot \int_s^t \tilde{W}_{s,r} \otimes \tilde{W}_{s,r} \otimes dW_r \bigg]
		\\
		\nonumber
		&+ \tfrac{1}{2} \tilde{\bE}\hat{\bE} \bigg[ \partial_{12} f\big( X_s, \cL_s^X \big)\big(\tilde{X}_s, \hat{X}_s \big) \otimes \Big( f\big( \tilde{X}_s, \cL_s^X \big) \otimes f \big( \hat{X}_s, \cL_s^X \big) \Big)  \cdot \int_s^t \tilde{W}_{s,r} \otimes \hat{W}_{s,r} \otimes dW_r \bigg]
		\\
		\nonumber
		&+ \partial_0 f\big( X_s, \cL_s^X \big) \otimes \partial_0 f \big( X_s, \cL_s^X \big) \otimes f\big( X_s, \cL_s^X \big) \cdot \int_s^t \int_s^r \int_s^u dW_{v} \otimes dW_u \otimes dW_r 
		\\
		\nonumber
		&+ \tilde{\bE}\bigg[ \partial_0 f\big( X_s, \cL_s^X \big) \otimes \partial_1 f \big( X_s, \cL_s^X \big) \big( \tilde{X}_s \big) \otimes f\big( \tilde{X}_s, \cL_s^X \big) \cdot \int_s^t \int_s^r \int_s^u d \tilde{W}_{v} \otimes dW_u \otimes dW_r \bigg]
		\\
		\nonumber
		&+ \tilde{\bE}\bigg[ \partial_1 f\big( X_s, \cL_s^X \big)\big( \tilde{X}_s \big) \otimes \partial_0 f \big( \tilde{X}_s, \cL_s^X \big) \otimes f\big( \tilde{X}_s, \cL_s^X \big) \cdot \int_s^t \int_s^r \int_s^u d\tilde{W}_{v} \otimes d\tilde{W}_u \otimes dW_r \bigg]
		\\
		\nonumber
		&+ \tilde{\bE}\hat{\bE}\bigg[ \partial_1 f\big( X_s, \cL_s^X \big)\big( \tilde{X}_s \big) \otimes \partial_1 f \big( \tilde{X}_s, \cL_s^X \big) \big( \hat{X}_s \big) \otimes f\big( \hat{X}_s, \cL_s^X \big) \cdot \int_s^t \int_s^r \int_s^u d \hat{W}_{v} \otimes d\tilde{W}_u \otimes dW_r \bigg]
		\\
		\label{eq:PRP-expans1}
		&+ O\big( |t-s|^{4 \alpha} \big). 
	\end{align}
	As before, Taylor expanding the functions $(x, \mu) \mapsto \partial_0 f( x, \mu ) \otimes f(x, \mu)$ and $(x, \mu, \tilde{x}) \mapsto \partial_1 f(x, \mu)(\tilde{x}) \otimes f(\tilde{x}, \mu )$, we obtain that
	\begin{align}
		\nonumber
		\partial_0 &f\big( X_{\cdot}, \cL_{\cdot}^X \big) \otimes f\big( X_{\cdot}, \cL_{\cdot}^X \big)_{s, t} 
		\\
		\nonumber
		=& 
		\Big( \partial_0 f\big( X_{s}, \cL_{s}^X \big) \otimes \partial_0 f\big( X_{s}, \cL_{s}^X \big) \otimes f\big( X_{s}, \cL_{s}^X \big) + \partial_{00} f\big( X_{s}, \cL_{s}^X \big) \otimes f\big( X_{s}, \cL_{s}^X \big)^{\otimes 2} \Big) \cdot W_{s, t}
		\\
		\nonumber
		&+ \tilde{\bE}\bigg[ \Big( \partial_0 f\big( X_{s}, \cL_{s}^X \big) \otimes \partial_1 f\big( X_{s}, \cL_{s}^X \big)\big( \tilde{X}_s\big) \otimes f\big( \tilde{X}_{s}, \cL_{s}^X \big) 
		\\
		\label{eq:PRP-expans2}
		&\qquad + \partial_{01} f\big( X_{s}, \cL_{s}^X \big)\big( \tilde{X}_s\big) \otimes f\big( X_{s}, \cL_{s}^X \big) \otimes f\big( \tilde{X}_{s}, \cL_{s}^X \big) \Big) \cdot \tilde{W}_{s, t} \bigg]
		+
		O\Big( |t-s|^{2\alpha} \Big)
		\\
		\nonumber
		\partial_1 &f\big( X_{\cdot}, \cL_{\cdot}^X \big)\big( \tilde{X}_{\cdot} \big) \otimes f\big( \tilde{X}_{\cdot}, \cL_{\cdot}^X \big)_{s, t} 
		\\
		\nonumber
		&= \partial_{10} f\big( X_s, \cL_s^X \big)\big( \tilde{X}_s \big) \otimes f\big( \tilde{X}_s, \cL_s^X \big) \otimes f\big( X_s, \cL_s^X \big) \cdot W_{s, t}
		\\
		\nonumber
		&+\Big( \partial_1 f\big( X_{s}, \cL_{s}^X \big)\big( \tilde{X}_s\big) \otimes \partial_0 f\big( \tilde{X}_{s}, \cL_{s}^X \big) \otimes f\big( \tilde{X}_{s}, \cL_{s}^X \big) + \partial_{11} f\big( X_{s}, \cL_{s}^X \big) \big( \tilde{X}_s\big) \otimes f\big( \tilde{X}_{s}, \cL_{s}^X \big)^{\otimes 2} \Big) \cdot \tilde{W}_{s, t}
		\\
		\nonumber
		&+ \hat{\bE}\bigg[ \Big( \partial_1 f\big( X_{s}, \cL_{s}^X \big)\big( \tilde{X}_s\big) \otimes \partial_1 f\big( \tilde{X}_{s}, \cL_{s}^X \big)\big( \hat{X}_s\big) \otimes f\big( \hat{X}_{s}, \cL_{s}^X \big) 
		\\
		\label{eq:PRP-expans3}
		&\qquad + \partial_{12} f\big( X_{s}, \cL_{s}^X \big)\big( \tilde{X}_s, \hat{X}_s \big) \otimes f\big( \tilde{X}_{s}, \cL_{s}^X \big) \otimes f\big( \hat{X}_{s}, \cL_{s}^X \big) \Big) \cdot \hat{W}_{s, t} \bigg]
		+
		O\Big( |t-s|^{2\alpha} \Big)
	\end{align}
	\begin{remark}
		By studying Equation \eqref{eq:PRP-expans1}, we can draw an impression of what the increments of the probabilistic rough path should look like. Further, we can see in Equation \eqref{eq:PRP-expans2} a sub-jet with the same expansion. The challenge which this paper addresses is that the jet expansion for \eqref{eq:PRP-expans3} is described by the increments of
		\begin{equation*}
			\Big( W_{s, t}, \tilde{W}_{s, t}, \hat{W}_{s, t} \Big)
		\end{equation*}
		The contribution of this paper is exploring how to index expansions of the form \eqref{eq:PRP-expans1}, describing the modules on which expansions of the form
		\begin{equation*}
			\bX_t = \bigg( X_t, f\big( X_t, \cL_t^X \big), \partial_0 f\big( X_t, \cL_t^X \big) \otimes f\big( X_t, \cL_t^X \big), \partial_1 f\big( X_t, \cL_t^X \big) \otimes f\big( \tilde{X}_t, \cL_t^X \big), ... \bigg)
		\end{equation*}
		take their values, defining the \emph{coupled coproduct} $\Delta$ on these modules that describes expressions of the form
		\begin{equation*}
			\Big\langle \bX_t, a \Big\rangle = \Big\langle \Delta\big[ \bX_s \big], \Pi_a^{s, t} \Big\rangle + O\Big( |t-s|^{\alpha(4-|a|)} \Big)
		\end{equation*}
		and concluding with the algebraic properties of $\Delta$. 
	\end{remark}
	
	\section{The coupled tensor algebra and Lions words}
	\label{section:ProbabilisticRoughPaths}
	
	Partition sequences and the way in which they couple provide a different kind of polynomial expansion that is more closely aligned with the Lions-Taylor expansions developed in Section \ref{section:TaylorExpansions}. The goal of this section is to document how these couplings create a new theory on which we will build our rough path for mean-field equations. 

	\subsection{Lions words and the shuffle product}
	\label{subsec:Lionswords}
	
	Our first step is to introduce a generalisation of the concept of Lyndon words (for a reference, we refer the reader to \cite{Melancon1989Lyndon}) where we additionally assign a partition structure:
	\begin{definition}
		\label{definition:Lions-words}
		Let $d\in \bN$, let $\cA = \{1, ..., d\}$ and let $M(\cA)$ be the free monoid generated by $\cA$;
		\begin{equation*}
			M(\cA)= \Big\{ w:=(w_1, ..., w_n): w_i \in \cA, n \in \bN_0 \Big\}. 
		\end{equation*}
		Let $|\cdot|: M(\cA) \to \bN_0$ denote length, $\big| (w_1, ..., w_n) \big| = n$.		
		
		Let $I$ be an index set. We denote the set of \emph{tagged Lions word} by
		\begin{equation*}
			\scW_{0, d}[I]:= \Big\{ (w, a): w \in M(\cA), a \in A_{|w|}[I] \Big\} 
			\quad\mbox{and the empty Lions word by}\quad
			\rId=(\emptyset, \emptyset). 
		\end{equation*}
		The set $\scW_{0, d}[I]$ is a poset with partial ordering
		\begin{align*}
			(w, a) \subseteq (w', a') \quad \iff w = w' \quad \mbox{and}\quad a \subseteq a'. 
		\end{align*}
		Finally, we define $\fm: \scW_{0, d}[I] \to \bN_0$ by $\fm\big[ (w, a) \big] = m[a]$. 
	\end{definition}
	
	Let $(\cR, +, \centerdot)$ be a ring. Motivated by the shuffle algebra (see \cite{reutenauer2003free}) and the way it captures ``integration by parts'' in Taylor expansions, we consider a shuffle product on the free $\cR$-module generated by $\scW_{0, d}[I]$ which we denote by $\spn_\cR\big( \scW_{0, d}[I] \big)$:
	\begin{definition}
		\label{definition:shuffleproduct1}
		Let $d\in \bN$ and let $I$ be an index set. We define the operator 
		\begin{equation*}
			\tilde{\shuffle}: \spn_\cR\big( \scW_{0, d}[I] \big) \times \spn_\cR\big( \scW_{0, d}[I] \big) \to \spn_\cR\big( \scW_{0, d}[I] \big)
		\end{equation*}
		to be the bilinear operator that satisfes that for any $W^1=(w^1, a^1)$ and $W^2=(w^2, a^2)$ we have
		\begin{equation*}
			W^1 \tilde{\shuffle} W^2 = \sum_{\sigma \in \Shuf\big( |W^1|, |W^2| \big) } \sigma\big(W^1, W^2 \big)
		\end{equation*}
		where $\Shuf(k, n)$ is the collection of $(k, n)$-riffle shuffles, we define  
		\begin{align}
			\label{eq:definition:shuffleproduct1}
			&\sigma\big(W^1, W^2 \big) := \Big( \sigma( w^1 \cdot w^2), \big\llbracket \sigma(a^1 \sqcup_I a^2) \big\rrbracket_I \Big)
		\end{align}
		and for $(a^1, a^2) \in A_k[I] \times A_n[I]$,
		\begin{align*}
			&a^1 \sqcup_I a^2 : = \big( b_i \big)_{i=1, ..., k+n} 
			\\
			&\mbox{where}\quad
			b_i := 
			\begin{cases}
				a^1_i \quad&\quad \mbox{when $i \in \{1, ..., k\}$;}
				\\
				a_i^2 \quad&\quad \mbox{when $i \in \{k+1, ..., k+n\}$ and if $a_{i-k}^2 \in I$;}
				\\
				a_i^2 + m[a^1] \quad&\quad \mbox{when $i \in \{k+1, ..., k+n\}$ and if $a_{i-k}^2 \in \{1, ..., m[a^2]\}$.}
			\end{cases}
		\end{align*} 
	\end{definition}

	\begin{proposition}
		\label{proposition:Shuffle=Assoc}
		Let $d\in \bN$, let $I$ be an index set and let $(\cR, +, \centerdot)$ be a ring. Then the $\cR$-module
		\begin{equation*}
			\Big( \spn_{\cR}(\scW_{0, d}[I], \tilde{\shuffle} , \rId \Big) 
			\quad \mbox{is \emph{associative} and \emph{unitary}.}
		\end{equation*}
		Further, if $\cR$ is a commutative ring then $\big( \spn_{\cR}(\scW_{0, d}[I], \tilde{\shuffle} , \rId \big) $ is \emph{commutative}.
	\end{proposition}

	\begin{proof}
		Using the associativity of the classical $\shuffle$ product, we have that for any $w^1, w^2, w^3 \in M(\cA)$ that
		\begin{equation*}
			\sum_{\substack{\sigma_1 \in \Shuf(|w^1|+|w^2|, |w^3|) \\ \sigma_2 \in \Shuf(|w^1|, |w^2|)}} \sigma_1\Big( \sigma_2\big( w^1 \cdot w^2 \big) \cdot w^3 \Big) = \sum_{\substack{\tilde{\sigma}_1 \in \Shuf(|w^1|, |w^2|+|w^3|) \\ \tilde{\sigma}_2 \in \Shuf(|w^2|, |w^3|)}} \tilde{\sigma}_1\Big( w^1 \cdot \tilde{\sigma}_2\big( w^2 \cdot w^3 \big) \Big). 
		\end{equation*}
		Therefore, for any $(w^1, a^1), (w^2,  a^2), (w^3, a^3) \in \scW_{0, d}[I]$, we have
		\begin{align*}
			\Big( (w^1&, a^1) \tilde{\shuffle} (w^2, a^2) \Big) \tilde{\shuffle} (w^3, a^3) 
			\\
			&= \sum_{\substack{\sigma_1 \in \Shuf(|w^1|+|w^2|, |w^3|) \\ \sigma_2 \in \Shuf(|w^1|, |w^2|)}}  \Big( \sigma_1\big( \sigma_2(w^1\cdot w^2) \cdot w^3 \big), \big\llbracket \sigma_1\big( \llbracket \sigma_2(a^1 \sqcup_I a^2) \rrbracket_I \sqcup_I a^3 \big) \big\rrbracket_I \Big)
			\\
			&= \sum_{\substack{\sigma_1 \in \Shuf(|w^1|+|w^2|, |w^3|) \\ \sigma_2 \in \Shuf(|w^1|, |w^2|)}}  \Big( \sigma_1\big( \sigma_2(w^1\cdot w^2) \cdot w^3 \big), \big\llbracket \sigma_1\big( \sigma_2(a^1 \sqcup_I a^2) \sqcup_I a^3  \big) \big\rrbracket_I \Big)
			\\
			&=\sum_{\substack{\tilde{\sigma}_1 \in \Shuf(|w^1|, |w^2|+|w^3|) \\ \tilde{\sigma}_2 \in \Shuf(|w^2|, |w^3|)}} \Big( \tilde{\sigma}_1\big( w^1 \cdot \tilde{\sigma}_2(w^2\cdot w^3) \big), \big\llbracket \tilde{\sigma}_1 \big( a^1 \sqcup_I \tilde{\sigma}_2(a^2 \sqcup_I a^3) \big) \big\rrbracket_I \Big)
			\\
			&=\sum_{\substack{\tilde{\sigma}_1 \in \Shuf(|w^1|, |w^2|+|w^3|) \\ \tilde{\sigma}_2 \in \Shuf(|w^2|, |w^3|)}} \Big( \tilde{\sigma}_1\big( w^1 \cdot \tilde{\sigma}_2(w^2\cdot w^3) \big), \big\llbracket \tilde{\sigma}_1 \big( a^1 \sqcup_I \llbracket \tilde{\sigma}_2(a^2 \sqcup_I a^3) \rrbracket \big) \big\rrbracket_I \Big)
			\\
			&=(w^1, a^1) \tilde{\shuffle} \Big( (w^2, a^2) \tilde{\shuffle} (w^3, a^3) \Big). 
		\end{align*}
		Similarly, 
		\begin{align*}
			(w, a) \tilde{\shuffle} \rId = \Big( w \cdot \emptyset, \llbracket a \sqcup_I \emptyset \rrbracket \Big) = \Big( \emptyset \cdot w, \llbracket \emptyset \sqcup_I a \rrbracket \Big) = (w, a)
		\end{align*}
		so that $\rId$ is unital. 
		
		Finally, using the commutativity of the classical $\shuffle$ product, we have that for any $w^1, w^2 \in M(\cA)$ that
		\begin{equation*}
			\sum_{\sigma \in \Shuf(|w^1|,|w^2|)} \sigma(w^1 \cdot w^2) = \sum_{\tilde{\sigma} \in \Shuf(|w^2|,|w^1|)} \tilde{\sigma}(w^2 \cdot w^1). 
		\end{equation*}
		so that when $\cR$ is commutative we obtain that
		\begin{align*}
			(w^1, a^1) \tilde{\shuffle} (w^2, a^2) 
			=& \sum_{\sigma \in \Shuf(|w^1|,|w^2|)}  \Big( \sigma( w^1\cdot w^2) , \big\llbracket \sigma ( a^1 \sqcup_I a^2) \big\rrbracket_I \Big)
			\\
			=& \sum_{\tilde{\sigma} \in \Shuf(|w^2|,|w^1|)}  \Big( \tilde{\sigma}( w^2\cdot w^1) , \big\llbracket \sigma ( a^2 \sqcup_I a^1) \big\rrbracket_I \Big) = (w^2, a^2)\tilde{\shuffle} (w^1, a^1). 
		\end{align*}
	\end{proof}
	
	\subsection{Couplings for Lions words and coupled coproduct}
	\label{subsubsec:CoupledTensorWords}
	
	We need a sense of product between two Lions words that is richer than the cartesian product in that it additionally needs to capture all of the ways in which the free variables can be coupled together. Building on Equation \eqref{eq:Tag-Part-Seq[a]}, for any $W=(w, a) \in \scW_{0, d}[I]$ we define
	\begin{equation}
		\label{eq:Tag-Part-Seq[a]2}
		\scW_{0, d}[W]: = \scW_{0, d}\Big[ I \cup \big\{ a^{-1}[j]: j \in \{1, ..., m[a] \} \big\} \Big].
	\end{equation}
	Following on from Remark \ref{remark:couplings1}, we introduce the next definition:
	\begin{definition}
		\label{definition:coupled_pair-W}
		Let $d\in \bN$ and let $I$ be an index set. We define the set
		\begin{align*}
			\scW_{0, d} \tilde{\times} \scW_{0, d}[I]
			:=& 
			\bigsqcup_{\hat{W} \in \scW_{0, d}[I]} \scW_{0, d}[\hat{W}]
			=
			\Big\{ (\bar{W}, \hat{W}): \hat{W} \in \scW_{0, d}[I] \quad \mbox{and}\quad \bar{W} \in \scW_{0, d}[\hat{W}] \Big\}. 
		\end{align*}
		The set $\scW_{0, d} \tilde{\times} \scW_{0, d}[I]$ is a poset with partial ordering $\subseteq$
		\begin{equation*}
			\big( (\bar{w}, \bar{a}), (\hat{w}, \hat{a}) \big) \subseteq \big( (\bar{w}', \bar{a}'), (\hat{w}', \hat{a}') \big)
			\quad \iff\quad
			(\bar{w}, \hat{w}) = (\bar{w}', \hat{w}') 
			\quad \mbox{and both}\quad
			\bar{a} \subseteq \bar{a}' , \hat{a} \subseteq \hat{a}'. 
		\end{equation*}
		We define $\fm: \scW_{0, d} \tilde{\times}\scW_{0, d}[I] \to \bN_0$ by $\fm\big[ \big( (\bar{w}, \bar{a}), (\hat{w}, \hat{a}) \big) \big] = m[\hat{a}] + m[\bar{a}]$.  
		
		Further, we inductively define
		\begin{align}
			\nonumber
			&\Big( \scalebox{1.5}{$\tilde{\times}$}_{1}^n \scW_{0, d}\Big)[I] := \scW_{0, d} \tilde{\times} \Big( \scalebox{1.5}{$\tilde{\times}$}_{1}^{n-1} \scW_{0, d} \Big)[I]
			\\
			\label{eq:iterative-coupling}
			&=\Big\{ \big( \hat{W}^n, ..., \hat{W}^1\big): \hat{W}^n \in \scW_{0, d}[\hat{W}^{n-1}], \hat{W}^{n-1} \in \scW_{0, d}[\hat{W}^{n-2}], ..., \hat{W}^{2} \in \scW_{0, d}[\hat{W}^1],  \hat{W}^1 \in \scW_{0, d}[I] \Big\}
		\end{align}
		and extend the partial ordering $\subseteq$ and $\fm$ appropriately. 
	\end{definition}
	
	\begin{lemma}
		\label{lemma:associativity-coupledproduct}
		Let $d \in \bN$ and let $I$ be an index set. Then for any choice of $k, n \in \bN$, there is an isomorphism between the posets
		\begin{equation*}
			\Big(\scalebox{1.5}{$\tilde{\times}$}_{1}^n \scW_{0, d} \Big) \tilde{\times} \Big( \scalebox{1.5}{$\tilde{\times}$}_{1}^k \scW_{0, d} \Big)[I]:= \bigsqcup_{\big( \hat{W}^k, ..., \hat{W}^1 \big) \in \scalebox{1.5}{$\tilde{\times}$}_{1}^k \scW_{0, d}[I] } \Big( \scalebox{1.5}{$\tilde{\times}$}_{1}^n \scW_{0, d} \Big)[\hat{W}^k]
			\quad \mbox{and}\quad
			\Big( \scalebox{1.5}{$\tilde{\times}$}_{1}^{k+n} \scW_{0, d} \Big) [I]. 
		\end{equation*}
		What is more, the isomorphism respects the partial ordering and $\fm$. 
	\end{lemma}
	
	\begin{proof}
		The set
		\begin{align*}
			&\Big(\scalebox{1.5}{$\tilde{\times}$}_{1}^n \scW_{0, d} \Big) \tilde{\times} \Big( \scalebox{1.5}{$\tilde{\times}$}_{1}^k \scW_{0, d} \Big)[I]
			\\
			&:= \bigg\{ \big( (\bar{W}^n, ..., \bar{W}^1), (\hat{W}^k, ..., \hat{W}^1) \big): (\hat{W}^k, ..., \hat{W}^1) \in \Big( \scalebox{1.5}{$\tilde{\times}$}_{1}^k \scW_{0, d} \Big)[I], (\bar{W}^n, ..., \bar{W}^1) \in \Big( \scalebox{1.5}{$\tilde{\times}$}_{1}^k \scW_{0, d} \Big)[\hat{W}^k] \bigg\}
			\\
			&= \bigg\{ \big( (\bar{W}^n, ..., \bar{W}^1), (\hat{W}^k, ..., \hat{W}^1) \big): \bar{W}^n \in \scW_{0,d}[\bar{W}^{n-1}], ..., \bar{W}^1 \in \scW_{0, d}[\hat{W}^k], 
			\\
			&\hspace{175pt} \hat{W}^k\in \scW_{0, d}[\hat{W}^{k-1}], ..., \hat{W}^1 \in \scW_{0, d}[I] \bigg\} = \Big(\scalebox{1.5}{$\tilde{\times}$}_{1}^{k+n} \scW_{0, d} \Big)[I]
		\end{align*}
		so the set isomorphism is clear. The partial ordering $\subseteq$ and equivalence of $\fm$ also follow. 
	\end{proof}
	
	\begin{remark}
		\label{remark:Whatis-CoupledTensor}
		Given a ring $(\cR, +, \centerdot)$ and two $\cR$-modules $U$ and $V$ with bases $B_U$ and $B_V$ respectively, we can equivalently write
		\begin{equation*}
			U = \spn_{\cR}\big( B_U \big) = \bigoplus_{u\in B_U} \cR 
			\quad \mbox{and}\quad
			V = \spn_{\cR}\big( B_V \big) = \bigoplus_{v\in B_V} \cR 
		\end{equation*}
		One interpretation of the tensor $\cR$-module $U \otimes V$ is as
		\begin{equation*}
			U \otimes V = \spn_{\cR}\big( B_U \times B_V \big) = \bigoplus_{(u, v) \in B_U \times B_V} \cR = \bigoplus_{u \in B_U} \bigoplus_{v \in B_V} \cR
		\end{equation*}
		where $B_U \times B_V$ is the Cartesian product of the two basis sets. 
		
		In contrast to this, we \emph{intuitively} define the coupled tensor product as
		\begin{equation}
			\label{eq:remark:Whatis-CoupledTensor}
			\spn_{\cR} \big( \scW_{0, d}[I] \big) \tilde{\otimes} \spn_{\cR} \big( \scW_{0, d}[I] \big) := \spn_{\cR} \big( \scW_{0, d} \tilde{\times} \scW_{0, d}[I] \big) = \bigoplus_{\hat{W} \in \scW_{0, d}[I]} \bigoplus_{\bar{W} \in \scW_{0, d}[\hat{W}]} \cR. 
		\end{equation}
	\end{remark}
	
	\begin{remark}
		\label{remark:couplings=partitions}
		Recalling Proposition \ref{proposition:taggedpartition*} and Proposition \ref{proposition:coupling(1)} for the moment, we can see that there is an interpretation of a tagged Lions word $(w, a) \in \scW_{0, d}[I]$ as an element of the free monoid $w \in M(\cA)$ paired with tagged partition $\scP(\scN)[I]$ (where $\scN$ is a set containing $|w|$ elements). 
		
		Similarly, a coupled pair of tagged Lions words $\big( (\bar{w},\bar{a}), (\hat{w}, \hat{a}) \big)$ can be identified with as a pair of elements of the free monoid $\bar{w}, \hat{w} \in M(\cA)$, a tagged partition from both $\scP(\bar{\scN})[I]$ and $\scP(\hat{\scN})[I]$ (where $\bar{\scN}$ and $\hat{\scN}$ are disjoint sets containing $|\bar{w}|$ and $|\hat{w}|$ elements respectively) and a tagged partition $\scP(\bar{\scN} \cup \hat{\scN})[I]$ that agrees with the two partitions when restricted to the subsets $\hat{\scN}$ and $\hat{\scN}$ respectively. 
	\end{remark}
	
	\subsubsection{Coupled coproduct}
	
	The purpose of introducing Lions words and couplings between them is to create index sets for a module on which we will introduce a \emph{coupled coproduct}. However, first we consider the module on which this operator will be defined. 
	
	\begin{definition}
		\label{definition:deconcatenation-coproduct}
		Let $I$ be an index set, let $n\in \bN$ and let $W=(w,a) \in \scW_{0, d}[I]$ such that $|w| = n$. 
			
		We define $\Delta: \spn_{\cR}\big( \scW_{0, d}[I] \big) \to \spn_{\cR}\big( \scW_{0, d} \tilde{\times} \scW_{0, d}[I] \big)$ to be the linear operator that satisfies
		\begin{align}
			\nonumber
			\Delta\Big[ (w, a) \Big] =& \big( (w, a), \rId) + \big( \rId, (w, a) \big)
			\\
			\label{eq:definition:deconcatenation-coproduct}
			&+
			 \sum_{k=1}^{n-1} \bigg( \Big( (w_{k+1}, ..., w_{n}), \big\llbracket (a_{k+1}, ..., a_n) \big\rrbracket_{I \cup m_k\{a\}} \Big), \big( (w_1, ..., w_k), (a_1, ..., a_k) \big) \bigg)
		\end{align}
		where we denote
		\begin{equation*}
			m_k\{a\}:=\Big\{ a^{-1}[j]: j=1, ..., m\big[(a_1, ...., a_k) \big] \Big\}. 
		\end{equation*}
		Further, let
		\begin{equation*}
			\epsilon: \spn_{\cR}\big( \scW_{0, d}[I]\big) \to \cR 
			\quad \mbox{by}\quad
			\epsilon \big[ X\big] = \big\langle X, \rId \big\rangle. 
		\end{equation*}
	\end{definition}
	The operator defined in Definition \ref{definition:deconcatenation-coproduct} shares a resemblance with the deconcatenation coproduct (see for instance \cite{reutenauer2003free}) which we shall be exploiting at length here. Our first goal is to prove that the operator $\Delta$ is co-associative and co-unital. However, to do this we first need to give a meaning to the coupled tensor product of operators. 
	
	With this in mind, first note that for any choice of index set $I$ the operation $\fm$ allows us to partition the elements of $\scW_{0, d}[I]$ into the disjoint sets
	\begin{equation*}
		\scW_{0, d}^{\{k\}}[I]:= \Big\{ W \in \scW_{0, d}[I]: \fm[W] = k \Big\}
		\quad \mbox{so that}\quad
		\scW_{0, d}[I] = \bigcup_{k \in \bN_0} \scW_{0, d}^{\{k\}}[I]. 
	\end{equation*}
	Similarly, the operator $\fm$ allows us to partition the set of coupled $n$-tuples into the disjoint sets
	\begin{align*}
		&\Big(\scalebox{1.5}{$\tilde{\times}$}_{1}^n \scW_{0, d}\Big)^{\{k\}}[I]:= \bigg\{ (\hat{W}^n,..., \hat{W}^1) \in \Big(\scalebox{1.5}{$\tilde{\times}$}_{1}^n \scW_{0, d} \Big)[I]: \fm\big[ (\hat{W}^n, ..., \hat{W}^1) \big] = k \bigg\}. 
	\end{align*}
	For any $n_1, n_2 \in \bN$, we say that a linear operators 
	\begin{align}
		\nonumber
		&A: \spn_{\cR} \Big(\scalebox{1.5}{$\tilde{\times}$}_{1}^{n_1} \scW_{0, d} \Big)[I] \to \spn_{\cR} \Big(\scalebox{1.5}{$\tilde{\times}$}_{1}^{n_2} \scW_{0, d} \Big)[I]
		\quad \mbox{s.t.}
		\\
		\label{eq:lemma:Delta-diagonal}
		\forall k \in \bN_0, \quad &A\Big|_{\spn_{\cR}\big(\scalebox{1.5}{$\tilde{\times}$}_{1}^{n_1} \scW_{0, d} \big)^{\{k\}}[I] } \subseteq \spn_{\cR}\Big(\scalebox{1.5}{$\tilde{\times}$}_{1}^{n_2} \scW_{0, d} \Big)^{\{k\}}[I]
	\end{align}
	are \emph{$\fm$-diagonal}. 
	
	\begin{lemma}
		\label{lemma:Delta-diagonal}
		For every index set $I$, the operator $\Delta$ is $\fm$-diagonal. 
	\end{lemma}
	
	\begin{proof}
		For any $W=(w, a)  \in \scW_{0, d}[I]$, we have $a \in A_{|w|}[I]$ and $\fm[W] = m[a]$. As $\fm[\rId] = 0$, we conclude that $\fm\big[ (W, \rId) \big] = \fm\big[ (\rId, W) \big] = \fm[W]$. Similarly, for any $k \in \{1, ..., |w|\}$
		\begin{equation*}
			(a_1, ..., a_k) \in A_k[I]
			\quad \mbox{and}\quad
			\big\llbracket (a_{k+1}, ..., a_{|w|}) \big\rrbracket_{I \cup m_k\{a\}} \in A_{|w|-k}\big[ I \cup m_l\{a\} \big]
		\end{equation*}
		so that
		\begin{align*}
			\fm \Big[& \Big( \big( (w_{k+1}, ..., w_n), \big\llbracket (a_{k+1}, ..., a_n) \big\rrbracket_{I \cup m_k\{a\}} \big), \big((w_1, ..., w_k), (a_1, ..., a_k) \big) \Big) \Big] 
			\\
			&= m\Big[ \big\llbracket (a_{k+1}, ..., a_n) \big\rrbracket_{I \cup m_k\{a\}} \Big] + m\big[ (a_1, ..., a_k) \big]=m[a]. 
		\end{align*}
		Therefore $\Delta$ as defined in Equation \eqref{eq:definition:deconcatenation-coproduct} satisfies Equation \eqref{eq:lemma:Delta-diagonal} with $n_1 = 1$ and $n_2 = 2$. 
	\end{proof}
	This $\fm$-diagonality will give us a sense in which we can define the coupled product of linear operators on modules indexed by the tagged Lions words:
	\begin{definition}
		\label{definition:coupledTP-operators}
		Let $I$ be an index set and let
		\begin{align*}
			A: \spn_{\cR}\Big(\scalebox{1.5}{$\tilde{\times}$}_{1}^{k_1} \scW_{0, d} \Big)[I] \to \spn_{\cR}\Big(\scalebox{1.5}{$\tilde{\times}$}_{1}^{k_2} \scW_{0, d} \Big)[I]
		\end{align*}
		be an $\fm$-diagonal linear operator. Next, for every index set $J$ such that $I \subseteq J$, let
		\begin{equation*}
			B_J: \spn_{\cR}\Big(\scalebox{1.5}{$\tilde{\times}$}_{1}^{n_1} \scW_{0, d} \Big)[J] \to \spn_{\cR}\Big(\scalebox{1.5}{$\tilde{\times}$}_{1}^{n_2} \scW_{0, d} \Big)[J]
		\end{equation*}
		be an $\fm$-diagonal linear operator. Then we define the linear operator
		\begin{equation*}
			B_I \tilde{\otimes} A: \spn_{\cR}\Big( \scalebox{1.5}{$\tilde{\times}$}_{1}^{k_1+n_1} \scW_{0, d} \Big)[I]
			 \to 
			 \spn_{\cR}\Big(\scalebox{1.5}{$\tilde{\times}$}_{1}^{k_2+n_2} \scW_{0, d} \Big)[I]
		\end{equation*}
		for every $(\bar{W}^{\{n_1\}}, \hat{W}^{\{k_1\}}) \in \Big( \scalebox{1.5}{$\tilde{\times}$}_{1}^{n_1} \scW_{0, d} \Big) \tilde{\times} \Big( \scalebox{1.5}{$\tilde{\times}$}_{1}^{k_1} \scW_{0, d} \Big)[I]$ by
		\begin{align*}
			&B_I \tilde{\otimes} A \Big[ (\bar{W}^{\{n_1\}}, \hat{W}^{\{k_1\}} ) \Big] 
			\\
			&= \sum_{\substack{\bar{W}^{\{n_2\}} \in \big(\scalebox{1.5}{$\tilde{\times}$}_{1}^{n_2} \scW_{0, d} \big)[\hat{W}^{\{k_2\}}]: \\ \fm[\bar{W}^{\{n_2\}}] = \fm[\bar{W}^{\{n_1\}}] ; \\ \bar{W}^{\{k_2\}} \in \big(\scalebox{1.5}{$\tilde{\times}$}_{1}^{k_2} \scW_{0, d} \big)[I]: \\ \fm[\bar{W}^{\{k_2\}}] = \fm[\bar{W}^{\{k_1\}}] }} \Big\langle B_{\hat{W}'} \big[ \bar{W}^{\{n_1\}} \big], \bar{W}^{\{n_2\}} \Big\rangle \centerdot \Big\langle A\big[ \hat{W}^{\{k_1\}} \big], \hat{W}^{\{k_2\}} \Big\rangle (\bar{W}^{\{n_2\}}, \hat{W}^{\{k_2\}}). 
		\end{align*}
	\end{definition}
	With this definition under our belt, we are now able to define the coupled tensor product between the identity operator (which is clearly $\fm$-diagonal) and $\Delta$ (which is $\fm$-diagonal by Lemma \ref{lemma:Delta-diagonal}). 
	
	\begin{proposition}
		\label{proposition:M-coassociativity*}
		Let $d\in \bN$, let $I$ be an index set and let $(\cR, +, \centerdot)$ be a ring. Let 
		\begin{align*}
			\fI:& \spn_{\cR}\big( \scW_{0, d}[I] \big) \to \spn_{\cR}\big( \scW_{0, d}[I] \big)
		\end{align*}
		be the identity operator and let $\Delta$ be the linear operator defined in Definition \ref{definition:deconcatenation-coproduct}. Then 
		\begin{equation}
			\label{eq:proposition:M-coassociativity*-1}
			\Big( \Delta \tilde{\otimes} \fI \Big) \circ \Delta = \Big( \fI \tilde{\otimes} \Delta \Big) \circ \Delta
			\quad \mbox{and}\quad
			\centerdot \circ \Big( \epsilon \tilde{\otimes} \fI \Big) \circ \Delta = \centerdot \circ \Big( \fI \tilde{\otimes} \epsilon \Big) \circ \Delta = \fI. 
		\end{equation}
	\end{proposition}
	
	\iftoggle{figure}{In particular, this tells us that the coupled coproduct $\Delta$ is \emph{coupled-coassociative}, or equivalently satisfies the two commutative diagrams described in Figure \ref{fig:coupled-associativity}
	\begin{figure}[htb]
		\centering
		\begin{tikzpicture}
			\node at (0,0) {$\spn_{\cR}(\scW_{0,d} \tilde{\times} \scW_{0,d}[I])$};
			\node at (6,3) {$\spn_{\cR}(\scW_{0,d} \tilde{\times} \scW_{0,d}[I])$};
			\node at (0,3) {$\spn_{\cR}(\scW_{0,d}[I])$};
			\node at (6,0) {$\spn_{\cR}\Big(\scalebox{1.5}{$\tilde{\times}$}_{1}^3 \scW_{0, d}[I] \Big)$};
			\draw[-to](2.25,0) to (4,0);
			\draw[-to](6,2.5) to (6,0.5);
			\draw[-to](0,2.5) to (0,0.5);
			\draw[-to](1.5,3) to (3.75,3);
			\node at (0.5,1.5) {$\Delta$};
			\node at (3,2.5) {$\Delta$};
			\node at (3,0.5) {$\Delta \tilde{\otimes} \fI$};
			\node at (5.5,1.5) {$\fI \tilde{\otimes} \Delta$};
		\end{tikzpicture}
		\begin{tikzpicture}
			\node at (0,0) {$\spn_{\cR}(\scW_{0,d}[I])$};
			\node at (3,1) {$\spn_{\cR}(\scW_{0,d} \tilde{\times} \scW_{0,d}[I])$};
			\node at (9,1) {$\spn_{\cR}(\scW_{0,d}[I]) \otimes \cR$};
			\node at (3,-1) {$\spn_{\cR}(\scW_{0,d} \tilde{\times} \scW_{0,d}[I])$};
			\node at (9,-1) {$\cR \otimes \spn_{\cR}( \scW_{0,d}[I])$};
			\node at (12,0) {$\spn_{\cR}(\scW_{0,d}[I])$};
			\draw[-to](0,-0.25) to (0.9,-1);
			\draw[-to](0, 0.25) to (0.9, 1);
			\draw[-to](5.25, -1) to (7, -1);
			\draw[-to](5.25, 1) to (7, 1);
			\draw[-to] (11.1,-1) to (12,-0.25);
			\draw[-to] (11.1,1) to (12, 0.25);
			\node at (0.25,0.75) {$\Delta$};
			\node at (0.25,-0.75) {$\Delta$};
			\node at (6,-1.5) {$\epsilon \tilde{\otimes} \fI$};
			\node at (6,1.5) {$\fI \tilde{\otimes} \epsilon$};
			\node at (11.75,0.75) {$\centerdot$};
			\node at (11.75,-0.75) {$\centerdot$};
		\end{tikzpicture}
		\caption{Coupled coassociativity}
		\label{fig:coupled-associativity}
	\end{figure}
	}
	
	\begin{proof}
		The identity operator is $\fm$-diagonal, so that we can define
		\begin{align*}
			\Delta \tilde{\otimes} \fI : \spn_{\cR}\Big( \scalebox{1.5}{$\tilde{\times}$}_{1}^{2} \scW_{0, d}[I] \Big) \to \spn_{\cR} \Big( \scalebox{1.5}{$\tilde{\times}$}_{1}^{3} \scW_{0, d}[I] \Big)
			\\
			\fI \tilde{\otimes} \Delta : \spn_{\cR}\Big( \scalebox{1.5}{$\tilde{\times}$}_{1}^{2} \scW_{0, d}[I] \Big) \to \spn_{\cR} \Big( \scalebox{1.5}{$\tilde{\times}$}_{1}^{3} \scW_{0, d}[I] \Big)
		\end{align*}
		according to Definition \ref{definition:coupledTP-operators}. 
		
		For any $W = (w, a)\in \scW_{0, d}[I]$ such that $|W| = n$ we get that
		\begin{align*}
			\big( \fI &\tilde{\otimes} \Delta \big) \circ \Delta \Big[ W \Big] 
			\\
			=& \sum_{k=0}^n \big( \fI \tilde{\otimes} \Delta \big)\bigg[ \Big(\Big( (w_{k+1}, ..., w_n), \big\llbracket(a_{k+1}, ..., a_n) \big\rrbracket_{I \cup m_k\{a\}} \Big), \big( (w_1, ..., w_k), (a_1, ..., a_k) \big) \Big) \bigg]
			\\
			=& \sum_{k=0}^n \sum_{i=0}^k \bigg( \Big( (w_{k+1}, ..., w_n), \big\llbracket(a_{k+1}, ..., a_n) \big\rrbracket_{I \cup m_k\{a\}} \Big), \Big( (w_{i+1}, ..., w_k), \big\llbracket(a_{i+1}, ..., a_k)\big\rrbracket_{I \cup a_i\{a\}} \Big) , 
			\\
			&\qquad \qquad \big( (w_1, ..., w_i), (a_1, ..., a_i) \big) \bigg)
			\\
			=& \sum_{i=0}^n \sum_{k=i}^{n} \bigg( \Big( (w_{k+1}, ..., w_n), \big\llbracket(a_{k+1}, ..., a_n) \big\rrbracket_{I \cup m_k\{a\}} \Big), \Big( (w_{i+1}, ..., w_k), \big\llbracket(a_{i+1}, ..., a_k)\big\rrbracket_{I \cup a_i\{a\}} \Big) , 
			\\
			&\qquad \qquad \big( (w_1, ..., w_i), (a_1, ..., a_i) \big) \bigg) 
			\\
			=& \sum_{i=0}^n \big( \Delta \tilde{\otimes} \fI \big)\bigg[ \Big(\Big( (w_{i+1}, ..., w_n), \big\llbracket(a_{i+1}, ..., a_n) \big\rrbracket_{I \cup m_i\{a\}} \Big), \big( (w_1, ..., w_i), (a_1, ..., a_i) \big) \Big) \bigg]
			\\
			&=\big( \Delta \tilde{\otimes} \fI \big) \circ \Delta \Big[ W \Big] 
		\end{align*}
		and we conclude. Verifying the counit identity is similar. 
	\end{proof}

	\subsubsection{Bialgebra identity on Lions words}
	
	We have seen how the shuffle product (defined as a linear operator)
	\begin{equation*}
		\tilde{\shuffle}: \spn_{\cR}\big( \scW_{0,d}[I] \big) \otimes \spn_{\cR}\big( \scW_{0,d}[I] \big) \to \spn_{\cR}\big( \scW_{0,d}[I] \big)
	\end{equation*}
	and coupled coproduct (also a linear operator) 
	\begin{equation*}
		\Delta: \spn_{\cR}\big( \scW_{0,d}[I] \big) \to \spn_{\cR}\big( \scW_{0,d}[I] \big) \tilde{\otimes} \spn_{\cR}\big( \scW_{0,d}[I] \big)
	\end{equation*}
	are defined individually, but now we want to see how these two operators interact with one another in order to demonstrate a coupled bialgebra structure. 
	
	As a $\cR$-module, we can classically define the tensor product
	\begin{align*}
		\spn_{\cR}&\big( \scW_{0, d} \tilde{\times} \scW_{0, d}[I] \big) \otimes \spn_{\cR}\big( \scW_{0, d} \tilde{\times} \scW_{0, d}[I] \big)
		\\
		&=\spn_{\cR}\Big( \big( \scW_{0, d} \tilde{\times} \scW_{0, d}[I] \big) \times \big( \scW_{0, d} \tilde{\times} \scW_{0, d}[I] \big) \Big). 
	\end{align*}
	In contrast to this, we want to give a meaning to the $\cR$-module
	\begin{align*}
		\spn_{\cR}&\big( \scW_{0, d}[I] \times \scW_{0, d}[I] \big) \tilde{\otimes} \spn_{\cR}\big( \scW_{0, d}[I] \times \scW_{0, d}[I] \big)
		\\
		&\overset{?}{=} \spn_{\cR}\Big( \big( \scW_{0, d}[I] \times \scW_{0, d}[I] \big)\tilde{\times} \big( \scW_{0,d}[I] \times \scW_{0, d}[I] \big) \Big). 
	\end{align*}

	\begin{definition}
		\label{definition:product-couplingsM}
		Let $I$ be index sets and write
		\begin{equation*}
			\big(\scW_{0, d} \times \scW_{0, d}\big) [I]:= \Big\{ (W^1, W^2): W^1 \in \scW_{0, d}[I], W^2 \in \scW_{0, d}[I] \Big\}
		\end{equation*}
		and we define $\fm: (\scW_{0, d}\times \scW_{0, d})[I] \to (\bN_0)^{\times 2}$ by $\fm[W^1, W^2] = \big( \fm[W^1], \fm[W^2] \big)$. 
		
		Next, we define
		\begin{align}
			\label{eq:definition:coupling_pairs}
			&\big( \scW_{0, d} \times \scW_{0, d} \big) \tilde{\times} \big( \scW_{0, d} \times \scW_{0, d} \big)[I]
			:=\bigsqcup_{\substack{ (W^1, W^2) \in \\ (\scW_{0, d} \times \scW_{0, d})[I] }} \big(\scW_{0, d} \times \scW_{0, d} \big)\big[ (W^1, W^2) \big]
		\end{align}
		where
		\begin{align*}
			\big(\scW_{0, d}& \times \scW_{0, d} \big)\big[ \big((w^1, a^1), (w^2, a^2) \big) \big]
			\\
			&:= \big(\scW_{0, d} \times \scW_{0, d}\big)\Big[ I \cup \big\{ (a^1)^{-1}[j]: j \in \{1, ..., m[a^1]\} \big\} \cup \big\{ (a^2)^{-1}[j]: j \in \{1, ..., m[a^2] \} \big\} \Big]. 
		\end{align*}
		We pair this with the operation
		\begin{align*}
			&\fm: \big( \scW_{0, d} \times \scW_{0, d} \big) \tilde{\times} \big( \scW_{0, d} \times \scW_{0, d} \big)[I] \to (\bN_0)^{\times 2}
			\\
			&\fm\big[ \big( (\bar{W}^1, \bar{W}^2), (\hat{W}^1, \hat{W}^2) \big) \big] = \big( \fm[ \bar{W}^1, \hat{W}^1], \fm[\bar{W}^2, \hat{W}^2] \big). 
		\end{align*}
	\end{definition}
	We can write
	\begin{align*}
		\Big(& \scW_{0, d} \tilde{\times} \scW_{0, d}[I] \Big) \times \Big( \scW_{0, d} \tilde{\times} \scW_{0, d}[I] \Big) 
		\\
		&= \Big\{ \big( (\bar{W}^1, \hat{W}^1), (\bar{W}^2, \hat{W}^2) \big): \hat{W}^1 \in \scW_{0, d}[I], \hat{W}^2 \in \scW_{0, d}[I], \quad \bar{W}^1 \in \scW_{0, d}[\hat{W}^1], \bar{W}^2 \in \scW_{0, d}[\hat{W}^2] \Big\}
	\end{align*}
	whereas
	\begin{align*}
		\Big(& \scW_{0, d} \times \scW_{0, d} \Big) \tilde{\times} \Big( \scW_{0, d} \times \scW_{0, d} \Big)[I]
		\\
		&= \Big\{ \big( (\bar{W}^1, \bar{W}^2), (\hat{W}^1, \hat{W}^2) \big) : \hat{W}^1, \hat{W}^2 \in \scW_{0, d}[I], \quad \bar{W}^1, \bar{W}^2 \in \scW_{0, d}\big[ (\hat{W}^1,\bar{W}^2) \big] \Big\}. 
	\end{align*}	
	In order to demonstrate the appropriate bialgebra properties, we need to introduce our own notion of $\mbox{Twist}$ operation that additionally keeps track of the couplings:
	\begin{definition}
		\label{definition:Twist-}
		Let $d\in \bN$ and let $I$ be an index sets. We define 
		\begin{align*}
			\overline{\mbox{Twist}}:& \spn_{\cR}\Big( \big(\scW_{0, d} \tilde{\times} \scW_{0, d} \big)[I] \times \big( \scW_{0,d} \tilde{\times} \scW_{0,d} \big)[I] \Big) 
			\to \spn_{\cR} \Big( \big( \scW_{0,d} \times \scW_{0,d} \big) \tilde{\times} \big( \scW_{0,d} \times \scW_{0, d} \big)[I] \Big)
		\end{align*}
		to be the linear operator that satisfies
		\begin{align*}
			&\overline{\mbox{Twist}}\Big[ \big( (\bar{W}^1, \hat{W}^1), (\bar{W}^2, \hat{W}^2) \big) \Big] = \big( (\bar{W}^1, \bar{W}^2), (\hat{W}^1, \hat{W}^2) \big). 
		\end{align*} 
	\end{definition}
	The twist operation swaps the two middle Lions words and transforms a couplings between pairs into a pair of couplings which relies on the fact that
	\begin{equation*}
		\bar{W}^1 \in \scW_{0, d}[\hat{W}^1] 
		\quad \mbox{and}\quad
		\bar{W}^2 \in \scW_{0, d}[\hat{W}^2]
		\quad \implies \quad
		(\bar{W}^1, \bar{W}^2) \in \scW_{0, d}\big[ (\hat{W}^1, \hat{W}^2)\big]. 
	\end{equation*}
	What is more, $\overline{\mbox{Twist}}$ is a linear $\cR$-module isomorphism and $\fm$-diagonal. 
	
	\begin{lemma}
		\label{lemma:Shuffle_diagonal}
		For any choice of index set $I$ the linear map
		\begin{equation*}
			\tilde{\shuffle}: \spn_{\cR}\big( \scW_{0, d}[I] \times \scW_{0, d}[I] \big) \to \spn_{\cR}\big( \scW_{0, d}[I] \big)
			\qquad
			\mbox{is $\fm$-diagonal. }
		\end{equation*}
	\end{lemma}
	
	\begin{proof}
		Let $n \in \bN$, let $(W^1, W^2) = \big( (w^1, a^1), (w^2, a^2) \big) \in (\scW_{0, d}\times \scW_{0, d})[I]$ such that $\fm\big[ (W^1, W^2) \big] = \fm[a^1] + \fm[a^2]= n$. Then for any $\sigma \in \Shuf( |W^1|, |W^2|)$, 
		\begin{equation*}
			\fm\Big[ \big( \sigma(w^1 \cdot w^2), \big\llbracket \sigma(a^1 \sqcup_I a^2) \big\rrbracket_I \big) \Big] = m[a^1] + m[a^2] = n
		\end{equation*}
		so that
		\begin{equation*}
			\tilde{\shuffle}\big[ (W^1, W^2) \big] \subseteq \spn_{\cR} \big( \scW_{0, d}^{\{n\}} [I] \big)
		\end{equation*}
		and we conclude. 
	\end{proof}
	
	Given $\tilde{\shuffle}$ is $\fm$-diagonal, we can define the linear map
	\begin{equation*}
		\tilde{\shuffle} \tilde{\otimes} \tilde{\shuffle}: \spn_{\cR}\Big( \big( \scW_{0, d} \times \scW_{0, d}[I] \big) \tilde{\times} \big( \scW_{0, d} \times \scW_{0, d}[I] \big) \to \scW_{0, d} \tilde{\times} \scW_{0, d}[I]
	\end{equation*}
	which allows us to conclude the following:
	\begin{theorem}
		\label{theorem:M-coupledBialgebra-}
		Let $d\in \bN$, let $I$ be an index set and let $(\cR, +, \centerdot)$ be a ring. Then the shuffle product $\tilde{\shuffle}$ defined in Definition \ref{definition:shuffleproduct1} and the coupled coproduct $\Delta$ defined in Definition \ref{definition:deconcatenation-coproduct} satisfy 
		\begin{equation}
			\label{eq:theorem:M-coupledBialgebra-}
			\begin{aligned}
				\Delta \circ \tilde{\shuffle} =& \Big( \tilde{\shuffle} \tilde{\otimes} \tilde{\shuffle} \Big) \circ \overline{\mbox{Twist}} \circ \Big( \Delta \otimes \Delta \Big),
				\\
				\epsilon \circ \tilde{\shuffle} =& \centerdot \circ \epsilon \otimes \epsilon, \quad \Delta \circ \rId \circ \centerdot = \rId \otimes \rId, \quad \epsilon \otimes \rId = \fI_{\cR}. 
			\end{aligned}
		\end{equation}
	\end{theorem}
	
	\iftoggle{figure}{Theorem \ref{theorem:M-coupledBialgebra-} is equivalent to the operators $(\tilde{\shuffle}, \rId, \Delta, \epsilon)$ satisfying the commutative relationship described in Figure \ref{fig:coupled-bialgebra}. 
	\begin{figure}[htb]
		\centering
		\begin{tikzpicture}
			\node at (0,2) {$\spn_{\cR}\big( (\scW_{0,d}\times \scW_{0,d})[I] \big)$};
			\node at (0,0) {$\spn_{\cR}\big( \scW_{0,d}[I] \big)$};
			\node at (0,-2) {$\spn_{\cR} \big( (\scW_{0,d}\tilde{\times}\scW_{0,d})[I] \big)$};
			\node at (9,2) {$\spn_{\cR}\Big( \big(\scW_{0,d} \tilde{\times}\scW_{0,d}\big)[I] \times \big(\scW_{0,d} \tilde{\times} \scW_{0,d}\big)[I] \Big)$};
			\node at (9,-2) {$\spn_{\cR}\Big( \big(\scW_{0,d}\times\scW_{0,d}\big) \tilde{\times} \big(\scW_{0,d} \times \scW_{0,d}\big)[I] \Big)$};
			\draw[-to](0,1.6) to (0,0.4);
			\draw[-to](0,-0.4) to (0,-1.6);
			\draw[-to](9.5,1.6) to (9.5,-1.6);
			\draw[-to](2.4,2) to (4.6,2);
			\draw[-to](4.6,-2) to (2.4,-2);
			\node at (0.5,1) {$\tilde{\shuffle}$};
			\node at (0.5,-1) {$\Delta$};
			\node at (3.5,1.5) {$\Delta\otimes \Delta$};
			\node at (3.5,-1.5) {$\tilde{\shuffle} \tilde{\otimes} \tilde{\shuffle}$};
			\node at (8.5,0) {$\overline{\mbox{Twist}}$};
		\end{tikzpicture}	
		\\
		\begin{tikzpicture}
			\node at (0,0) {$\spn_{\cR}\big( (\scW_{0,d} \times \scW_{0,d})[I] \big)$};
			\node at (6,0) {$\spn_{\cR}\big( \scW_{0,d}[I] \big)$};
			\node at (3,-2) {$\cR \otimes \cR \equiv \cR$};
			\draw[-to](1,-0.4) to (2,-1.6);
			\draw[-to](5,-0.4) to (4,-1.6);
			\draw[-to](2.25,0) to (4.25,0);
			\node at (3.25,0.5) {$\tilde{\shuffle}$};
			\node at (1,-1) {$\epsilon \otimes \epsilon$};
			\node at (5,-1) {$\epsilon$};
		\end{tikzpicture}
		\\
		\begin{tikzpicture}
			\node at (0,0) {$\spn_{\cR}\big( (\scW_{0,d} \tilde{\times} \scW_{0,d})[I] \big)$};
			\node at (6,0) {$\spn_{\cR}\big( \scW_{0,d}[I] \big)$};
			\node at (3,2) {$\cR \otimes \cR \equiv \cR$};
			\draw[-to](2,1.6) to (1,0.4);
			\draw[-to](4,1.6) to (5,0.4);
			\draw[-to](4.25,0) to (2.25,0);
			\node at (3.25,0.5) {$\Delta$};
			\node at (1,1) {$\rId \tilde{\otimes} \rId$};
			\node at (5,1) {$\rId$};
		\end{tikzpicture}
		\\
		\begin{tikzpicture}
			\node at (0,0) {$\spn_{\cR}\big( \scW_{0,d}[I] \big)$};
			\node at (4,0) {$\cR$};
			\draw[->] (1,0.4) to[bend left] (3.75,0.4);
			\draw[->] (3.75,-0.4) to[bend left] (1,-0.4);
			\node at (2.375,1) {$\epsilon$};
			\node at (2.375,-1) {$\rId$};
		\end{tikzpicture}
		\caption{Coupled bialgebra}
		\label{fig:coupled-bialgebra}
	\end{figure}
	}
	
	\begin{proof}
		Let $I$ be an index set and let $W^1=(w^1, a^1), W^2=(w^2, a^2) \in \scW_{0, d}[I]$ such that $|w^1| = n_1$ and $|w^2| = n_2$. Then
		\begin{align}
			\nonumber
			\Delta \circ& \tilde{\shuffle} \Big[ W^1 \times W^2 \Big] =\Delta \Big[ W^1 \tilde{\shuffle} W^2 \Big]
			\\
			\nonumber
			=& \sum_{\sigma \in \Shuf(n_1, n_2)} \Delta \bigg[ \Big( \sigma(w^1 \cdot w^2), \big\llbracket \sigma(a^1 \sqcup_I a^2) \big\rrbracket_I \Big)  \bigg]
			\\
			\nonumber
			=& \sum_{\sigma \in \Shuf(n_1, n_2)} \sum_{k=0}^{n_1+n_2} \Bigg( \bigg( \big( \sigma(w^1 \cdot w^2)_{k+1}, ..., \sigma(w^1\cdot w^2)_{n_1+n_2} \big), 
			\\
			\nonumber
			&\qquad\qquad\qquad\qquad \Big\llbracket \Big( \big\llbracket \sigma(a^1 \sqcup_I a^2) \big\rrbracket_{I, i} \Big)_{i=k+1, ..., n_1+n_2} \Big\rrbracket_{I \cup m_k\big[ \llbracket \sigma(a^1 \sqcup_I a^2) \rrbracket_I \big]} \bigg), 
			\\
			\label{eq:proposition:M-coupledBialgebra-1}
			&\quad \bigg( \big( \sigma(w^1 \cdot w^2)_1, ..., \sigma(w^1 \cdot w^2)_k \big), \Big( \big\llbracket \sigma( a^1 \sqcup_I a^2 ) \big\rrbracket_{I, i} \Big)_{i=1, ..., k} \bigg) \Bigg). 
		\end{align}
		On the other hand, 
		\begin{align*}
			\Delta\big[ W^1 \big] =& \sum_{k_1=0}^{n_1} \Big( \big( (w_{k_1+1}^1, ..., w_{n_1}^1), \big\llbracket (a_{k_1+1}^1, ..., a_{n_1}^1) \big\rrbracket_{I \cup m_k\{a^1\}} \big), \big( (w_1^1, ..., w_k^1), (a_1^1, ..., a_k^1) \big) \Big)
			\quad\mbox{and}
			\\
			\Delta\big[ W^2 \big] =& \sum_{k_2=0}^{n_2} \Big( \big( (w_{k_2+1}^2, ..., w_{n_2}^2), \big\llbracket (a_{k_2+1}^2, ..., a_{n_2}^2) \big\rrbracket_{I \cup m_k\{a^2\}} \big), \big( (w_1^2, ..., w_k^2), (a_1^2, ..., a_k^2) \big) \Big)
		\end{align*}
		so that
		\begin{align*}
			&\overline{\mbox{Twist}} \circ \big( \Delta \otimes \Delta \big) \Big[ \big( W^1, W^2 \big) \Big] 
			\\
			&= \sum_{k_1=0}^{n_1} \sum_{k_2=0}^{n_2}\bigg( \Big( \big( (w_{k_1+1}^1, ..., w_{n_1}^1), \big\llbracket (a_{k_1+1}^1, ..., a_{n_1}^1) \big\rrbracket_{J_{k_1, k_2}} \big), 
			\big( (w_{k_2+1}^2, ..., w_{n_2}^2), \big\llbracket (a_{k_2+1}^2, ..., a_{n_2}^2) \big\rrbracket_{J_{k_1,k_2}} \big) \Big),
			\\
			&\qquad \Big( \big( (w_1^1, ..., w_{k_1}^1), (a_1^1, ..., a_{k_1}^1) \big), \big( (w_1^2, ..., w_{k_2}^2), (a_1^2, ..., a_{k_2}^2) \big) \Big) \bigg)
		\end{align*}
		where
		\begin{equation*}
			J_{k_1, k_2}:= I \cup m_{k_1}\{a^1\} \cup m_{k_2}\{a^2\}. 
		\end{equation*}
		Therefore
		\begin{align}
			\nonumber
			&\big( \tilde{\shuffle} \tilde{\otimes} \tilde{\shuffle} \big) \circ \overline{\mbox{Twist}} \circ \big( \Delta \otimes \Delta \big) \Big[ \big( W^1, W^2 \big) \Big]
			\\
			\nonumber
			&= \sum_{k_1=0}^{n_1} \sum_{k_2=0}^{n_2}  \sum_{\substack{\bar{\sigma} \in \Shuf( n_1 - k_1, n_2-k_2) \\ \hat{\sigma} \in \Shuf( k_1, k_2) }} \bigg( \Big( \bar{\sigma}\big( (w_{k_1+1}^1, ..., w_{n_1}^1) \cdot (w_{k_2+1}^2, ..., w_{n_2}^2) \big), 
			\\
			\nonumber
			&\qquad \qquad \Big\llbracket \bar{\sigma}\Big( \big\llbracket (a_{k_1+1}^1, ..., a_{n_1}^1) \big\rrbracket_{J_{k_1, k_2}} \sqcup_{J_{k_1, k_2}} \big\llbracket (a_{k_2+1}^2, ..., a_{n_2}^2) \big\rrbracket_{J_{k_1,k_2}}\Big) \Big\rrbracket_{J_{k_1, k_2}} \Big), 
			\\
			\label{eq:proposition:M-coupledBialgebra-2}
			&\qquad \Big( \hat{\sigma}\big( (w_1^1, ..., w_{k_1}^1) \cdot (w_1^2, ..., w_{k_2}^2) \big), \big\llbracket \hat{\sigma}\big( (a_1^1, ..., a_{k_1}^1) \sqcup_I (a_1^2, ..., a_{k_2}^2) \big) \big\rrbracket_I \Big) \bigg). 
		\end{align}
		Next, we can verify that
		\begin{equation*}
			J_{k_1, k_2} = m_{k_1+k_2}\Big\{ \big\llbracket \hat{\sigma}\big( (a_1^1, ..., a_{k_1}^1) \sqcup_I (a_1^2, ..., a_{k_2}^2) \big) \big\rrbracket_I \Big\}
		\end{equation*}
		and by applying the identity
		\begin{align}
			\nonumber
			&\sum_{k_1=0}^{n_1} \sum_{k_2=0}^{n_2} \sum_{\substack{\bar{\sigma} \in \Shuf(n_1 - k_1, n_2-k_2) \\ \hat{\sigma} \in \Shuf(k_1, k_2)}} \hspace{-20pt}\Big( \bar{\sigma}\big( (w_{k_1+1}^1, ..., w_{n_1}^1) \cdot (w_{k_2+1}^2, ..., w_{n_2}^2) \big) , \hat{\sigma}\big( (w_1^1, ..., w_{k_1}^1) \cdot (w_1^2, ..., w_{k_2}^2) \big) \Big)
			\\
			&= \sum_{\sigma \in \Shuf(n_1, n_2)} \sum_{k=0}^{n_1+n_2} \Big( \big( \sigma(w^1 \cdot w^2)_{k+1}, ..., \sigma(w^1 \cdot w^2)_{n} \big), \big( \sigma(w^1 \cdot w^2)_{1}, ..., \sigma(w^1 \cdot w^2)_{k} \big) \Big)
		\end{align}
		we can change the order of summation in Equation \eqref{eq:proposition:M-coupledBialgebra-2} to obtain Equation \eqref{eq:proposition:M-coupledBialgebra-1}. 
		
		The unit and counit identities are easy to verify and we skip here. 
	\end{proof}

	\subsubsection{Grading on Lions words}
	\label{subsubsection:Grading-Lionswords}
	
	The $\cR$-module spanned by Lyndon words has a grading determined by the length of Lyndon words. For this work, we need a grading that additionally captures the partition structure of a Lions word:	
	\begin{definition}
		\label{lemma:grading(1)}
		For any index set $I$, we define $\scG^I:\scW_{0,d}[I] \to \bN_0^{\times 2}$ by 
		\begin{equation}
			\label{eq:lemma:grading(1)}
			\scG^I \big[(w,a) \big]:= \big( |w|, m[a] \big)
		\end{equation}
		For $(k, n) \in \bN_0^{\times 2}$, we define
		\begin{equation*}
			\scW_{0, d}^{(k, n)}[I] := \Big\{ (w,a)\in \scW_{0, d}[I]: \scG^I[W] = (k, n ) \Big\}
		\end{equation*}
		and denote
		\begin{equation*}
			\big(\scW_{0, d}^{(k_2, n_2)} \tilde{\times} \scW_{0, d}^{(k_1, n_1)}\big)[I] := \bigsqcup_{W \in \scW_{0, d}^{(k_1, n_1)}[I]} \scW_{0, d}^{(k_2, n_2)}[W]
		\end{equation*}	
	\end{definition}
	In the next result, we demonstrate that this decomposition is natural to the underlying structure of the algebra operation $\tilde{\shuffle}$ and the coupled coproduct operation $\Delta$:
	\begin{proposition}
		\label{proposition:M-Grading-}
		Let $d\in \bN$, let $I$ be an index set and let $(\cR, +, \centerdot)$ be a ring. Then 
		\begin{equation}
			\label{eq:proposition:M-Grading-}
			\left.
			\begin{aligned}
				&\spn_{\cR}\Big( \scW_{0,d}^{(0, 0)}[I] \Big) = \cR, 
				\\
				&\spn_{\cR}\Big( \scW_{0, d}^{(k_1, n_1)}[I] \Big) \tilde{\shuffle} \spn_{\cR}\Big( \scW_{0, d}^{(k_2, n_2)}[I] \Big) \subseteq \spn_{\cR}\Big( \scW_{0, d}^{(k_1+k_2, n_1+n_2)} \Big),  
				\\
				&\Delta\Big[ \spn_{\cR}\big( \scW_{0, d}^{(k, n)}[I] \big) \Big] \subseteq  \bigoplus_{k' = 0}^k \bigoplus_{n'=0}^n \spn_{\cR}\Big( \scW_{0, d}^{(k-k', n-n')} \tilde{\times} \scW_{0, d}^{(k', n')}[I] \Big). 
			\end{aligned}
			\right\}
		\end{equation}
		Finally, for any choice of index set $I$ and $(k, n) \in \bN_0^{\times 2}$ we have that the set $\big| \scW_{0, d}^{(k, n)}[I] \big|$ is finite. 
	\end{proposition}

	\begin{remark}
		\label{remark:Link-Graded.Connected.Finite}
		Drawing on classical ideas from Hopf algebra theory (see \cite{cartier2021hopf}), we want to ultimately introduce the concepts of \emph{gradings}, \emph{connectedness} and \emph{local finiteness}. 
		
		For the moment, we say that $\scG^I$ describes a $\bN_0^{\times 2}$-grading on the locally finite connected coupled bialgebra
		\begin{equation*}
			\Big( \spn_{\cR}\big( \scW_{0, d}[I] \big), \tilde{\shuffle}, \rId, \Delta, \epsilon \Big)
		\end{equation*} 
		if Equation \eqref{eq:proposition:M-Grading-} is satisfied and $\big| \scW_{0, d}^{(k, n)}[I] \big|< \infty$. 
	\end{remark}

	\begin{proof}
		We proceed to address each of the identities in \eqref{eq:proposition:M-Grading-} individually:
		
		\textit{Step 1.} Firstly, since for any $W\in \scW_{0, d}[I]$,
		\begin{equation*}
			\scG^I(W) = (0, 0) \quad \iff \quad W = \rId. 
		\end{equation*}
		As such, 
		\begin{equation*}
			\scW_{0, d}^{(0, 0)}[I] = \big\{\rId \big\}
			\quad \mbox{so that} \quad 
			\spn_{\cR}\Big( \scW_{0, d}^{(0, 0)}[I] \Big) = \cR. 
		\end{equation*}
		
		\textit{Step 2.} Let $k_1, k_2, n_1, n_2  \in \bN_0$ and suppose that 
		\begin{align*}
			W^1 = (w^1, a^1) \in \scW_{0,d}[I]& \quad \mbox{such that} \quad \scG^I[W^1] = (k_1, n_1)
			\quad \mbox{and}
			\\
			W^2 = (w^2, a^2) \in \scW_{0,d}[I]& \quad \mbox{such that} \quad \scG^I[W^2] = (k_2, n_2). 
		\end{align*}
		Then
		\begin{equation*}
			W^1 \tilde{\shuffle} W^2 = \sum_{\sigma \in \Shuf(|W^1|, |W^2|)} \sigma(W^1, W^2) 
		\end{equation*}	
		and for any $\sigma \in \Shuf\big( |W^1|, |W^2| \big)$
		\begin{align*}
			\scG^I\Big[ \sigma(W^1, W^2) \Big] =& \Big( \big| \sigma(w^1\cdot w^2) \big| , m\big[ \llbracket \sigma(a^1 \sqcup_I a^2) \rrbracket \big] \Big)
			\\
			=& \big( |w^1| + |w^2|, m[a^1] + m[a^2] \big) = (k_1+k_2, n_1+n_2)
		\end{align*}
		and we conclude. 
		
		\textit{Step 3.} For any $W = (w, a) \in \scW_{0,d}[I]$ and any coupled pair $(\bar{W}, \hat{W}) \in \scW_{0,d} \tilde{\times} \scW_{0,d}[I]$ such that
		\begin{equation*}
			\Big\langle \Delta[W], (\bar{W}, \hat{W}) \Big\rangle > 0 \quad \implies \quad m[\bar{a}] + m[\hat{a}] = m[a] 
			\quad \mbox{and}\quad
			|\bar{W}| + |\hat{W}| = |W|. 
		\end{equation*}
		Hence for any $W \in \scW_{0, d}[I]$ such that $\scG^I[W] = (k, n)$ we obtain that
		\begin{equation*}
			\Delta \Big[ W \Big] \in \bigoplus_{k'=0}^k \bigoplus_{n'=0}^n \spn_{\cR}\Big( \scW_{0, d}^{(k - k', n - n')} \tilde{\times} \scW_{0, d}^{(k',n')}[I] \Big). 
		\end{equation*}
		and we conclude that Equation \eqref{eq:proposition:M-Grading-} holds. The finiteness of the set $|\scW_{0, d}^{(k_I, n)}[I]|$ is immediate. 
	\end{proof}
	
	While these infinite expansions are more favourable to work with in terms of their symmetry relations, in practice we want to work with truncated expansions. Let $g: \bN_0^{\times 2} \to \bR$ be a monotone increasing function such that $g(0, 0) \leq 0$. We denote
	\begin{equation}
		\label{eq:truncatedWord}
		\begin{aligned}
			\scW_{0, d}^{g,+}[I]:=&\Big\{ W \in \scW_{0, d}[I]: g\big( \scG^I[W] \big) > 0 \Big\}, 
			\quad \mbox{and}
			\\
			\scW_{0, d}^{g,-}[I]:=&\Big\{ W \in \scW_{0, d}[I]: g\big( \scG^I[W] \big) \leq 0 \Big\}. 
		\end{aligned}
	\end{equation}
	
	\begin{corollary}
		\label{lemma:M-Finite-grading-}
		Let $d\in \bN$, let $I$ be an index set and let $(\cR, +, \centerdot)$ be a ring. Let $g: \bN_0^{\times 2}\to \bR$ be a monotone increasing function such that $g(0, 0) \leq 0$. Then 
		\begin{enumerate}[label=(\ref*{lemma:M-Finite-grading-}.\roman*)]
			\item
			\label{enum:lemma:M-Finite-grading-1}
			The $\cR$-module
			\begin{equation*}
				\spn_{\cR}\Big( \scW_{0, d}^{g,+} \Big)
				\quad \mbox{is an algebra ideals of } \quad
				\Big( \spn_{\cR}\big( \scW_{0, d}[I]\big), \tilde{\shuffle}, \rId \Big)
			\end{equation*}
			and we can identify the algebra over the $\cR$-module quotient by
			\begin{align*}
				\spn_{\cR}\Big( \scW_{0, d}^{g,-}[I] \Big) =& \spn_{\cR} \Big( \scW_{0, d}[I] \Big) / \spn_{\cR}\Big( \scW_{0, d}^{g,+}[I] \Big) 
			\end{align*}
			\item 
			\label{enum:lemma:M-Finite-grading-2}
			The coupled coproduct and counit $(\Delta, \epsilon)$ restricted to the sub-module
			\begin{equation*}
				\Big( \spn_{\cR}\big( \scW_{0, d}^{g,-}[I] \big), \Delta, \epsilon \Big)
			\end{equation*}
			is a co-associative sub-coupled coalgebras of $\big( \spn_{\cR}( \scW_{0, d}[I] ), \Delta, \epsilon \big)$. 
			\item 
			\label{enum:lemma:M-Finite-grading-3}
			By pairing the quotient algebra and unit $(\tilde{\shuffle}, \rId)$ to the restriction of the coupled coproduct and counit $(\Delta, \rId)$, we obtain
			\begin{equation*}
				\Big( \spn_{\cR}\big( \scW_{0, d}^{g,-}[I] \big), \tilde{\shuffle}, \rId, \Delta, \epsilon \Big)
			\end{equation*}
			satisfies the commutative identities of Equation \eqref{eq:theorem:M-coupledBialgebra-}. 
		\end{enumerate}
	\end{corollary}
	
	\begin{proof}
		\ref{enum:lemma:M-Finite-grading-1}: Thanks to Proposition \ref{proposition:M-Grading-} and the monotoncity of $g$, we have for any $W^1\in \scW_{0, d}^{g,+}[I]$ and $W^2 \in \scW_{0, d}[I]$ that
		\begin{equation*}
			W^1 \tilde{\shuffle} W^2 \in \spn_{\cR}\Big( \scW_{0, d}^{g,+} \Big). 
		\end{equation*}
		Thus the sub $\cR$-module $\spn_{\cR}\big( \scW_{0, d}^{g,+}[I] \big)$ is an algebra ideal over $\cR$ and the conclusion follows from the disjointedness of the sets $\scW_{0, d}^{g, -}[I]$ and $\scW_{0, d}^{g, +}[I]$. 
		
		\ref{enum:lemma:M-Finite-grading-2}: In the same fashion, Proposition \ref{proposition:M-Grading-} and the monotinicity of $g$ implies that for any $W\in \scW_{0, d}^{g,-}[I]$ and any $k \in \{1, ..., |W| \}$ we have that
		\begin{align*}
			g\Big(& \scG^I\big[ \big( (w_1, ...., w_k), (a_1, ..., a_k) \big) \big] \Big) \vee g\Big( \scG^I\big[ \big( (w_{k+1}, ..., w_{|w|}), \big\llbracket (a_{k+1}, ..., a_{|w|}) \big\rrbracket_{I\cup m_k\{a\}} \big) \big] \Big) 
			\\
			&\leq g\Big( \scG^I\big[ W \big] \Big)
		\end{align*}
		so that 
		\begin{equation*}
			\Delta\Big[ \spn_{\cR}\big( \scW_{0, d}^{g, -}[I] \big) \Big] \subseteq \spn_{\cR}\big( \scW_{0, d}^{g, -}[I] \big) \tilde{\otimes} \spn_{\cR}\big( \scW_{0, d}^{g, -}[I] \big)
		\end{equation*}
		and thus $\big( \spn_{\cR}( \scW_{0, d}^{g, -}[I] ), \Delta, \epsilon \big)$ is a sub-coupled coalgebra. 
		
		\ref{enum:lemma:M-Finite-grading-3}: This follows from Proposition \ref{proposition:M-Grading-} and Theorem \ref{theorem:M-coupledBialgebra-}. 
	\end{proof}
	
	\iftoggle{Plus}{
	\subsection{Modules of measurable functions indexed by Lions words}
	
	To streamline notation, we define the coproduct counting function 
	\begin{equation}
		\label{eq:definition:deconcatenation-coproduct-counting}
		c_I: \scW_{0, d}[I] \times \big( \scW_{0, d}[I] \tilde{\times} \scW_{0, d}[I] \big) \to \bN_0
		\quad \mbox{by}\quad
		c_I\Big( W, \bar{W}, \hat{W} \Big) = \Big\langle \Delta\big[ W \big] , (\bar{W}, \hat{W}) \Big\rangle. 
	\end{equation}
			
	\begin{assumption}
		\label{notation:V-(2)}
		Let $(\Omega, \cF, \bP)$ and $(\Omega', \cF', \bP')$ be probability spaces. Let $d \in \bN$ and let $(\cR, +, \centerdot)$ be a separable normed ring which we associate with the Borel $\sigma$-algebra $\cB(\cR)$. 
		
		For any index set $I$, let
		\begin{equation*}
			\Big( \bV^W\big( \Omega; \cR \big) \Big)_{W\in \scW_{0, d}[I]}
		\end{equation*}
		be a collection of unital $\cR$-modules that satisfy:
		\begin{enumerate}[label=(\ref*{notation:V-(2)}.\roman*)]
			\item \label{enum:notation:V-(2)-1}
			For every $W\in \scW_{0,d}[I]$, we have
			\begin{equation*}
				\bV^W\big( \Omega; \cR \big) \subseteq L^{\boldsymbol{0}}\Big( \Omega^{\times \fm[W]}; \cR \Big); 
			\end{equation*}
			\item \label{enum:notation:V-(2)-2}
			For any $W\in \scW_{0, d}[I]$ such that $\fm[W] = 0$, we have that
			\begin{equation*}
				\bV^{W}\big( \Omega; \cR \big) = \cR; 
			\end{equation*}
			\item \label{enum:notation:V-(2)-3}
			For any $W^1, W^2 \in \scW_{0, d}[I]$ and any $\sigma \in \Shuf\big( |W^1|, |W^2| \big)$, we have that
			\begin{equation*}
				\bV^{W^1}\big( \Omega; \cR \big) \otimes \bV^{W^2}\big( \Omega; \cR \big) \subseteq \bV^{\sigma(W^1, W^2)} \big(\Omega; \cR \big); 
			\end{equation*}
			\item \label{enum:notation:V-(2)-4}
			For every $W\in \scW_{0, d}[I]$ and for any
			\begin{equation*}
				(\bar{W}, \hat{W}) \in \scW_{0, d} \tilde{\times} \scW_{0, d}[I] 
				\quad\mbox{such that} \quad 
				c_I\Big( W, \bar{W}, \hat{W} \Big)>0,
			\end{equation*}
			we have that
			\begin{equation*}
				\bV^{W}\Big( \Omega; \cR \Big) \subseteq \bV^{\hat{W}}\Big( \Omega; \bV^{\bar{W}}\big( \Omega; \cR \big) \Big). 
			\end{equation*}
		\end{enumerate}
	\end{assumption}
	
	\begin{example}
		\label{example:L1-Module1}
		Let $(\Omega, \cF, \bP)$ be a probability space and consider the normed ring $(\bR, +, \centerdot)$. Suppose for $W = (w, a) \in \scW_{0, d}[I]$ that we choose
		\begin{equation*}
			\bV^{W}\Big(\Omega; \bR \Big) = L^{\boldsymbol{0}}\Big( \Omega^{\times m[a]}, \bP^{\times m[a]};  \bR \Big). 
		\end{equation*}
		Measurable (rather than integrable) functions serve as a simple example since we can take products of measurable functions to obtain another measurable function whereas we cannot take the product of two integrable functions and obtain another integrable function without having to resort to H\"older type inequalities which reduce the amount of integrability of the product. 
	
		Firstly, we immediately have that for any $W\in \scW_{0, d}[I]$, 
		\begin{equation*}
			L^{\boldsymbol{0}}\Big( \Omega^{\times \fm[W]}, \bP^{\times \fm[W]}; \cR \Big)
			\subseteq
			L^{\boldsymbol{0}}\Big( \Omega^{\times \fm[W]}, \bP^{\times \fm[W]}; \cR \Big)
		\end{equation*}
		so that \ref{enum:notation:V-(2)-1} is satisfied. Similarly, using the convention that
		\begin{equation*}
			L^{(\emptyset)}\Big( \Omega^{\times 0}, \bP^{\times 0}; \cR \Big) = \cR
		\end{equation*}
		gives \ref{enum:notation:V-(2)-2}. Next, for any $W^1, W^2 \in \scW_{0,d}[I]$ and $\sigma \in \Shuf\big( |W^1|, |W^2|\big)$ we have thanks to Equation \eqref{eq:definition:shuffleproduct1} that $\fm[W^1] + \fm[W^2] = \fm[\sigma(W^1, W^2)]$ so that
		\begin{align*}
			\bV^{W^1}\big( \Omega; \cR \big) \otimes \bV^{W^2}\big( \Omega; \cR \big) =& L^{\boldsymbol{0}} \Big(\Omega^{\times \fm[W^1]}, \bP^{\times \fm[W^1]}; \cR \Big) \otimes L^{\boldsymbol{0}} \Big(\Omega^{\times \fm[W^2]}, \bP^{\times \fm[W^2]}; \cR \Big)
			\\
			\subseteq& L^{\boldsymbol{0}} \Big(\Omega^{\times (\fm[W^1]+\fm[W^2])}, \bP^{\times (\fm[W^1]+\fm[W^2])}; \cR \Big)
			\\
			\subseteq& L^{\boldsymbol{0}} \Big(\Omega^{\times \fm[\sigma(W^1,W^2)]}, \bP^{\times \fm[\sigma(W^1,W^2)]}; \cR \Big) = \bV^{\sigma(W^1, W^2)} \big(\Omega; \cR \big)
		\end{align*}
		and \ref{enum:notation:V-(2)-3} holds. 
		
		Finally, let $W = \big( (w_1, ..., w_n), (a_1, ..., a_n) \big) \in \scW_{0, d}[I]$ and suppose that $c_I(W, \bar{W}, \hat{W} )>0$ for some $(\bar{W}, \hat{W}) \in \scW_{0, d} \tilde{\times} \scW_{0, d}[I]$. Then $\exists k \in \{0, 1, ..., n\}$ such that
		\begin{align*}
			&(\bar{W}, \hat{W}) = \Big( \big( (w_{k+1}, ..., w_n), \big\llbracket (a_{k+1}, ..., a_n) \big\rrbracket_{I \cup m_k\{a\}} \big) , \big( (w_{1}, ..., w_k), (a_1, ..., a_k) \big) \Big)
			\\
			&\mbox{and} \quad \fm\big[ (\bar{W}, \hat{W}) \big] = \fm[\hat{W}] + \fm[\bar{W}]. 
		\end{align*}
		Then we can use the $\cR$-module identity that
		\begin{equation*}
			L^{\boldsymbol{0}}\Big( \Omega^{\times (m+n)}, \bP^{\times (m+n)}; \cR \Big) = L^{\boldsymbol{0}}\bigg( \Omega^{\times m}, \bP^{\times m}; L^{\boldsymbol{0}}\Big( \Omega^{\times n}, \bP^{\times n}; \cR \Big) \bigg)
		\end{equation*}
		to conclude that 
		\begin{equation*}
			\bV^{W}\Big( \Omega; \cR \Big) \subseteq \bV^{\hat{W}}\Big( \Omega; \bV^{\bar{W}}\big( \Omega; \cR \big) \Big). 
		\end{equation*}
		Thus \ref{enum:notation:V-(2)-4} follows. 
	\end{example}

	The purpose of Assumption \ref{notation:V-(2)} is to provide a collection of $\cR$-modules that will be used to construct the following module:
	\begin{definition}
		\label{definition:coupled-shuffle}
		Let $(\Omega, \cF, \bP)$ and $(\Omega', \cF, \bP')$ be probability spaces. Let $(\cR, +, \centerdot)$ be a normed ring and let $I$ be an index set. 
		
		Suppose that $\bU\big( \Omega; \cR \big)$ satisfies Assumption \ref{notation:U-(1)} and that $\big( \bV^W \big)_{W\in \scW_{0, d}[I]}$ are a collection of modules that satisfies Assumption \ref{notation:V-(2)}.  
		
		We define the $\bU(\Omega; \cR)$-module
		\begin{equation}
			\label{eq:definition:coupled-shuffle-1}
			\scM_I(\Omega, \Omega') := \bU\bigg( \Omega; \bigoplus_{W \in \scW_{0, d}[I]} \bV^{W}\Big( \Omega'; \cR \Big) \bigg). 
		\end{equation}
	
		We use the representation that for $X \in \scM_I(\Omega, \Omega')$, we write
		\begin{equation*}
			X(\omega_I) = \sum_{W \in \scW_{0, d}[I]} \Big\langle X, W \Big\rangle(\omega_I, \cdot)
			\quad \mbox{for}\quad
			\omega_I:=(\underbrace{\omega_{\iota}, ...}_{\iota \in I}) \in (\Omega)^{\times |I|}
		\end{equation*}
		where
		\begin{equation}
			\begin{aligned}
				&\Big\langle X, W \Big\rangle \in \bU\bigg( \Omega; \bV^{W}\Big( \Omega'; \cR \Big) \bigg) 
				\quad \mbox{or} \quad
				\Big\langle X, W \Big\rangle(\omega_I, \cdot) \in \bV^W\Big( \Omega'; \cR \Big). 
			\end{aligned}
		\end{equation}
		
		The ring operators $+: \scM_I(\Omega, \Omega') \times \scM_I(\Omega, \Omega') \to \scM_I(\Omega, \Omega')$ is defined for $X, Y \in \scM_I(\Omega, \Omega')$ by
		\begin{equation}
			\label{eq:definition:coupled-shuffle+}
			(X+Y)(\omega_I) 
			= \sum_{W \in \scW_{0, d}[I]} \Big\langle (X+Y), W \Big\rangle(\omega_I, \cdot)
			= \sum_{W \in \scW_{0, d}[I]} \bigg( \Big\langle X, W \Big\rangle(\omega_I, \cdot) + \Big\langle Y, W \Big\rangle(\omega_I, \cdot) \bigg)
		\end{equation}
		and $\centerdot: \bU(\Omega; \cR) \times \scM_I(\Omega, \Omega') \to \scM_I(\Omega, \Omega')$ is defined for $R \in \bU(\Omega; \cR)$ and $X \in \scM_I(\Omega, \Omega')$ by
		\begin{equation}
			\label{eq:definition:coupled-shufflex}
			(R \centerdot X)(\omega_I) 
			= \sum_{W \in \scW_{0, d}[I]} \Big\langle (R \centerdot X), W \Big\rangle(\omega_I, \cdot)
			=\sum_{W \in \scW_{0, d}[I]} R(\omega_I) \centerdot \Big\langle X, W \Big\rangle(\omega_I, \cdot). 
		\end{equation}
	\end{definition}
	We should view $(\Omega, \cF, \bP)$ as the \emph{tagged probability space} in contrast to $(\Omega', \cF', \bP')$ which is viewed as the \emph{free probability space}. 
	
	Returning to Equation \eqref{eq:definition:coupled-shuffle+} again, we emphasise that
	\begin{align*}
		&\Big\langle X, W \Big\rangle, \Big\langle Y, W \Big\rangle\in \bU\bigg( \Omega; \bV^{W}\Big( \Omega'; \cR \Big) \bigg) \quad \mbox{so that}
		\\
		\mbox{for $\omega_I \in (\Omega)^{\times |I|}$} \quad &\Big\langle X, W \Big\rangle(\omega_I, \cdot), \Big\langle Y, W \Big\rangle(\omega_I, \cdot) \in \bV^{W}\Big( \Omega'; \cR \Big), 
	\end{align*}
	and similarly for Equation \eqref{eq:definition:coupled-shufflex}
	\begin{align*}
		&R \in \bU\big( \Omega; \cR \big), \quad \Big\langle X, W \Big\rangle \in \bU\bigg( \Omega; \bV^W\Big( \Omega'; \cR \Big) \bigg) \quad \mbox{so that}
		\\
		\mbox{for $\omega_I \in (\Omega)^{\times |I|}$} \quad &R(\omega_I) \in \cR , \quad \Big\langle X, W \Big\rangle(\omega_I, \cdot) \in \bV^W\Big( \Omega'; \cR \Big). 
	\end{align*}
	
	\subsubsection{Algebra over $\scM_I$}
	Following on from Definition \ref{definition:shuffleproduct1}, we extend $\scM_I(\Omega, \Omega')$ to be an algebra over the ring $\bU(\Omega; \cR)$:
	\begin{definition}
		\label{definition:shuffleproduct2}
		Let $(\Omega, \cF, \bP)$ and $(\Omega', \cF, \bP')$ be probability spaces. Let $(\cR, +, \centerdot)$ be a normed ring and let $I$ be an index set. 
		
		Suppose that $\bU\big( \Omega; \cR \big)$ satisfies Assumption \ref{notation:U-(1)} and $\big( \bV^W \big)_{W\in \scW_{0, d}[I]}$ is a collection of modules that satisfies Assumption \ref{notation:V-(2)}. 
		
		We define $\tilde{\shuffle}: \scM_I(\Omega, \Omega') \times \scM_I(\Omega, \Omega') \to \scM_I(\Omega, \Omega')$ be the bilinear mapping that satisfies the identity
		\begin{align*}
			&X \tilde{\shuffle} Y(\omega_I) = \sum_{W\in \scW_{0, d}[I]} \Big\langle X \tilde{\shuffle} Y, W \Big\rangle(\omega_I, \cdot)
			\quad \mbox{where}
			\\
			&\Big\langle X \tilde{\shuffle} Y, W \Big\rangle(\omega_I, \cdot) = \sum_{\substack{W^1, W^2 \in \scW_{0, d}[I] \\ \exists \sigma \in \Shuf(|W^1|, |W^2|) \\ \sigma(W^1, W^2) = W}} \Big\langle X, W^1 \Big\rangle(\omega_I, \cdot) \otimes \Big\langle Y, W^2 \Big\rangle(\omega_I, \cdot). 
		\end{align*}
	\end{definition}
	We use the convention that for any $W = (w, a) \in \scW_{0, d}[I]$ the sample space element 
	\begin{equation*}
		(\omega'_{m\{a\}}) = \big( \omega'_{1}, ..., \omega'_{m[a]} \big) \in (\Omega')^{\times m[a]}. 
	\end{equation*}
	
	In particular, because of Assumption  \ref{enum:notation:V-(2)-3}
	\begin{align*}
		\Big\langle X_1&, (w^1, a^1) \Big\rangle(\omega_I, \omega'_{m\{a^1\}} ) \otimes \Big\langle X_2, (w^2, a^2) \Big\rangle(\omega_I, \omega'_{m\{a^2\}})
		\\
		\in& \bU\bigg( \Omega;  \bV^{W^1}\Big( \Omega'; \cR \Big) \otimes \bV^{W^2}\Big( \Omega'; \cR \Big)  \bigg)
		\subseteq
		\bU\bigg( \Omega; \bigoplus_{\substack{\sigma(W^1, W^2) \in \scW_{0, d}[I] \\  \sigma \in \Shuf(W^1, W^2)}} \bV^{\sigma(W^1, W^2)}\Big( \Omega'; \cR \Big) \bigg). 
	\end{align*}
	
	In this work, it will be important to distinguish free variables but not to order them as is common convention. For this reason, we index variables (both free and tagged) in terms of a set (unordered collection) rather than a sequence (ordered collection). 
	\begin{proposition}
		\label{proposition:associativity(1)}
		Let $(\Omega, \cF, \bP)$ and $(\Omega', \cF, \bP')$ be probability spaces. Let $(\cR, +, \centerdot)$ be a normed ring and let $I$ be an index set. 
		
		Suppose that $\bU\big( \Omega; \cR \big)$ satisfies Assumption \ref{notation:U-(1)} and $\big( \bV^W \big)_{W\in \scW_{0, d}[I]}$ is a collection of modules that satisfies Assumption \ref{notation:V-(2)}. Then
		\begin{enumerate}
			\item $\scM_I(\Omega, \Omega')$ as defined in Equation \eqref{eq:definition:coupled-shuffle-1} is a $\Big( \bU\big( \Omega; \cR \big), +, \centerdot \Big)$ module. 
			\item $\big( \scM_I(\Omega, \Omega'), \tilde{\shuffle} , \rId \big)$ is a \emph{associative} \emph{unital} algebra over the ring $\bU(\Omega; \cR)$. 
			\item If $\bU(\Omega; \cR)$ is commutative, then $\big( \scM_I(\Omega, \Omega'), \tilde{\shuffle} , \rId \big)$ is a commutative algebra over the ring. 
		\end{enumerate}
	\end{proposition}
	
	\begin{proof}
		For any $X\in \scM_I(\Omega, \Omega')$ and $R\in \bU(\Omega; \cR)$, we note that for any $W\in \scW_{0, d}[I]$ that
		\begin{equation*}
			\Big\langle X, W \Big\rangle \in \bU\Big( \Omega; \bV^{W}\big( \Omega'; \cR \big) \Big) 
			\quad \mbox{and}\quad
			R\in \bU\big( \Omega; \cR \big)
		\end{equation*}
		so that
		\begin{equation*}
			R \centerdot \Big\langle X, W \Big\rangle \in \bU\Big( \Omega; \bV^W\big( \Omega'; \cR \big) \Big). 
		\end{equation*}
		Hence by defining
		\begin{align*}
			\Big\langle R \centerdot X, W \Big\rangle = R \centerdot \Big\langle X, W \Big\rangle
			\quad \mbox{and} \quad
			R \centerdot X = \sum_{W \in \scW_{0, d}[I]} \Big\langle R \centerdot X, W \Big\rangle, 
		\end{align*}
		we are able to extend the ring multiplication to $\centerdot : \bU(\Omega; \cR) \times \scM_I(\Omega, \Omega') \to \scM_I(\Omega, \Omega')$ and it is easy to verify that $\scM_I(\Omega, \Omega')$ is indeed a module. 
		
		For associativity, for any $X, Y, Z \in \scM_I(\Omega, \Omega')$ and $W \in \scW_{0, d}[I]$, we have that
		\begin{align*}
			\Big\langle \big( X \tilde{\shuffle} Y \big) \tilde{\shuffle} Z, W \Big\rangle 
			&= 
			\sum_{\substack{W^1, W^2 \in \scW_{0, d}[I] \\ \exists \sigma \in \Shuf(|W^1|, |W^2|) \\ \sigma(W^1, W^2) = W}} \Big\langle \big( X \tilde{\shuffle} Y \big), W^1 \Big\rangle \otimes \Big\langle Z, W^2 \Big\rangle
			\\
			&=
			\sum_{\substack{W', W^1\in \scW_{0, d}[I] \\ \exists \sigma \in \Shuf(|W'|, |W^1) \\ \sigma(W', W^1) = W}}
			\sum_{\substack{W^2, W^3\in \scW_{0, d}[I] \\ \exists \sigma' \in \Shuf(|W^2|, |W^3|) \\ \sigma'(W^2, W^3) = W'}} \Big\langle X, W^1 \Big\rangle \otimes \Big\langle Y, W^2 \Big\rangle \otimes \Big\langle Z, W^3 \Big\rangle 
			\\
			&= 
			\sum_{\substack{W^1, W^2 \in \scW_{0, d}[I] \\ \exists \sigma \in \Shuf(|W^1|, |W^2|) \\ \sigma(W^1, W^2) = W}} \Big\langle X , W^1 \Big\rangle \otimes \Big\langle Y \tilde{\shuffle} Z, W^2 \Big\rangle
			=
			\Big\langle X \tilde{\shuffle} \big( Y  \tilde{\shuffle} Z\big), W \Big\rangle 
		\end{align*}
		thanks to Proposition \ref{proposition:Shuffle=Assoc} and \ref{enum:notation:V-(2)-3}. Thus $\tilde{\shuffle}$ is associative and further the empty word $\rId \in \scW_{0, d}[I]$ satisfies that for any $W\in \scW_{0, d}[I]$
		\begin{equation*}
			\rId \tilde{\shuffle} W = W \tilde{\shuffle} \rId = W 
		\end{equation*}
		Let $Y \in \scM_I(\Omega, \Omega')$ defined by
		\begin{equation*}
			\Big\langle Y, \rId \Big\rangle(\omega_I) = 1_\cR \quad \bP\mbox{-a.s}
			\quad \mbox{and}\quad
			\Big\langle Y, W \Big\rangle(\omega_I, \cdot ) = 0. 
		\end{equation*}
		Then for any $X\in \scM_I(\Omega, \Omega')$
		\begin{equation*}
			X \tilde{\shuffle} Y = Y \tilde{\shuffle} X = X
		\end{equation*}
		so that $\big( \scM_I(\Omega, \Omega'), \tilde{\shuffle}, \rId \big)$ is unital. 
		
		For commutativity, suppose that $\bU(\Omega; \cR)$ is commutative. Then for any $X, Y \in \scM_I(\Omega, \Omega')$ and $W\in \scW_{0, d}[I]$, 
		\begin{align*}
			\Big\langle X \tilde{\shuffle} Y, W \Big\rangle 
			&= 
			\sum_{\substack{W^1, W^2 \in \scW_{0, d}[I] \\ \exists \sigma' \in \Shuf(|W^2|, |W^1|) \\ \sigma'(W^2, W^1) = W}} \Big\langle X, W^2 \Big\rangle \otimes \Big\langle Y, W^1 \Big\rangle
			\\
			&= 
			\sum_{\substack{W^1, W^2 \in \scW_{0, d}[I] \\ \exists \sigma' \in \Shuf(|W^2|, |W^1|) \\ \sigma'(W^2, W^1) = W}} \Big\langle Y, W^1 \Big\rangle \otimes \Big\langle X, W^2 \Big\rangle 
			= 
			\Big\langle Y \tilde{\shuffle} X, W \Big\rangle
		\end{align*}
		thanks to Proposition \ref{proposition:Shuffle=Assoc} and \ref{enum:notation:V-(2)-3}. Thus, $X\tilde{\shuffle} Y = Y \tilde{\shuffle} X$. 
	\end{proof}

	\subsubsection{The coupled tensor product of $\scM_I$}
	
	We have seen that the module of Lions words has a meaningful algebra operation and that the product of measurable functions is defined on some product measure space. For the next research component, we are interested in additionally allowing for couplings between the collection of free variables indexed by partitions. 
	
	\begin{remark}
		\label{remark:taggedTensorproduct}
		Before we talk more about coupled tensor products, we briefly remind the reader the non-trivial fact about tensor products of measurable functions: Let $(\cR, +, \centerdot)$ be a commutative unital normed ring. Firstly, recall that by treating $\cR$ as the (trivial) $\cR$-module we obtain that the $\cR$-module tensor product $\cR \otimes \cR \equiv \cR$. 
		
		Let $(\Omega, \cF, \bP)$, $(\Omega', \cF', \bP')$ and $(\hat{\Omega}, \hat{\cF}, \hat{\bP})$ be three probability spaces and let
		\begin{equation*}
			L^0\Big( \Omega', \bP'; \cR \Big) 
			\quad \mbox{and}\quad
			L^0\Big( \hat{\Omega}, \hat{\bP}; \cR \Big)
		\end{equation*}
		be two $\cR$-modules of measurable functions with codomain equal to the (trivial) $\cR$-module $\cR$. Then the tensor product of these two $\cR$-modules is the $\cR$-module
		\begin{equation*}
			L^0\Big( \Omega', \bP'; \cR \Big) \otimes L^0\Big( \hat{\Omega}, \hat{\bP}; \cR \Big) = L^0\Big(\Omega' \times \hat{\Omega}, \bP' \times \hat{\bP}; \cR \Big)
		\end{equation*}
		On the other hand, the two $L^0(\Omega; \cR)$-modules have tensor product
		\begin{equation*}
			L^0\bigg( \Omega, \bP; L^0\Big( \Omega', \bP'; \cR \Big) \bigg) \otimes L^0\bigg( \Omega, \bP; L^0\Big( \hat{\Omega}, \hat{\bP}; \cR \Big) \bigg) = L^0\bigg( \Omega, \bP; L^0\Big( \Omega' \times \hat{\Omega}, \bP' \times \hat{\bP}; \cR \Big) \bigg). 
		\end{equation*}
		We include this to illustrate that considering a larger underlying ring leads to a smaller tensor product between two modules. 
	\end{remark}
	
	\begin{definition}
		\label{definition:M-coupledTensorModule}
		Let $(\Omega, \cF, \bP)$ and $(\Omega', \cF, \bP')$ be probability spaces. Let $(\cR, +, \centerdot)$ be a unital normed ring and let $I$ be an index set. 
		
		Suppose that $\bU\big( \Omega; \cR \big)$ satisfies Assumption \ref{notation:U-(1)} and $\big( \bV^W \big)_{W\in \scW_{0, d}[I]}$ is a collection of modules that satisfies Assumption \ref{notation:V-(2)}. We define
		\begin{equation}
			\label{eq:definition:M-coupledTensorModule}
			\scM_I \tilde{\otimes} \scM_I (\Omega, \Omega') = \bU\Bigg( \Omega; \bigoplus_{\hat{W} \in \scW_{0, d}[I]} \bV^{\hat{W}}\bigg( \Omega'; \bigoplus_{\bar{W} \in \scW_{0, d}[\hat{W}] } \bV^{\bar{W}}\Big( \Omega' ; \cR \Big) \bigg) \Bigg). 
		\end{equation}
		We use the representation that for $X \in \scM_I\tilde{\otimes}\scM_I(\Omega, \Omega')$, we write
		\begin{align*}
			X(\omega_I) =& \sum_{\substack{\bar{W} \in \scW_{0, d}[\hat{W}] \\ \hat{W}\in \scW_{0, d}[I]}} \Big\langle X, (\bar{W}, \hat{W}) \Big\rangle(\omega_I, \cdot)
		\end{align*}
		where for $\hat{W}=(\hat{w},\hat{a}) \in \scW_{0, d}[I]$ and $\bar{W}=(\bar{w},\bar{a}) \in \scW_{0, d}[\hat{W}]$
		\begin{align*}
			&\Big\langle X, (\bar{W}, \hat{W}) \Big\rangle(\omega_I, \omega_{m\{\hat{a}\}}', \cdot ) 
			\in 
			\bV^{\bar{W}}\Big( \Omega'; \cR \Big),
			\\
			\sum_{\bar{W} \in \scW_{0,d}[\hat{W}]} &\Big\langle X, (\bar{W}, \hat{W}) \Big\rangle(\omega_I, \omega_{m\{\hat{a}\}}', \cdot ) 
			\in 
			\bigoplus_{\bar{W} \in \scW_{0,d}[\hat{W}]} \bV^{\bar{W}}\Big( \Omega'; \cR \Big),
		\end{align*}
		and
		\begin{align*}
			\sum_{\substack{\bar{W} \in \scW_{0,d}[\hat{W}] \\ \hat{W} \in \scW_{0, d}[I]}} & \Big\langle X, (\bar{W}, \hat{W}) \Big\rangle (\omega_I, \cdot, \cdot)
			\in 
			\bigoplus_{\hat{W}\in \scW_{0,d}[I]} \bV^{\hat{W}} \bigg( \Omega'; \bigoplus_{\bar{W} \in \scW_{0,d}[\hat{W}]} \bV^{\bar{W}} \Big( \Omega'; \cR \Big) \bigg),
			\\
			\sum_{\substack{\bar{W} \in \scW_{0,d}[\hat{W}] \\ \hat{W} \in \scW_{0, d}[I]}} & \Big\langle X, (\bar{W}, \hat{W}) \Big\rangle \in \bU\Bigg( \Omega; \bigoplus_{\hat{W}\in \scW_{0,d}[I]} \bV^{\hat{W}} \bigg( \Omega'; \bigoplus_{\bar{W} \in \scW_{0,d}[\hat{W}]} \bV^{\bar{W}} \Big( \Omega'; \cR \Big) \bigg) \Bigg). 
		\end{align*}
	\end{definition}
	
	Further, following the ideas of Equation \eqref{eq:iterative-coupling} we also define for any $n\in \bN$ 
	\begin{equation*}
		\scM_I^{\tilde{\otimes}n}(\Omega, \Omega') = \bU\Bigg( \Omega; \bigoplus_{\hat{W}^1 \in \scW_{0, d}[I]} \bV^{\hat{W}^1}\bigg( \Omega'; \bigoplus_{\hat{W}^{2} \in \scW_{0, d}[\hat{W}^1]} \bV^{\hat{W}^{2}}\Big( \Omega';... \bigoplus_{\hat{W}^n \in \scW_{0, d}[\hat{W}^{n-1}]} \bV^{\hat{W}^n}\big( \Omega'; \cR \big) \Big) \bigg) \Bigg)
	\end{equation*}

	\subsubsection{The coupled coproduct on $\scM_I$}
	
	The purpose of introducing the coupled tensor product in the previous section was to provide a space into which our coupled coproduct that arises from the theory of probabilistic rough paths can map. The next step on our agenda is to introduce the coupled deconcatenation coproduct:
	\begin{definition}
		\label{definition:deconcatenation-coproduct(2)}
		Let $(\Omega, \cF, \bP)$ and $(\Omega', \cF, \bP')$ be probability spaces. Let $(\cR, +, \centerdot)$ be a commutative unital normed ring and let $I$ be an index set. 
		
		Suppose that $\bU\big( \Omega; \cR \big)$ satisfies Assumption \ref{notation:U-(1)} and $\big( \bV^W \big)_{W\in \scW_{0, d}[I]}$ is a collection of modules that satisfies Assumption \ref{notation:V-(2)}. We define
		\begin{equation*}
			\Delta: \scM_I(\Omega, \Omega') \to \scM_I \tilde{\otimes} \scM_I(\Omega,\Omega') 
			\quad \mbox{and} \quad 
			\epsilon:\scM_I(\Omega, \Omega') \to \bU\big( \Omega; \cR \big)
		\end{equation*}
		for $X\in \scM_I(\Omega, \Omega')$ and $(\bar{W},\hat{W}) \in \scW_{0, d}[I] \tilde{\times} \scW_{0, d}[I]$ by
		\begin{equation*}
			\Big\langle \Delta\big[ X \big] , (\bar{W},\hat{W}) \Big\rangle = \sum_{W\in \scW_{0, d}[I]} c_I\Big(W, \bar{W}, \hat{W} \Big) \cdot \Big\langle X, W \Big\rangle
			\quad\mbox{and} \quad
			\epsilon[X] = \Big\langle X, \rId \Big\rangle. 
		\end{equation*}
	\end{definition}

	\begin{proposition}
		\label{proposition:M-coassociativity}
		Let $(\Omega, \cF, \bP)$ and $(\Omega', \cF', \bP')$ be probability spaces. Let $(\cR, +, \centerdot)$ be a commutative unital normed ring and let $I$ be an index set. 
		
		Suppose that $\bU\big( \Omega; \cR \big)$ satisfies Assumption \ref{notation:U-(1)} and $\big( \bV^W \big)_{W\in \scW_{0, d}[I]}$ is a collection of modules that satisfies Assumption \ref{notation:V-(2)}. Let
		\begin{align*}
			\Delta:& \scM_I(\Omega, \Omega') \to \scM_I \tilde{\otimes}\scM_I(\Omega, \Omega')
			\quad\mbox{and}\quad
			\fI: \scM_I(\Omega, \Omega') \to \scM_I(\Omega, \Omega')
		\end{align*}
		be the linear operator defined in Definition \ref{definition:deconcatenation-coproduct(2)} and the identity operator. Additionally, let
		\begin{align*}
			\fI \tilde{\otimes} \Delta : \scM_I \tilde{\otimes} \scM_I(\Omega, \Omega') \to \scM_I^{\tilde{\otimes}3}(\Omega, \Omega')
			\\
			\Delta \tilde{\otimes} \fI : \scM_I \tilde{\otimes} \scM_I(\Omega, \Omega') \to \scM_I^{\tilde{\otimes}3}(\Omega, \Omega')
		\end{align*}
		Then
		\begin{equation}
			\label{eq:proposition:M-coassociativity}
			\fI \tilde{\otimes} \Delta \circ \Delta = \Delta \tilde{\otimes} \fI \circ \Delta
			\quad \mbox{and} \quad 
			\centerdot \circ \epsilon \tilde{\otimes} \fI \circ \Delta = \centerdot \circ \fI \tilde{\otimes} \epsilon \circ \Delta = \fI. 
		\end{equation}
		More specifically, $\big( \scM_I(\Omega, \Omega'), \Delta, \epsilon \big)$ is a coassociative coupled coalgebras over the ring $\big( \bU(\Omega; \cR), +, \centerdot \big)$.
	\end{proposition}
	
	\begin{proof}
		Firstly, note that for any $X \in \scM_I(\Omega, \Omega')$, we have that
		\begin{equation*}
			\Delta[X] = \sum_{(\bar{W}, \hat{W}) \in \scW_{0,d}\tilde{\times} \scW_{0, d}[I] }  \bigg( \sum_{W \in \scW_{0, d}[I]} c_I\Big( W, \bar{W}, \hat{W} \Big) \cdot \Big\langle X, W \Big\rangle \bigg) 
		\end{equation*}
		so that by applying \ref{enum:notation:V-(2)-4} we conclude that
		\begin{align*}
			\bigg( \sum_{W \in \scW_{0, d}[I]} c_I\Big( W, \bar{W}, \hat{W}\Big) \cdot \Big\langle X, W \Big\rangle \bigg) \in& \bU\bigg( \Omega; \bV^{\hat{W}}\Big( \Omega'; \bV^{\bar{W}}\big( \Omega; \cR \big) \Big) \bigg)
			\\
			&\subseteq \bU\bigg( \Omega; \bV^{\hat{W}}\Big( \Omega'; \bigoplus_{\bar{W} \in \scW_{0, d}[\hat{W}]} \bV^{\bar{W}}\big( \Omega; \cR \big) \Big) \bigg)
		\end{align*}
		and as such
		\begin{align*}
			\Delta[X] \in \bU\bigg( \Omega; \bigoplus_{\hat{W} \in \scW_{0, d}[I]} \bV^{\hat{W}}\Big( \Omega'; \bigoplus_{\bar{W} \in \scW_{0, d}[\hat{W}]} \bV^{\bar{W}}\big( \Omega'; \cR \big) \Big) \bigg)
			=\scM_I^{\otimes 2}(\Omega, \Omega'). 
		\end{align*}
		In the same way, by applying Proposition \ref{proposition:M-coassociativity*} we can verify that for every $X \in \scM_I(\Omega, \Omega')$,
		\begin{align*}
			\Big\langle \fI \tilde{\otimes} \Delta \circ \Delta[X], (\check{W}, \bar{W}, \hat{W}) \Big\rangle =& \sum_{W \in \scW_{0, d}[I]} \sum_{\tilde{W} \in \scW_{0, d}[I]} \Big\langle X, W \Big\rangle \cdot c_I\Big( W, \check{W}, \tilde{W} \Big) \cdot c_I \Big( \tilde{W}, \bar{W}, \hat{W} \Big)
			\\
			\Big\langle \Delta \tilde{\otimes} \fI \circ \Delta[X], (\check{W}, \bar{W}, \hat{W}) \Big\rangle =& \sum_{W \in \scW_{0, d}[I]} \sum_{\tilde{W} \in \scW_{0, d}[\hat{W}]} \Big\langle X, W \Big\rangle \cdot c_I\Big( W, \tilde{W}, \hat{W} \Big) \cdot c_I \Big( \tilde{W}, \check{W}, \bar{W} \Big)
		\end{align*}
		so that by applying \ref{enum:notation:V-(2)-4} twice we conclude that
		\begin{align*}
			\bigg( \sum_{W \in \scW_{0, d}[I]}& \sum_{\tilde{W} \in \scW_{0, d}[I]} \Big\langle X, W \Big\rangle \cdot c_I\Big( W, \check{W}, \tilde{W} \Big) \cdot c_I \Big( \tilde{W}, \bar{W}, \hat{W} \Big) \bigg) 
			\\
			\in& \bU\Bigg( \Omega; \bV^{\hat{W}}\bigg( \Omega'; \bV^{\bar{W}}\Big( \Omega'; \bV^{\check{W}}\big( \Omega'; \cR \big) \Big) \bigg) \Bigg)
			\\
			&\subseteq \bU\Bigg( \Omega; \bigoplus_{\hat{W} \in \scW_{0, d}[I]} \bV^{\hat{W}}\bigg( \Omega'; \bigoplus_{\bar{W} \in \scW_{0, d}[\hat{W}]} \bV^{\bar{W}}\Big( \Omega'; \bigoplus_{\check{W} \in \scW_{0, d}[\bar{W}]} \bV^{\check{W}} \big( \Omega'; \cR \big) \Big) \bigg) \Bigg)
			\\
			\bigg( \sum_{W \in \scW_{0, d}[I]}& \sum_{\tilde{W} \in \scW_{0, d}[\hat{W}]} \Big\langle X, W \Big\rangle \cdot c_I\Big( W, \tilde{W}, \hat{W} \Big) \cdot c_I \Big( \tilde{W}, \check{W}, \bar{W} \Big) \bigg) 
			\\
			\in& \bU\Bigg( \Omega; \bV^{\hat{W}}\bigg( \Omega'; \bV^{\bar{W}}\Big( \Omega'; \bV^{\check{W}}\big( \Omega'; \cR \big) \Big) \bigg) \Bigg)
			\\
			&\subseteq \bU\Bigg( \Omega; \bigoplus_{\hat{W} \in \scW_{0, d}[I]} \bV^{\hat{W}}\bigg( \Omega'; \bigoplus_{\bar{W} \in \scW_{0, d}[\hat{W}]} \bV^{\bar{W}}\Big( \Omega'; \bigoplus_{\check{W} \in \scW_{0, d}[\bar{W}]} \bV^{\check{W}} \big( \Omega'; \cR \big) \Big) \bigg) \Bigg)
		\end{align*}
		and as such
		\begin{equation*}
			\fI \tilde{\otimes} \Delta \circ \Delta\big[X \big] \in \scM_I^{\tilde{\otimes}3}(\Omega, \Omega')
			\quad \mbox{and}\quad
			\Delta \tilde{\otimes} \fI \circ \Delta\big[X \big] \in \scM_I^{\tilde{\otimes}3}(\Omega, \Omega'). 
		\end{equation*}
		Next, courtesy of Equation \eqref{eq:proposition:M-coassociativity*-1} we have that
		\begin{align*}
			\bigg( \sum_{W \in \scW_{0, d}[I]}& \sum_{\tilde{W} \in \scW_{0, d}[I]} \Big\langle X, W \Big\rangle \cdot c_I\Big( W, \check{W}, \tilde{W} \Big) \cdot c_I \Big( \tilde{W}, \bar{W}, \hat{W} \Big) \bigg)
			\\
			=&\bigg( \sum_{W \in \scW_{0, d}[I]} \sum_{\tilde{W} \in \scW_{0, d}[\hat{W}]} \Big\langle X, W \Big\rangle \cdot c_I\Big( W, \tilde{W}, \hat{W} \Big) \cdot c_I \Big( \tilde{W}, \check{W}, \bar{W} \Big) \bigg). 
		\end{align*}
		so that we conclude that for every $X \in \scM_I(\Omega, \Omega')$,
		\begin{equation*}
			\fI \tilde{\otimes} \Delta \circ \Delta [X] = \Delta \tilde{\otimes} \fI \circ \Delta[X]. 
		\end{equation*}
		Similarly, thanks to \ref{enum:notation:V-(2)-2}, we have that for every $X \in \scM_I(\Omega, \Omega')$
		\begin{equation*}
			\epsilon[X] \in \bU(\Omega; \cR)
		\end{equation*}
		so that for any $(\bar{W}, \hat{W}) \in \scW_{0, d} \tilde{\times} \scW_{0, d}[I]$
		\begin{align*}
			&\Big\langle \centerdot \circ \epsilon \tilde{\otimes} \fI \circ \Delta[X], (\bar{W}, \hat{W}) \Big\rangle = \sum_{W \in \scW_{0, d}[I]} c_I\Big( W, \bar{W}, \hat{W} \Big) \Big\langle X, W \Big\rangle \delta_{\bar{W} = \rId} = \Big\langle X, \hat{W} \Big\rangle, 
			\\
			&\Big\langle \centerdot \circ \fI \tilde{\otimes} \epsilon \circ \Delta[X], (\bar{W}, \hat{W}) \Big\rangle = \sum_{W \in \scW_{0, d}[I]} c_I\Big( W, \bar{W}, \hat{W} \Big) \Big\langle X, W \Big\rangle \delta_{\hat{W} = \rId} = \Big\langle X, \bar{W} \Big\rangle. 
		\end{align*}
		Summing these terms up, we conclude that Equation \eqref{eq:proposition:M-coassociativity} holds. 
	\end{proof}
	
	\subsubsection{The coupled bialgebra over $\scM_I$}
		
	The purpose of this Section is to establish that the shuffle product and coupled deconcatenation coproduct interact with one another in a specific way similar to the properties of a bialgebra. In order to do this, let us start by considering the difference between the two $\bU(\Omega; \cR)$-modules
	\begin{align*}
		&\scM_I(\Omega, \Omega') \otimes \scM_I(\Omega, \Omega')
		= \bU\bigg( \Omega; \bigoplus_{\substack{(W^1, W^2) \in \\ (\scW_{0, d}\times \scW_{0, d})[I] }} \bV^{W^1}\Big( \Omega'; \cR \Big)\otimes \bV^{W^2}\Big( \Omega'; \cR \Big) \bigg)
		\\
		&\scM_I \tilde{\otimes} \scM_I(\Omega, \Omega')
		= \bU\bigg( \Omega; \bigoplus_{\hat{W}\in \scW_{0, d}[I]} \bV^{\hat{W}}\Big( \Omega'; \bigoplus_{\bar{W} \in \scW_{0, d}[\hat{W}]} \bV^{\bar{W}}\big( \Omega'; \cR \big) \Big) \bigg)
	\end{align*}
	As the collection of modules $\big( \bV^{W}(\Omega'; \cR) \big)_{W \in \scW_{0, d}[I]}$ satisfies Assumption \ref{notation:V-(2)}, we define
	\begin{equation*}
		\Big( \bV^{(W^1, W^2)}(\Omega'; \cR) \Big)_{(W^1, W^2) \in \scW_{0, d} \times \scW_{0, d}[I]},
		\qquad
		\bV^{(W^1, W^2)}(\Omega'; \cR) = \bV^{W^1}(\Omega'; \cR) \otimes \bV^{W^2}(\Omega'; \cR). 
	\end{equation*}
	Then this collection of modules satisfies:
	\begin{enumerate}[label=(\roman*)]
		\item 
		For every $(W^1, W^2) \in (\scW_{0, d} \times \scW_{0, d})[I]$,  we have
		\begin{equation*}
			\bV^{(W^1, W^2)}(\Omega'; \cR) \subseteq L^{\boldsymbol{0}}\Big( \Omega^{\times |\fm[ (W^1, W^2) ]|_1}; \cR \Big); 
		\end{equation*}
		\item 
		For any $(W^1, W^2) \in (\scW_{0, d} \times \scW_{0, d})[I]$ such that $\fm\big[ (W^1, W^2) \big] = (0,0)$, we have that
		\begin{equation*}
			\bV^{(W^1, W^2)}(\Omega'; \cR) = \cR; 
		\end{equation*}
	\end{enumerate}
	
	In particular, following similar ideas to those of Definition \ref{definition:product-couplingsM} gives us that
	\begin{align}
		\nonumber
		&\Big( \scM_I(\Omega, \Omega') \otimes \scM_I(\Omega, \Omega') \Big) \tilde{\otimes} \Big( \scM_I(\Omega, \Omega') \otimes \scM_I(\Omega, \Omega') \Big) 
		\\
		\label{eq:couptes-of-tesor}
		&= \bU\Bigg( \Omega; \bigoplus_{\substack{(\hat{W}^1, \hat{W}^2) \in \\ (\scW_{0,d} \times \scW_{0, d})[I]}} \bV^{(\hat{W}^1, \hat{W}^2)} \bigg( \Omega'; \bigoplus_{\substack{(\bar{W}^1, \bar{W}^2) \in \\ (\scW_{0,d} \times \scW_{0, d})[\hat{W}^1,\hat{W}^2] }} \bV^{(\hat{W}^1, \hat{W}^2)} \Big( \Omega'; \cR \Big) \bigg) \Bigg) 
	\end{align}
	In contrast to Equation \eqref{eq:couptes-of-tesor}, we can canonically define the tensor product of the two $\bU(\Omega; \cR)$-modules 
	\begin{align*}
		&\Big( \scM_I \tilde{\otimes} \scM_I(\Omega, \Omega') \Big) \otimes \Big( \scM_I \tilde{\otimes} \scM_I(\Omega, \Omega') \Big) 
		\\
		&= \bU\Bigg( \Omega; \bigoplus_{\substack{(\hat{W}^1, \hat{W}^2) \in \\ (\scW_{0, d} \times \scW_{0, d})[I] }} \bV^{\hat{W}^1}\bigg( \Omega'; \bigoplus_{\bar{W}^1 \in \scW_{0, d}[\hat{W}^1]} \bV^{\bar{W}^1}\Big( \Omega'; \cR \Big)\bigg) \otimes \bV^{\hat{W}^2}\bigg( \Omega'; \bigoplus_{\bar{W}^2 \in \scW_{0, d}[\hat{W}^2]} \bV^{\bar{W}^2}\Big( \Omega'; \cR \Big)\bigg) \Bigg).  
	\end{align*}
	In order to demonstrate the appropriate bialgebra properties of $\scM_I(\Omega, \Omega')$, we need to introduce our own notion of $\mbox{Twist}$ operation in the same fashion as Definition \ref{definition:Twist-}:
	\begin{definition}
		\label{definition:Twist}
		Let $(\Omega, \cF, \bP)$ and $(\Omega', \cF', \bP')$ be probability spaces. Let $(\cR, +, \centerdot)$ be a commutative unital normed ring and let $I$ be an index set. 
		
		Suppose that $\bU\big( \Omega; \cR \big)$ satisfies Assumption \ref{notation:U-(1)} and $\big( \bV^W \big)_{W\in \scW_{0, d}[I]}$ is a collection of modules that satisfies Assumption \ref{notation:V-(2)}. 
		
		We define $\overline{\mbox{Twist}}$ to be the operator
		\begin{align*}
			\overline{\mbox{Twist}}:& \Big( \scM_I \tilde{\otimes} \scM_I(\Omega, \Omega') \Big) \otimes \Big( \scM_I \tilde{\otimes} \scM_I(\Omega, \Omega') \Big) 
			\\
			&\to \Big( \scM_I(\Omega, \Omega') \otimes \scM_I(\Omega, \Omega')\Big) \tilde{\otimes} \Big( \scM_I(\Omega, \Omega') \otimes \scM_I(\Omega, \Omega') \Big)
		\end{align*}
		that satisfies
		\begin{align*}
			&\overline{\mbox{Twist}}\Bigg[ \bigg( \sum_{\substack{(\bar{W}^1, \hat{W}^1) \in \\ (\scW_{0,d}\tilde{\times} \scW_{0,d})[I] }} \Big\langle X^1, \bar{W}^1, \hat{W}^1 \Big\rangle(\omega_I, \omega'_{\fm\{\hat{W}^1\}}, \omega'_{\fm\{\bar{W}^1\}} ) \bigg)
			\\
			&\qquad \otimes \bigg( \sum_{\substack{(\bar{W}^2, \hat{W}^2) \in \\ (\scW_{0,d} \tilde{\times} \scW_{0,d})[I] }} \Big\langle X^2, (\bar{W}^2, \hat{W}^2) \Big\rangle(\omega_I, \omega'_{\fm\{\hat{W}^2\}}, \omega'_{\fm\{\bar{W}^2\}} ) \bigg) \Bigg](\omega_I)
			\\
			&= \sum_{\substack{(\hat{W}^1, \hat{W}^2) \in \\ (\scW_{0,d} \times \scW_{0,d})[I] }} \sum_{\substack{(\bar{W}^1, \bar{W}^2 ) \in \\ (\scW_{0,d} \times \scW_{0,d})[(\hat{W}^1,\hat{W}^2)] }} \hspace{-10pt} \mbox{Twist}\bigg[ \Big\langle X^1, (\bar{W}^1, \hat{W}^1) \Big\rangle(\omega_I, \omega'_{\fm\{\hat{W}^1\}}, \omega'_{\fm\{\bar{W}^1\}} ) 
			\\
			&\hspace{120pt}\otimes \Big\langle X^2, (\bar{W}^2, \hat{W}^2) \Big\rangle(\omega_I, \omega'_{\fm\{\hat{W}^2\}}, \omega'_{\fm\{\bar{W}^2\}} ) \bigg]. 
		\end{align*}
		where $\mbox{Twist}$ is the linear map that for any $(\bar{W}^1, \hat{W}^1), (\bar{W}^2, \hat{W}^2) \in (\scW_{0, d} \tilde{\times} \scW_{0,d})[I]$, 
		\begin{equation*}
			\mbox{Twist} : \bV^{\hat{W}^1}\Big( \Omega'; \bV^{\bar{W}^1}\big( \Omega'; \cR\big) \Big) \otimes \bV^{\hat{W}^2}\Big( \Omega'; \bV^{\bar{W}^2}\big( \Omega'; \cR \big) \Big) \to \bV^{(\hat{W}^1, \hat{W}^2)} \Big( \Omega'; \bV^{(\bar{W}^1, \bar{W}^2)}\big( \Omega'; \cR \big) \Big)
		\end{equation*}
		via the canonical embeddings 
		\begin{equation}
			\label{eq:definition:Twist-canon}
			\bV^{\hat{W}^i}\Big( \Omega'; \bV^{\bar{W}^i}\big( \Omega'; \cR \big) \Big) \hookrightarrow \bV^{\hat{W}^i}\Big( \Omega'; \bV^{\bar{W}^i}\big( \Omega'; \cR \big) \otimes \bV^{\bar{W}^j}\big( \Omega'; \cR \big) \Big).
		\end{equation}
	\end{definition}
		
	\begin{theorem}
		\label{theorem:M-coupledBialgebra}
		Let $(\Omega, \cF, \bP)$ and $(\Omega', \cF', \bP')$ be probability spaces. Let $(\cR, +, \centerdot)$ be a commutative unital normed ring and let $I$ be an index set. 
		
		Suppose that $\bU\big( \Omega; \cR \big)$ satisfies Assumption \ref{notation:U-(1)} and $\big( \bV^W \big)_{W\in \scW_{0, d}[I]}$ is a collection of modules that satisfies Assumption \ref{notation:V-(2)}.
		
		Then 
		\begin{equation}
			\label{eq:theorem:M-coupledBialgebra}
			\begin{aligned}
				\Delta \circ \tilde{\shuffle} =& \Big( \tilde{\shuffle} \tilde{\otimes} \tilde{\shuffle} \Big) \circ \overline{\mbox{Twist}} \circ \Big( \Delta \otimes \Delta\Big),
				\\
				\epsilon \circ \tilde{\shuffle} =& \centerdot \circ \epsilon \otimes \epsilon,
				\quad 
				\Delta \circ \rId \circ \centerdot = \rId \otimes \rId,
				\quad
				\epsilon \circ \rId = \fI_{\cR}. 
			\end{aligned}
		\end{equation}
		We say that $\big( \scM_I(\Omega, \Omega'), \tilde{\shuffle}, \rId, \Delta, \epsilon \big)$ is a coupled bialgebra over the ring $\big( \bU(\Omega; \cR), +, \centerdot \big)$. More specifically, $\big( \scM_I(\Omega, \Omega'), \tilde{\shuffle}, \rId \big)$ are associative algebras and $\big( \scM_I(\Omega, \Omega'), \Delta, \epsilon \big)$ is a co-associative coupled coalgebras.
	\end{theorem}
	
	\begin{proof}
		Firstly, since $\tilde{\shuffle}$ is bilinear, we can extend it to a linear operator
		\begin{equation*}
			\tilde{\shuffle}: \scM_I(\Omega, \Omega') \otimes \scM_I(\Omega,\Omega') \to \scM_I(\Omega, \Omega')
		\end{equation*}
		which implies that
		\begin{equation*}
			\Delta \circ \tilde{\shuffle}: \scM_I(\Omega, \Omega') \otimes \scM_I(\Omega,\Omega') \to \scM_I\tilde{\otimes} \scM_I(\Omega, \Omega'). 
		\end{equation*}
		In the same fashion, we also obtain that
		\begin{equation*}
			\Big( \tilde{\shuffle} \tilde{\otimes} \tilde{\shuffle} \Big) \circ \overline{\mbox{Twist}} \circ \Big( \Delta \otimes \Delta \Big): \scM_I(\Omega, \Omega') \otimes \scM_I(\Omega,\Omega') \to \scM_I\tilde{\otimes} \scM_I(\Omega, \Omega'). 
		\end{equation*}
		Let $X, Y \in \scM_I(\Omega, \Omega')$ so that 
		\begin{equation*}
			X \otimes Y \in \scM_I(\Omega, \Omega') \otimes \scM_I(\Omega, \Omega'). 
		\end{equation*} 
		Then for any coupled pair $(\bar{W}, \hat{W}) \in (\scW_{0, d} \tilde{\times} \scW_{0, d})[I]$, we have that 
		\begin{align}
			\nonumber
			\Big\langle& \Delta \circ \tilde{\shuffle}\big[ X \otimes Y \big], (\bar{W}, \hat{W}) \Big\rangle
			=\sum_{W \in \scW_{0, d}[I]} \Big\langle \tilde{\shuffle}[X \otimes Y], W \Big\rangle c_I \Big( W, \bar{W}, \hat{W} \Big)
			\\
			\label{eq:pf:theorem:M-coupledBialgebra}
			&=\sum_{W \in \scW_{0, d}[I]} \bigg( \sum_{\substack{W^1, W^2 \in \scW_{0, d}[I] \\ \exists \sigma \in \Shuf(W^1, W^2) \\ \sigma(W^1, W^2) = W}} \Big\langle X, W^1\Big\rangle \otimes \Big\langle Y, W^2 \Big\rangle \bigg) \cdot c_I\Big( W, \bar{W}, \hat{W} \Big). 
		\end{align}
		On the other hand for any $(\bar{W}^1, \hat{W}^1), (\bar{W}^2, \hat{W}^2) \in (\scW_{0, d} \tilde{\times} \scW_{0, d})[I]$, 
		\begin{align}
			\nonumber
			\Big\langle \Delta& \otimes \Delta \big[ X \otimes Y \big], \big( (\bar{W}^1, \hat{W}^1), (\bar{W}^2, \hat{W}^2) \big) \Big\rangle 
			\\
			\nonumber
			&= 
			\bigg( \sum_{W^1 \in \scW_{0, d}[I]} c_I\Big( W^1, \bar{W}^1, \hat{W}^1\Big) \cdot \Big\langle X, W^1 \Big\rangle \bigg) \otimes \bigg( \sum_{W^2 \in \scW_{0, d}[I]} c_I\Big( W^2, \bar{W}^2, \hat{W}^2\Big) \cdot \Big\langle Y, W^2 \Big\rangle \bigg)
			\\
			\label{eq:theorem:M-coupledBialgebra:pf1}
			&= \Big\langle \overline{\mbox{Twist}} \circ \Delta \otimes \Delta \big[ X \otimes Y \big], \big( (\bar{W}^1, \bar{W}^2), (\hat{W}^1, \hat{W}^2) \big) \Big\rangle
		\end{align}
		from the embedding Equation \eqref{eq:definition:Twist-canon}. 
		
		Next, the linear operator 
		\begin{equation*}
			\tilde{\shuffle} \tilde{\otimes} \tilde{\shuffle} : \Big( \scM_I(\Omega, \Omega') \otimes \scM_I(\Omega, \Omega') \Big) \tilde{\otimes} \Big( \scM_I(\Omega, \Omega') \otimes \scM_I(\Omega, \Omega') \Big) \to  \scM_I \tilde{\otimes} \scM_I(\Omega, \Omega') 
		\end{equation*}
		is defined for 
		\begin{equation*}
			Z \in \Big( \scM_I(\Omega, \Omega') \otimes \scM_I(\Omega, \Omega') \Big) \tilde{\otimes} \Big( \scM_I(\Omega, \Omega') \otimes \scM_I(\Omega, \Omega') \Big) 
		\end{equation*}
		and $(\bar{W}, \hat{W}) \in (\scW_{0, d} \tilde{\times} \scW_{0, d})[I]$ by
		\begin{equation}
			\label{eq:theorem:M-coupledBialgebra:pf2}
			\Big\langle \tilde{\shuffle} \tilde{\otimes} \tilde{\shuffle}\big[ Z \big], (\bar{W}, \hat{W}) \Big\rangle = \sum_{\substack{\hat{W}^1, \hat{W}^2 \in \scW_{0, d}[I] \\ \exists \sigma' \in \Shuf(|\hat{W}^1|, |\hat{W}^1|) \\ \sigma'(\hat{W}^1, \hat{W}^2) = \hat{W}}} \sum_{\substack{\bar{W}^1, \bar{W}^2 \in \scW_{0, d}[\hat{W}] \\ \exists \sigma \in \Shuf(|\bar{W}^1|, |\bar{W}^1|) \\ \sigma(\bar{W}^1, \bar{W}^2) = \bar{W}}} \Big\langle Z, \big( (\bar{W}^1, \bar{W}^2),(\hat{W}^1, \hat{W}^2) \big) \Big\rangle. 
		\end{equation}
		
		Therefore combining Equations \eqref{eq:theorem:M-coupledBialgebra:pf1} and \eqref{eq:theorem:M-coupledBialgebra:pf2} gives us that
		\begin{align*}
			\bigg\langle &\Big( \tilde{\shuffle} \tilde{\otimes} \tilde{\shuffle} \Big) \circ \overline{\mbox{Twist}} \circ \Big( \Delta \otimes \Delta\Big)\big[ X \otimes Y \big], \big( \bar{W}, \hat{W} \big) \bigg\rangle
			\\
			&=\sum_{\substack{\hat{W}^1, \hat{W}^2 \in \scW_{0, d}[I] \\ \exists \sigma' \in \Shuf(|\hat{W}^1|, |\hat{W}^1|) \\ \sigma'(\hat{W}^1, \hat{W}^2) = \hat{W}}} \sum_{\substack{\bar{W}^1, \bar{W}^2 \in \scW_{0, d}[\hat{W}] \\ \exists \sigma \in \Shuf(|\bar{W}^1|, |\bar{W}^1|) \\ \sigma(\bar{W}^1, \bar{W}^2) = \bar{W}}} \bigg\langle \overline{\mbox{Twist}} \circ \Big( \Delta \otimes \Delta\Big)\big[ X \otimes Y \big], \big( (\bar{W}^1, \bar{W}^2), (\hat{W}^1, \hat{W}^2) \big) \bigg\rangle
			\\
			&= \sum_{\substack{\hat{W}^1, \hat{W}^2 \in \scW_{0, d}[I] \\ \exists \sigma' \in \Shuf(|\hat{W}^1|, |\hat{W}^1|) \\ \sigma'(\hat{W}^1, \hat{W}^2) = \hat{W}}} \sum_{\substack{\bar{W}^1, \bar{W}^2 \in \scW_{0, d}[\hat{W}] \\ \exists \sigma \in \Shuf(|\bar{W}^1|, |\bar{W}^1|) \\ \sigma(\bar{W}^1, \bar{W}^2) = \bar{W}}} \bigg( \sum_{W^1 \in \scW_{0, d}[I]} c_I\Big( W^1, \bar{W}^1, \hat{W}^1\Big) \cdot \Big\langle X, W^1 \Big\rangle \bigg) 
			\\
			&\quad \otimes \bigg( \sum_{W^2 \in \scW_{0, d}[I]} c_I\Big( W^2, \bar{W}^2, \hat{W}^2\Big) \cdot \Big\langle Y, W^2 \Big\rangle \bigg)
		\end{align*}
		By applying Theorem \ref{theorem:M-coupledBialgebra-} to swap the order of summations, we conclude with Equation \eqref{eq:pf:theorem:M-coupledBialgebra} that
		\begin{align*}
			\bigg\langle &\Big( \tilde{\shuffle} \tilde{\otimes} \tilde{\shuffle} \Big) \circ \overline{\mbox{Twist}} \circ \Big( \Delta \otimes \Delta\Big)\big[ X \otimes Y \big], \big( \bar{W}, \hat{W} \big) \bigg\rangle
			\\
			&=\sum_{W \in \scW_{0, d}[I]} \bigg( \sum_{\substack{W^1, W^2 \in \scW_{0, d}[I] \\ \exists \sigma \in \Shuf(W^1, W^2) \\ \sigma(W^1, W^2) = W}} \Big\langle X, W^1\Big\rangle \otimes \Big\langle Y, W^2 \Big\rangle \bigg) \cdot c_I\Big( W, \bar{W}, \hat{W} \Big)
			\\
			&= \Big\langle  \Delta \circ \tilde{\shuffle} \big[ X \otimes Y \big], \big( \bar{W}, \hat{W} \big) \Big\rangle 
		\end{align*}
		In the same fashion, 
		\begin{equation*}
			\Big\langle \epsilon \circ \tilde{\shuffle}\big[ X \otimes Y \big], W \Big\rangle 
			= \Big\langle X, \rId \Big\rangle \centerdot \Big\langle Y, \rId \Big\rangle
			= \Big\langle \centerdot \circ \epsilon \otimes \epsilon\big[ X \otimes Y \big], W \Big\rangle. 
		\end{equation*}
		By considering the linear operator $\rId: \bU(\Omega; \cR) \to \scM_I(\Omega, \Omega')$, we obtain that for any $R^1, R^2 \in \bU(\Omega; \cR)$ and $(\bar{W}, \hat{W}) \in (\scW_{0, d} \tilde{\times} \scW_{0, d})[I]$ that
		\begin{align*}
			\Big\langle \rId \otimes \rId\big[ R^1 \otimes R^2 \big], (\bar{W}, \hat{W}) \Big\rangle =& 
			\begin{cases}
				R^1 \centerdot R^2 \quad&\quad \mbox{if } (\bar{W}, \hat{W})=\rId \otimes \rId
				\\
				0 \quad&\quad \mbox{otherwise. }
			\end{cases}
			\\
			=& \Big\langle \rId \circ \centerdot \big[ R^1 \otimes R^2 \big], (\bar{W}, \hat{W}) \Big\rangle
			= \Big\langle \Delta \circ \rId \circ \centerdot \big[ R^1 \otimes R^2 \big], (\bar{W}, \hat{W}) \Big\rangle
		\end{align*}
		due to Equation \eqref{eq:theorem:M-coupledBialgebra-}. It is immediate that $\epsilon \circ \rId = \fI_{\cR}$ so that Equation \eqref{eq:theorem:M-coupledBialgebra} holds. 
	\end{proof}

	\subsubsection{The grading over $\scM_I$}
	\label{subsubsection:grading-M}
	
	Before we conclude with a truncation argument, we show that the coupled bialgebra we have been working with admits a grading:	
	\begin{definition}		
		Let $(\Omega, \cF, \bP)$ and $(\Omega', \cF', \bP')$ be probability spaces. Let $(\cR, +, \centerdot)$ be a commutative unital normed ring and let $I$ be an index set. 
		
		Suppose that $\bU\big( \Omega; \cR \big)$ satisfies Assumption \ref{notation:U-(1)} and $\big( \bV^W \big)_{W\in \scW_{0, d}[I]}$ is a collection of modules that satisfies Assumption \ref{notation:V-(2)}.
		
		For $(k, n) \in \bN_0^{\times 2}$, we define the $\bU(\Omega; \cR)$-module
		\begin{align*}
			\scM_I^{(k, n)}\big( \Omega, \Omega' \big) 
			:=& 
			\bU\bigg( \Omega; \bigoplus_{\substack{W\in \scW_{0,d}[I] \\ \scG^I[W] = (k,n)}}
			\bV^W\Big( \Omega'; \cR \Big) \bigg). 
		\end{align*}
		while for $(k_1, n_1), (k_2, n_2) \in \bN_0^{\times 2}$, we define the $\bU(\Omega; \cR)$-module
		\begin{align*}
			\scM_I^{(k_1, n_1)} \tilde{\otimes} \scM_I^{(k_2, n_2)}\big( \Omega, \Omega' \big)
			:=
			\bU\bigg( \Omega; \bigoplus_{\hat{W} \in \scW_{0,d}^{(k_2, n_2)}[I]} \bV^{\hat{W}}\Big( \Omega'; \bigoplus_{\bar{W} \in \scW_{0, d}^{(k_1, n_1)}[\hat{W}] } \bV^{\bar{W}}\big(\Omega'; \cR \big) \Big) \bigg).  
		\end{align*}
	\end{definition}
	
	\begin{proposition}
		\label{proposition:M-Grading}
		Let $(\Omega, \cF, \bP)$ and $(\Omega', \cF', \bP')$ be probability spaces. Let $(\cR, +, \centerdot)$ be a unital normed ring and let $I$ be an index set. 
		
		Suppose that $\bU\big( \Omega; \cR \big)$ satisfies Assumption \ref{notation:U-(1)} and $\big( \bV^W \big)_{W\in \scW_{0, d}[I]}$ is a collection of modules that satisfies Assumption \ref{notation:V-(2)}. Then 
		\begin{equation}
			\label{eq:proposition:M-Grading}
			\begin{split}
				&\scM_I^{(0,0)}(\Omega, \Omega') = \bU\big( \Omega; \cR \big), 
				\\
				&\scM_I^{(k^1, n^1)}(\Omega, \Omega') \tilde{\shuffle} \scM_I^{(k^2, n^2)}(\Omega, \Omega') \subseteq  \scM_I^{(k^1+k^2, n^1+n^2)}(\Omega, \Omega'),  
				\\
				&\Delta\Big[ \scM_I^{(k, n)}(\Omega, \Omega') \Big] \subseteq  \bigoplus_{k'=0}^n \bigoplus_{n'=0}^n \Big(  \scM_I^{(k - k', n - n')} \Big) \tilde{\otimes} \scM_I^{(k', n')}(\Omega, \Omega'). 
			\end{split}
		\end{equation}
		That is, the function $\scG^I:\scW_{0,d}[I] \to \bN_0^{\times 2}$ as defined in Equation \eqref{eq:lemma:grading(1)} describes a $\bN_0^{\times 2}$-grading on the connected coupled bialgebras $\big( \scM_I(\Omega, \Omega'), \tilde{\shuffle}, \rId, \Delta, \epsilon \big)$. 
	\end{proposition}	
	
	\begin{proof}
		This follows pretty naturally from Corollary \ref{lemma:M-Finite-grading-} along with similar arguments to the proofs of Proposition \ref{proposition:associativity(1)} and \ref{proposition:M-coassociativity} plus Theorem \ref{theorem:M-coupledBialgebra}. 
	\end{proof}
	
	Following on from Equation \eqref{eq:truncatedWord}, we define
	\begin{equation}
		\label{eq:truncatedWord-Mod}
		\begin{aligned}
			\scM_I^{g, -}(\Omega, \Omega'):=& \bU\bigg( \Omega; \bigoplus_{W \in \scW_{0, d}^{g,-}[I]} \bV^{W}\Big(\Omega'; \cR \Big) \bigg)
			\\
			\scM_I^{g, +}(\Omega, \Omega'):=& \bU\bigg( \Omega; \bigoplus_{W \in \scW_{0, d}^{g,+}[I]} \bV^{W}\Big(\Omega'; \cR \Big) \bigg)
		\end{aligned}
	\end{equation}
	
	\begin{corollary}
		\label{lemma:M-Finite-grading}
		Let $(\Omega, \cF, \bP)$ and $(\Omega', \cF', \bP')$ be probability spaces. Let $(\cR, +, \centerdot)$ be a unital normed ring and let $I$ be an index set. 
		
		Suppose that $\bU\big( \Omega; \cR \big)$ satisfies Assumption \ref{notation:U-(1)} and $\big( \bV^W \big)_{W\in \scW_{0, d}[I]}$ is a collection of modules that satisfies Assumption \ref{notation:V-(2)}.
		
		Let $g: \bN_0^{\times |I|} \times \bN_0 \to \bR$ be a monotone increasing function such that $g(0_I, 0) \leq 0$. Then 
		\begin{enumerate}[label=(\ref*{lemma:M-Finite-grading}.\roman*)]
			\item
			\label{enum:lemma:M-Finitegrading-1}
			The $\bU(\Omega; \cR)$-module
			\begin{equation*}
				\scM_I^{g, +}(\Omega, \Omega')
				\quad \mbox{is an algebra ideals of } \quad
				\Big( \scM_I(\Omega, \Omega'), \tilde{\shuffle}, \rId \Big)
			\end{equation*}
			and we can identify the algebra over the $\bU(\Omega; \cR)$-module quotient by
			\begin{align*}
				\scM_I^{g,-}(\Omega, \Omega') =& \scM_I(\Omega,\Omega') / \scM_I^{g,+}(\Omega,\Omega') 
			\end{align*}
			\item 
			\label{enum:lemma:M-Finitegrading-2}
			The coupled coproduct and counit $(\Delta, \epsilon)$ restricted to the sub-module
			\begin{equation*}
				\Big( \scM_I^{g,-}(\Omega,\Omega') , \Delta, \epsilon \Big)
			\end{equation*}
			is a co-associative sub-coupled coalgebras of $\big( \scM_I(\Omega,\Omega'), \Delta, \epsilon \big)$. 
			\item 
			\label{enum:lemma:M-Finitegrading-3}
			By pairing the quotient algebra and unit $(\tilde{\shuffle}, \rId)$ to the restriction of the coupled coproduct and counit $(\Delta, \rId)$, we obtain
			\begin{equation*}
				\Big( \scM_I^{g,-}(\Omega,\Omega'), \tilde{\shuffle}, \rId, \Delta, \epsilon \Big)
			\end{equation*}
			satisfies the commutative identities of Equation \eqref{eq:theorem:M-coupledBialgebra-}. 
		\end{enumerate}
	\end{corollary}
	
	\begin{proof}
		Follows in the same fashion as Corollary \ref{lemma:M-Finite-grading-}:
		
		\ref{enum:lemma:M-Finitegrading-1}: courtesy of \ref{enum:lemma:M-Finite-grading-1} we conclude that for any 
		\begin{equation*}
			X \in \scM_I(\Omega, \Omega')
			\quad \mbox{and}\quad
			Y \in \scM_I^{g,+}(\Omega, \Omega')
		\end{equation*}
		and for any $W \in \scW_{0, d}^{g, -}$ that
		\begin{equation*}
			\Big\langle X \tilde{\shuffle} Y, W \Big\rangle = \sum_{\substack{W^1, W^2 \in \scW_{0,d}[I] \\ \sigma \in \Shuf(|W^1|, |W^2|) \\ \sigma(W^1, W^2)  =W}} \Big\langle X, W^1\Big\rangle \otimes \Big\langle Y, W^2 \Big\rangle = 0
			\quad \mbox{so that}\quad
			X \tilde{\shuffle} Y \in \scM_I^{g,+}(\Omega,\Omega'). 
		\end{equation*}
		
		\ref{enum:lemma:M-Finitegrading-2}: courtesy of \ref{enum:lemma:M-Finite-grading-2} we conclude that for any $X \in \scM_I^{g,-}(\Omega, \Omega')$, any pair $(\bar{W}, \hat{W}) \in (\scW_{0, d} \tilde{\times} \scW_{0, d})[I]$ such that either
		\begin{equation*}
			g\Big( \scG^I[\bar{W}] \Big)>0
			\quad \mbox{or}\quad
			g\Big( \scG^I[\hat{W}] \Big)>0,
		\end{equation*} 
		we conclude via the monotonicity of $g$ that
		\begin{equation*}
			\Big\langle \Delta\big[ X \big], (\bar{W},\hat{W}) \Big\rangle = 0
			\quad \mbox{so that}\quad
			\Delta \big[ X \big] \in \scM_I^{g, -} \tilde{\otimes} \scM_I^{g, -}(\Omega, \Omega'). 
		\end{equation*}
		
		\ref{enum:lemma:M-Finitegrading-3}: Follows from \ref{enum:lemma:M-Finite-grading-2} and Theorem \ref{theorem:M-coupledBialgebra}. 
	\end{proof}
	}

	\section{Lions forests and the associated coupled bialgebra}
	\label{section:Lions-Trees}
	
	By a directed graph, we mean a pair $(\scN, \scE)$ where $\scN$ is a set of distinct elements which we call nodes (sometimes referred to as vertices in the literature) and $\scE\subseteq \scN\times \scN$ which we call edges. A directed tree is a graph that is connected, acyclic and all directed edges point towards a single node which we call the root. If a directed graph has a finite number of connected components, each of which is a directed tree, we refer to it as a directed forest. We denote the set of directed forests by $\fF$. When referring to more than one directed trees, it will always be assumed that any two sets of nodes are disjoint. 

	For a directed forest $(\scN, \scE)$ with set of roots $\fr(\scN)$, every element $y\in \scN$ has a unique sequence $(y_i)_{i=1, ..., n} \in \scN$ such that $y_1=y$, $y_n \in \fr(\scN)$ and $(y_i, y_{i+1}) \in \scE$ for $i=1, ..., n-1$.
		
	There is a Reflexive, Transitive binary relationship $\leq$ over the nodes of a directed forest $\scN$ determined by the number of edges along the unique path from each node to a root. 
	
	That is $x \leq y \iff$
	\begin{align*}
		&\exists! (x_i)_{i=1, ..., m}: x_1 = x, x_m \in \fr(\scN) \mbox{ and } \forall i=1, ..., m-1, (x_i, x_{i+1}) \in \scE, 
		\\
		&\exists! (y_i)_{i=1, ..., n}: y_1 = y, y_n \in \fr(\scN) \mbox{ and } \forall i=1, ..., n-1, (y_i, y_{i+1}) \in \scE,
		\\
		&\quad \mbox{and} \quad m \leq n.  
	\end{align*}
	When $x\leq y$ and not $y \leq x$, we additionally denote the Transitive binary relationship $<$. Thus $(\scN, \leq)$ is a preorder and the set of all nodes such that
	\begin{equation}
		\label{eq:EquivalenceClassTree}
		x \lesseqqgtr y \quad \mbox{or equivalently} \quad x\leq y \mbox{ and } y \leq x
	\end{equation}
	form equivalence classes of the nodes. 
		
	We start with the following notion which will be used intensively for the rest of the paper:
	\begin{definition}
		Let $\scN$ be a non-empty set containing a finite number of elements and let $H \subseteq 2^{\scN}\backslash \emptyset$ be a collection of subsets of $\scN$. Then we say that the pair $(\scN, H)$ is a hypergraph. The elements $h\in H$ are referred to as hyperedges. 
	\end{definition}
	
	A hypergraph $(\scN, H)$ is a generalisation of a graph in which "edges" $h\in H$ can contain any positive number of nodes, rather than specifically two. Hypergraphs are sometimes referred to as \emph{range spaces} in computational geometry, \emph{simple games} in cooperative game theory and in some literature hyperedges are referred to as \emph{hyperlinks} or \emph{connectors}. 
	
	A hyperedge is said to be \emph{$d$-regular} if every node is contained in exactly $d$ hyperedges. In this work, we make no assumptions about the users background in hypergraph theory. However, the curious reader may choose to refer to \cite{Bretto2014Hypergraph} for a more general introduction to the theory of hypergraphs. 
	
	\begin{example}
		\label{example:3:2}
		The following are all visual representations of 1-regular hypergraphs:
		$$
		\begin{tikzpicture}
			\node[vertex, label=right:{\footnotesize 3}] at (0,0) {}; 
			\node[vertex, label=right:{\footnotesize 2}] at (0.5,1) {}; 
			\node[vertex, label=right:{\footnotesize 1}] at (-0.5,1) {}; 
			\begin{pgfonlayer}{background}
				\draw[zhyedge1] (0,0) -- (-0,0);
				\draw[zhyedge2] (0,0) -- (-0,0);
				\draw[hyedge1, color=red] (-0.5,1) -- (0.5,1);
				\draw[hyedge2] (-0.5,1) -- (0.5,1);
			\end{pgfonlayer}
		\end{tikzpicture}
		,\qquad
		\begin{tikzpicture}
			\node[vertex, label=right:{\footnotesize 3}] at (0,0) {}; 
			\node[vertex, label=right:{\footnotesize 2}] at (0.5,1) {}; 
			\node[vertex, label=right:{\footnotesize 1}] at (-0.5,1) {}; 
			\node[vertex, label=right:{\footnotesize 4}] at (0.5,2) {}; 
			\begin{pgfonlayer}{background}
				\draw[hyedge1, color=red] (0.5,2) -- (0.5,1) -- (0,0) -- (-0.5,1);
				\draw[hyedge2] (0.5,2) -- (0.5,1) -- (0,0) -- (-0.5,1);
			\end{pgfonlayer}
		\end{tikzpicture}
		, \qquad
		\begin{tikzpicture}
			\node[vertex, label=right:{\footnotesize 1}] at (0,0) {}; 
			\node[vertex, label=right:{\footnotesize 2}] at (1,1) {}; 
			\node[vertex, label=right:{\footnotesize 3}] at (-1,1) {}; 
			\node[vertex, label=right:{\footnotesize 4}] at (0,1) {}; 
			\node[vertex, label=right:{\footnotesize 5}] at (1,2) {}; 
			\begin{pgfonlayer}{background}
				\draw[zhyedge1] (0,0) -- (0,0);
				\draw[zhyedge2] (0,0) -- (0,0);
				\draw[hyedge1, color=red] (-1,1) -- (-1,1);
				\draw[hyedge2] (-1,1) -- (-1,1);
				\draw[hyedge1, color=blue] (0,1) -- (0,1);
				\draw[hyedge2] (0,1) -- (0,1);
				\draw[hyedge1, color=green] (1,1) -- (1,2);
				\draw[hyedge2] (1,1) -- (1,2);
			\end{pgfonlayer}
		\end{tikzpicture}. 
		$$
		These correspond to the Hypergraphs 
		$$
		\Big( \{1,2,3\}, \big\{ \{1,2\}, \{3\}\big\} \Big), \quad
		\Big( \{1,2,3,4\}, \big\{ \{1,2,3,4\}\big\}\Big), \quad
		\Big( \{1,2,3,4,5\}, \big\{ \{1\}, \{2,5\}, \{3\}, \{4\}\big\}\Big).
		$$ 
		We emphasise that despite the aethetics of these examples, these are not trees (or even graphs). 
	\end{example}
	
	\subsection{Lions forests and their properties}
	
	With this standard notation fixed, we start by introducing two classes of forests paired with a hypergraphic structure. These definitions can both be found in \cite{salkeld2022ExamplePRP} and \cite{2021Probabilistic} and we encourage the curious reader to explore how these abstract objects are used in these works as the index set for the regularity structure describing the dynamics of the McKean-Vlasov equation and the associated system of interacting equations:

	\begin{definition}
		\label{definition:Forests}
		Let $I$ be an index set and let $(\scN, \scE) \in \fF$ with preordering $\leq$. Let $(h_I, H) \in \scP(\scN)[I]$ and denote $H' = \big(H \cup \{ h_\iota: \iota \in I\} \big) \backslash\{\emptyset\}$. 
		
		We refer to $T = (\scN, \scE, h_I, H)$ as a \emph{tagged Lions forest} if
		\begin{enumerate}[label=($\ast$.\arabic*)]
			\item 
			\label{definition:Forests:2.2}
			For $h \in H'$, suppose $x, y\in h$ and $x<y$. Then $\exists z \in h$ such that $(y, z)\in \scE$. 
			\item 
			\label{definition:Forests:2.3}
			For $h \in H'$, suppose $x_1, y_1 \in h$, $x_1 \lesseqqgtr y_1$, $(x_1, x_2), (y_1, y_2) \in \scE$ and $x_2 \neq y_2$. Then $x_2, y_2 \in h$. 
			\item 
			\label{definition:Forests:2.1}
			For all $\iota\in I$, $h_\iota \neq \emptyset$ implies that $\exists x\in h_\iota$ such that $\forall y\in \scN$, $x\leq y$. 
		\end{enumerate}
		The collection of all such tagged Lions forests is denoted $\scF[I]$. When a tagged Lions forest $T=( \scN,\scE, h_I, H )$ satisfies that $(\scN, \scE)$ is a directed tree, it is referred to as a tagged Lions tree and the collection of all Lions trees is denoted $\scT[I]$. We define $\scF_0[I]= \scF[I] \cup\{\rId\}$ where $\rId=(\emptyset, \emptyset, \emptyset, \emptyset)$ is the empty tree. 
		
		Let $\scL:\scN \to \{1, ...,d \}$. Then for any tagged Lions forest $(\scN, \scE, h_I, H)$, we say that \\ $(\scN, \scE, h_I, H, \scL)$ is a labelled tagged Lions forest and we denote the set of all such labelled tagged Lions forests by $\scF_{0, d}[I]$. 
		
		The set $\scF_{0, d}[I]$ is a poset with partial ordering
		\begin{align*}
			(\scN, \scE&, h_I, H, \scL) \subseteq (\scN', \scE', h_I', H', \scL')
			\\
			&\iff\quad
			(\scN, \scE, \scL) = (\scN', \scE', \scL') 
			\quad\mbox{and}\quad
			(h_I, H) \subseteq (h_I', H')
		\end{align*}
		as described in \eqref{definition:taggedpartition*}. 
		
		Finally, we define $\fm: \scF_{0, d}[I] \to \bN_0$ by $\fm\big[ (\scN, \scE, h_I, H, \scL) \big] = |H|$. 
	\end{definition}
	We emphasise that this is a slight adjustment of the notation found in \cite{salkeld2022ExamplePRP}. These notational adjustments are critical for this work but were not including in the previous work to facilitate simplification of notation. 
	
	\begin{remark}
		Bullet \ref{definition:Forests:2.2} in Definition \ref{definition:Forests} says that for any two nodes $x$ and $y$ in the same hyperedge $h \in H'$ but at different distances from the roots of the forest, the node $y$ that is further from the root must have a parent in $h$, and then any node in between (along the hence common ancestral line) must also belong to $h$. In particular, there is no way for a node $x$ to have an ancestor and a descendant in the same hyperedge $h$ if $x$ is not in $h$.

		As for bullet \ref{definition:Forests:2.3} in Definition \ref{definition:Forests}, it says that given two nodes $x_{1}$ and $y_{1}$ that are the same distance from the root and belong to the same hyperedge $h$, the parents of $x_{1}$ and $y_{1}$ must belong to $h$ if they are not the same. In particular, by tracing back the genealogy of $x_{1}$ and $y_{1}$, we find that either $x_{1}$ and $y_{1}$ originate from two different roots that belong to $h$ or  $x_{1}$ 
		and $y_{1}$ have a common ancestor in the forest; the latter might not be in $h$, but necessarily its offsprings (at least up until $x_{1}$ and $x_{2}$) belong to $h$. Proceeding forward, we deduce that at any new node $x$ that is revealed, we can either stay in the hyperedge of the parent or start a new hyperedge; this new hyperedge can also contain some of the sister-brother nodes of $x$ (together with their future offsprings) but cannot contain any other ancestral line. 
		
		It should be stressed that for any $\iota \in I$, the set $h_\iota$ can be empty. When $h_{\iota}$ is not empty, bullet \ref{definition:Forests:2.1} in Definition \ref{definition:Forests} says that $h_{\iota}$ must contain at least the root of one of the trees included in the forest. In particular, the $h_{\iota}$-hyperedge of a Lions tree must contain the unique root if it is non-empty so that only $h_{\iota}$ is non-empty for only one $\iota \in I$. From an intuitive point of view, this $h_{\iota}$-hyperedge represent the label carried by a tagged (or a reference) particle in the continuum; in turn, this label is the number of the particle in the corresponding particle system.
		
		We illustrate this with Figure \ref{fig:1}.
	\end{remark}
	
	\begin{figure}[htb]
		\label{fig:1}
		\begin{center}
			\definecolor{ffdxqq}{rgb}{1,0.84,0}
			\definecolor{qqffqq}{rgb}{0,1,0}
			\definecolor{ffqqqq}{rgb}{1,0,0}
			\definecolor{qqqqff}{rgb}{0,0,1}
			\definecolor{cqcqcq}{rgb}{0.75,0.75,0.75}
			\begin{tikzpicture}[line cap=round,line join=round,>=triangle 45,x=1.0cm,y=1.0cm]
				\draw (-2,6)-- (-1,5);
				\draw (0,6)-- (-1,5);
				\draw (-1,4)-- (-1,5);
				\draw (-1,4)-- (0,3);
				\draw (0,3)-- (0,2);
				\draw (0,3)-- (1,4);
				\draw (1,5)-- (1,4);
				\draw (1,6)-- (1,5);
				\draw (5,6)-- (6,5);
				\draw (6,4)-- (6,5);
				\draw (7,6)-- (8,5);
				\draw (8,4)-- (8,5);
				\draw (9,6)-- (8,5);
				\draw (3,4)-- (3,3);
				\draw (3,4)-- (2,5);
				\draw (3,4)-- (4,5);
				\draw (3,2)-- (3,3);
				\draw (7,2)-- (7,3);
				\draw [color=ffqqqq] (2,5)-- (3,4);
				\draw [color=ffqqqq] (3,4)-- (3,2);
				\draw [color=ffqqqq] (0,6)-- (-1,5);
				\draw [color=ffqqqq] (-1,4)-- (-1,5);
				\draw [color=ffqqqq] (-1,4)-- (0,3);
				\draw [color=ffqqqq] (0,2)-- (0,3);
				\draw [line width=2pt,color=ffqqqq] (2,5)-- (3,4);
				\draw [line width=2pt,color=ffqqqq] (3,4)-- (3,3);
				\draw [line width=2pt,color=ffqqqq] (3,2)-- (3,3);
				\draw [line width=2pt,color=ffqqqq] (0,2)-- (0,3);
				\draw [line width=2pt,color=ffqqqq] (0,3)-- (-1,4);
				\draw [line width=2pt,color=ffqqqq] (-1,5)-- (-1,4);
				\draw [line width=2pt,color=ffqqqq] (-1,5)-- (0,6);
				\draw [line width=2pt,color=ffqqqq] (9,6)-- (8,5);
				\draw [line width=2pt,color=ffqqqq] (8,5)-- (8,4);
				\draw [line width=2pt,color=ffqqqq] (6,5)-- (6,4);
				\draw [line width=1.2pt,dotted,color=ffqqqq] (3,2)-- (0,2);
				\draw (6,4)-- (7,3);
				\draw (8,4)-- (7,3);
				\draw (5.82,5.67) node[anchor=north west] {Node 1};
				\draw (8.25,6.69) node[anchor=north west] {Node 2};
				\draw (-0.80,6.68) node[anchor=north west] {Node 1};
				\draw (1.30,5.70) node[anchor=north west] {Node 2};
				\draw (-.20,1.82) node[anchor=north west] {Ancestors are roots};
				\draw (7.28,3.24) node[anchor=north west] {Common ancestor};
				\begin{scriptsize}
					\draw [fill=qqqqff] (-2,6) circle (1.5pt);
					\draw [fill=qqqqff] (-1,5) circle (1.5pt);
					\draw [fill=qqqqff] (0,6) circle (1.5pt);
					\draw [fill=ffqqqq] (-1,4) circle (1.5pt);
					\draw [fill=qqqqff] (0,3) circle (1.5pt);
					\draw [fill=qqqqff] (0,2) circle (1.5pt);
					\draw [fill=qqffqq] (1,4) circle (1.5pt);
					\draw [fill=qqffqq] (1,5) circle (1.5pt);
					\draw [fill=qqffqq] (1,6) circle (1.5pt);
					\draw [fill=qqffqq] (5,6) circle (1.5pt);
					\draw [fill=qqqqff] (6,5) circle (1.5pt);
					\draw [fill=ffqqqq] (6,4) circle (1.5pt);
					\draw [fill=qqqqff] (7,6) circle (1.5pt);
					\draw [fill=ffqqqq] (8,5) circle (1.5pt);
					\draw [fill=ffqqqq] (8,4) circle (1.5pt);
					\draw [fill=ffdxqq] (7,3) circle (3.5pt);
					\draw [fill=qqqqff] (9,6) circle (1.5pt);
					\draw [fill=ffqqqq] (3,4) circle (1.5pt);
					\draw [fill=ffqqqq] (3,3) circle (1.5pt);
					\draw [fill=ffqqqq] (3,2) circle (1.5pt);
					\draw [fill=qqqqff] (2,5) circle (1.5pt);
					\draw [fill=qqqqff] (4,5) circle (1.5pt);
					\draw [fill=ffdxqq] (7,2) circle (1.5pt);
					\draw [fill=ffqqqq] (2,5) circle (3.5pt);
					\draw [fill=ffqqqq] (0,6) circle (3.5pt);
					\draw [fill=ffqqqq] (6,5) circle (3.5pt);
					\draw [fill=ffqqqq] (9,6) circle (3.5pt);
					\draw [fill=ffqqqq] (0,2) circle (1.5pt);
					\draw [fill=ffqqqq] (-1,5) circle (1.5pt);
					\draw [fill=ffqqqq] (0,3) circle (1.5pt);
				\end{scriptsize}
			\end{tikzpicture}
			\caption{Two examples of ancestral lines that are included in the same hyperedge: $(i)$ (Left pane) The two nodes (Node 1 and Node 2) have ancestral lines in the same  hyperedge up to the roots; $(ii)$ The two nodes (Node 1 and Node 2) have ancestral lines in the same hyperedge up to a common ancestor, but the common ancestor may be in another hyperdege (hence a different color on the graph).}
		\end{center}
	\end{figure}
	
	Motivated by Proposition \ref{proposition:taggedpartition*}, we want to consider the collection of partitions of the nodes of some directed forest that satisfy the properties of Definition \ref{definition:Forests}:	
	\begin{definition}
		\label{definition:Quotient-Partition}
		Let $\tau=(\scN, \scE)$ be a directed forest with preordering $\leq$. Let $I$ be an index set. 
		
		We define $\scQ^{\scE}(\scN)[I] \subseteq \scP(\scN)[I]$ to be the set of all tagged partitions $(h_I, H)$ of the set $\scN$ such that
		\begin{enumerate}[label=($\ast$.\arabic*)]
			\item $\forall h \in H'$ such that $x, y \in h$ and  $x<y$, $\exists z \in h$ such that $(y,z) \in \scE$. 
			\item $\forall h \in H'$ such that $x_1, y_1 \in q$ and $x_1 \lesseqqgtr y_1$, $(x_1, x_2), (y_1, y_2) \in \scE$ and $x_2 \neq y_2$, then $x_2, y_2 \in h$. 
			\item For all $\iota \in I$,  $h_\iota \neq \emptyset$ implies that $h_\iota \cap \fr[\tau] \neq \emptyset$. 
		\end{enumerate}
		where we use the convention that
		\begin{equation*}
			H' = \big( H \cup \{h_\iota: \iota \in I\} \big)\backslash \{\emptyset\}. 
		\end{equation*}
		We refer to $\scQ^{\scE}(\scN)[I]$ as the set of all Lions admissible tagged partitions. 
		
		The set $\scQ^{\scE}(\scN)[I]$ is a poset with partial ordering defined by
		\begin{equation}
			\label{eq:definition:PartOrd_T}
			(g_I, G) \subseteq_{\scQ} (h_I, H) \iff 
			\begin{aligned}
				& \forall \iota \in I, g_\iota \subseteq h_\iota \quad \mbox{and}
				\\
				&\forall g\in G, \exists h \in H' \quad \mbox{such that} \quad g \subseteq H. 
			\end{aligned}
		\end{equation}
		We denote $\fm_{\scQ}: \scQ^{\scE}(\scN)[I] \to \bN_0$ to be the function $\fm_{\scQ}\big[(h_I, H) \big] = |H|$. 
	\end{definition}
	We remark that $(\scN, \scE, h_I, H) \in \scF[I]$ if and only if $(\scN, \scE) \in \fF$ and $(h_I, H) \in \scQ^\scE(\scN)[I]$. 
	
	\begin{lemma}
		\label{lemma:Q-poset}
		Let $\tau=(\scN, \scE)$ be a directed forest with preordering $\leq$.  Then for any choice of index set we have
		\begin{equation*}
			\big( \scQ^{\scE}(\scN)[I], \subseteq_{\scQ}, \fm_{\scQ} \big)
			\quad \mbox{is a sub-poset of}\quad
			\big( \scP(\scN)[I], \subseteq, \fm \big)
		\end{equation*} 
	\end{lemma}

	\begin{proof}
		This is immediate since $\scQ^{\scE}(\scN)[I] \subseteq \scP(\scN)[I]$, for every $(g_I, G), (h_I, H) \in \scQ^{\scE}(\scN)[I]$ we have that 
		\begin{equation*}
			(g_I, G) \subseteq (h_I, H)	
			\quad \mbox{under the $\scP(\scN)[I]$ poset} \quad \implies \quad
			(g_I, G) \subseteq_{\scQ} (h_I, H),
		\end{equation*}
		and for every $(h_I, H) \in \scQ^{\scE}(\scN)[I]$, $\fm_{\scQ}\big[ (h_I, H) \big] = \fm \big[ (h_I, H) \big]$. 
	\end{proof}

	\subsubsection{Partition sequences and Lions forests}
	
	As a minor reduction of notation, when the index set $I=\{0\}$ containing a single element, we write
	\begin{equation*}
		A[I] = A[0] 
		\quad \mbox{instead of}\quad
		A[\{0\}], 
		\quad \mbox{and}\quad
		\scP(\scN)[I] = \scP(\scN)[0] 
		\quad \mbox{instead of}\quad
		\scP(\scN)[\{0\}]. 
	\end{equation*}
	Our goal is to show that the hypergraphic structure described in Definition \ref{definition:Forests} arise as a result of the properties of the Lions derivatives. 
	\begin{definition}
		\label{definition:Lions-Partition-Tree}
		Let $(\scN, \scE, \scL)$ is a labelled directed forest with roots $\fr(\scN) \subseteq \scN$. 

		For $x \in \scN$, we denote
		\begin{equation}
			\label{eq:N_x}
			\scN_x := \big\{ y \in \scN: (y, x) \in \scE\big\}
		\end{equation}
		and let 
		\begin{equation*}
			(\hat{h}_I, \hat{H}) \in \scP(\fr(\scN))[I]
			\quad \mbox{and}\quad
			\fH \in \bigsqcup_{x\in \scN} \scP(\scN_x)[0]. 
		\end{equation*}
		
		Then we say that $\big( \scN, \scE, (\hat{h}_I, \hat{H}), \fH, \scL \big)$ is a labelled \emph{Lions partition forest}. We denote $\hat{\scF}_{0,d}[I]$ to be the set of all Lions partition forests. 
		
		The set of Lions partition forests is a poset with partial ordering
		\begin{align*}
			\big( &\scN, \scE, (\hat{h}_I, \hat{H}), \fH, \scL \big) \hat{\subseteq} \big( \scN', \scE', (\hat{h}_I', \hat{H}'), \fH', \scL' \big)
			\\
			&\iff \quad (\scN, \scE, \scL) = (\scN', \scE', \scL') 
			\quad \mbox{and}\quad
			(\hat{h}_I, \hat{H}) \subseteq (\hat{h}_I', \hat{H}')
			\quad \mbox{and}\quad
			\forall x\in \scN\quad
			\fh[x] \subseteq \fh'[x]
		\end{align*}
		(where we used \eqref{definition:taggedpartition*} and  \eqref{eq:partial-ordering}). 
		
		Finally, we define $\fm: \hat{\scF}_{0, d}[I] \to \bN_0$ by
		\begin{equation*}
			\hat{\fm}\big[ \big( \scN, \scE, (\hat{h}_I, \hat{H}), \fH, \scL \big) \big] = \fm[\hat{H}] + \sum_{x\in \scN} \fm\big[ \fh[x] \big]. 
		\end{equation*}
	\end{definition}
	
	\begin{remark}
		\label{remark:Elem-Diff}
		While Definition \ref{definition:Lions-Partition-Tree} may be confusing given we have another definition for Lions forests that matches with the notation of Section \ref{section:ProbabilisticRoughPaths}, it has a highly visual interpretation which helps to illustrate the link between Lions trees and the mean-field elementary differentials that are mentioned in \cite{salkeld2022ExamplePRP}, \cite{salkeld2021Probabilistic2} and \cite{2021Probabilistic} (to name a few). For this remark, we will completely avoid a discussion of higher order Lions calculus and instead treat this definition at a purely abstract level. 
		
		Let us start by considering the directed rooted forest with vertices $\{1, 2, 3,4,5\}$:
		\begin{equation*}
			\begin{tikzpicture}
				\node[vertex, label=right:{\footnotesize 1}] at (0,0) {}; 
				\node[vertex, label=right:{\footnotesize 2}] at (-0.5,1) {}; 
				\node[vertex, label=right:{\footnotesize 3}] at (0.5,1) {}; 
				\node[vertex, label=right:{\footnotesize 4}] at (0,2) {}; 
				\node[vertex, label=right:{\footnotesize 5}] at (1,2) {}; 
				\draw[edge] (0,0) -- (-0.5, 1);
				\draw[edge] (0,0) -- (0.5, 1);
				\draw[edge] (0.5, 1) -- (0,2);
				\draw[edge] (0.5, 1) -- (1,2);
			\end{tikzpicture}
		\end{equation*}
		Then we have that
		\begin{equation*}
			\scN_1 = \big\{ 2, 3\big\}, \quad 
			\scN_2 = \emptyset, \quad
			\scN_3 = \{4, 5\} \quad \mbox{and} \quad
			\scN_4 = \scN_5 = \emptyset. 
		\end{equation*}
		Therefore, the only values of $x\in \scN$ for which $\fh[x]$ is non-trivial are $1$ and $3$ and $|\scN_1| = |\scN_3| = 2$. There are 5 elements of the set $\scP(\scN_1)[0]$ and $\scP(\scN_3)[0]$, so there are twenty five possible Lions partition forests.
		Recalling the link between partitions and partition sequences (see Proposition \ref{proposition:taggedpartition*}), we could visualise this as 
		\begin{equation}
			\label{eq:Visual-LionsPF}
			\vcenter{
				\hbox{
					\begin{tikzpicture}
						\node[vertex, label=right:{\footnotesize 1}] at (0,0) {}; 
						\node[vertex, label=right:{\footnotesize 2}] at (-0.5,1) {}; 
						\node[vertex, label=right:{\footnotesize 3}] at (0.5,1) {}; 
						\node[vertex, label=right:{\footnotesize 4}] at (0,2) {}; 
						\node[vertex, label=right:{\footnotesize 5}] at (1,2) {}; 
						\draw[edge] (0,0) -- (-0.5, 1);
						\draw[edge] (0,0) -- (0.5, 1);
						\draw[edge] (0.5, 1) -- (0,2);
						\draw[edge] (0.5, 1) -- (1,2);
						\node at (-1,1) {$\cE^a\Big[$};
						\node at (1,1) {$\Big]$};
						\node at (-0.5,2) {$\cE^{a'}\Big[$};
						\node at (1.5,2) {$\Big]$};
					\end{tikzpicture}
				}
			}
		\end{equation}
		where we sum over $a, a' \in A_2[0]$. 
		
		The possible choices for $(a, a') \in A_2[0] \times A_n[0]$ in Equation \eqref{eq:Visual-LionsPF} can be written as
		\begin{equation*}
			\big( a, a' \big) = 
			\begin{pmatrix}
				&&&&
				\\
				\big( (00), (00) \big), & \big( (01), (00) \big), & \big( (10), (00) \big), & \big( (11), (00) \big), & \big( (12), (00) \big)
				\\
				&&&&
				\\
				\big( (00), (01) \big), & \big( (01), (01) \big), & \big( (10), (01) \big), & \big( (11), (01) \big), & \big( (12), (01) \big)
				\\
				&&&&
				\\
				\big( (00), (10) \big), & \big( (01), (10) \big), & \big( (10), (10) \big), & \big( (11), (10) \big), & \big( (12), (10) \big)
				\\
				&&&&
				\\
				\big( (00), (11) \big), & \big( (01), (11) \big), & \big( (10), (11) \big), & \big( (11), (11) \big), & \big( (12), (11) \big)
				\\
				&&&&
				\\
				\big( (00), (12) \big), & \big( (01), (12) \big), & \big( (10), (12) \big), & \big( (11), (12) \big), & \big( (12), (12) \big)
				\\
				&&&&
			\end{pmatrix}
		\end{equation*}
		
		These can be visualised as follows:
		\begin{align*}
			\begin{tikzpicture}
				\node[vertex, label=right:{\footnotesize 1}] at (0,0) {}; 
				\node[vertex, label=right:{\footnotesize 2}] at (-0.5,1) {}; 
				\node[vertex, label=right:{\footnotesize 3}] at (0.5,1) {}; 
				\node[vertex, label=right:{\footnotesize 4}] at (0,2) {}; 
				\node[vertex, label=right:{\footnotesize 5}] at (1,2) {}; 
				\draw[edge] (0,0) -- (-0.5, 1);
				\draw[edge] (0,0) -- (0.5, 1);
				\draw[edge] (0.5, 1) -- (0,2);
				\draw[edge] (0.5, 1) -- (1,2);
				\begin{pgfonlayer}{background}
					\draw[zhyedge1] (-0.5,1) -- (0,0) -- (0.5, 1) -- (0,2) -- (1,2) -- (0.5,1);
					\draw[zhyedge2] (-0.5,1) -- (0,0) -- (0.5, 1) -- (0,2) -- (1,2) -- (0.5,1);
				\end{pgfonlayer}
			\end{tikzpicture}
			,\quad
			\begin{tikzpicture}
				\node[vertex, label=right:{\footnotesize 1}] at (0,0) {}; 
				\node[vertex, label=right:{\footnotesize 2}] at (-0.5,1) {}; 
				\node[vertex, label=right:{\footnotesize 3}] at (0.5,1) {}; 
				\node[vertex, label=right:{\footnotesize 4}] at (0,2) {}; 
				\node[vertex, label=right:{\footnotesize 5}] at (1,2) {}; 
				\draw[edge] (0,0) -- (-0.5, 1);
				\draw[edge] (0,0) -- (0.5, 1);
				\draw[edge] (0.5, 1) -- (0,2);
				\draw[edge] (0.5, 1) -- (1,2);
				\begin{pgfonlayer}{background}
					\draw[zhyedge1] (-0.5,1) -- (0,0);
					\draw[zhyedge2] (-0.5,1) -- (0,0);
					\draw[hyedge1, color=red] (0,2) -- (0.5,1) -- (1,2);
					\draw[hyedge2] (0,2) -- (0.5,1) -- (1,2);
				\end{pgfonlayer}
			\end{tikzpicture}
			,\quad
			\begin{tikzpicture}
				\node[vertex, label=right:{\footnotesize 1}] at (0,0) {}; 
				\node[vertex, label=right:{\footnotesize 2}] at (-0.5,1) {}; 
				\node[vertex, label=right:{\footnotesize 3}] at (0.5,1) {}; 
				\node[vertex, label=right:{\footnotesize 4}] at (0,2) {}; 
				\node[vertex, label=right:{\footnotesize 5}] at (1,2) {}; 
				\draw[edge] (0,0) -- (-0.5, 1);
				\draw[edge] (0,0) -- (0.5, 1);
				\draw[edge] (0.5, 1) -- (0,2);
				\draw[edge] (0.5, 1) -- (1,2);
				\begin{pgfonlayer}{background}
					\draw[zhyedge1] (0,0) -- (0.5,1) -- (0,2) -- (1,2) -- (0.5,1);
					\draw[zhyedge2] (0,0) -- (0.5,1) -- (0,2) -- (1,2) -- (0.5,1);
					\draw[hyedge1, color=red] (-0.5,1) -- (-0.5,1);
					\draw[hyedge2] (-0.5,1) -- (-0.5,1);
				\end{pgfonlayer}
			\end{tikzpicture}
			,\quad
			\begin{tikzpicture}
				\node[vertex, label=right:{\footnotesize 1}] at (0,0) {}; 
				\node[vertex, label=right:{\footnotesize 2}] at (-0.5,1) {}; 
				\node[vertex, label=right:{\footnotesize 3}] at (0.5,1) {}; 
				\node[vertex, label=right:{\footnotesize 4}] at (0,2) {}; 
				\node[vertex, label=right:{\footnotesize 5}] at (1,2) {}; 
				\draw[edge] (0,0) -- (-0.5, 1);
				\draw[edge] (0,0) -- (0.5, 1);
				\draw[edge] (0.5, 1) -- (0,2);
				\draw[edge] (0.5, 1) -- (1,2);
				\begin{pgfonlayer}{background}
					\draw[zhyedge1] (0,0) -- (0,0);
					\draw[zhyedge2] (0,0) -- (0,0);
					\draw[hyedge1, color=red] (-0.5,1) -- (0.5,1) -- (0,2) -- (1,2) -- (0.5,1);
					\draw[hyedge2] (-0.5,1) -- (0.5,1) -- (0,2) -- (1,2) -- (0.5,1);
				\end{pgfonlayer}
			\end{tikzpicture}
			,\quad
			\begin{tikzpicture}
				\node[vertex, label=right:{\footnotesize 1}] at (0,0) {}; 
				\node[vertex, label=right:{\footnotesize 2}] at (-0.5,1) {}; 
				\node[vertex, label=right:{\footnotesize 3}] at (0.5,1) {}; 
				\node[vertex, label=right:{\footnotesize 4}] at (0,2) {}; 
				\node[vertex, label=right:{\footnotesize 5}] at (1,2) {}; 
				\draw[edge] (0,0) -- (-0.5, 1);
				\draw[edge] (0,0) -- (0.5, 1);
				\draw[edge] (0.5, 1) -- (0,2);
				\draw[edge] (0.5, 1) -- (1,2);
				\begin{pgfonlayer}{background}
					\draw[zhyedge1] (0,0) -- (0,0);
					\draw[zhyedge2] (0,0) -- (0,0);
					\draw[hyedge1, color=red] (-0.5,1) -- (-0.5,1);
					\draw[hyedge2] (-0.5,1) -- (-0.5,1);
					\draw[hyedge1, color=blue] (0.5,1) -- (0,2) -- (1,2) -- (0.5,1);
					\draw[hyedge2] (0.5,1) -- (0,2) -- (1,2) -- (0.5,1);
				\end{pgfonlayer}
			\end{tikzpicture}
			,
			\\
			\begin{tikzpicture}
				\node[vertex, label=right:{\footnotesize 1}] at (0,0) {}; 
				\node[vertex, label=right:{\footnotesize 2}] at (-0.5,1) {}; 
				\node[vertex, label=right:{\footnotesize 3}] at (0.5,1) {}; 
				\node[vertex, label=right:{\footnotesize 4}] at (0,2) {}; 
				\node[vertex, label=right:{\footnotesize 5}] at (1,2) {}; 
				\draw[edge] (0,0) -- (-0.5, 1);
				\draw[edge] (0,0) -- (0.5, 1);
				\draw[edge] (0.5, 1) -- (0,2);
				\draw[edge] (0.5, 1) -- (1,2);
				\begin{pgfonlayer}{background}
					\draw[zhyedge1] (-0.5,1) -- (0,0) -- (0.5, 1) -- (0,2);
					\draw[zhyedge2] (-0.5,1) -- (0,0) -- (0.5, 1) -- (0,2);
					\draw[hyedge1, color=red] (1,2) -- (1,2);
					\draw[hyedge2] (1,2) -- (1,2);
				\end{pgfonlayer}
			\end{tikzpicture}
			,\quad
			\begin{tikzpicture}
				\node[vertex, label=right:{\footnotesize 1}] at (0,0) {}; 
				\node[vertex, label=right:{\footnotesize 2}] at (-0.5,1) {}; 
				\node[vertex, label=right:{\footnotesize 3}] at (0.5,1) {}; 
				\node[vertex, label=right:{\footnotesize 4}] at (0,2) {}; 
				\node[vertex, label=right:{\footnotesize 5}] at (1,2) {}; 
				\draw[edge] (0,0) -- (-0.5, 1);
				\draw[edge] (0,0) -- (0.5, 1);
				\draw[edge] (0.5, 1) -- (0,2);
				\draw[edge] (0.5, 1) -- (1,2);
				\begin{pgfonlayer}{background}
					\draw[zhyedge1] (-0.5,1) -- (0,0);
					\draw[zhyedge2] (-0.5,1) -- (0,0);
					\draw[hyedge1, color=red] (0,2) -- (0.5,1);
					\draw[hyedge2] (0,2) -- (0.5,1);
					\draw[hyedge1, color=blue] (1,2) -- (1,2);
					\draw[hyedge2] (1,2) -- (1,2);
				\end{pgfonlayer}
			\end{tikzpicture}
			,\quad
			\begin{tikzpicture}
				\node[vertex, label=right:{\footnotesize 1}] at (0,0) {}; 
				\node[vertex, label=right:{\footnotesize 2}] at (-0.5,1) {}; 
				\node[vertex, label=right:{\footnotesize 3}] at (0.5,1) {}; 
				\node[vertex, label=right:{\footnotesize 4}] at (0,2) {}; 
				\node[vertex, label=right:{\footnotesize 5}] at (1,2) {}; 
				\draw[edge] (0,0) -- (-0.5, 1);
				\draw[edge] (0,0) -- (0.5, 1);
				\draw[edge] (0.5, 1) -- (0,2);
				\draw[edge] (0.5, 1) -- (1,2);
				\begin{pgfonlayer}{background}
					\draw[zhyedge1] (0,0) -- (0.5,1) -- (0,2);
					\draw[zhyedge2] (0,0) -- (0.5,1) -- (0,2);
					\draw[hyedge1, color=red] (-0.5,1) -- (-0.5,1);
					\draw[hyedge2] (-0.5,1) -- (-0.5,1);
					\draw[hyedge1, color=blue] (1,2) -- (1,2);
					\draw[hyedge2] (1,2) -- (1,2);
				\end{pgfonlayer}
			\end{tikzpicture}
			,\quad
			\begin{tikzpicture}
				\node[vertex, label=right:{\footnotesize 1}] at (0,0) {}; 
				\node[vertex, label=right:{\footnotesize 2}] at (-0.5,1) {}; 
				\node[vertex, label=right:{\footnotesize 3}] at (0.5,1) {}; 
				\node[vertex, label=right:{\footnotesize 4}] at (0,2) {}; 
				\node[vertex, label=right:{\footnotesize 5}] at (1,2) {}; 
				\draw[edge] (0,0) -- (-0.5, 1);
				\draw[edge] (0,0) -- (0.5, 1);
				\draw[edge] (0.5, 1) -- (0,2);
				\draw[edge] (0.5, 1) -- (1,2);
				\begin{pgfonlayer}{background}
					\draw[zhyedge1] (0,0) -- (0,0);
					\draw[zhyedge2] (0,0) -- (0,0);
					\draw[hyedge1, color=red] (-0.5,1) -- (0.5,1) -- (0,2);
					\draw[hyedge2] (-0.5,1) -- (0.5,1) -- (0,2);
					\draw[hyedge1, color=blue] (1,2) -- (1,2);
					\draw[hyedge2] (1,2) -- (1,2);
				\end{pgfonlayer}
			\end{tikzpicture}
			,\quad
			\begin{tikzpicture}
				\node[vertex, label=right:{\footnotesize 1}] at (0,0) {}; 
				\node[vertex, label=right:{\footnotesize 2}] at (-0.5,1) {}; 
				\node[vertex, label=right:{\footnotesize 3}] at (0.5,1) {}; 
				\node[vertex, label=right:{\footnotesize 4}] at (0,2) {}; 
				\node[vertex, label=right:{\footnotesize 5}] at (1,2) {}; 
				\draw[edge] (0,0) -- (-0.5, 1);
				\draw[edge] (0,0) -- (0.5, 1);
				\draw[edge] (0.5, 1) -- (0,2);
				\draw[edge] (0.5, 1) -- (1,2);
				\begin{pgfonlayer}{background}
					\draw[zhyedge1] (0,0) -- (0,0);
					\draw[zhyedge2] (0,0) -- (0,0);
					\draw[hyedge1, color=red] (-0.5,1) -- (-0.5,1);
					\draw[hyedge2] (-0.5,1) -- (-0.5,1);
					\draw[hyedge1, color=blue] (0.5,1) -- (0,2);
					\draw[hyedge2] (0.5,1) -- (0,2);
					\draw[hyedge1, color=green] (1,2) -- (1,2);
					\draw[hyedge2] (1,2) -- (1,2);
				\end{pgfonlayer}
			\end{tikzpicture}
			,
			\\
			\begin{tikzpicture}
				\node[vertex, label=right:{\footnotesize 1}] at (0,0) {}; 
				\node[vertex, label=right:{\footnotesize 2}] at (-0.5,1) {}; 
				\node[vertex, label=right:{\footnotesize 3}] at (0.5,1) {}; 
				\node[vertex, label=right:{\footnotesize 4}] at (0,2) {}; 
				\node[vertex, label=right:{\footnotesize 5}] at (1,2) {}; 
				\draw[edge] (0,0) -- (-0.5, 1);
				\draw[edge] (0,0) -- (0.5, 1);
				\draw[edge] (0.5, 1) -- (0,2);
				\draw[edge] (0.5, 1) -- (1,2);
				\begin{pgfonlayer}{background}
					\draw[zhyedge1] (-0.5,1) -- (0,0) -- (0.5, 1) -- (1,2);
					\draw[zhyedge2] (-0.5,1) -- (0,0) -- (0.5, 1) -- (1,2);
					\draw[hyedge1, color=red] (0,2) -- (0,2);
					\draw[hyedge2] (0,2) -- (0,2);
				\end{pgfonlayer}
			\end{tikzpicture}
			,\quad
			\begin{tikzpicture}
				\node[vertex, label=right:{\footnotesize 1}] at (0,0) {}; 
				\node[vertex, label=right:{\footnotesize 2}] at (-0.5,1) {}; 
				\node[vertex, label=right:{\footnotesize 3}] at (0.5,1) {}; 
				\node[vertex, label=right:{\footnotesize 4}] at (0,2) {}; 
				\node[vertex, label=right:{\footnotesize 5}] at (1,2) {}; 
				\draw[edge] (0,0) -- (-0.5, 1);
				\draw[edge] (0,0) -- (0.5, 1);
				\draw[edge] (0.5, 1) -- (0,2);
				\draw[edge] (0.5, 1) -- (1,2);
				\begin{pgfonlayer}{background}
					\draw[zhyedge1] (-0.5,1) -- (0,0);
					\draw[zhyedge2] (-0.5,1) -- (0,0);
					\draw[hyedge1, color=red] (0.5,1) -- (1,2);
					\draw[hyedge2] (0.5,1) -- (1,2);
					\draw[hyedge1, color=blue] (0,2) -- (0,2);
					\draw[hyedge2] (0,2) -- (0,2);
				\end{pgfonlayer}
			\end{tikzpicture}
			,\quad
			\begin{tikzpicture}
				\node[vertex, label=right:{\footnotesize 1}] at (0,0) {}; 
				\node[vertex, label=right:{\footnotesize 2}] at (-0.5,1) {}; 
				\node[vertex, label=right:{\footnotesize 3}] at (0.5,1) {}; 
				\node[vertex, label=right:{\footnotesize 4}] at (0,2) {}; 
				\node[vertex, label=right:{\footnotesize 5}] at (1,2) {}; 
				\draw[edge] (0,0) -- (-0.5, 1);
				\draw[edge] (0,0) -- (0.5, 1);
				\draw[edge] (0.5, 1) -- (0,2);
				\draw[edge] (0.5, 1) -- (1,2);
				\begin{pgfonlayer}{background}
					\draw[zhyedge1] (0,0) -- (0.5,1) -- (1,2);
					\draw[zhyedge2] (0,0) -- (0.5,1) -- (1,2);
					\draw[hyedge1, color=red] (-0.5,1) -- (-0.5,1);
					\draw[hyedge2] (-0.5,1) -- (-0.5,1);
					\draw[hyedge1, color=blue] (0,2) -- (0,2);
					\draw[hyedge2] (0,2) -- (0,2);
				\end{pgfonlayer}
			\end{tikzpicture}
			,\quad
			\begin{tikzpicture}
				\node[vertex, label=right:{\footnotesize 1}] at (0,0) {}; 
				\node[vertex, label=right:{\footnotesize 2}] at (-0.5,1) {}; 
				\node[vertex, label=right:{\footnotesize 3}] at (0.5,1) {}; 
				\node[vertex, label=right:{\footnotesize 4}] at (0,2) {}; 
				\node[vertex, label=right:{\footnotesize 5}] at (1,2) {}; 
				\draw[edge] (0,0) -- (-0.5, 1);
				\draw[edge] (0,0) -- (0.5, 1);
				\draw[edge] (0.5, 1) -- (0,2);
				\draw[edge] (0.5, 1) -- (1,2);
				\begin{pgfonlayer}{background}
					\draw[zhyedge1] (0,0) -- (0,0);
					\draw[zhyedge2] (0,0) -- (0,0);
					\draw[hyedge1, color=red] (-0.5,1) -- (0.5,1) -- (1,2);
					\draw[hyedge2] (-0.5,1) -- (0.5,1) -- (1,2);
					\draw[hyedge1, color=blue] (0,2) -- (0,2);
					\draw[hyedge2] (0,2) -- (0,2);
				\end{pgfonlayer}
			\end{tikzpicture}
			,\quad
			\begin{tikzpicture}
				\node[vertex, label=right:{\footnotesize 1}] at (0,0) {}; 
				\node[vertex, label=right:{\footnotesize 2}] at (-0.5,1) {}; 
				\node[vertex, label=right:{\footnotesize 3}] at (0.5,1) {}; 
				\node[vertex, label=right:{\footnotesize 4}] at (0,2) {}; 
				\node[vertex, label=right:{\footnotesize 5}] at (1,2) {}; 
				\draw[edge] (0,0) -- (-0.5, 1);
				\draw[edge] (0,0) -- (0.5, 1);
				\draw[edge] (0.5, 1) -- (0,2);
				\draw[edge] (0.5, 1) -- (1,2);
				\begin{pgfonlayer}{background}
					\draw[zhyedge1] (0,0) -- (0,0);
					\draw[zhyedge2] (0,0) -- (0,0);
					\draw[hyedge1, color=red] (-0.5,1) -- (-0.5,1);
					\draw[hyedge2] (-0.5,1) -- (-0.5,1);
					\draw[hyedge1, color=blue] (0.5,1) -- (1,2);
					\draw[hyedge2] (0.5,1) -- (1,2);
					\draw[hyedge1, color=green] (0,2) -- (0,2);
					\draw[hyedge2] (0,2) -- (0,2);
				\end{pgfonlayer}
			\end{tikzpicture}
			,
			\\
			\begin{tikzpicture}
				\node[vertex, label=right:{\footnotesize 1}] at (0,0) {}; 
				\node[vertex, label=right:{\footnotesize 2}] at (-0.5,1) {}; 
				\node[vertex, label=right:{\footnotesize 3}] at (0.5,1) {}; 
				\node[vertex, label=right:{\footnotesize 4}] at (0,2) {}; 
				\node[vertex, label=right:{\footnotesize 5}] at (1,2) {}; 
				\draw[edge] (0,0) -- (-0.5, 1);
				\draw[edge] (0,0) -- (0.5, 1);
				\draw[edge] (0.5, 1) -- (0,2);
				\draw[edge] (0.5, 1) -- (1,2);
				\begin{pgfonlayer}{background}
					\draw[zhyedge1] (-0.5,1) -- (0,0) -- (0.5, 1);
					\draw[zhyedge2] (-0.5,1) -- (0,0) -- (0.5, 1);
					\draw[hyedge1, color=red] (0,2) -- (1,2);
					\draw[hyedge2] (0,2) -- (1,2);
				\end{pgfonlayer}
			\end{tikzpicture}
			,\quad
			\begin{tikzpicture}
				\node[vertex, label=right:{\footnotesize 1}] at (0,0) {}; 
				\node[vertex, label=right:{\footnotesize 2}] at (-0.5,1) {}; 
				\node[vertex, label=right:{\footnotesize 3}] at (0.5,1) {}; 
				\node[vertex, label=right:{\footnotesize 4}] at (0,2) {}; 
				\node[vertex, label=right:{\footnotesize 5}] at (1,2) {}; 
				\draw[edge] (0,0) -- (-0.5, 1);
				\draw[edge] (0,0) -- (0.5, 1);
				\draw[edge] (0.5, 1) -- (0,2);
				\draw[edge] (0.5, 1) -- (1,2);
				\begin{pgfonlayer}{background}
					\draw[zhyedge1] (-0.5,1) -- (0,0);
					\draw[zhyedge2] (-0.5,1) -- (0,0);
					\draw[hyedge1, color=red] (0.5,1) -- (0.5,1);
					\draw[hyedge2] (0.5,1) -- (0.5,1);
					\draw[hyedge1, color=blue] (0,2) -- (1,2);
					\draw[hyedge2] (0,2) -- (1,2);
				\end{pgfonlayer}
			\end{tikzpicture}
			,\quad
			\begin{tikzpicture}
				\node[vertex, label=right:{\footnotesize 1}] at (0,0) {}; 
				\node[vertex, label=right:{\footnotesize 2}] at (-0.5,1) {}; 
				\node[vertex, label=right:{\footnotesize 3}] at (0.5,1) {}; 
				\node[vertex, label=right:{\footnotesize 4}] at (0,2) {}; 
				\node[vertex, label=right:{\footnotesize 5}] at (1,2) {}; 
				\draw[edge] (0,0) -- (-0.5, 1);
				\draw[edge] (0,0) -- (0.5, 1);
				\draw[edge] (0.5, 1) -- (0,2);
				\draw[edge] (0.5, 1) -- (1,2);
				\begin{pgfonlayer}{background}
					\draw[zhyedge1] (0,0) -- (0.5,1);
					\draw[zhyedge2] (0,0) -- (0.5,1);
					\draw[hyedge1, color=red] (-0.5,1) -- (-0.5,1);
					\draw[hyedge2] (-0.5,1) -- (-0.5,1);
					\draw[hyedge1, color=blue] (0,2) -- (1,2);
					\draw[hyedge2] (0,2) -- (1,2);
				\end{pgfonlayer}
			\end{tikzpicture}
			,\quad
			\begin{tikzpicture}
				\node[vertex, label=right:{\footnotesize 1}] at (0,0) {}; 
				\node[vertex, label=right:{\footnotesize 2}] at (-0.5,1) {}; 
				\node[vertex, label=right:{\footnotesize 3}] at (0.5,1) {}; 
				\node[vertex, label=right:{\footnotesize 4}] at (0,2) {}; 
				\node[vertex, label=right:{\footnotesize 5}] at (1,2) {}; 
				\draw[edge] (0,0) -- (-0.5, 1);
				\draw[edge] (0,0) -- (0.5, 1);
				\draw[edge] (0.5, 1) -- (0,2);
				\draw[edge] (0.5, 1) -- (1,2);
				\begin{pgfonlayer}{background}
					\draw[zhyedge1] (0,0) -- (0,0);
					\draw[zhyedge2] (0,0) -- (0,0);
					\draw[hyedge1, color=red] (-0.5,1) -- (0.5,1);
					\draw[hyedge2] (-0.5,1) -- (0.5,1);
					\draw[hyedge1, color=blue] (0,2) -- (1,2);
					\draw[hyedge2] (0,2) -- (1,2);
				\end{pgfonlayer}
			\end{tikzpicture}
			,\quad
			\begin{tikzpicture}
				\node[vertex, label=right:{\footnotesize 1}] at (0,0) {}; 
				\node[vertex, label=right:{\footnotesize 2}] at (-0.5,1) {}; 
				\node[vertex, label=right:{\footnotesize 3}] at (0.5,1) {}; 
				\node[vertex, label=right:{\footnotesize 4}] at (0,2) {}; 
				\node[vertex, label=right:{\footnotesize 5}] at (1,2) {}; 
				\draw[edge] (0,0) -- (-0.5, 1);
				\draw[edge] (0,0) -- (0.5, 1);
				\draw[edge] (0.5, 1) -- (0,2);
				\draw[edge] (0.5, 1) -- (1,2);
				\begin{pgfonlayer}{background}
					\draw[zhyedge1] (0,0) -- (0,0);
					\draw[zhyedge2] (0,0) -- (0,0);
					\draw[hyedge1, color=red] (-0.5,1) -- (-0.5,1);
					\draw[hyedge2] (-0.5,1) -- (-0.5,1);
					\draw[hyedge1, color=blue] (0.5,1) -- (0.5,1);
					\draw[hyedge2] (0.5,1) -- (0.5,1);
					\draw[hyedge1, color=green] (0,2) -- (1,2);
					\draw[hyedge2] (0,2) -- (1,2);
				\end{pgfonlayer}
			\end{tikzpicture}
			,
			\\
			\begin{tikzpicture}
				\node[vertex, label=right:{\footnotesize 1}] at (0,0) {}; 
				\node[vertex, label=right:{\footnotesize 2}] at (-0.5,1) {}; 
				\node[vertex, label=right:{\footnotesize 3}] at (0.5,1) {}; 
				\node[vertex, label=right:{\footnotesize 4}] at (0,2) {}; 
				\node[vertex, label=right:{\footnotesize 5}] at (1,2) {}; 
				\draw[edge] (0,0) -- (-0.5, 1);
				\draw[edge] (0,0) -- (0.5, 1);
				\draw[edge] (0.5, 1) -- (0,2);
				\draw[edge] (0.5, 1) -- (1,2);
				\begin{pgfonlayer}{background}
					\draw[zhyedge1] (-0.5,1) -- (0,0) -- (0.5, 1);
					\draw[zhyedge2] (-0.5,1) -- (0,0) -- (0.5, 1);
					\draw[hyedge1, color=red] (0,2) -- (0,2);
					\draw[hyedge2] (0,2) -- (0,2);
					\draw[hyedge1, color=blue] (1,2) -- (1,2);
					\draw[hyedge2] (1,2) -- (1,2);
				\end{pgfonlayer}
			\end{tikzpicture}
			,\quad
			\begin{tikzpicture}
				\node[vertex, label=right:{\footnotesize 1}] at (0,0) {}; 
				\node[vertex, label=right:{\footnotesize 2}] at (-0.5,1) {}; 
				\node[vertex, label=right:{\footnotesize 3}] at (0.5,1) {}; 
				\node[vertex, label=right:{\footnotesize 4}] at (0,2) {}; 
				\node[vertex, label=right:{\footnotesize 5}] at (1,2) {}; 
				\draw[edge] (0,0) -- (-0.5, 1);
				\draw[edge] (0,0) -- (0.5, 1);
				\draw[edge] (0.5, 1) -- (0,2);
				\draw[edge] (0.5, 1) -- (1,2);
				\begin{pgfonlayer}{background}
					\draw[zhyedge1] (-0.5,1) -- (0,0);
					\draw[zhyedge2] (-0.5,1) -- (0,0);
					\draw[hyedge1, color=red] (0.5,1) -- (0.5,1);
					\draw[hyedge2] (0.5,1) -- (0.5,1);
					\draw[hyedge1, color=blue] (0,2) -- (0,2);
					\draw[hyedge2] (0,2) -- (0,2);
					\draw[hyedge1, color=green] (1,2) -- (1,2);
					\draw[hyedge2] (1,2) -- (1,2);
				\end{pgfonlayer}
			\end{tikzpicture}
			,\quad
			\begin{tikzpicture}
				\node[vertex, label=right:{\footnotesize 1}] at (0,0) {}; 
				\node[vertex, label=right:{\footnotesize 2}] at (-0.5,1) {}; 
				\node[vertex, label=right:{\footnotesize 3}] at (0.5,1) {}; 
				\node[vertex, label=right:{\footnotesize 4}] at (0,2) {}; 
				\node[vertex, label=right:{\footnotesize 5}] at (1,2) {}; 
				\draw[edge] (0,0) -- (-0.5, 1);
				\draw[edge] (0,0) -- (0.5, 1);
				\draw[edge] (0.5, 1) -- (0,2);
				\draw[edge] (0.5, 1) -- (1,2);
				\begin{pgfonlayer}{background}
					\draw[zhyedge1] (0,0) -- (0.5,1);
					\draw[zhyedge2] (0,0) -- (0.5,1);
					\draw[hyedge1, color=red] (-0.5,1) -- (-0.5,1);
					\draw[hyedge2] (-0.5,1) -- (-0.5,1);
					\draw[hyedge1, color=blue] (0,2) -- (0,2);
					\draw[hyedge2] (0,2) -- (0,2);
					\draw[hyedge1, color=green] (1,2) -- (1,2);
					\draw[hyedge2] (1,2) -- (1,2);
				\end{pgfonlayer}
			\end{tikzpicture}
			,\quad
			\begin{tikzpicture}
				\node[vertex, label=right:{\footnotesize 1}] at (0,0) {}; 
				\node[vertex, label=right:{\footnotesize 2}] at (-0.5,1) {}; 
				\node[vertex, label=right:{\footnotesize 3}] at (0.5,1) {}; 
				\node[vertex, label=right:{\footnotesize 4}] at (0,2) {}; 
				\node[vertex, label=right:{\footnotesize 5}] at (1,2) {}; 
				\draw[edge] (0,0) -- (-0.5, 1);
				\draw[edge] (0,0) -- (0.5, 1);
				\draw[edge] (0.5, 1) -- (0,2);
				\draw[edge] (0.5, 1) -- (1,2);
				\begin{pgfonlayer}{background}
					\draw[zhyedge1] (0,0) -- (0,0);
					\draw[zhyedge2] (0,0) -- (0,0);
					\draw[hyedge1, color=red] (-0.5,1) -- (0.5,1);
					\draw[hyedge2] (-0.5,1) -- (0.5,1);
					\draw[hyedge1, color=blue] (0,2) -- (0,2);
					\draw[hyedge2] (0,2) -- (0,2);
					\draw[hyedge1, color=green] (1,2) -- (1,2);
					\draw[hyedge2] (1,2) -- (1,2);
				\end{pgfonlayer}
			\end{tikzpicture}
			,\quad
			\begin{tikzpicture}
				\node[vertex, label=right:{\footnotesize 1}] at (0,0) {}; 
				\node[vertex, label=right:{\footnotesize 2}] at (-0.5,1) {}; 
				\node[vertex, label=right:{\footnotesize 3}] at (0.5,1) {}; 
				\node[vertex, label=right:{\footnotesize 4}] at (0,2) {}; 
				\node[vertex, label=right:{\footnotesize 5}] at (1,2) {}; 
				\draw[edge] (0,0) -- (-0.5, 1);
				\draw[edge] (0,0) -- (0.5, 1);
				\draw[edge] (0.5, 1) -- (0,2);
				\draw[edge] (0.5, 1) -- (1,2);
				\begin{pgfonlayer}{background}
					\draw[zhyedge1] (0,0) -- (0,0);
					\draw[zhyedge2] (0,0) -- (0,0);
					\draw[hyedge1, color=red] (-0.5,1) -- (-0.5,1);
					\draw[hyedge2] (-0.5,1) -- (-0.5,1);
					\draw[hyedge1, color=blue] (0.5,1) -- (0.5,1);
					\draw[hyedge2] (0.5,1) -- (0.5,1);
					\draw[hyedge1, color=green] (0,2) -- (0,2);
					\draw[hyedge2] (0,2) -- (0,2);
					\draw[hyedge1, color=purple] (1,2) -- (1,2);
					\draw[hyedge2] (1,2) -- (1,2);
				\end{pgfonlayer}
			\end{tikzpicture}
			.
		\end{align*}
		The grey partition element corresponds to the tagged hyperedge while the coloured hyperedges correspond the detagged hyperedges where different colours indicate distinct sets. 
		
		By examination, we can see that each of these tagged partitions satisfies \ref{definition:Forests:2.2}, \ref{definition:Forests:2.3} and \ref{definition:Forests:2.1} so these are all Lions forests. Further, by manual calculation we can also verify that these are all of the partitions of the nodes of the rooted forest $\big( \{1, 2,3,4,5\}, \{(2,1), (3,1),(4,3),(5,3) \} \big)$ that satisfy Definition \ref{definition:Quotient-Partition} are included in this collection. 
	\end{remark}
	
	\begin{theorem}
		\label{theorem:EquivalenceTrees}
		Let $d\in \bN$ and let $I$ be an index set. There is an isomorphism between 
		\begin{equation*}
			\big( \scF_{0,d}[I], \subseteq, \fm \big)
			\quad \mbox{and}\quad
			\big( \hat{\scF}_{0,d}[I], \hat{\subseteq}, \hat{\fm} \big).
		\end{equation*}
		As such, we view Definitions \ref{definition:Forests} and \ref{definition:Lions-Partition-Tree} as equivalent. 
	\end{theorem}
	
	In order to prove Theorem \ref{theorem:EquivalenceTrees}, we first introduce a sense of union which allows us to stitch together a collection of local partitions. 
	\begin{definition}
		\label{definition:OverlineUnion}
		Let $\scM$ and $\scN$ be finite sets such that each of the sets $\scM\cap \scN$, $\scM\backslash\scN$ and $\scN \backslash\scM$ are all non-empty. Let $P \in \scP(\scM)$ and $Q \in \scP(\scN)$. We denote
		\begin{equation}
			\label{eq:definition:OverlineUnion1}
			P \cap \scN:=\{ p\cap \scN: p\in P\}\backslash\{\emptyset\},
			\quad 
			Q \cap \scM:= \{ q\cap \scM: q\in Q\}\backslash\{\emptyset\}. 
		\end{equation}
		Suppose that 
		\begin{equation}
			\label{eq:definition:OverlineUnion3}
			\forall q\in Q\cap \scM, \exists p \in P \cap \scN \quad \mbox{such that} \quad q \subseteq p.
		\end{equation}
		Then we define $P\overline{\cup} Q \in \scP( \scM\cup \scN )$ by
		\begin{equation}
			\label{eq:definition:OverlineUnion2}
			P\overline{\cup} Q: = \Big\{ p\in P: p\cap \scN = \emptyset \Big\} \cup \Big\{ q\in Q: q\cap \scM = \emptyset \Big\} \cup \Big\{p \cup \bigcup_{\substack{q \in Q\\ q \cap p \neq \emptyset}} q: p \in P, p\cap \scN \neq \emptyset \Big\}. 
		\end{equation}
	\end{definition}
	
	The operation $\overline{\cup}$ allows us to take the union of partitions that nest (in the sense described in Relationship \eqref{eq:definition:OverlineUnion3}) to obtain another partition. Due to the ordered nature of Relationship \eqref{eq:definition:OverlineUnion3}, this union is not commutative. 
	
	\begin{proof}[Proof of Theorem \ref{theorem:EquivalenceTrees}]
		\textit{Step 1.} We show that we can identify a Lions partition forest with a Lions forest. Let
		\begin{equation*}
			\big( \scN, \scE, (\hat{h}_I, \hat{H}), \fH, \scL \big) \in \hat{\scF}_{0, d}[I]. 
		\end{equation*}
		We denote the equivalence classes described by the binary relation $\lesseqqgtr$ on the set of nodes $\scN$ in Equation \eqref{eq:EquivalenceClassTree} by
		\begin{align*}
			\scN_0 =& \big\{x\in \scN: \forall y\in \scN, x\leq y \big\} = \fr(\scN), 
			\quad
			\scN_1 = \big\{y\in \scN: \exists x\in \scN_0 \mbox{ such that } (y, x)\in \scE \big\}, 
			\\
			\scN_{i+1} =& \big\{ y\in \scN: \exists x\in \scN_i \mbox{ such that } (y, x) \in \scE \big\}. 
		\end{align*}
		
		Note that as $\scN$ is finite, there are a finite number of equivalence classes and let $n\in \bN$ be the number of equivalence classes. Also, for $x, x'\in \scN_i$, we have (recalling Equation \eqref{eq:N_x}) that $\scN_x \cap \scN_{x'} = \emptyset$. For every $x \in \scN$, we write $\fH[x]:=(h_0^x, H^x)$ and define $\fH[x]':= H^x \cup \big\{ h_0^x \cup \{x\} \big\}$. Next, we define
		\begin{align*}
			H^0:= \Big( \hat{H} \cup \big\{ \hat{h}_\iota: \iota \in I \big\} \Big) \backslash \{ \emptyset\}
			\quad\mbox{and for every $i\in \bN$,} \quad H^i:=& \bigcup_{x\in \scN_i} \fH[x]' \in \scP\Big( \scN_i \cup \scN_{i+1}\Big). 
		\end{align*}
		The two partitions $H^0$ and $H^1$ satisfy Equation \eqref{eq:definition:OverlineUnion3}, so using Definition \ref{definition:OverlineUnion} we can define
		\begin{equation*}
			\overline{H}^1 = H^0 \overline{\cup} H^1
		\end{equation*}
		Further, the two partitions $H^1$ and $H^2$ satisfy Equation \eqref{eq:definition:OverlineUnion3} and the sets $\scN_0 \cup \scN_1$ and $\scN_2 \cup \scN_3$ are disjoint so that we can further define
		\begin{equation*}
			\overline{H}^2 = (H^0 \overline{\cup} H^1) \overline{\cup} H^2 = \overline{H}^1 \overline{\cup} H^2. 
		\end{equation*}
		By repeating this argument $n$ times, we denote the resulting partition
		\begin{equation}
			\label{eq:definition:Local-Partition}
			H':= \Big( \big( (H^0 \overline{\cup} H^1) \overline{\cup} H^2 \big) \overline{\cup} ... \Big) \overline{\cup} H^n
			\in \scP( \scN ). 
		\end{equation}
		For every $\iota \in I$, when the set $\hat{h}_\iota$ is non-empty $\exists h_\iota \in H'$ such that $\hat{h}_\iota \subseteq h_\iota$. On the other hand, when $\hat{h}_\iota = \emptyset$ we prescribe that the set $h_\iota: = \emptyset$ too. Then
		\begin{equation*}
			H:= H' \backslash \{ h_\iota: \iota \in I \} \in \scP\Big( \scN \backslash \big( \bigcup_{\iota \in I} h_\iota \big) \Big)
			\quad \mbox{and we obtain}\quad
			(h_I, H) \in \scP(\scN)[I]. 
		\end{equation*}
		We want to prove that $(\scN, \scE, h_I, H, \scL)$ is a Lions forest or equivalently that $(h_I, H) \in \scQ^{\scE}(\scN)[I]$. 
		
		Firstly, let $h \in H'= \overline{\bigcup}_{i\in \bN_0} H^i$ and suppose that $\exists x,y \in h$ such that $x <y$. Since $y$ is not a root of $T$, $\exists z\in \scN$ such that $(y,z) \in \scE$. Let us suppose for a contradiction that $z \notin h$. 
		
		$\exists i \in \bN_0$ such that $z \in \scN_i$ and $y\in \scN_{i+1}$. Further, $\exists j\leq i$ such that $x\in \scN_j$. Since $z \notin h$, we have that the partition $\fH[z]$ contains $h^y$ such that $z \notin h^y$ and $y \in h^y$. Therefore $h^y \in H^i$ and $h^y \cap \scN_i = \emptyset$. This implies that $h^y \in H^{i-1} \overline{\cup} H^{i}$. Further, $h^y \cap \scN_{i-1} = \emptyset$ so that $h^y \in H^{i-2} \overline{\cup} H^{i-1} \overline{\cup} H^{i}$ and so forth. Repeating this argument, we get that $h^y \in \overline{\bigcup}_{j\leq i}  H^j $ and $\overline{\bigcup}_{j\leq i} H^j \in \scP\Big( \cup_{j\leq i+1} \scN_j \Big)$. Since $x \in \cup_{j\leq i+1} \scN_j$, $\exists h^x \in \overline{\bigcup}_{j\leq i} H^j $ and $h^y \neq h^x$. Therefore \ref{definition:Forests:2.2} holds. 
		
		As the sets $h^y \neq h^x$, we have that $\exists \tilde{h}^x, \tilde{h}^y \in H$ such that $h^x \subseteq \tilde{h}^x$ and $h^y \subseteq \tilde{h}^y$ and $\tilde{h}^x \cap \tilde{h}^y = \emptyset$. However, this contradicts the assumption that $x$ and $y$ are contained in the same set $h$. Thus $z \in h$. 
		
		Secondly, let $h\in H'$, let $x_1, y_1 \in h$ and suppose $\exists x_2, y_2 \in \scN$ such that $(x_1, x_2), (y_1, y_2) \in \scE$. Without loss of generality, suppose for a contradiction that $x_2 \notin h$. 
		
		Let $i\geq 0$ such that $x_2, y_2 \in \scN_i$. We have $x_1 \in \scN_{x_2}$ and $y_1 \in \scN_{y_2}$, so that $\exists h^x \in \fH[x_2]$ and $\exists h^y \in \fH[y_2]$ and $x_1 \in h^x$ and $y_1 \in h^y$. Thus $h^x, h^y \in H^{i}$ and $h^x \cap h^y = \emptyset$. Since by hypothesis $h^x \cap \scN_i = \emptyset$, we have that $h^x \in H^{i-1} \overline{\cup} H^i$ and $\exists h'^{y} \in H^{i-1} \overline{\cup} H^i$. Repeating the same argument as before, we get that $h^x \in \overline{\bigcup}_{j\leq i} H^j$ and $\exists h'^y \in \overline{\bigcup}_{j\leq i} H^j$ such that $h^y \subseteq h'^y$. 
		
		Then $\exists \tilde{h}^x, \tilde{h}^y \in H'$ such that $h^x \subseteq \tilde{h}^x$ and $h'^y \subseteq \tilde{h}^y$ and $\tilde{h}^x \cap \tilde{h}^y = \emptyset$. However, this is a contradiction since we assumed that $x_1, y_1 \in h \in H'$.  Therefore $x_2 \in h$. By symmetry, we also conclude that $y_2 \in h$.  Therefore \ref{definition:Forests:2.3} holds. 
		
		Finally, for every $\iota \in I$ when the set $h_\iota$ is non-empty we have that $\hat{h}_\iota \subset h_\iota$, and $\hat{h}_\iota \subseteq \fr(\scN)$ so that $h_\iota$ contains a root. Therefore \ref{definition:Forests:2.1} holds. Hence, the partition $\overline{\bigcup}_i H^i$ satisfies the requirements of Definition \ref{definition:Forests} such that $(\scN, \scE, h_\iota, H, \scL) \in \scF_d[I]$.
		
		\textit{Step 2.} Next, we show that we can identify a labelled Lions forest with a labelled Lions partition forest. Let $(\scN, \scE, h_I, H, \scL)$ be a Lions forest. For starters, for every $\iota \in I$ consider the subset
		\begin{equation*}
			\hat{h}_\iota:=h_\iota \cap \fr(\scN)
			\quad \mbox{and}\quad
			\hat{H} = H \cap \fr(\scN) \in \scP( \fr(\scN) \backslash \big( \bigcup_{\iota \in I} \hat{h}_\iota \big)).
		\end{equation*} 
		Denote $H' = \Big( H \cup \bigcup_{\iota \in I} \{h_\iota\}\Big) \backslash \{\emptyset\}$. For each $x\in \scN$, $\exists h_x \in H'$ such that $x \in h_x$. We define 
		\begin{equation}
			\label{eq:FrakH_construction}
			\fH \in \bigsqcup_{x\in \scN} \scP(\scN_x)[0]
			\quad\mbox{by}\quad
			\fH[x]:= \Big( h_x \cap \scN_x, \{ h \cap \scN_x: h \in H' \} \backslash\{ \emptyset, h_x \cap \scN_x\} \Big).
		\end{equation}
		Then $\fH[x] \in \scP(\scN_x )[0]$ so that we have that $\big( \scN, \scE, (\hat{h}_I, \hat{H}), \fH, \scL\big)$ is a Lions partition forest. 
		
		\textit{Step 3.} Finally, it is easy to verify that mappings we have constructed that map $\scF_d[I] \to \hat{\scF}_d[I]$ and $\hat{\scF}_d[I] \to \scF_d[I]$ are the inverse of one another. 
		
		Let $\hat{T} = \big(\scN, \scE, (\hat{h}_I, \hat{H}), \fH, \scL \big) \in \hat{\scF}_{0, d}[I]$ and let $(h_I, H) \in \scQ^{\scE}(\scN)[I]$ be the associated tagged partition constructed in Step 1. Then for any $x\in \scN$ we have that the partition $H'$ restricted to the set $\scN_x$ agrees with the partition $\fH[x]' \in \scP(\scN_x)$ where for $\fH[x] = (h_0^x, H^x)$, we write $\fH[x]':= \big(H^x \cup\{h_0^x\} \big)\backslash \{\emptyset\}$. Therefore, we conclude that $\fH$ as constructed from the partition $H'$ using \eqref{eq:FrakH_construction} is equal to $\fH$ from the hypothesis. 
		
		Similarly, let $T = (\scN, \scE, h_I, H, \scL) \in \scF_{0, d}[I]$ and for every $\iota \in I$ let $\hat{h}_\iota = h_\iota \cap \fr[T]$, $\hat{H} = H \cap \fr(\scN)$ and for each $x\in \scN$ define $\fH[x]$ according to \eqref{eq:FrakH_construction}. By writing 
		\begin{equation*}
			\fH[x]' = H^x \cup \big\{ h_0^x \cup\{x\} \big\}, 
			\quad\mbox{we obtain}\quad
			H^i: = \bigcup_{x\in \scN_i} \fH[x]' = H' \cap \Big( \scN_i \cup \scN_{i+1} \Big)
		\end{equation*}
		so that the partition
		\begin{equation*}
			H^i \overline{\cup} H^{i+1} = H' \cap \Big( \scN_i \cup \scN_{i+1} \cup \scN_{i+2} \Big)
			\quad \mbox{so we conclude}\quad
			\bigcup_{i\in \bN_0} H^i = H'. 
		\end{equation*}
	\end{proof}
	
	\begin{remark}
		Theorem \ref{theorem:EquivalenceTrees} shows us the link between partitions constructed using the method seen in Equation \eqref{eq:definition:Local-Partition} (stitching together local partitions) and partitions that satisfy Definition \ref{definition:Quotient-Partition}. Further, the link between partitions of the set $\scQ^\scE(\scN)[I]$ and tagged Lions forests as defined in Definition \ref{definition:Forests} is clear. 
		
		The use of these partition structures for studying elementary differentials derived from Lions derivatives is physically justified since the process of taking iterative Lions derivatives can be abstracted to correspond to the construction of a partition of a directed forest from its local partitions. Each Lions derivative corresponds to a partition sequence $a \in A[0]$ paired with a node of the forest which uniquely determines how the nodes above that are partitioned. 
		
		Thus, we should see the use of Definition \ref{definition:Forests} in rough path theory as being justified by this result. 
	\end{remark}
	
	\iftoggle{Plus}{
	Our final step is to understand how a partition from $\scP(\scN)[I]$ can be transformed into a partition from $\scQ^{\scE}(\scN)[I]$:
	\begin{proposition}
		\label{proposition:Partitions2}
		Let $\tau=(\scN, \scE)$ be a directed tree and let $I$ be an index set. Suppose that $(h_I, H) \in \scP(\scN)[I]$. Then $\exists (\tilde{h}_I, \tilde{H}) \in \scQ^{\scE}(\scN)[I]$ such that $(\tilde{h}_I, \tilde{H}) \subseteq (h_I, H)$. 
		
		Further, $\exists! (\tilde{h}_I', \tilde{H}') \in \scQ^{\scE}(\scN)[I]$ such that
		\begin{equation}
			\label{eq:proposition:Partitions2}
			\forall (\tilde{h}_I'', \tilde{H}'') \in \Big\{ (\tilde{h}_I, \tilde{H}) \in \scQ^{\scE}(\scN)[I]: (\tilde{h}_I, \tilde{H}) \subseteq (h_I, H) \Big\}, 
			\quad
			(\tilde{h}_I'', \tilde{H}'') \subseteq (\tilde{h}_I', \tilde{H}'). 
		\end{equation}
	\end{proposition}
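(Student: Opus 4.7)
The plan is to establish both claims by direct construction. Existence is essentially trivial: the finest tagged partition $(\emptyset, \{\{x\} : x \in \scN\})$ satisfies the three conditions of Definition \ref{definition:Quotient-Partition} vacuously (the hypotheses of (2) and (3) cannot be met for singleton hyperedges, and condition (1) is vacuous for an empty tag) and refines every element of $\scP(\scN)[0]$. Hence the set $\{(\tilde h_0, \tilde H)\in \scQ^{\scE}(\scN)[0]: (\tilde h_0, \tilde H) \subseteq (h_0, H)\}$ is non-empty.

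For the uniqueness of the maximum I would construct it explicitly. Writing $H' = (H \cup \{h_0\}) \backslash \{\emptyset\}$, define for each $h \in H'$ an equivalence relation $\sim_h$ on $h$ as the transitive closure of the two atomic relations: $u \sim v$ when $(u,v) \in \scE$ or $(v,u) \in \scE$, and $u \sim v$ when $u, v$ share the same tree-parent in $\tau$. Let $\tilde H^{\star}_h$ be the partition of $h$ into $\sim_h$-classes. The candidate maximum $(\tilde{h}_0^\star, \tilde{H}^\star)$ is defined by taking $\tilde{h}_0^\star$ to be the $\sim_{h_0}$-class of the root $r \in \fr[\tau]$ whenever $r \in h_0$ (and $\tilde{h}_0^\star = \emptyset$ otherwise), and assembling $\tilde{H}^\star$ from the remaining $\sim_h$-classes across all $h \in H'$. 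By construction, $(\tilde{h}_0^\star, \tilde{H}^\star) \subseteq (h_0, H)$.

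The main obstacle is verifying admissibility of each $\sim_h$-class $[v]$, which I would reduce to the following structural lemma proved by induction on the maximum depth attained along the chain: \emph{if $u_1, u_2 \in [v]$ lie at the same tree-depth $d$ and are connected by a $\sim_h$-chain every node of which has depth $\geq d$, then $u_1$ and $u_2$ share the same tree-parent in $\tau$.} In the base case the chain stays at depth $d$, so every step is of the sibling type and transitivity yields the conclusion. In the inductive step, any maximal excursion of the chain above depth $d$ begins with a child step and ends with a parent step; the inductive hypothesis applied to the excursion viewed as a chain at depth $\geq d+1$ equates the two launch/return parents at depth $d$, so the excursion contracts to a single depth-$d$ node, reducing to the base case. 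Granted the lemma, condition (2) of Definition \ref{definition:Quotient-Partition} holds for $[v]$ because any $y \in [v]$ above the minimum depth $d_0$ is connected to a depth-$d_0$ witness, and the first depth-drop along such a chain delivers $p(y)$ as its lower endpoint (applying the lemma to the two depth-$d$ chain-neighbours of that drop); condition (3) at depth $d_0$ reduces directly to the lemma, and at higher depths to condition (2). The tag condition (1) is clear from the construction.

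Finally, I would prove the universal property \eqref{eq:proposition:Partitions2} by establishing the converse structural statement: any admissible piece $\tilde{h}'' \subseteq h$ arising in an alternative refinement $(\tilde h_0'', \tilde H'')$ is ``subtree-shaped'', with its minimum-depth elements sharing a common tree-parent (by condition (3)) and each higher-depth element possessing its full ancestor chain within $\tilde{h}''$ (by iterating condition (2)). Consequently, any two elements of $\tilde{h}''$ can be linked by a $\sim_h$-chain built from these ancestor chains joined by a single sibling-jump at the minimum depth, so $\tilde{h}''$ lies within a single $\sim_h$-class, hence within a piece of $\tilde H^\star$. The inclusion $\tilde h_0'' \subseteq \tilde h_0^\star$ is handled analogously: a non-empty admissible tag must contain $r$ and, by the same argument, must lie within the $\sim_{h_0}$-class of $r$. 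This shows $(\tilde h_0'', \tilde H'') \subseteq (\tilde h_0^\star, \tilde H^\star)$, which both identifies $(\tilde h_0^\star, \tilde H^\star)$ as the required maximum and forces uniqueness.
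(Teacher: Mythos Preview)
Your proposal is correct and reaches the same maximal element as the paper, but by a different route. The paper constructs the maximum by restricting $H$ to each local fan $\{x\}\cup\scN_x$, obtaining the family $H^x$, and then stitching these with the operator $\overline{\cup}$ of Definition~\ref{definition:OverlineUnion}; admissibility of the result is not proved afresh but imported from Theorem~\ref{theorem:EquivalenceTrees}. You instead define the transitive closure $\sim_h$ of the parent--child and sibling relations inside each block $h\in H'$ and take the classes. The two constructions coincide: two nodes lie in the same local piece of some $H^x$ precisely when they are in the same $h$ and either form a parent--child pair or are siblings, so connectedness through overlapping local pieces is exactly $\sim_h$-equivalence. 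What your approach buys is self-containment (no appeal to Theorem~\ref{theorem:EquivalenceTrees}) and a cleaner maximality argument via the ``subtree-shaped'' characterisation of admissible pieces; the paper's corresponding step is terser and leans on matching the local data. What the paper's approach buys is economy once Theorem~\ref{theorem:EquivalenceTrees} is in place.

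One small point of exposition: in your structural lemma the induction should be on the gap between the maximum depth attained along the chain and the base depth $d$, rather than on the maximum depth alone. An excursion above depth $d$ may well attain the same maximum depth as the full chain, so the inductive hypothesis as you phrase it does not apply; but its base depth is $d+1$, so the gap strictly decreases and the induction goes through.
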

	Intuitively, this means that tagged partitions $(h_I,H)\in \scP(\scN)[I]$ can always be projected onto the subset $\scQ^{\scE}(\scN)[I]$ (although not in a unique way) and there exists a minimal partition under the partital ordering determined by Definition \ref{definition:Quotient-Partition} (in the sense that it has minimal number of elements). 
	
	\begin{proof}
		The existence of such a tagged partition $(\tilde{h}_I, \tilde{H}) \in \scQ^{\scE}(\scN)[I]$ is immediate by choosing $\tilde{h}_\iota = \emptyset$ for every $\iota \in I$ and $\tilde{H}:=\big\{ \{x\}: x\in \scN\big\}$. Clearly $(\tilde{h}_I, \tilde{H}) \subseteq (h_I, H)$, but this will not be a pertinent choice and we focus on constructing a relevant partition and showing that this partition is a maximal element. 
		
		For $H' = H \cup \{h_{\iota}: \iota\in I\} \in \scP(\scN)$ and $x\in \scN$, (recalling Equation \eqref{eq:N_x}) we define 
		\begin{equation*}
			H^x:=\Big\{ h \cap \big( \{x\} \cup \scN_x\big): h \in H \Big\} \backslash\{ \emptyset \}, 
		\end{equation*}
		we define $H^i:= \bigcup_{x\in \scN_i} H^x$ and $\tilde{H}':= \overline{\bigcup}_{i \in \bN} H^i$ as in Equation \eqref{eq:definition:Local-Partition}. Then by Theorem \ref{theorem:EquivalenceTrees} we have that $\tilde{H}' \in \scQ^\scE(\scN)$ and $\forall \tilde{h} \in \tilde{H}'$ we have that $\exists h\in H$ such that $\tilde{h} \subseteq h$. 
		
		We prove the set $\tilde{H}'$ is the maximal partition in the sense of Equation \eqref{eq:proposition:Partitions2}. Suppose first that the partition $H \in \scQ^\scE(\scN)$. Then by Theorem \ref{theorem:EquivalenceTrees} we have that $\tilde{H}' = H$. Suppose that $\tilde{H}'' \subseteq H$. Then since $H = \tilde{H}'$, we have that $\tilde{H}'' \subseteq \tilde{H}'$. Thus $\tilde{H}'$ is the unique maximal. 
		
		Now suppose that $H \notin \scQ^\scE(\scN)$. Suppose that $\tilde{H}'' \in \scQ^\scE(\scN)$ and that $\tilde{H}'' \subseteq H$. Let $\tilde{h}'' \in \tilde{H}''$ and let $h\in H$ such that $\tilde{h}'' \subseteq h$. Then $\forall x\in \tilde{h}''$ such that $\exists y \in \tilde{h}''$ such that $(y, x) \in \scE$, we have that the sets
		\begin{equation*}
			H^x:=\Big\{ h \cap ( \{x\} \cup \scN_x) \Big\}, \quad (\tilde{H}'')^x:=\Big\{ \tilde{h}'' \cap ( \{x\} \cup \scN_x) \Big\} 
		\end{equation*}
		are equal. Therefore $H^i$ and $(\tilde{H}'')^i$ defined accordingly are also equal, so that $\exists \tilde{h}' \in \tilde{H}'$ such that $\tilde{h}'' \subseteq \tilde{h}'$. Therefore $\tilde{H}'' \subseteq \tilde{H}'$, so that $\tilde{H}'$ is maximal. 
		
		Let $(h_I, H) \in \scP(\scN)[I]$. Firstly, for each $\iota \in I$, we either have that the set $h_{\iota}$ satisfies all three of \ref{definition:Forests:2.2}, \ref{definition:Forests:2.3} and \ref{definition:Forests:2.1} or it does not. For any $\iota \in I$ for which it does, we denote $h_{\iota}' = h_{\iota}$ and for all other $\iota \in I$ we denote $h_{\iota}' = \emptyset$. The partition $\tilde{H}' \in \scQ^{\scE}(\scN)$ constructed above can then be rewritten as
		\begin{equation*}
			\tilde{H}': = \tilde{H}' \backslash \{h_{\iota}': \iota \in I \}
		\end{equation*}
		and $(\tilde{h}_I', \tilde{H}') \in \scQ^{\scE}(\scN)[I]$. Further, it satisfies \eqref{eq:proposition:Partitions2} and we conclude.  
	\end{proof}
	}{}

	\subsubsection{Lions forests as the completion of tree operations}
	
	Lions forests (and tagged Lions forests) have a natural product structure:
	\begin{definition}
		\label{definition:Lionsproduct}
		Let $I$ be an index set. Then we define $\circledast: \scF_{0,d}[I] \times \scF_{0,d}[I] \to \scF_{0,d}[I]$ so that for two Lions forests 
		\begin{equation*}
			T_1=\big( \scN^1, \scE^1, (h_\iota^1)_{\iota\in I}, H^1, \scL^1 \big)
			\quad \mbox{and} \quad 
			T_2 = \big( \scN^2, \scE^2, (h_\iota^2)_{\iota\in I}, H^2, \scL^2 \big), 
		\end{equation*}
		we have $T_1 \circledast T_2 = \big( \tilde{\scN}, \tilde{\scE}, (\tilde{h_\iota})_{\iota\in I}, \tilde{H}, \tilde{\scL} \big)$ such that 
		\begin{align*}
			&\tilde{\scN} = \scN^1 \cup \scN^2,
			\quad
			\tilde{\scE} = \scE^1 \cup \scE^2, 
			\quad
			\tilde{h}_\iota = h_\iota^1 \cup h_\iota^2 
			\quad
			\tilde{H} = H^1 \cup H^2,  
			\\
			&\tilde{\scL}: \tilde{\scN} \to \{1, ..., d\} \quad \mbox{ such that } \quad \tilde{\scL}|_{\scN^i} = \scL^i. 
		\end{align*}
	\end{definition}
	Simple computations allow us to verify that the $\circledast$ operation is associative and commutative product on the space of (tagged) Lions forests with unit $\rId$. 
	
	\begin{definition}
		\label{definition:Coupled-E}
		Let $\cE: \scF_{0,d}[0] \to \scF_{0,d}[0]$ be the operator defined for $T=(\scN, \scE, h_0, H, \scL) \in \scF_{0,d}[0]$ by
		\begin{equation*}
			\cE[T] = (\scN, \scE, \emptyset, H', \scL).
		\end{equation*}
		We refer to $\cE$ as the decoupling operation. 
		
		Let $a \in A_{n}[0]$. We define the operator $\cE^a: \big(\scF_{0, d}[0]\big)^{\times n} \to \scF_{0, d}[0]$ by
		\begin{equation}
			\label{eq:definition:E^a-notation}
			\cE^a\Big[ T_1, ..., T_n\Big] = \Big[ \underset{\substack{i:\\ a_i=0}}{\circledast} T_i \Big] \circledast \cE\Big[ \underset{\substack{i:\\ a_i=1}}{\circledast} T_i \Big] \circledast ... \circledast \cE\Big[ \underset{\substack{i:\\ a_i=m[a]}}{\circledast} T_i \Big] . 
		\end{equation}
		Intuitively, the decoupling by the partition sequence $\cE^a$ groups the sequence of trees into $m[a]+1$ partitions with a single group tagged and all others detagged. 
		
		Similarly, let $I$ be an index set. Then we extend the decoupling operation to a collection of coupling operations $(\cE_\iota)_{\iota\in I}$ such that for each $\iota\in I$, $\cE_i: \scF_{0,d}[0] \to \scF_{0,d}[I]$ be the operator defined for $T = \big( \scN, \scE, h_0, H, \scL \big) \in \scF_{0,d}[0]$ by
		\begin{equation*}
			\cE_\iota[T] = \big( \scN, \scE, (\tilde{h}_{\hat{\iota}})_{\hat{\iota} \in I}, H, \scL \big), \quad \tilde{h}_{\iota} = h_0, \quad \tilde{h}_{\hat{\iota}} = \emptyset \quad \mbox{for $\hat{\iota} \in I \backslash \{\iota\}$}. 
		\end{equation*}
		
		Let $a \in A_{n}[I]$. We define the operator $\cE^a: \big( \scF_{0, d}[0] \big)^{\times n} \to \scF_{0, d}[I]$ by
		\begin{equation}
			\label{eq:definition:E^a-notation-}
			\cE^a\Big[ T_1, ..., T_n\Big] = \underset{\iota \in I}{\scalebox{1.5}{$\circledast$}} \cE_\iota \Big[ \underset{\substack{i:\\ a_i=\iota }}{\scalebox{1.5}{$\circledast$}} T_\iota \Big]  \circledast \cE\Big[ \underset{\substack{i:\\ a_i=1}}{\scalebox{1.5}{$\circledast$}} T_\iota \Big] \circledast ... \circledast \cE\Big[ \underset{\substack{i:\\ a_i=m[a]}}{\scalebox{1.5}{$\circledast$}} T_\iota \Big] 
		\end{equation}
		Intuitively, the decoupling by the partition sequence $\cE^a$ groups the sequence of trees into $m[a]+|I|$ partitions.	
	\end{definition}

	\begin{definition}
		\label{definition:Rooting}
		We denote $\scT_{0, d}[0]$ to be the collection of all Lions trees with labellings taking their value in the set $\{1, ..., d\}$ such that the tagged hyperedge $h_0$ is non-empty. 
		
		Let $i\in \{1, ..., d\}$. The rooting operator 
		\begin{equation*}
			\lfloor \cdot \rfloor: \scF_{0, d}[0] \to \scT_{0, d}[0]
		\end{equation*}
		is defined so that for $T=(\scN, \scE, h_0, H, \scL)$, $\lfloor T\rfloor_i = (\tilde{\scN}, \tilde{\scE}, \tilde{h_0}, H, \tilde{\scL})$ where
		\begin{align*}
			\mbox{For} \quad& x_0 \notin \scN,
			\quad
			\tilde{\scN} = \scN \cup \{ x_0\}, 
			\quad
			\tilde{h_0} = h_0 \cup\{x_0\}, 
			\\
			\mbox{For} \quad& \{x_1, ..., x_n\}=\fr[T] \subseteq \scN, 
			\quad \tilde{\scE} = \scE \cup \big\{ (x_1, x_0), ..., (x_n, x_0) \big\}
			\\
			& \tilde{\scL}:\tilde{\scN} \to \{1, ..., d\}, 
			\quad
			\tilde{\scL}\big|_{\scN} = \scL, \quad \tilde{\scL}[x_0] = i.
		\end{align*}
	\end{definition}
	
	The next proposition shows that the product, decoupling and rooting operations suffice to generate all the forests:
	\begin{proposition}
		\label{proposition:CompletionOfTrees}
		Let $d\in \bN$. The completion of the set $\{ \rId \}$ with respect to the operations $\circledast$, $\cE$ and $\lfloor \cdot\rfloor_i$ such that $i\in\{1, ..., d\}$ is the complete set of Lions forests $\scF_{0, d}[0]$. 
	\end{proposition}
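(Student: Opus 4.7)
The plan is to establish both inclusions separately. The inclusion that the completion is contained in $\scF_{0,d}[0]$ is straightforward: $\rId \in \scF_{0,d}[0]$ by convention, and it is routine to check that each of $\circledast$, $\cE$, and $\lfloor \cdot \rfloor_i$ preserves the axioms of Definition \ref{definition:Forests}; in particular, $\cE$ simply reinterprets $h_0$ as a regular hyperedge, and $\lfloor \cdot \rfloor_i$ attaches a fresh root below the existing roots and adjoins it to $h_0$. The substantive content is the reverse inclusion, which I would prove by strong induction on $|\scN|$, with base case $\scN=\emptyset$ giving $T=\rId$.

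The linchpin of the induction is the following structural observation: \emph{any hyperedge $h \in H'$ that contains nodes from two distinct trees $\tau_i,\tau_j$ must in fact contain both roots $r_i$ and $r_j$.} To see this, pick $x \in h \cap \tau_i$ and $y \in h \cap \tau_j$ with $x \leq y$ in the preorder; axiom \ref{definition:Forests:2.2} allows climbing $y$'s ancestral line inside $h$ until reaching some $y'$ at the same level as $x$; then axiom \ref{definition:Forests:2.3} (applied to $x,y'$, whose parents live in distinct trees hence differ) forces both parents into $h$, and iterating gives $r_i,r_j \in h$. An immediate corollary is that if the root of a tree $\tau_i$ is in $h_0$, then $\tau_i$ cannot share any non-$h_0$ hyperedge with another tree of the forest, since such a hyperedge would then also have to contain $r_i$, contradicting disjointness of $h_0$ and the elements of $H$.

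With this in hand, the inductive step splits into three cases. \textbf{Case A:} $T$ is a single tree and $h_0 \neq \emptyset$. By \ref{definition:Forests:2.1} the root $r \in h_0$, and deleting $r$ produces a forest $T'$ with tag $h_0\setminus\{r\}$ and unchanged $H$; axiom (2.2) on $h_0$ inside $T$ ensures that $h_0\setminus\{r\}$, if nonempty, contains a child of $r$ (which is a root of $T'$), so $T'\in\scF_{0,d}[0]$. Then $T = \lfloor T'\rfloor_{\scL(r)}$ and induction applies. \textbf{Case B:} $T$ is a single tree with $h_0 = \emptyset$. The root $r$ lies in a unique $h \in H$; defining $T'' = (\scN,\scE,h,H\setminus\{h\},\scL)$ gives a valid Lions tree with $h_0(T'') = h \ni r$, and $T = \cE[T'']$, so Case A applies to $T''$. \textbf{Case C:} $T$ is a proper forest ($k \geq 2$). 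Partition the trees into equivalence classes $I_1,\ldots,I_\ell$ under the relation ``share a non-$h_0$ hyperedge''. If $\ell \geq 2$, define $T_j$ to be the sub-forest on $\bigcup_{i\in I_j}\scN(\tau_i)$ with tag $h_0\cap\scN(T_j)$ and hyperedge set $\{h \in H : h \subseteq \scN(T_j)\}$; the structural observation ensures $H$ splits cleanly across the classes, tracing via (2.2) shows each nonempty $h_0\cap\scN(T_j)$ contains a root of $T_j$, and each $T_j$ is a strictly smaller valid Lions forest, so $T = T_1 \circledast \cdots \circledast T_\ell$ concludes by induction. If $\ell = 1$, the corollary forces $h_0 = \emptyset$; pick any $h \in H$ that spans multiple trees and set $T'' = (\scN,\scE,h,H\setminus\{h\},\scL)$, so $T = \cE[T'']$; applying the corollary inside $T''$, the trees whose roots are in $h$ are now each isolated from each other, so $T''$ falls into the $\ell \geq 2$ situation and induction concludes.

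The main obstacle is Case C, and specifically two subtleties that must be handled with care. First, the structural observation that any $H'$-hyperedge crossing tree boundaries necessarily contains all the corresponding roots has to be proved rigorously by interleaving axioms (2.2) and (2.3), and it is this observation that makes the tree-equivalence classes compatible with the $\circledast$-factoring. Second, one is tempted to merge $h_0$ with an additional ``connecting'' hyperedge in order to form a larger tag, but such a merger would in general violate axiom (2.2); the corollary rules this out by showing that whenever $h_0\neq\emptyset$ the $\ell\geq 2$ sub-case is automatically triggered, so the promotion-to-tag trick via $\cE^{-1}$ is only ever invoked when $h_0 = \emptyset$, where it is unambiguously legitimate.
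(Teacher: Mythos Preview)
Your proof is correct and rests on the same key structural observation as the paper's: any hyperedge meeting two distinct trees of the forest must contain both their roots, proved by interleaving axioms \ref{definition:Forests:2.2} and \ref{definition:Forests:2.3}. The paper organises the argument somewhat differently: rather than your three-case strong induction, it iterates over the root-hyperedges directly, writing $T = T_1^1 \circledast \cE[T_1^2] \circledast \cdots \circledast \cE[T_1^m]$ where $T_1^j$ collects all trees whose root lies in the $j$-th element of $P_1 = \{h \in H' : h \cap \fr[T] \neq \emptyset\}$, then further factors each $T_1^j$ as a $\circledast$-product of its individual trees and strips each root via $\lfloor\cdot\rfloor$. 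Your equivalence-class formulation is a cleaner packaging of the same idea (and in fact your classes coincide with the paper's root-hyperedge groups, except that you split the $h_0$-rooted trees into singletons whereas the paper keeps them bundled in $T_1^1$); the paper's version is more explicitly constructive, producing the $\cE^a$-style decomposition in one pass rather than deferring the $\cE$ to a later inductive call. One small point worth making explicit in your write-up: in Case~C with $\ell = 1$, the passage ``$T''$ falls into the $\ell \geq 2$ situation and induction concludes'' is not a direct application of the inductive hypothesis to $T''$ (since $|T''| = |T|$) but rather an application of the $\ell \geq 2$ analysis to $T''$ followed by induction on its strictly smaller $\circledast$-factors; this is clearly what you intend, but it reads as a minor elision.
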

	
	\begin{proof}
		Firstly, note that $\rId \circledast \rId = \rId$ and $\cE[\rId] = \rId$ so the first step is to consider $T_i = \lfloor \rId \rfloor_i$: each elements $T_i$ is a Lions tree (and therefore forests with a single root) and the operations $\circledast$, $\cE$ and $\lfloor \cdot \rfloor_\cdot$ all map Lions forests to Lions forests. Hence the completion of the set $\{\rId\}$ must be contained in $\scF_{0,d}[0]$. We prove that for any forest $T\in \scF_{0,d}[0]$, it must be expressible in terms of some combination of the operators $\circledast$, $\cE$ and $\lfloor \cdot \rfloor_\cdot$. 
		\vskip 4pt
		
		\textit{Step 1.}
		Let $T=(\scN, \scE, h_0, H, \scL)\in \scF_d[0]$. Let $\fr[T]$ be the collection of roots of $T$ and let $P:=\{ h \in H\cup \{h_0\}: h\cap \fr[T] \neq \emptyset\}$ be the collection of hyperedges that contain a root. The set $\fr[T]$ cannot be empty since $\scN$ must contain at least one element that is minimal with respect to the partial ordering $<$ and $P$ cannot be empty since every node of the hypergraph $\big(\scN, H \cup \{h_0\}\big)$ is 1-regular. Further, since $\scN$ is finite we know that $\fr[T]$ and $P$ are also both finite. Let $m= |P|$. 
		
		By \ref{definition:Forests:2.1}, either $h_0 \in P$ or $h_0 = \emptyset$. Suppose that $h_0$ is the empty set and let $h\in P$. Then for any choice of $h\in P$, by assumption $(\scN, \scE)$ is a directed forest, $(\scN, H')$ is a 1-regular hypergraph, $\scL:\scN \to \{1, ..., d\}$ is a labelling and $T = \cE[ (\scN, \scE, h, H \backslash \{h\}, \scL)]$. Therefore, $(\scN, \scE, h, H\backslash \{h\}, \scL)$ is a Lions forest with non-empty tagged hyperedge so that we can assume without loss of generality that $h_0$ is non-empty. 
		
		Suppose first that $|\fr[T]| = 1$ so that necessarily $m=1$ too and write $\fr[T] = \{x_0\}$. Then we have that $\exists h_0\in H'$ such that $x_0 \in h_0$. Therefore, we can write either 
		\begin{equation*}
			T \in \scT_{0, d}[0] \quad \mbox{or} \quad T = \cE\big[ (\scN, \scE, h_0, H, \scL) \quad \mbox{where} \quad (\scN, \scE, h_0, H, \scL) \in \scT_{0,d}[0]
		\end{equation*}
		
		Now we consider the case when $|\fr[T]|>1$. Denote $n = |\fr[T]|$ and note that $m, n \in \bN$ and $m\leq n$. For each $x\in \fr[T]$, let 
		\begin{equation*}
			\scN_x = \{x\} \cup \big\{ y\in \scN: \exists! (\hat{y}_i)_{i=1, ..., \hat{n}}, \hat{y}_1= y, \hat{y}_{\hat{n}} = x \mbox{ and } \forall i=1, ..., \hat{n}-1, (\hat{y}_i, \hat{y}_{i+1}) \in \scE \big\}. 
		\end{equation*}
		That is, the collection $(\scN_x)_{x\in \fr[T]}$ represents the connected components of the directed forest $(\scN, \scE)$ and $\cup_{x\in \fr[T]} \scN_x = \scN$. Further, let
		\begin{equation*}
			\scE_x = \big\{ (y, z) \in \scE: y, z \in \scN_x \big\}. 
		\end{equation*}
		Then $\cup_{x\in \fr[T]} \scE_x = \scE$ since any edge between two nodes in different connected components necessarily doesn't exist. 
		
		For each $p\in P$, we define
		\begin{equation*}
			\scN_p = \bigcup_{x\in p \cap \fr[T]} \scN_x, \quad
			\scE_p = \bigcup_{x\in p \cap \fr[T]} \scE_x \quad\mbox{and}\quad
			\scL_p: \scN_p \to \{1, ..., d\}, \quad  \scL_p = \scL\big|_{\scN_p}. 
		\end{equation*}
		We can easily verify that $(\scN_p, \scE_p, \scL_p)$ is a labelled directed forest. Next denote
		\begin{equation*}
			H'_p = \big\{ h\in H': h\cap \scN_p \neq \emptyset \big\}. 
		\end{equation*}
		We want to show that the set $H_p' \in \scP(\scN_p)$: certainly every element $h\in H_p'$ contains at least one element of $\scN_p$ and every element of $\scN_p$ is contained in one element of $H_p'$ but we have not established that every $h\in H_p'$ contains only elements of $\scN_p$. Suppose for a contradiction that there exists a $\hat{h}\in H_p'$ such that there exists $y\in \hat{h}$ and $y \notin \scN_p$. Further, let $x \in \hat{h}$ be a minimal element (with respect to the partial ordering $\leq$ on $(\scN, \scE)$) of $\hat{h}$ such that $x\in \scN_p$. Suppose first that $y>x$. As the set $h$ satisfies \ref{definition:Forests:2.2}, we conclude that there exists $y'<y$ such that $(y, y') \in \scE$ and $y' \in h$ too. As $y$ and $y'$ are connected, $y'$ is not in $\scN_p$ and we can keep repeating this argument until we obtain an element $\tilde{y} \lesseqqgtr x$ such that $\tilde{y} \in h$ and $\tilde{y} \notin \scN_p$. Next, suppose that $\tilde{y}, x \notin \fr[T]$, so that $\exists \tilde{y}', x' \in \scN$ such that $(\tilde{y}, \tilde{y}), (x, x') \in \scE$. Then $x' \in \scN_p$ and $\tilde{y}' \notin \scN_p$. However, as $h$ satisfies \ref{definition:Forests:2.3} we have that $x' \in h$ and this is a contradiction since we assumed that $x$ was the minimal element of $h$ also contained in $\scN_p$. Now, suppose that $x, \tilde{y} \in \fr[T]$ so that $x\in p$ and $\tilde{y} \notin p$. This ensures that there exists no element of $h\in H'$ such that $x, \tilde{y} \in h$ which is a contradiction. Thus we conclude that $H_p' \in \scP(\scN_p)$. 
		
		Finally, the partial ordering determined by $(\scN_p, \scE_p)$ is the same as the partial ordering determined by $(\scN, \scE)$ restricted to the set $\scN_p$ so that $\scQ^{\scE_p}(\scN_p) = \scQ^{\scE}(\scN_p)$ and thus $H' \in \scQ^{\scE}(\scN)$ implies that $H_p' \in \scQ^{\scE_p}(\scN_p)$. Therefore, the partition $H_p'$ satisfies \ref{definition:Forests:2.2} and \ref{definition:Forests:2.3} so that 
		\begin{equation*}
			\big( \scN_p, \scE_p, p, H_p'\backslash\{p\}, \scL_p \big) \in \scF_{d}[0]
		\end{equation*} 
		and we obtain
		\begin{equation*}
			T = \big( \scN_{h_0}, \scE_{h_0}, h_0, H_{h_0}', \scL_{h_0} \big) \circledast \underset{p \in P\backslash \{h_0\}}{\scalebox{1.5}{$\circledast$}} \cE\Big[ \big( \scN_p, \scE_p, p, H_p', \scL_p \big) \Big]
		\end{equation*}
		where $\fr\big[ ( \scN_p, \scE_p, p, H_p', \scL_p ) \big] = p \cap \fr[T]$. 
		
		Next, for each $p \in P$ and $x\in p$ we denote 
		\begin{equation*}
			H_x' = \big\{ h\in H': h\cap \scN_x \neq \emptyset \big\}. 
		\end{equation*}
		Arguing in the same fashion as before, we conclude that $(\scN_x, H_x')$ is a 1-regular hypergraph with hyperedges that satisfy \ref{definition:Forests:2.2} and \ref{definition:Forests:2.3}. Finally, the set $p \cap \scN_x$ is non-empty and contains the root $x$ and no other elements of $\fr[T]$ (so that $p \cap \scN_x$ satisfies \ref{definition:Forests:2.1}). Therefore, we obtain that 
		\begin{equation*}
			\big( \scN_x, \scE_x, p\cap \scN_x, H_x' \backslash\{ p\cap \scN_x\}, \scL_x \big) \in \scF_d[0]. 
		\end{equation*}
		Further, since $(\scN_x, \scE_x)$ is a sub-forest of $(\scN, \scE)$ that represents a single connected component and $p\cap \scN_x$ is non-empty, we obtain that
		\begin{equation*}
			\big( \scN_x, \scE_x, p\cap \scN_x, H_x' \backslash\{ p\cap \scN_x\}, \scL_x \big) \in \scT_{0,d}[0]. 
		\end{equation*}
		Therefore, we conclude that any Lions forest $T\in \scF_{0, d}[0]$ can be represented as
		\begin{equation}
			\label{eq:proposition:CompletionOfTrees-1}
			T = \bigg( \underset{x \in h_0}{\scalebox{1.5}{$\circledast$}} T_x \bigg) \circledast \underset{p \in P\backslash \{h_0\}}{\scalebox{1.5}{$\circledast$}} \cE\bigg[ \underset{x \in p}{\scalebox{1.5}{$\circledast$}} T_x \bigg]
		\end{equation}
		where for each $x\in \fr[T]$, $T_x \in \scT_{0, d}[0]$. 
		\vskip 4pt
		
		\textit{Step 2.}
		Our next step is to consider $T = (\scN, \scE, h_0, H, \scL)\in \scT_{0, d}[0]$ and let $\fr[T] = \{x_0\}$. We denote
		\begin{equation*}
			\tilde{\scN} = \scN \backslash \{x_0\} \quad
			\tilde{\scE} = \scE \backslash \big\{ (y, x_0): y\in \scN \big\}
			\quad \mbox{and}\quad
			\tilde{\scL}: \tilde{\scN} \to \{1,..., d\}, \quad \tilde{\scL} = \scL\Big|_{\tilde{\scN}}. 
		\end{equation*}
		The first goal is to show that $(\tilde{\scN}, \tilde{\scE})$ is a directed forest. First, note that $\tilde{\scE} \subseteq \tilde{\scN} \times \tilde{\scN}$ and $(x, y) \in \scE \implies (y, x) \notin \tilde{\scE}$. Consider the set $\tilde{R} = \{x \in \scN: (x, x_0) \in \scE \} \subseteq \tilde{\scN}$. As $(\scN, \scE)$ is a directed forest, for every $y \in \scN$ there exists a unique sequence $(y_i)_{i=1, ..., \hat{n}}$ such that $y_1 = y$, $y_{\hat{n}} = x_0$ and for $i=1, ..., \hat{n}-1$, $(y_{i}, y_{i+1}) \in \scE$. As $\tilde{\scN} \subseteq \scN$, for every $y\in \tilde{\scN}$ there exists a unique sequence $(y_i)_{i=1, ..., \hat{n}-1}$ such that $y_1 = y$, $y_{\hat{n}-1} \in \tilde{R}$ and for every $i=1, ... \hat{n}-2$ we have that $(y_i, y_{i+1}) \tilde{\scE}$. As such, we conclude that $(\tilde{\scN}, \tilde{\scE})$ is a directed forest and $\fr\big[ (\tilde{\scN}, \tilde{\scE}) \big] = R$. 
		
		Next, consider the set $\tilde{h}_0 = h_0 \cap \tilde{\scN}$. As $h_0 \subseteq \scN$, we have that $\tilde{h}_0 \subseteq \tilde{\scN}$. Suppose for the moment that $\tilde{h}_0$ is non-empty and for a contradiction additionally suppose that $(h_0 \cap \scN) \cap \tilde{R} = \emptyset$. Then $\exists y \in h_0$ such that $y\in \tilde{\scN}$ and $y \notin R$. As there is a unique sequence $(\hat{y}_{i})_{i=1, ..., \hat{n}}$ such that $\hat{y}_1 = y$, $\hat{y}_{\hat{n}} = x_0$ and for all $i=1, ..., \hat{n}-1$ we have that $(\hat{y}_i, \hat{y}_{i+1}) \in \scE$. Therefore, $(\hat{y}_{\hat{n}-1}, \hat{y}_{\hat{n}}) \in \scE$ so that $\hat{y}_{\hat{n}-1} \in \tilde{R}$. By comparing $y$ and $x_0$ and using that $h_0$ satisfies \ref{definition:Forests:2.2}, we conclude that $\hat{y}_{\hat{n}-1} \in h_0$ which is a contradiction. 
		
		Now consider $\tilde{H}' = \big( H \cup \{ \tilde{h}_0 \} \big)\backslash \{\emptyset\} \in \scP(\tilde{\scN})$. By assumption, any $h \in H' = \big( H \cup \{h_0\} \big)\backslash \{\emptyset\}$ will satisfy \ref{definition:Forests:2.2} and \ref{definition:Forests:2.3} with respect to the partial ordering on $(\scN, \scE)$ and the partial ordering on $(\tilde{\scN}, \tilde{\scE})$ agrees with the partial ordering on $(\scN, \scE)$ when restricted to $(\tilde{\scN}, \tilde{\scE})$ so that the tagged partition $(\tilde{h}_0, H) \in \scQ^{\tilde{\scE}}(\tilde{\scN})[0]$. Thus we conclude that
		\begin{equation}
			\label{eq:proposition:CompletionOfTrees-2}
			\big( \tilde{\scN}, \tilde{\scE}, \tilde{h}_0, H, \tilde{\scL} \big) \in \scF_{0, d}[0]
			\quad \mbox{and}\quad
			T = \Big\lfloor \big( \tilde{\scN}, \tilde{\scE}, \tilde{h}_0, H, \tilde{\scL} \big) \Big\rfloor_{\scL[x_0]}
		\end{equation}

		\vskip 4pt
		\textit{Step 3.} The final step is to observe that for any Lions forest $T\in \scF_{0, d}[0]$ the collection of Lions trees with non-empty tagged hyperedge from representation \eqref{eq:proposition:CompletionOfTrees-1} each have fewer nodes that $T$. Similarly, for any Lions tree $T$ with non-empty tagged hyperedge, the Lions forest from representation \eqref{eq:proposition:CompletionOfTrees-2} has one fewer nodes that $T$. 
		
		Thus, repeating these arguments iteratively, we eventually obtain a representation for any choice of $T\in \scF_{0, d}[0]$ in terms of the operations $\circledast$, $\cE$, $\lfloor \cdot \rfloor_i$ and the empty forest $\rId$. This concludes the proof. 
	\end{proof}
	
	\begin{corollary}
		\label{corollary:CompletionOfTrees}
		Let $d\in \bN$ and let $I$ be an index set. Then 
		\begin{equation*}
			\scF_{0, d}[I] = \Big\{ \cE^a\big[ T_1, ..., T_{|a|} \big] : T_1, ..., T_{|a|} \in \scT_{0, d}[0], \quad a\in A[I] \Big\}
		\end{equation*}
	\end{corollary}
	
	\begin{proof}
		The proof is very similar to that of Proposition \ref{proposition:CompletionOfTrees}, so we only highlight the key points: Firstly by Definition
		\begin{equation*}
			\forall (a, T_1, ..., T_{|a|}) \in \bigsqcup_{a\in A[I]} \big( \scT_{d}[0] \big)^{\times |a|}, 
			\qquad
			\cE^a\big[ T_1, ..., T_{|a|} \big] \in \scF_{0, d}[I]
		\end{equation*}
		so the focus is on proving the reverse implication. 
		
		Now take $T = (\scN, \scE, h_I, H, \scL) \in \scF_{0, d}[I]$ and as before divide it into $|\fr[T]|$ sub-trees. Take the restrictions of the hyperedges to each of these trees. Next, observe that each sub-tree contains only one root so there is at most one of the tagged hyperedges that is non-empty. We partition up the sub-trees that have some non-empty tagged hyperedge and all other sub-trees are partitioned up according to whether their roots are in a common hyperedge of $H$. 
		
		By construction, each of these sub-trees is still an element of $\scT_d[I]$. For each of the sub-trees that have some non-empty $I$-tagged hyperedge, we can transform them to an element of $\scT_{0, d}[0]$ by discarding all of the empty hyperedges. On the other hand, for every one of the sub-trees for which every $I$-tagged hyperedge is empty, we can transform these to an element of $\scT_{0, d}[0]$ by discarding all of the empty $I$-tagged hyperedges and moving the hyperedge of $H$ that contains a root to be the tagged hyperegde. 
		
		We denote the collection of transformed sub-trees with non-empty $h_\iota$ for $\iota \in I$ by the set $\hat{\scT}_\iota$ and the collection of transformed sub-trees that have all empty $I$-tagged hyperedges and all have root in the hyperedge $h\in H$ by $\hat{\scT}_h$. We can then verify that
		\begin{equation*}
			\big( \scN, \scE, h_I, H, \scL \big) = \underset{\iota \in I}{\scalebox{1.5}{$\circledast$}} \cE_{\iota}\bigg[ \underset{\hat{T} \in \hat{\scT}_{\iota}}{\scalebox{1.5}{$\circledast$}} \hat{T} \bigg]
			\circledast 
			\underset{h \in H}{\scalebox{1.5}{$\circledast$}} \cE\bigg[ \underset{\hat{T} \in \hat{\scT}_{h} }{\scalebox{1.5}{$\circledast$}} \hat{T} \bigg]. 
		\end{equation*}
		To conclude, we choose
		\begin{equation*}
			\hat{a} = \big( \underbrace{\underbrace{\iota, ....}_{\times |\hat{\scT}_\iota|}, ..., }_{\iota \in I} \underbrace{\underbrace{h, ...}_{\times |\hat{\scT}_h|}, ... }_{h\in H} \big)
			\quad \mbox{and}\quad
			a = \big\llbracket \hat{a} \rrbracket_{I} \in A[I]
		\end{equation*}
		and rearrange to obtain
		\begin{equation*}
			\big( \scN, \scE, h_I, H \big) = \cE^a\Big[ \underbrace{\underbrace{\hat{T}, ...}_{ \in \hat{\scT}_{\iota}}, ...}_{\iota \in I}, \underbrace{\underbrace{\hat{T}, ...}_{ \in \hat{\scT}_{h}}, ...}_{h \in H} \Big]. 
		\end{equation*}
	\end{proof}
	
	\subsubsection{A module spanned by Lions forests}
	
	The first step in describing a coupled bialgebra is to introduce the algebra structure. We are able to extent the product first introduced on Definition \ref{definition:Lionsproduct}:
	\begin{definition}
		\label{definition:product-span}
		Let $d\in \bN$, let $I$ be an index set and let $(\cR, +, \centerdot)$ be a ring. Let $\spn_\cR\big( \scF_{0, d}[I] \big)$ be the free $\cR$-module generated by $\scF_{0, d}[I]$. We define
		\begin{equation*}
			\circledast: \spn_{\cR}\big( \scF_{0, d}[I] \big) \times \spn_{\cR}\big( \scF_{0, d}[I] \big) \to \spn_{\cR}\big( \scF_{0, d}[I] \big)
		\end{equation*}
		to be the linear extension of the operator of $\circledast$ as defined in Definition \ref{definition:Lionsproduct}. 
	\end{definition}

	\begin{proposition}
		\label{proposition:Prod=Assoc}
		Let $d\in \bN$, let $I$ be an index set and let $(\cR, +, \centerdot)$ be a ring. Then the $\cR$-module
		\begin{equation*}
			\Big( \spn_{\cR} \big( \scF_{0, d}[I] \big), \circledast, \rId \Big)
			\quad \mbox{is \emph{associative} and \emph{unitary}.}
		\end{equation*}
		
		Further, if $\cR$ is a commutative ring then $\big( \spn_{\cR} \big( \scF_{0, d}[I] \big), \circledast, \rId \big)$ is commutative. 
	\end{proposition}

	\begin{proof}
		This follows from Definition \ref{definition:Lionsproduct} and the associativity of the binary set operation union. 
	\end{proof}

	\subsection{Couplings for Lions forests and the coupled coproduct}
	
	Building on the ideas of Remark \ref{remark:couplings=partitions}, a curiosity relating to Definition \ref{definition:Quotient-Partition} is that in this section we are not interested in the complete set of tagged partitions $\scP(\scN)[I]$ but rather a sub-poset $\scQ^{\scE}(\scN)[I]$. This restriction on the number of partitions also reduces the number of couplings that we shall consider. However, the coupled tensor product will still constructed in the same fashion. 

	Building on Equations \eqref{eq:Tag-Part-Seq[a]} and \eqref{eq:Tag-Part-Seq[a]2}, for any $T=(\scN, \scE, h_I, H, \scL) \in \scF_{0, d}[I]$ we define
	\begin{equation*}
		\scF_{0, d}[T]:= \scF_{0, d}\big[ I \cup H \big]. 
	\end{equation*}
	
	\begin{definition}
		\label{definition:coupled_pair-F}
		Let $d\in \bN$ and let $I$ be an index set. We define the set
		\begin{equation*}
			\scF_{0, d}\tilde{\times} \scF_{0, d}[I] 
			:=
			\bigsqcup_{T \in \scF_{0, d}[I]} \scF_{0, d}[T]
		\end{equation*}
		The set $\scF_{0, d}\tilde{\times} \scF_{0, d}[I]$ is a poset with partial ordering $(\Upsilon, Y) \subseteq (\Upsilon', Y')$ if and only if
		\begin{align*}
			&\big( (\scN^\Upsilon, \scE^\Upsilon, \scL^{\Upsilon}), (\scN^Y, \scE^Y, \scL^Y) \big)= \big( (\scN^{\Upsilon'}, \scE^{\Upsilon'}, \scL^{\Upsilon'}), (\scN^{Y'}, \scE^{Y'}, \scL^{Y'}) \big), 
			\\
			&\mbox{and both}\quad
			(h_I^{Y}, H^Y) \subseteq (h_I^{Y'}, H^{Y'})
			\quad \mbox{and}\quad
			(h_{Y}^{\Upsilon}, H^{\Upsilon}) \subseteq (h_{Y'}^{\Upsilon'}, H^{\Upsilon'}). 
		\end{align*}
		We define $\fm: \scF_{0, d} \tilde{\times} \scF_{0, d}[I] \to \bN_0$ by $\fm [ \Upsilon, Y ] = \fm[Y] + \fm[\Upsilon]$. 
		
		Further, we inductively define
		\begin{align*}
			&\scalebox{1.5}{$\tilde{\times}$}_{1}^n \scF_{0, d}[I] = \scF_{0, d} \tilde{\times} \Big( \scalebox{1.5}{$\tilde{\times}$}_{1}^{n-1} \scF_{0, d} \Big)[I]
			\\
			&:=
			\Big\{ \big( \hat{T}^n, ..., \hat{T}^1 \big): \quad \hat{T}^n \in \scF_{0, d}[\hat{T}^{n-1}], \hat{T}^{n-1} \in \scF_{0, d}[\hat{T}^{n-2}], ...,  \hat{T}^1 \in \scF_{0, d}[I] \Big\}
		\end{align*}
		and extend the partial ordering $\subseteq$ and $\fm$ appropriately. 
	\end{definition}
	Following the same ideas as the proof of Lemma \ref{lemma:associativity-coupledproduct}, we conclude that for any $m, n\in \bN$
	\begin{equation*}
		\Big( \scalebox{1.5}{$\tilde{\times}$}_{1}^m \scF_{0,d} \Big) \tilde{\times} \Big( \scalebox{1.5}{$\tilde{\times}$}_{1}^m \scF_{0,d} \Big)[I] = \Big( \scalebox{1.5}{$\tilde{\times}$}_{1}^{m+n} \scF_{0,d} \Big)[I]. 
	\end{equation*}

	\subsubsection{Coupled coproduct on Lions forests}
	
	An admissible cut is a way of dividing a directed tree into two subtrees, one a directed tree (referred to as the root) and one a directed forest (referred to as the prune), see for instance \cite{connes1999hopf}. In the context of Lions forests, a cut is a subset of the edges of a Lions forest that can be removed for combinatorial purposes but this operation should not alter the hypergraphic structure. Thus, a cut takes a Lions tree to a coupled pair of Lions forests contained in $\scF_0 \tilde{\times} \scF_0$ rather than simply $\scF_0 \times \scF_0$ as one might naively expect in the Connes-Kreimer setting. 
	
	\begin{definition}
		\label{definition:CutsCoproduct}
		Let $I$ be an index set and let $T=(\scN, \scE, h_I, H, \scL) \in \scT_d[I]$ be a non-empty labelled Lions tree. A subset $c\subseteq \scE$ is called an admissible cut if $\forall y\in \scN$, the unique path $(e_i)_{i=1, ..., n}$ from $y$ to the root $x$ satisfies that if $e_i\in c$ then $\forall j\neq i, e_j\notin c$. Further, we denote the two empty cuts $(\emptyset, +)$ and $(\emptyset, -)$ (which correspond to the cut that passes over and under respectively the Lions tree). The set of admissible cuts for the Lions tree $T$ is denoted $C(T)$ and we emphasise that $C(T)$ also contains the two empty cuts. 
		
		For a non-empty cut $c\in C(T)$, we call the tuple $T_c^R$ the root of the cut $c$ of the Lions tree $T$ 
		\begin{align*}
			T_c^R :&= (\scN_c^R, \scE_c^R, h_I^{c,R}, H_c^R, \scL_c^R), \quad \mbox{where}
			\\
			\scN_c^R:&= \big\{ y\in \scN: \exists (y_i)_{i=1, ..., n}\in \scN, y_1 = y, y_n \in \fr(T), (y_i, y_{i+1})\in \scE \backslash c \big\}, 
			\\
			\scE_c^R:&= \big\{ (y,z)\in \scE: y,z\in \scN_c^R \big\}, 
			\qquad 
			\scL_c^R: \scN_c^R \to \{1, ..., d\}, \quad \scL_c^R = \scL|_{\scN_c^R}
			\\
			h_I^{c,R}:&= (h_\iota \cap \scN_c^R)_{\iota \in I}, 
			\qquad
			H_c^R:= \big\{ h\cap \scN_c^R: h\in H \big\}\backslash \{\emptyset\}. 
		\end{align*} 
		We call $T_c^P$ the prune of the cut $c$ of $T$ is the tagged Lions forest 
		\begin{align*}
			T_c^P:&= (\scN_c^P, \scE_c^P, h_I^{c,P}, h_{H_c^R}^{c,P}, H_c^P, \scL_c^P) \quad \mbox{where}
			\\
			\scN_c^P :&= \scN\backslash \scN_c^R, 
			\qquad
			\scE_c^P = \scE\backslash (\scE_c^R \cup c), 
			\qquad 
			\scL_c^P = \scL|_{\scN_c^R}, 
			\\
			h_I^{c, P}:&= ( h_{\iota} \cap \scN_c^{P})_{\iota \in I}, 
			\qquad
			H_c^P := \big\{ h \cap \scN_c^P: h \cap \scN_c^R \neq \emptyset \big\} \backslash \{\emptyset\}, 
			\\
			\mbox{and for}\quad  h^{c, P} \in H_c^R, 
			\quad
			h_{h^{c, R}}^{c, P} :&= h \cap \scN_c^P
			\quad \mbox{where}\quad
			h \in H , h^{c, R} = h \cap \scN^{R} \neq \emptyset. 
		\end{align*}
		On the other hand, the root and prune for the empty cuts are denoted
		\begin{align*}
			T_{(\emptyset, +)}^R = ( \scN, \scE, h_I, H, \scL)
			\quad\mbox{and}\quad&
			T_{(\emptyset, -)}^R = (\emptyset, \emptyset, (\emptyset)_I, \emptyset, \scL), 
			\\
			T_{(\emptyset, +)}^P = ( \emptyset, \emptyset, (\emptyset)_I, \emptyset, \scL)
			\quad\mbox{and}\quad&
			T_{(\emptyset, -)}^P = (\scN, \scE, h_I, H, \scL). 
		\end{align*}
		
		Now suppose that $T \in \scF_{0, d}[I]$ is a non-empty labelled Lions forest. By Corollary \ref{corollary:CompletionOfTrees}, we have that
		\begin{equation*}
			T = \cE^a\Big[ T_1, ..., T_{|a|} \Big] 
			\quad \mbox{where}\quad
			(a, T_1, ..., T_{|a|})\in \bigsqcup_{a\in A[I]} \big(\scT_{0, d}[0]\big)^{\times |a|}. 
		\end{equation*}
		We define $C(T) = \prod_{i=1}^{|a|} C(T_i)$. Further, for each $c=(c_1, ..., c_{|a|}) \in C(T)$ where $c_i \in C(T_i)$, we denote
		\begin{align*}
			T_c^R =& \bigg( \bigcup_{i=1}^{|a|} (\scN^{T_i})_{c_i}^R, \quad \bigcup_{i=1}^{|a|} (\scE^{T_i})_{c_i}^R, \quad \Big( \bigcup_{i=1}^{|a|} (h_{\iota}^{T_i} \cap (\scN^{T_i})_{c_i}^R \Big)_{\iota \in I}, \quad \bigcup_{i=1}^{|a|} (H^{T_i})_{c_i}^R, \quad \scL_c^R \bigg), 
			\\
			T_c^P =& \bigg( \bigcup_{i=1}^{|a|} (\scN^{T_i})_{c_i}^P, \quad \bigcup_{i=1}^{|a|} (\scE^{T_i})_{c_i}^P, \quad \Big( \bigcup_{i=1}^{|a|} (h_{\iota}^{T_i} \cap (\scN^{T_i})_{c_i}^P \Big)_{\iota \in I}, 
			\\
			&\quad \Big( h \cap(\scN^{T_i})_{c_i}^P: h \cap (\scN^{T_i})_{c_i}^R \in (H^{T_i})_{c_i}^{R} \Big)_{i =1, ..., |a|}, 
			\quad 
			\bigcup_{i=1}^{|a|} (H^{T_i})_{c_i}^P, \quad \scL_c^P \bigg). 
		\end{align*}
	\end{definition}
	
	\begin{lemma}
		\label{lemma:Lions-coupling}
		Let $T = (\scN, \scE, h_I, H, \scL) \in \scF_d[I]$ and let $c\in C(T)$. Then $(T_c^R, T_c^P) \in \scF_d \tilde{\times} \scF_{0, d}[I]$. 
	\end{lemma}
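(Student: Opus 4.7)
The plan is to verify the three claims separately: first that $T_c^R$ is a Lions forest, second that $T_c^P$ is a Lions forest, and third that the ambient partition $H^T$ defines a Lions coupling $G \in H_c^P \tilde{\cup} H_c^R$ in the sense of Definition \ref{definition:coupled_trees}.

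The root component $T_c^R$ is the easy part. By admissibility of the cut, $\scN_c^R$ is upward-closed in $(\scN, \scE)$: any $T$-ancestor of a node in $\scN_c^R$ remains in $\scN_c^R$. Consequently, the parent relation and pre-order on $T_c^R$ agree with the restrictions from $T$, the hyperedges $H_c^R$ are literal intersections $h \cap \scN_c^R$, and all three hyperedge axioms in Definition \ref{definition:Forests} transfer verbatim from $T$. In particular (2.1) is immediate because the $T$-root belongs to $h_0$ whenever $h_0 \neq \emptyset$ and lies in $\scN_c^R$ by construction.

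The prune component $T_c^P$ is where the real work lies. The underlying graph $(\scN_c^P, \scE_c^P)$ is a directed forest whose roots are precisely the lower endpoints of the cut edges, so the $T_c^P$-depth of $y \in \scN_c^P$ differs from its $T$-depth by the $T$-distance from the $T$-root down to the $T_c^P$-root $y^*$ above $y$. This discrepancy is the source of all technical difficulty: for $x, y$ in the same hyperedge $h = h' \cap \scN_c^P$, one may have $x < y$ in $T_c^P$ yet $x > y$ in $T$. My plan to establish (2.2) and (2.3) for $T_c^P$ is a ``walk-up'' argument: starting from the $T$-deeper of $\{x, y\}$ and using axioms (2.2) and (2.3) of $T$ iteratively with the other node as reference, one shows that every $T$-ancestor of the deeper node, up to the $T$-depth of the shallower one, still lies in $h'$; a final application of (2.3) in $T$ at that common $T$-depth then produces the required parent in $h' \cap \scN_c^P$. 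The main obstacle is the exceptional subcase where the walk terminates at a $T$-sibling of the shallower node whose common parent $p$ could a priori sit outside $h'$. The key cancellation to rule this out is that such a configuration would force $p \in \scN_c^P$ (otherwise the shallower node would itself be a $T_c^P$-root, contradicting its $T_c^P$-depth) and would then force the two $T_c^P$-roots $x^*$ and $y^*$ to coincide, contradicting the very inequality of $T_c^P$-depths used to launch the argument. Condition (2.1) for $T_c^P$ follows from the same walk-up trick, applied to a hypothetical $y \in h_0 \cap \scN_c^P$ with the $T$-root $x_0 \in h_0 \cap \scN_c^R$ as reference.

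The Lions coupling assertion re-uses exactly this walk-up argument. By construction every hyperedge $h' \in (H^T)'$ splits as $(h' \cap \scN_c^P) \sqcup (h' \cap \scN_c^R)$, which already gives $H^T \in H_c^P \tilde{\cup} H_c^R$ in the sense of Definition \ref{definition:couplings}. The remaining Lions condition \eqref{eq:definition:LionsCoupling} demands that any hyperedge of $H_c^P$ which misses $\fr[T_c^P]$ is uncoupled to $H_c^R$; equivalently, if $h'$ intersects both $\scN_c^P$ and $\scN_c^R$ then $h' \cap \scN_c^P$ must contain a root of $T_c^P$. Picking any $y \in h' \cap \scN_c^P$ and $z \in h' \cap \scN_c^R$ and inducting up the $T$-ancestor chain of $y$ with $z$ as reference (using (2.2) or (2.3) at each step, exactly as in the prune argument), every ancestor on the chain remains in $h'$; the first ancestor $y^*$ on the chain whose $T$-parent lies in $\scN_c^R$ is, by Definition \ref{definition:CutsCoproduct}, a root of $T_c^P$, and it belongs to $h' \cap \scN_c^P$ as required.
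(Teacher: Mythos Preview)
Your proof is correct and follows essentially the paper's walk-up strategy: lift to $T$, climb ancestor chains via axioms \ref{definition:Forests:2.2}--\ref{definition:Forests:2.3}, and handle the prune's depth mismatch by case analysis on the $T$-ordering; your observation that $\scN_c^R$ is ancestor-closed is a clean shortcut for the root half that the paper does by direct verification. Two small points to tighten: in your exceptional subcase, what is actually contradicted once $x^*=y^*$ is the strict $T$-depth inequality that launched the walk (since then the $T$-depth gap equals the $\Upsilon$-depth gap, which was assumed non-positive for (2.2) and zero for (2.3)), so the phrase ``inequality of $T_c^P$-depths'' is slightly misdirected; and in the Lions-coupling step, walking up the $T$-ancestor chain of $y$ with an \emph{arbitrary} reference $z$ stalls when $y<_T z$ (neither (2.2) nor (2.3) then produces $y$'s parent), so you should either take $z$ to be $T$-minimal in $h'\cap\scN_c^R$ or run the paper's variant, which argues by contradiction from simultaneous minimal elements $x\in h'\cap\scN_c^P$ and $\tilde{x}\in h'\cap\scN_c^R$ and disposes of all three $T$-depth relations in one stroke.
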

	
	\begin{proof}
		By construction, we have that $(\scN_c^R, \scE_c^R, \scL_c^R), (\scN_c^P, \scE_c^P, \scL_c^P) \in \fF_{0, d}$,
		\begin{align*}
			&(h_I^{c, R}, H_c^R) \in \scP(\scN_c^R)[I]
			\quad \mbox{and}\quad
			(h_I^{c, P}, h_{H_c^R}^{c, P}, H_c^P) \in \scP(\scN_c^P)[ I \cup H_c^R ]. 
		\end{align*}
		Therefore, we conclude if we can show that
		\begin{equation}
			\label{eq:lemma:Lions-coupling}
			(h_I^{c, R}, H_c^R) \in \scQ^{\scE_c^R}(\scN_c^R)[I]
			\quad \mbox{and}\quad
			(h_I^{c, P}, h_{H_c^R}^{c, P}, H_c^P) \in \scQ^{\scE_c^P}(\scN_c^P)[ I \cup H_c^R ]. 
		\end{equation}
		To avoid lengthy sub/superscripts, we denote 
		\begin{align*}
			&(\scN^Y, \scE^Y, h_I^{Y}, H^{Y}) = (\scN_c^R, \scE_c^R, h_I^{c, R}, H_c^R)
			\quad \mbox{and}
			\\
			&(\scN^\Upsilon, \scE^\Upsilon, h_I^{\Upsilon}, h_{H^Y}^{\Upsilon}, H^{\Upsilon}) = (\scN_c^P, \scE_c^P, h_I^{c, P}, h_{H_c^R}^{c, P}, H_c^P). 
		\end{align*}
		
		\emph{Step 1.1} First consider when $h\in (H^{Y})':=\big( H^Y\cup\{h_{\iota}^Y:\iota \in I \} \big)\backslash \{\emptyset\}$ and suppose that $x, y\in h$ and $x <_{Y} y$. Since $x<_Y y$ implies that $x<_T y$ and there exists $h'\in H^T$ such that $x, y\in h'$. Then thanks to $h'$ satisfying \ref{definition:Forests:2.2} for the preorder $\leq_T$, there exists $z\in h'$ such that $(y, z)\in \scE^T$. Hence, $(y, z) \in \scE^Y$ and we conclude that $z\in h$ too so that $(H^Y)'$ satisfies \ref{definition:Forests:2.2}. 
		
		\emph{Step 1.2} Let $h\in (H^{\Upsilon})':= \big( H^{\Upsilon}\cup \{h_\iota^{\Upsilon}: \iota \in I\} \cup \{h_{h^Y}\Upsilon: h^Y \in H^Y \} \big)\backslash\{\emptyset\}$ and suppose that $x, y\in h$ and $x <_{\Upsilon} y$. First of all, suppose that $x$ and $y$ have a common unique root in $\fr[\Upsilon]$. If this is the case, then $x<_T y$ and $\exists h'\in (H^T)'$ such that $x, y\in h'$. Since $T\in \scF_d[I]$, this implies that $\exists z\in h'$ such that $(y, z) \in\scE^T$. Since $x<_{\Upsilon} y$, we know that $y \notin \fr[\Upsilon]$ so that $(y, z) \in \scE^{\Upsilon}$ and $z\in \scN^{\Upsilon}$. As such, $z\in h$. 
		
		On the other hand, suppose that $x$ and $y$ do not have a common unique root in $\fr[\Upsilon]$. Thus $\exists x', y' \in\fr[\Upsilon]$ such that there exists unique sequence $(x_i)_{i=1, ..., m}$ and $(y_i)_{i=1, ..., n} \in \scN^{\Upsilon}$ such that $x_1 = x$, $y_1 = y$, $x_m = x'$, $y_n = y'$ and $(x_i, x_{i+1}), (y_i, y_{i+1}) \in \scE^{\Upsilon}$. Further, these two sequences do not intersect. From the argument above we know that if $x<_T y$ then we can conclude that the element $y_2 \in h$ so let us suppose that instead we have that $x\geq_T y$. The unique sequence $(w_i)_{i=1, ..., l} \in \scN^T$ such $w_1 = x$, $w_l \in \fr[T]$ and $(w_i, w_{i+1}) \in \scE^T$ will satisfy that $l\geq m$ and $x_i= w_i$ for $i=1, ..., m$. Now, $\exists h'\in H^T$ such that $x, y\in h'$ and $x>_T y$ so that $x_2 = w_2 \in h'$ as well. If we still have that $w_2>_T y$, then we can repeat this argument until we obtain a $w_j \in \scN^T$ such that $w_j \leq\geq_T y$ such that $w_j, y\in h$. We also have that $(y, y_2) \in \scE^{T}$ and $y_2$ is not included in the set $\{x_i: i=1, ..., m\}$ so that $y_2 \neq w_j$. Hence, thanks to \ref{definition:Forests:2.3}, we have that $y_2, w_{j+1}\in h'$. Since $h = h' \cap \scN^{\Upsilon}$ and $y_2 \in \scN^{\Upsilon}$, we can conclude that $y_2 \in h$. Hence $(H^\Upsilon)'$ satisfies \ref{definition:Forests:2.2}
		
		\emph{Step 2.1} Let $h\in (H^{Y})'$ and suppose that we have $x_1, y_1 \in h$ such that $x \lesseqqgtr_\Upsilon y_1$, $(x_1, x_2), (y_1, y_2) \in \scE^{Y}$ and $x_2 \neq y_2$. Then $x_1 \lesseqqgtr_{T} y_1$ so we conclude that $x_2, y_2 \in h$ and $(H^Y)'$ satisfies \ref{definition:Forests:2.3}. 
		
		\emph{Step 2.2} Let $h\in (H^{\Upsilon})'$ and suppose that we have $x_1, y_1 \in h$ such that $x \lesseqqgtr_\Upsilon y_1$, $(x_1, x_2), (y_1, y_2)$ $\in \scE^{\Upsilon}$ and $x_2 \neq y_2$. First let us suppose that $x_1 \lesseqqgtr_T y_1$. Then we are done since there exists $h' \in H^T$ such that $x_1, y_1 \in h'$ and $h'$ satisfies \ref{definition:Forests:2.3} so that we can conclude that $x_2, y_2 \in h'$. As $h = h' \cap \scN^{\Upsilon}$ and $x_2, y_2 \in \scN^{\Upsilon}$, we conclude $x_2, y_2 \in h$ and we are done. 
		
		Therefore, we assume that without loss of generality that $x_1 >_T y_1$. Let $(x_i)_{i=1, ..., n}$ be the unique sequence such that $x_n \in \fr[T]$ and $(x_i, x_{i+1}) \in \scE^T$. However, since $h'$ satisfies \ref{definition:Forests:2.2}, we conclude that also $x_2\in h'$ and we can repeat this argument until we obtain that $x_j \in h'$ and $x_j \lesseqqgtr_T y$. Since $x_1 \neq y_1$ and $x_2 \neq y_2$, we have that $y_2$ is not included in the sequence $\{x_i: i=1, ..., n\}$. We also have that $(x_j, x_{j+1}), (y_1, y_2) \in \scE^T$ and that $h'$ satisfies \ref{definition:Forests:2.3} and conclude that $x_{j+1}, y_2 \in h'$. Finally, by assumption $y_2 \in \scN^{\Upsilon}$ so that $x_2, y_2 \in h$ and we conclude that $(H^\Upsilon)'$ satisfies \ref{definition:Forests:2.3}
		
		\emph{Step 3.1} Fix $\iota \in I$ and suppose $h_\iota^{Y} \neq \emptyset$. Then $h_\iota^T \neq \emptyset$ so that $\fr[T] \cap h_\iota^T \neq \emptyset$. Suppose that $h_\iota^{Y} \cap \fr[Y] = \emptyset$. Then $\exists x_0 \in \fr[T] \backslash \fr[Y]$ and $\exists! y \in \fr[Y]$ such that $x_1, ..., x_m \in \scN^Y$ such that $x_1 = x$, $x_m = y$ and $(x_i, x_{i+1}) \in \scE^Y$. Then $x_{i+1} <_Y x_i$, which implies $x_{i+1} <_T x_i$. As $x_1, x_0 \in h_{\iota}^T$, we obtain from \ref{definition:Forests:2.2} that $x_2 \in h_{\iota}^T$ and $x_2 \in \scN^Y$ so we also have $x_2 \in h_{\iota}^Y$. Repeating this argument, we obtain that $x_m = y \in h_{\iota}^Y$. However, this is a contradiction since $y \in \fr[Y]$. Then \ref{definition:Forests:2.1} is satisfied. 
		
		\emph{Step 3.2} For $\iota \in I$ let $h_\iota^{\Upsilon} \neq \emptyset$. Then $h_\iota^T \neq \emptyset$ so that $\fr[T] \cap h_\iota^T \neq \emptyset$. Suppose that $h_\iota^{\Upsilon} \cap \fr[\Upsilon] = \emptyset$. Then there exists $x\in h_\iota^{\Upsilon}$ and $\exists! x'\in \fr[T]$ such that there is a sequence $(x_i)_{i=1,..., m}$ such that $x_1 = x$, $x_m = x'$ and $(x_i, x_{i+1}) \in \scE^{T}$. For any $i \in \{1, ..., m\}$, if $x_i \in \scN^{\Upsilon}$ then $x_1, ..., x_i \in \scN^{\Upsilon}$, $(x_j, x_{j+1}) \in \scE^{\Upsilon}$ and $x_j \in h_{\iota}^{\Upsilon}$ for every $j \in \{1, ..., i\}$. Suppose that $x_{i+1} \notin \scN^{\Upsilon}$. then $x_i \in \fr[\Upsilon]$ so that $x_i \in h_{\iota}^{\Upsilon} \cap \fr[\Upsilon]$. As this would contradict our hypothesis, we conclude that $x_{i+1} \in \scN^{\Upsilon}$. This implies that $x_m \in \scN^{\Upsilon}$ and $x_m \in \fr[T]$. Therefore,  $\forall z \in \scN^T$, $y \leq_T z$ which implies that $\forall z \in \scN^{\Upsilon}$, $y \leq_{\Upsilon} z$ which implies $y \in \fr[\Upsilon]$. This would contradict our hypothesis so we conclude that \ref{definition:Forests:2.1} is satisfied. 
		
		\emph{Step 3.3} For $h^Y \in H^Y$, let $h_{h^Y}^\Upsilon \in (H^{\Upsilon})'$ and suppose that $h_{h^Y}^\Upsilon \neq \emptyset$. Then $\exists x\in h_{h^Y}^\Upsilon$ and $\exists h' \in H^T$ such that $h' \cap \scN^{\Upsilon} = h_{h^Y}^\Upsilon$ and $h' \cap \scN^Y = h^Y \neq \emptyset$. There exists $x_1, ..., x_m \in \scN^T$ such that $x_1 = x$, $x_m \in \fr[T]$ and $(x_i, x_{i+1}) \in \scE^T$. Arguing as before with an induction argument on $i$ we conclude that for the hypothesis to be true we require that $x_m \in \scN^{\Upsilon}$ and we conclude that $x_m \in \fr[\Upsilon]$. However, this contradicts the hypothesis and we conclude that \ref{definition:Forests:2.1} is satisfied. 
		
		As such \eqref{eq:lemma:Lions-coupling} is proved. 		
	\end{proof}
	
	Following on from Remark \ref{remark:Whatis-CoupledTensor}, we want to interpret the coupled tensor product $\cR$-module as
	\begin{equation*}
		\spn_{\cR}\big(\scF_{0, d}[I] \big)\tilde{\otimes} \spn_{\cR}\big( \scF_{0, d}[I] \big) := \spn_{\cR} \big(\scF_{0, d} \tilde{\times} \scF_{0, d}[I] \big)
	\end{equation*}
	
	Motivated by the Connes-Kreimer coproduct (see for example \cite{connes1999hopf}), we define the following:
	\begin{definition}
		\label{definition:coproduct}
		Let $d\in \bN$, let $I$ be an index set and let $(\cR, +, \centerdot)$ be a ring. We define 
		\begin{equation*}
			\Delta: \spn_{\cR}\Big( \scF_{0,d}[I] \Big) \to \spn_{\cR}\Big( \scF_{0,d} \tilde{\times} \scF_{0,d}[I] \Big)
		\end{equation*}
		to be the linear operator such that $\Delta[\rId] = \rId\times \rId$ and for $T=\big( \scN,\scE, h_I, H, \scL \big) \in \scF_{d}[I]$, 
		\begin{equation}
			\label{eq:proposition:Coproduct2-Equivalence}
			\Delta\Big[ T \Big] = \sum_{c\in C(T)} (T_c^P,T_c^R)
		\end{equation}		
		We pair the operator $\Delta$ with the linear functional $\epsilon: \spn_{\cR}\big( \scF_{0,d}[I] \big) \to \cR$ which satisfies $\epsilon(\rId) = 1_\cR$, and $\forall T\in \scF_{0,d}[I]$, $\epsilon(T) = 0$. We refer to $\epsilon$ as the counit of $\Delta$. 
	\end{definition}
	
	Following in the footsteps of the Definition of $\fm$-diagonal operators in \ref{eq:lemma:Delta-diagonal}, we define
	\begin{equation*}
		\Big( \scalebox{1.5}{$\tilde{\times}$}_{1}^n \scF_{0,d} \Big)^{\{k\}}[I]:= \Big\{ (\Upsilon^n, ..., \Upsilon^1) \in \Big( \scalebox{1.5}{$\tilde{\times}$}_{1}^n \scF_{0,d} \Big)[I]: \fm\big[ (\Upsilon^n, ..., \Upsilon^1) \big] = k \Big\}. 
	\end{equation*}
	\begin{lemma}
		For every index set $I$, we have that
		\begin{equation*}
			\forall k \in \bN_0, 
			\quad 
			\Delta\bigg|_{\spn_{\cR} \big( \scF_{0, d}^{\{k\}}[I] \big)} \subseteq \spn_{\cR} \Big( (\scF_{0, d} \tilde{\times} \scF_{0, d})^{\{k\}}[I] \Big). 
		\end{equation*}
		We say that the operator $\Delta$ is $\fm$-diagonal. 
	\end{lemma}
	
	\begin{proof}
		For every $c \in C(T)$, we have that $\fm\big[ (T_c^P, T_c^R) \big] = \fm[T_c^R] + \fm[T_c^P] = \fm[T]$ and we conclude. 
	\end{proof}
	
	In the same line as Proposition \ref{proposition:M-coassociativity*}, we want to show that the coproduct is \emph{coassociative}:
	\begin{proposition}
		\label{proposition:H-coassociativity*}
		Let $d\in \bN$, let $I$ be an index set and let $(\cR, +, \centerdot)$ be a ring. Let 
		\begin{align*}
			\fI:& \spn_{\cR}\big( \scF_{0, d}[I] \big) \to \spn_{\cR}\big( \scF_{0, d}[I] \big)
		\end{align*}
		be the identity operator and let $\Delta$ be the linear operator defined in Definition \ref{definition:coproduct}. 

		Then
		\begin{equation}
			\label{eq:proposition:H-coassociativity-}
			\Big( \Delta \tilde{\otimes} \fI \Big) \circ \Delta = \Big( \fI \tilde{\otimes} \Delta \Big) \circ \Delta
			\quad \mbox{and}\quad
			\centerdot \circ \Big( \epsilon \tilde{\otimes} \fI \Big) \circ \Delta = \centerdot \circ \Big( \fI \tilde{\otimes} \epsilon \Big) \circ \Delta = \fI. 
		\end{equation}
	\end{proposition}
	
	\iftoggle{figure}{In particular, this tells us that the coupled coproduct $\Delta$ is \emph{coupled-coassociative}, or equivalently satisfies the two commutative diagrams described in Figure \ref{fig:coupled-associativity-H}
	\begin{figure}[htb]
		\centering
		\begin{tikzpicture}
			\node at (0,0) {$\spn_{\cR}(\scF_{0,d} \tilde{\times} \scF_{0,d}[I])$};
			\node at (6,3) {$\spn_{\cR}(\scF_{0,d}\tilde{\times} \scF_{0,d}[I])$};
			\node at (0,3) {$\spn_{\cR}(\scF_{0,d}[I])$};
			\node at (6,0) {$\spn_{\cR}\Big(\scalebox{1.5}{$\tilde{\times}$}_{1}^3 \scF_{0, d}[I] \Big)$};
			\draw[-to](2.25,0) to (4,0);
			\draw[-to](6,2.5) to (6,0.5);
			\draw[-to](0,2.5) to (0,0.5);
			\draw[-to](1.5,3) to (3.75,3);
			\node at (0.5,1.5) {$\Delta$};
			\node at (3,2.5) {$\Delta$};
			\node at (3,0.5) {$\Delta \tilde{\otimes} \fI$};
			\node at (5.5,1.5) {$\fI \tilde{\otimes} \Delta$};
		\end{tikzpicture}
		\begin{tikzpicture}
			\node at (0,0) {$\spn_{\cR}(\scF_{0,d}[I])$};
			\node at (3,1) {$\spn_{\cR}(\scF_{0,d} \tilde{\times} \scF_{0,d}[I])$};
			\node at (9,1) {$\spn_{\cR}(\scF_{0,d}[I]) \otimes \cR$};
			\node at (3,-1) {$\spn_{\cR}(\scF_{0,d} \tilde{\times} \scF_{0,d}[I])$};
			\node at (9,-1) {$\cR \otimes \spn_{\cR}( \scF_{0,d}[I])$};
			\node at (12,0) {$\spn_{\cR}(\scF_{0,d}[I])$};
			\draw[-to](0,-0.25) to (0.9,-1);
			\draw[-to](0, 0.25) to (0.9, 1);
			\draw[-to](5.25, -1) to (7, -1);
			\draw[-to](5.25, 1) to (7, 1);
			\draw[-to] (11.1,-1) to (12,-0.25);
			\draw[-to] (11.1,1) to (12, 0.25);
			\node at (0.25,0.75) {$\Delta$};
			\node at (0.25,-0.75) {$\Delta$};
			\node at (6,-1.5) {$\epsilon \tilde{\otimes} \fI$};
			\node at (6,1.5) {$\fI \tilde{\otimes} \epsilon$};
			\node at (11.75,0.75) {$\centerdot$};
			\node at (11.75,-0.75) {$\centerdot$};
		\end{tikzpicture}
		\caption{Coupled coassociativity}
		\label{fig:coupled-associativity-H}
	\end{figure}
	}
	
	\begin{proof}
		We proceed with the same ideas as Proposition \ref{proposition:M-coassociativity*}. For any $T\in \scF_{0, d}[I]$ and $c\in C(T)$, 
		\begin{align*}
			\Delta\big[ T_c^P \big] = \sum_{\bar{c} \in C(T_c^P)}  \big( (T_c^P)_{\bar{c}}^{P}, (T_c^P)_{\bar{c}}^{R} \big)
			\quad\mbox{and}\quad
			\Delta\big[ T_c^R \big] = \sum_{\hat{c} \in C(T_c^R)}  \big( (T_c^R)_{\hat{c}}^P, (T_c^R)_{\hat{c}}^R \big). 
		\end{align*}
		Following the same arguments as the proof of coassociativity of the Connes-Kreimer coproduct
		\begin{align*}
			(\fI \tilde{\otimes} \Delta) \circ \Delta\Big[ T \Big] =& \sum_{c\in C(T)} (\fI \tilde{\otimes} \Delta) \Big[ (T_c^P, T_c^R) \Big]
			\\
			=&\sum_{c\in C(T)} \sum_{\hat{c} \in C(T_c^R)} \Big( T_c^P, (T_c^R)_{\hat{c}}^P, (T_c^R)_{\hat{c}}^R \Big)
			=\sum_{c'\in C(T)} \sum_{\bar{c} \in C(T_{c'}^P)} \Big( (T_{c'}^P)_{\bar{c}}^P, (T_{c'}^P)_{\bar{c}}^R, T_{c'}^R \Big)
			\\
			=& \sum_{c' \in C(T)} (\Delta \tilde{\otimes} \fI)\Big[ (T_{c'}^P, T_{c'}^R ) \Big] = (\Delta \tilde{\otimes} \fI) \circ \Delta\Big[ T \Big]. 
		\end{align*}
	\end{proof}
	
	\subsubsection{Bialgebra identity on Lions Forests}
	
	Following the ideas of Lemma \ref{lemma:Shuffle_diagonal}:
	\begin{lemma}
		For any choice of index set $I$ the linear map
		\begin{equation*}
			\circledast: \spn_{\cR} \big( \scF_{0, d}[I] \times \scF_{0, d}[I] \big) \to \spn_{\cR}\big( \scF_{0, d}[I] \big)
			\qquad
			\mbox{is $\fm$-diagonal. }
		\end{equation*}
	\end{lemma}
	
	\begin{proof}
		For $(T_1, T_2) \in (\scF_{0, d} \times \scF_{0, d})[I]$, we have that
		\begin{align*}
			\fm\big[ T_1 \circledast T_2 \big] = \big| H^{T_1 \circledast T_2} \big| = \big| H^{T_1} \big| + \big| H^{T_2} \big| = \fm[T_1] + \fm[T_2] = \fm\big[ (T_1, T_2) \big]. 
		\end{align*}
	\end{proof}
	
	Having established in the proof of Proposition \ref{proposition:H-coassociativity*} the ways in which the product $\circledast$ and the coupled coproduct $\Delta$ can be combined, our next step is to establish the coupled bialgebra identities equivalent to those proved in Theorem \ref{theorem:M-coupledBialgebra-}: 
	\begin{proposition}
		\label{proposition:Coproduct-Equivalence}
		Let $d\in \bN$, let $I$ be an index set and let $(\cR, +, \centerdot)$ be a ring. 
		
		The operation $\Delta: \spn_{\cR}\big( \scF_{0,d}[I] \big) \to \spn_{\cR}\big( \scF_{0,d} \tilde{\times} \scF_{0,d}[I] \big)$ defined in Definition \ref{definition:coproduct} satisfies that
		\begin{equation}
			\label{eq:definition:coproduct}
			\begin{split}
				\forall T \in \scF_{0, d}[0],
				\qquad 
				&\Delta\Big[ \lfloor T\rfloor \Big] = \big( \lfloor T\rfloor, \rId\big) + \Big( \fI \tilde{\otimes} \lfloor \cdot \rfloor\Big) \circ \Delta \big[ T \big], 
				\\
				\forall (T, \iota) \in \scF_{0, d}[0] \times I, 
				\qquad
				&\Delta\Big[ \cE_{\iota}[T] \Big] = (\cE_{\iota} \tilde{\otimes} \cE_{\iota}) \Big[ \Delta [T] \Big], 
				\\
				\forall T, T' \in \scF_{0, d}[I],
				\qquad
				&\Delta\Big[ T \circledast T' \Big] = \Big(\circledast \tilde{\otimes} \circledast \Big) \circ \overline{\mbox{Twist}} \circ \Big( \Delta \otimes \Delta\Big) \Big[ T \otimes T' \Big]. 
			\end{split}
		\end{equation}
		
		Further, this is the unique linear operator from $\spn_{\cR}\big( \scF_{0,d}[I] \big)$ to $\spn_{\cR}\big( \scF_{0,d} \tilde{\times} \scF_{0,d}[I] \big)$ that satisfies Equation \eqref{eq:definition:coproduct}. 
	\end{proposition}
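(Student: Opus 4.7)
The plan is to verify the three compatibility identities in Equation (\ref{eq:definition:coproduct}) directly from the combinatorial definition of $\Delta$ given in Equations (\ref{eq:proposition:Coproduct2-Equivalence}) and (\ref{eq:proposition:Coproduct2-Equivalence2}), and then deduce uniqueness via Proposition \ref{proposition:CompletionOfTrees}.

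For the product identity, the key observation is that admissible cuts of a disjoint union $T \circledast T'$ are in bijection with pairs $(c, c')$ with $c$ a cut of $T$ and $c'$ a cut of $T'$ (including the conventional empty/full degenerate cases), since $T$ and $T'$ share no edges. Each such pair produces a prune $T_c^P \circledast (T')_{c'}^P$ and a root $T_c^R \circledast (T')_{c'}^R$, with $H^{T \circledast T'} = H^T \cup H^{T'}$ inducing the coupling. The operator $\overline{\mbox{Twist}}$ from Definition \ref{definition:Twist} is engineered precisely to rearrange the four factors produced by $(\Delta \otimes \Delta)[T \otimes T']$ into the correct order (joining both prunes and both roots) while tracking the couplings. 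The subsequent application of $\circledast \tilde{\otimes} \circledast$ then reproduces the expansion of $\Delta[T \circledast T']$ term by term. This verification is essentially bookkeeping once the twist is unpacked.

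For the decoupling identity, since cuts depend only on the underlying edge structure and since $\cE$ merely empties the tagged hyperedge $h_0$ (merging it back into $H$), the operator $\cE$ commutes with restriction of the hypergraph to prune and root. The identity $\Delta \circ \cE = (\cE \tilde{\times} \cE) \circ \Delta$ then follows term by term from the sum expansion in Equation (\ref{eq:proposition:Coproduct2-Equivalence2}). For the rooting identity, I would fix $T \in \scF_{0,d}[0]$ with constituent trees $\tau_1, \ldots, \tau_n$ rooted at $r_1, \ldots, r_n$, and enumerate the admissible cuts of $\lfloor T \rfloor_i$ according to which of the new edges $(r_j, x_0)$ lie in the cut. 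When $(r_j, x_0)$ is in the cut, the entirety of $\tau_j$ is forced into the prune with no further cuts permitted above $r_j$; otherwise the cut restricts to an admissible cut of $\tau_j$ and $r_j$ is collected with the new root $x_0$. Summing over these dichotomies reproduces the classical Connes-Kreimer recursion $\lfloor T \rfloor_i \times^H \rId + (I \times^H \lfloor \cdot \rfloor_i) \circ \Delta[T]$.

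The main obstacle throughout will be tracking the hypergraph structure consistently: for the rooting identity one must verify that the enlarged tagged hyperedge $h_0 \cup \{x_0\}$ in $\lfloor T \rfloor_i$ transfers correctly to whichever side of $\times^H$ contains $x_0$, and that all other hyperedges are correctly split between prune and root via $\varphi^{H, \cdot, \cdot}$. That cuts respect the Lions coupling structure is precisely the content of Lemma \ref{lemma:Lions-coupling}, which I will invoke to ensure all coupling pairs produced lie in $\lion(T_c^P, T_c^R)$. Finally, uniqueness follows from Proposition \ref{proposition:CompletionOfTrees}: the set $\{\rId\}$ generates all of $\scF_{0,d}[0]$ under $\circledast$, $\cE$, and $\lfloor \cdot \rfloor_i$ for $i \in \{1, \ldots, d\}$, and the three identities together with the base case $\Delta[\rId] = \rId \times \rId$ determine $\Delta$ on every generator by induction on the number of nodes; thus $\Delta$ is the unique linear operator satisfying Equation (\ref{eq:proposition:Coproduct2-Equivalence}).
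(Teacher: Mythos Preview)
Your proposal is correct and follows essentially the same route as the paper: verify each of the three identities directly from the cut-based definition (using $\fD(T_1 \circledast T_2) = \fD(T_1) \times \fD(T_2)$ for the product, $C(\cE[T]) = C(T)$ with $\cE[T]_c^{R,P} = \cE[T_c^{R,P}]$ for the decoupling, and the dichotomy on the new root edges for the rooting), then deduce uniqueness by induction via Proposition \ref{proposition:CompletionOfTrees}. Your explicit mention of Lemma \ref{lemma:Lions-coupling} to justify that the resulting couplings are Lions couplings is a nice touch that the paper leaves implicit.
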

		
	\begin{proof}
		\textit{Step 1.} Let $\Delta: \spn_{\cR}\big( \scF_{0,d}[I] \big) \to \spn_{\cR}\big( \scF_{0,d} \tilde{\times} \scF_{0,d}[I] \big)$ be the linear operator defined in Definition \ref{definition:coproduct}. Then the set of cuts of the empty forest $C(\rId) = \{ \emptyset \}$ so that $\Delta[\rId] = \rId \times \rId$. 
		
		For $i\in \{1, ..., d\}$ the Lions tree with a single node $\lfloor \rId \rfloor_i$, the set of cuts $C\big( \lfloor \rId \rfloor_i \big) = \big\{ (\emptyset,+), (\emptyset, -) \big\}$ so that
		\begin{equation*}
			\Delta\big[ \lfloor \rId \rfloor_i \big] = \big( \rId, \lfloor \rId \rfloor_i \big) + \big( \lfloor \rId \rfloor_i, \rId \big). 
		\end{equation*}
		Similarly, $\cE_{\iota}[\rId] = \rId$ and $\rId \circledast \rId = \rId$ so that Equation \eqref{eq:definition:coproduct} is satisfied in the case where $T = \rId$.  
	
		\text{Step 1.1} Suppose that $T\in \scF_{0,d}[I]$ and without loss of generality assume that $h_{\iota}^T\neq \emptyset$. Then
		\begin{equation*}
			\Delta\big[ T \big] =  \sum_{c\in C(T)} \big( T_c^P, T_c^R \big). 
		\end{equation*}
		The sets $C(\cE_{\iota}[T])$ and $C(T)$ are same and $H^{\cE_{\iota}[T]} = H^T \cup \{h_\iota^T\}$, so that
		\begin{equation*}
			\Delta\Big[ \cE_{\iota}[T] \Big] = \sum_{c\in C(T)} \Big( \cE_{\iota}[T]_c^P, \cE_{\iota}[T]_c^R \Big). 
		\end{equation*}
		From Definition \ref{definition:CutsCoproduct} we get
		\begin{equation*}
			\cE_{\iota}[T]_c^P = \cE_{\iota}[T_c^P], \quad \cE_{\iota}[T]_c^R = \cE_{\iota}[T_c^R]
		\end{equation*}
		so that $\Delta\big[ \cE_{\iota}[T] \big] = \cE_{\iota} \tilde{\otimes} \cE_{\iota} \circ \Delta \big[ T\big]$. 
		
		\textit{Step 1.2.} Let $T_1,  T_2 \in \scF_{0, d}[0]$ so that
		\begin{align*}
			\Delta\Big[ T_1 \Big] = \sum_{c\in C(T_1)} \Big( (T_1)_c^P, (T_1)_c^R \Big)
			\quad\mbox{and}\quad
			\Delta\Big[ T_2 \Big] = \sum_{c\in C(T_2)} \Big( (T_2)_c^P, (T_2)_c^R \Big). 
		\end{align*}
		From Definition \ref{definition:CutsCoproduct} we get $C(T_1 \circledast T_2) = C(T_1) \times C( T_2)$ so that
		\begin{align*}
			\Delta\Big[ T_1 \circledast T_2 \Big] 
			&= 
			\sum_{c \in C(T_1 \circledast T_2)} \Big( (T_1 \circledast T_2)_c^P, (T_1 \circledast T_2)_c^R \Big)
			\\
			&= \sum_{c_1 \in C(T_1)} \sum_{c_2 \in C(T_2)} \Big( \big( (T_1)_{c_1}^P \circledast (T_2)_{c_2}^P \big), \big( (T_1)_{c_1}^R \circledast (T_2)_{c_2}^R \big) \Big)
			\\
			&= \Big( \circledast \otimes \circledast \Big) \circ \overline{\mbox{Twist}} \circ \Delta \otimes \Delta \Big[ (T_1,T_2) \Big]
		\end{align*}
		
		\textit{Step 1.3.} Let $T\in \scT_{0, d}[I]$. Then there exists $a\in A[I]$ and $\boldsymbol{T} = (T_1, ..., T_{|a|}) \in (\scT_{0, d}[0])^{\times |a|}$ such that
		\begin{equation*}
			T = \big\lfloor \cE^a[ \boldsymbol{T}] \big\rfloor. 
		\end{equation*} 		
		Let $c\in C(T)$ be a non-empty cut. Following on from Definition \ref{definition:CutsCoproduct}, we have that for each leaf of $T$ the cut $c$ must pass through at most 1 edge between the leaf and the root. Thus, for each $T_i$ with root $x_i \in \fr[T_i]$ and $x_0 \in \fr[T]$, we have that the cut $c$ restricted to the set of vertices $\scN^{T_i} \cup \{x_0\}$ is either $(x_i, x_0)$, $(\emptyset,+)$ or $c \in C(T_i)$. In particular, this means that
		\begin{align*}
			C\big( \cE^a[\boldsymbol{T}] \big) =& \prod_{i=1}^{|a|} C(T_i)
			\quad\mbox{and} \quad
			C\Big( \big\lfloor \cE^a[\boldsymbol{T}] \big\rfloor \Big) = \big\{ (\emptyset, -) \big\} \cup \prod_{i=1}^{|a|} \bigg( \Big( C(T_i) \backslash\{ (\emptyset,-)\} \Big) \cup \big\{ (x_i, x_0) \big\} \bigg). 
		\end{align*}
		Replacing the element $(\emptyset, -) \in C(T_i)$ with the element $(x_i, x_0)$ corresponds to applying the rooting operation to the root of any cut. Therefore
		\begin{equation*}
			\Delta \Big[ \big\lfloor \cE^a[\boldsymbol{T}] \big\rfloor \Big] 
			= 
			\Big( \big\lfloor \cE^a[\boldsymbol{T}] \big\rfloor,  \rId \Big) + \Big( \fI \tilde{\otimes} \lfloor \cdot \rfloor \Big) \circ \Delta \Big[ \cE^a[\boldsymbol{T}] \Big]. 
		\end{equation*}
		Hence the linear operator defined in Definition \ref{definition:coproduct} satisfies \eqref{eq:definition:coproduct}. 
		
		\textit{Step 2.} Now suppose we have a second linear operator 
		\begin{equation*}
			\Delta': \spn_{\cR}\big( \scF_{0,d}[I] \big) \to \spn_{\cR}\big( \scF_{0,d} \tilde{\times} \scF_{0,d}[I] \big)
		\end{equation*}
		that satisfies Equation \eqref{eq:definition:coproduct} and that $\Delta'\big[ \rId \big] = \rId \times^\emptyset \rId$. Then $(\Delta - \Delta')[\rId] = (\Delta - \Delta')\big[ \lfloor \rId\rfloor \big] = 0$. 
		
		\textit{Step 2.1.} Suppose for $T \in \scF_{0, d}[I]$ that $(\Delta - \Delta')[T] = 0$ and let $\iota \in I$. Then
		\begin{align*}
			(\Delta - \Delta')\Big[ \cE_\iota[T] \Big] =& \Delta\Big[ \cE_\iota[T] \Big] - \Delta'\Big[ \cE_\iota[T] \Big] 
			= \cE_\iota \tilde{\otimes} \cE_{\iota} \circ (\Delta - \Delta')[ T ] = 0. 
		\end{align*}
		\textit{Step 2.2.} Now suppose that for $T_1, T_2 \in\scF_{0, d}[I]$ that $(\Delta - \Delta')[T_1] = (\Delta - \Delta')[T_2] = 0$. Then
		\begin{align*}
			&(\Delta - \Delta')\Big[ T_1 \circledast T_2 \Big] = \Delta\Big[ T_1 \circledast T_2 \Big] - \Delta'\Big[ T_1 \circledast T_2 \Big] 
			\\
			&= \Big(\circledast \tilde{\otimes} \circledast \Big) \circ \overline{\mbox{Twist}} \circ \Big( \Delta \otimes \Delta\Big)\Big[ T_1 \otimes T_2 \Big] - \Big(\circledast \tilde{\otimes} \circledast \Big) \circ \overline{\mbox{Twist}} \circ \Big( \Delta' \otimes \Delta' \Big)\Big[ T_1 \otimes T_2 \Big]
			\\
			&= \Big(\circledast \tilde{\otimes} \circledast \Big) \circ \overline{\mbox{Twist}} \circ \Big( (\Delta \otimes \Delta) - (\Delta' \otimes \Delta') \Big) \Big[ T_1 \otimes T_2 \Big]
			\\
			&= \Big(\circledast \tilde{\otimes} \circledast \Big) \circ \overline{\mbox{Twist}} \circ \Big( (\Delta - \Delta')\big[T_1\big] \otimes (\Delta - \Delta')\big[T_2\big] \Big) = 0. 
		\end{align*}
		
		\textit{Step 2.3.} To conclude, let $T\in \scF_{0, d}[0]$ so that $\lfloor T \rfloor \in \scT_{0, d}[0]$. Then
		\begin{align*}
			(\Delta - \Delta')\Big[ \big\lfloor T \big\rfloor \Big] =& \Delta \Big[ \big\lfloor T \big\rfloor \Big] - \Delta'\Big[ \big\lfloor T \big\rfloor \Big]
			\\
			=& \Big( \big\lfloor T\big\rfloor, \rId\Big) + \Big( \fI \tilde{\otimes} \big\lfloor \cdot \big\rfloor \Big) \circ \Delta\Big[ T \Big] - \Big( \big\lfloor T\big\rfloor, \rId\Big) - \Big( \fI \tilde{\otimes} \big\lfloor \cdot \big\rfloor \Big) \circ \Delta'\Big[ T \Big]
			\\
			=& \Big( \fI \tilde{\otimes} \big\lfloor \cdot \big\rfloor \Big) \circ (\Delta - \Delta')\Big[ T \Big] = 0. 
		\end{align*}
		Arguing via induction, we conclude that $(\Delta - \Delta') [T]= 0$ for every $T\in \scF_{0, d}[I]$. 
	\end{proof}
	
	\begin{theorem}
		\label{theorem:H-coupledBialgebra-}
		Let $d\in \bN$, let $I$ be an index set and let $(\cR, +, \centerdot)$ be a ring. Then the product of Lions forests and the coupled coproduct $\Delta$ satisfy
		\begin{equation}
			\label{eq:theorem:H-coupledBialgebra-}
			\begin{aligned}
				\Delta \circ \circledast =& \Big( \circledast \tilde{\otimes} \circledast \Big) \circ \overline{\mbox{Twist}} \circ \Big( \Delta \otimes \Delta \Big)
				\\
				\epsilon \circ \circledast =& \centerdot \circ \epsilon \otimes \epsilon \quad \Delta \circ \rId \circ \epsilon = \rId \otimes \rId \quad \epsilon \otimes \rId = \rId_{\cR}. 
			\end{aligned}
		\end{equation}
	\end{theorem}
	
	\iftoggle{figure}{Theorem \ref{theorem:H-coupledBialgebra-} is equivalent to the operators $(\circledast, \rId, \Delta, \epsilon)$ satisfying the commutative relationship described in Figure \ref{fig:coupled-bialgebra-H}. 
	\begin{figure}[htb]
		\centering
		\begin{tikzpicture}
			\node at (0,2) {$\spn_{\cR}\big( \scF_{0,d}[I] \times \scF_{0,d}[I] \big)$};
			\node at (0,0) {$\spn_{\cR}\big( \scF_{0,d}[I] \big)$};
			\node at (0,-2) {$\spn_{\cR} \big( \scF_{0,d} \tilde{\times}\scF_{0,d}[I] \big)$};
			\node at (9,2) {$\spn_{\cR}\Big( \big(\scF_{0,d} \tilde{\times}\scF_{0,d}[I]\big) \times \big(\scF_{0,d}\tilde{\times}\scF_{0,d}[I]\big) \Big)$};
			\node at (9,-2) {$\spn_{\cR}\Big( \big(\scF_{0,d}\times\scF_{0,d}\big) \tilde{\times} \big(\scF_{0,d} \times\scF_{0,d}\big)\big[I\big] \Big)$};
			\draw[-to](0,1.6) to (0,0.4);
			\draw[-to](0,-0.4) to (0,-1.6);
			\draw[-to](9.5,1.6) to (9.5,-1.6);
			\draw[-to](2.4,2) to (4.6,2);
			\draw[-to](4.6,-2) to (2.4,-2);
			\node at (0.5,1) {$\circledast$};
			\node at (0.5,-1) {$\Delta$};
			\node at (3.5,1.5) {$\Delta\otimes \Delta$};
			\node at (3.5,-1.5) {$\circledast \tilde{\otimes} \circledast$};
			\node at (8.5,0) {$\overline{\mbox{Twist}}$};
		\end{tikzpicture}	
		\\
		\begin{tikzpicture}
			\node at (0,0) {$\spn_{\cR}\big( \scF_{0,d}[I] \times \scF_{0,d}[I] \big)$};
			\node at (6,0) {$\spn_{\cR}\big( \scF_{0,d}[I] \big)$};
			\node at (3,-2) {$\cR \otimes \cR \equiv \cR$};
			\draw[-to](1,-0.4) to (2,-1.6);
			\draw[-to](5,-0.4) to (4,-1.6);
			\draw[-to](2.25,0) to (4.25,0);
			\node at (3.25,0.5) {$\circledast$};
			\node at (1,-1) {$\epsilon \otimes \epsilon$};
			\node at (5,-1) {$\epsilon$};
		\end{tikzpicture}
		\\
		\begin{tikzpicture}
			\node at (0,0) {$\spn_{\cR}\big( \scF_{0,d} \tilde{\times} \scF_{0,d}[I] \big)$};
			\node at (6,0) {$\spn_{\cR}\big( \scF_{0,d}[I] \big)$};
			\node at (3,2) {$\cR \otimes \cR \equiv \cR$};
			\draw[-to](2,1.6) to (1,0.4);
			\draw[-to](4,1.6) to (5,0.4);
			\draw[-to](4.25,0) to (2.25,0);
			\node at (3.25,0.5) {$\Delta$};
			\node at (1,1) {$\rId \tilde{\otimes} \rId$};
			\node at (5,1) {$\rId$};
		\end{tikzpicture}
		\\
		\begin{tikzpicture}
			\node at (0,0) {$\spn_{\cR}\big( \scF_{0,d}[I] \big)$};
			\node at (4,0) {$\cR$};
			\draw[->] (1,0.4) to[bend left] (3.75,0.4);
			\draw[->] (3.75,-0.4) to[bend left] (1,-0.4);
			\node at (2.375,1) {$\epsilon$};
			\node at (2.375,-1) {$\rId$};
		\end{tikzpicture}
		\caption{Coupled bialgebra}
		\label{fig:coupled-bialgebra-H}
	\end{figure}
	}
	
	\begin{proof}
		Follows immediately from Proposition \ref{proposition:Coproduct-Equivalence}. 
	\end{proof}
	
	\subsubsection{Grading on Lions forests}
	
	The $\cR$-module spanned by labelled forests has a grading determined by the number of vertices of the forest. As with Subsection \ref{subsubsection:Grading-Lionswords}, we need a grading that additionally captures the hypergraphic structure of a Lions forests:	
	\begin{definition}
		\label{lemma:grading(2)-}
		Let $d\in \bN$ and let $I$ be an index set. We define $\scG^I:\scF_{0,d}[I] \to \bN_0^{\times 2}$ by
		\begin{equation}
			\label{eq:lemma:grading(2)-}
			\scG^I\big[ (\scN, \scE, h_I, H, \scL) \big]:= \big( |\scN|, |H| \big). 
		\end{equation}	
		For $(k,n) \in \bN_0^{\times 2}$, we define
		\begin{equation*}
			\scF_{0, d}^{(k, n)}[I] := \Big\{ T\in \scF_{0, d}[I]: \scG^I[T] = (k, n) \Big\}
		\end{equation*}
		and denote
		\begin{equation*}
			\big( \scF_{0, d}^{(k_2, n_2)} \tilde{\times} \scF_{0, d}^{(k_1, n_1)} \big) [I]:= \bigsqcup_{T \in \scF_{0, d}^{(k_1, n_1)}[I]} \scF_{0, d}^{(k_2, n_2)}[ T]. 
		\end{equation*}
	\end{definition}
	
	In the next result, we demonstrate that this decomposition is natural to the underlying structure of the algebra operation $\circledast$ and the coupled coproduct operation $\Delta$:
	\begin{proposition}
		\label{proposition:H-Grading-}
		Let $d\in \bN$, let $I$ be an index set and let $(\cR, +, \centerdot)$ be a ring. Then
		\begin{equation}
			\label{eq:proposition:H-Grading-}
			\left.
			\begin{aligned}
				&\spn_{\cR}\big( \scF_{0,d}^{(0, 0)}[I]\big) = \cR, 
				\\
				&\spn_{\cR}\Big( \scF_{0, d}^{(k_1, n_1)}[I] \Big) \circledast \spn_{\cR}\Big( \scF_{0, d}^{(k_2, n_2)}[I] \Big) \subseteq \spn_{\cR}\Big( \scF_{0, d}^{(k_1+k_2, n_1+n_2)} \Big),  
				\\
				&\Delta\Big[ \spn_{\cR}\big( \scF_{0, d}^{(k, n)}[I] \big) \Big] \subseteq  \bigoplus_{k'=0}^{k} \bigoplus_{n'=0}^n \spn_{\cR}\Big( \scF_{0, d}^{(k-k', n-n')} \tilde{\times} \scF_{0, d}^{(k', n')}[I] \Big). 
			\end{aligned}
			\right\}
		\end{equation}
		Finally, for any choice of index set $I$ and $(k, n) \in \bN_0^{\times 2}$ we have that the set $\big| \scF_{0, d}^{(k, n)}[I] \big|$ is finite. 
	\end{proposition}
	
	\begin{proof}
		\textit{Step 1.} Firstly, for any $T\in \scF_{0, d}[I]$
		\begin{equation*}
			\scG^I[T] = (0, 0) \quad \iff \quad T = \rId
		\end{equation*}
		so that
		\begin{equation*}
			\scF_{0, d}^{(0, 0)}[I] = \big\{\rId \big\}
			\quad \mbox{and} \quad 
			\spn_{\cR}\Big( \scF_{0, d}^{(0_I, 0)}[I] \Big) = \cR. 
		\end{equation*}
		
		\textit{Step 2.} Let $k_1, k_2, n_1, n_2 \in \bN_0^{\times 2}$ and suppose that 
		\begin{align*}
			T^1 = (\scN^1, \scE^1, h_I^1, H^1, \scL^1) \in \scF_{0,d}[I]& \quad \mbox{such that} \quad \scG^I[T^1] = (k_1, n_1) \quad \mbox{and}
			\\
			T^2 = (\scN^2, \scE^2, h_I^2, H^2, \scL^2) \in \scF_{0,d}[I]& \quad \mbox{such that} \quad \scG^I[T^2] = (k_2, n_2). 
		\end{align*}
		Then
		\begin{align*}
			&\scG^I\big[ T^1 \circledast T^2 \big] = \big( k_1 + k_2, n_1 + n_2 \big)
			\quad \mbox{so that}
			\\
			&\circledast: \spn_{\cR} \Big( \scF_{0, d}^{(k_I^1, n^1)}[I] \Big) \times \spn_{\cR} \Big( \scF_{0, d}^{(k_I^2, n^2)}[I] \Big) \to \spn_{\cR} \Big( \scF_{0, d}^{(k_I^1+k_I^2, n^1+n^2)}[I] \Big). 
		\end{align*}
		
		\textit{Step 3.} For any $T \in \scF_{0, d}[I]$ and $(\Upsilon, Y) \in \scF_{0,d} \tilde{\times} \scF_{0, d}[I]$ such that
		\begin{equation*}
			\Big\langle \Delta[T], (\Upsilon, Y) \Big\rangle >0 
			\quad \implies \quad 
			\scG^I[\Upsilon] + \scG^I[Y] = \scG^I[T]. 
		\end{equation*}
		Therefore
		\begin{equation*}
			\Delta\Big[ \spn_{\cR}\big( \scF_{0, d}^{(k, n)}[I] \big) \Big] \subseteq \bigoplus_{k' \in 0}^n \bigoplus_{n'=0}^n \spn_{\cR}\Big( \scF_{0, d}^{(k - k', n-n')} \tilde{\times} \scF_{0, d}^{(k', n')}[I] \Big)
		\end{equation*}
		and we conclude that Equation \eqref{eq:proposition:H-Grading-} holds. 
	\end{proof}

	Let $g: \bN_0^{\times 2} \to \bR$ be a monotone increasing function such that $g(0_I, 0) \leq 0$. Following \eqref{eq:truncatedWord}, we denote
	\begin{equation*}
		\scF_{0, d}^{g,+}[I]:=\Big\{ T \in \scF_{0, d}[I]: g\big( \scG^I[T] \big) > 0 \Big\}, 
		\quad \mbox{and}\quad
		\scF_{0, d}^{g,-}[I]:=\Big\{ T \in \scF_{0, d}[I]: g\big( \scG^I[T] \big) \leq 0 \Big\}. 
	\end{equation*}
	
	\begin{corollary}
		\label{lemma:H-Finite-grading-}
		Let $d\in \bN$, let $I$ be an index set and let $(\cR, +, \centerdot)$ be a ring. Let $g: \bN_0^{\times 2} \to \bR$ be a monotone increasing function such that $g(0_I, 0) \leq 0$. 
		
		Then 
		\begin{enumerate}[label=(\ref*{lemma:H-Finite-grading-}.\roman*)]
			\item
			\label{enum:lemma:H-Finite-grading-1}
			The $\cR$-module
			\begin{equation*}
				\spn_{\cR}\Big( \scF_{0, d}^{g,+} \Big)
				\quad \mbox{is an algebra ideals of } \quad
				\Big( \spn_{\cR}\big( \scF_{0, d}[I]\big), \circledast, \rId \Big)
			\end{equation*}
			and we can identify the algebra over the $\cR$-module quotient by
			\begin{align*}
				\spn_{\cR}\Big( \scF_{0, d}^{g,-}[I] \Big) =& \spn_{\cR} \Big( \scF_{0, d}[I] \Big) / \spn_{\cR}\Big( \scF_{0, d}^{g,+}[I] \Big) 
			\end{align*}
			\item 
			\label{enum:lemma:H-Finite-grading-2}
			The coupled coproduct and counit $(\Delta, \epsilon)$ restricted to the sub-module
			\begin{equation*}
				\Big( \spn_{\cR}\big( \scF_{0, d}^{g,-}[I] \big), \Delta, \epsilon \Big)
			\end{equation*}
			is a co-associative sub-coupled coalgebras of $\big( \spn_{\cR}( \scF_{0, d}[I] ), \Delta, \epsilon \big)$. 
			\item 
			\label{enum:lemma:H-Finite-grading-3}
			By pairing the quotient algebra and unit $(\circledast, \rId)$ to the restriction of the coupled coproduct and counit $(\Delta, \epsilon)$, we obtain
			\begin{equation*}
				\Big( \spn_{\cR}\big( \scF_{0, d}^{g,-}[I] \big), \circledast, \rId, \Delta, \epsilon \Big)
			\end{equation*}
			satisfies
			\begin{equation}
				\label{eq:theorem:H-coupledBialgebra--}
				\begin{aligned}
					\Delta \circ \circledast =& \Big( \circledast \tilde{\otimes} \circledast \Big) \circ \overline{\mbox{Twist}} \circ \Big( \Delta \otimes \Delta \Big)
					\\
					\epsilon \circ \circledast =& \centerdot \circ \epsilon \otimes \epsilon \quad \Delta \circ \rId \circ \epsilon = \rId \otimes \rId \quad \epsilon \otimes \rId = \rId_{\cR}. 
				\end{aligned}
			\end{equation}
		\end{enumerate}
	\end{corollary}
	
	\begin{proof}
		\ref{enum:lemma:H-Finite-grading-1}: Thanks to Proposition \ref{proposition:H-Grading-} and the monotonicity of $f$, we have for any $T^1 \in \scF_{0, d}^{g,+}[I]$ and $T^2 \in \scF_{0, d}[I]$ that
		\begin{equation*}
			T^1 \circledast T^2 \in \spn_{\cR}\Big( \scF_{0, d}^{g,+} \Big). 
		\end{equation*}
		Thus the sub $\cR$-module $\spn_{\cR}\big( \scF_{0, d}^{g,+}[I] \big)$ is an algebra ideal over $\cR$ and the conclusion follows. 
		
		\ref{enum:lemma:H-Finite-grading-2}: In the same fashion, Proposition \ref{proposition:H-Grading-} and the monotinicity of $g$ implies that for any $T\in \scF_{0, d}^{g,-}[I]$ and any $(\Upsilon, Y) \in \scF_{0, d}\tilde{\times} \scF_{0, d}[I]$ such that $c_I(T, \Upsilon, Y)>0$ we have that
		\begin{equation*}
			g\Big( \scG^I\big[ Y \big] \Big) , g\Big( \scG^{Y}\big[ \Upsilon \big] \Big) \leq g \Big( \scG^I[T] \Big)
		\end{equation*}
		so that 
		\begin{equation*}
			\Delta\Big[ \spn_{\cR}\big( \scF_{0, d}^{g, -}[I] \big) \Big] \subseteq \spn_{\cR}\big( \scF_{0, d}^{g, -}[I] \big) \tilde{\otimes} \spn_{\cR}\big( \scF_{0, d}^{g, -}[I] \big)
		\end{equation*}
		and thus $\big( \spn_{\cR}( \scF_{0, d}^{g, -}[I] ), \Delta, \epsilon \big)$ is a sub-coupled coalgebra. 
		
		\ref{enum:lemma:H-Finite-grading-3}: This follows easily from Proposition \ref{proposition:H-Grading-} and Proposition \ref{proposition:Coproduct-Equivalence}, where the unit and counit identities are immediate by direct computation. 
	\end{proof}
	
	\iftoggle{Plus}{
	\subsection{Modules of measurable functions indexed by Lions forests}
	
	To streamline notation, we define the coproduct counting function 
	\begin{equation}
		\label{eq:definition:deconcatenation-coproduct-counting+}
		c_I: \scF_{0, d}[I] \times \big( \scF_{0, d}[I] \tilde{\times} \scF_{0, d}[I] \big) \to \bN_0
		\quad \mbox{by}\quad
		c_I\Big( T, (\Upsilon, Y) \Big) = \Big\langle \Delta\big[ T \big] , (\Upsilon, Y) \Big\rangle. 
	\end{equation}
	
	Following in the footsteps of Assumption \ref{notation:V-(2)}, we now wish to work with a collection of modules that satisfy:
	\begin{assumption}
		\label{notation:V-(3)}
		Let $(\Omega, \cF, \bP)$ and $(\Omega', \cF', \bP')$ be probability spaces. Let $d \in \bN$ and let $(\cR, +, \centerdot)$ be a separable normed ring which we associate with the Borel $\sigma$-algebra $\cB(\cR)$. 
		
		For any index set $I$, let
		\begin{equation*}
			\Big( \bV^T\big( \Omega; \cR \big) \Big)_{T\in \scF_{0, d}[I]}
		\end{equation*}
		be a collection of unital $\cR$-modules that satisfy:
		\begin{enumerate}[label=(\ref*{notation:V-(3)}.\roman*)]
			\item 
			\label{enum:notation:V-(3)-1}
			For every $T\in \scF_{0,d}[I]$, we have
			\begin{equation*}
				\bV^T\big( \Omega; \cR \big) \subseteq L^{\boldsymbol{0}}\Big( \Omega^{\times \fm[T]}; \cR \Big)
			\end{equation*}
			\item \label{enum:notation:V-(3)-2}
			For any $T\in \scF_{0, d}[I]$ such that $\fm[T] = 0$, we have that
			\begin{equation*}
				\bV^{T}\big( \Omega; \cR \big) = \cR. 
			\end{equation*}
			\item 
			\label{enum:notation:V-(3)-3}
			For any $T^1, T^2 \in \scF_{0, d}[I]$, we have that
			\begin{equation*}
				\bV^{T^1}\big( \Omega; \cR \big) \otimes \bV^{T^2}\big( \Omega; \cR \big) \subseteq \bV^{T^1\circledast T^2} \big(\Omega; \cR \big)
			\end{equation*}
			\item 
			\label{enum:notation:V-(3)-4}
			For all $T\in \scF_{0, d}[I]$ and for any
			\begin{equation*}
				(\Upsilon,Y) \in \scF_{0, d} \tilde{\times} \scF[I] 
				\quad \mbox{such that} \quad 
				c_I\Big( T, (\Upsilon, Y) \Big) > 0,
			\end{equation*}
			we have that
			\begin{equation*}
				\bV^{T}\Big( \Omega; \cR \Big) \subseteq \bV^{Y}\Big( \Omega; \bV^{\Upsilon}\big( \Omega; \cR \big) \Big). 
			\end{equation*}
		\end{enumerate}				
	\end{assumption}

	\begin{definition}
		\label{definition:AlgebraRVs}
		Let $(\Omega, \cF, \bP)$ and $(\Omega', \cF', \bP')$ be probability spaces, let $I$ be an index set and let $(\cR, +, \centerdot)$ be a unital normed ring. 
		
		Suppose that $\bU(\Omega; \cR)$ satisfies Assumption \ref{notation:U-(1)} and that $\big( \bV^T \big)_{T\in \scF_{0, d}[I]}$ are a collection of modules that satisfies Assumption \ref{notation:V-(3)}.
				
		We define the $\bU(\Omega; \cR)$-module 
		\begin{align}
			\label{eq:definition:AlgebraRVs}
			\scH_I(\Omega, \Omega') 
			= 
			\bU\bigg( \Omega; \bigoplus_{T\in \scF_{0,d}[I]} \bV^{T} \Big( \Omega'; \cR \Big) \bigg).  
		\end{align}
		
		We use the convention that for any $X\in \scH_I(\Omega, \Omega')$, we write
		\begin{equation*}
			X(\omega_I) = \sum_{T\in \scF_{0, d}[I]} \Big\langle X, T \Big\rangle(\omega_I, \cdot) 
		\end{equation*}
		where
		\begin{equation}
			\label{eq:definition:AlgebraRVs:<X,T>}
			\begin{aligned}
				&\Big\langle X, T \Big\rangle \in \bU\bigg( \Omega; \bV^{T} \Big( \Omega'; \cR \Big) \bigg) 
				\quad\mbox{or}\quad
				\Big\langle X, T \Big\rangle(\omega_I, \cdot) \in \bV^T\Big( \Omega'; \cR \Big) 
			\end{aligned}
		\end{equation}
		
		The ring operators $+: \scH_I(\Omega, \Omega') \times \scH_I(\Omega, \Omega') \to \scH_I(\Omega, \Omega')$ is defined for $X, Y \in \scH_I(\Omega, \Omega')$ by
		\begin{equation}
			\label{definition:AlgebraRVs+}
			(X+Y)(\omega_I) 
			= \sum_{T \in \scF_{0, d}[I]} \Big\langle (X+Y), T \Big\rangle(\omega_I, \cdot)
			= \sum_{T \in \scF_{0, d}[I]} \bigg( \Big\langle X, T \Big\rangle(\omega, \cdot) + \Big\langle Y, T \Big\rangle(\omega_I, \cdot) \bigg)
		\end{equation}
		and $\centerdot: \bU(\Omega; \cR) \times \scH_I(\Omega, \Omega') \to \scH_I(\Omega, \Omega')$ is defined for $R \in \bU(\Omega; \cR)$ and $X \in \scH_I(\Omega, \Omega')$ by
		\begin{equation}
			\label{definition:AlgebraRVsx}
			(R \centerdot X)(\omega) 
			= \sum_{T \in \scF_{0, d}[I]} \Big\langle (R \centerdot X), T \Big\rangle(\omega_I, \cdot)
			=\sum_{T \in \scF_{0, d}[I]} R(\omega_I) \centerdot \Big\langle X, T \Big\rangle(\omega_I, \cdot). 
		\end{equation}
	\end{definition}
	
	We use the convention that $(\omega_{H^{T}}) \in \Omega^{\times \fm[T]}$ where $H$ is the set of hyperedges of the tree $T$ written as $H^T = \{h^T, ...\}$. 
	
	\subsubsection{Algebra over $\scH_I$}
	Following on from Definition \ref{definition:product-span}, we extend $\scH_I(\Omega, \Omega')$ to be an algebra over the ring $\bU(\Omega; \cR)$:
	\begin{definition}
		Let $(\Omega, \cF, \bP)$ and $(\Omega', \cF, \bP')$ be probability spaces. Let $(\cR, +, \centerdot)$ be a normed ring and let $I$ be an index set. 
		
		Suppose that $\bU\big( \Omega; \cR \big)$ satisfies Assumption \ref{notation:U-(1)} and $\big( \bV^W \big)_{W\in \scW_{0, d}[I]}$ is a collection of modules that satisfies Assumption \ref{notation:V-(3)}. 
		
		We define $\circledast: \scH_I(\Omega, \Omega') \times \scH_I(\Omega, \Omega') \to \scH_I(\Omega, \Omega')$ be the bilinear mapping that satisfies the identity
		\begin{align}
			\nonumber
			&X \circledast Y(\omega_I) = \sum_{T\in \scF_{0, d}[I]} \Big\langle X \circledast Y, T \Big\rangle(\omega_I, \cdot)
			\\
			\label{eq:definition:AlgebraRVs:Product}
			&\Big\langle X \circledast Y, T \Big\rangle(\omega_I) = \sum_{\substack{T^1, T^2 \in \scF_{0, d}[I] \\ T^1\circledast T^2 = T}} \Big\langle X, T^1 \Big\rangle(\omega_I, \cdot) \otimes \Big\langle Y, T^2 \Big\rangle(\omega_I, \cdot)
		\end{align}
	\end{definition}
	
	\begin{proposition}
		\label{proposition:associativity(2)}
		Let $(\Omega, \cF, \bP)$ and $(\Omega', \cF, \bP')$ be probability spaces. Let $(\cR, +, \centerdot)$ be a normed ring and let $I$ be an index set. 
		
		Suppose that $\bU\big( \Omega; \cR \big)$ satisfies Assumption \ref{notation:U-(1)} and $\big( \bV^W \big)_{W\in \scW_{0, d}[I]}$ is a collection of modules that satisfies Assumption \ref{notation:V-(3)}. Then
		\begin{enumerate}
			\item $\scH_I(\Omega, \Omega')$ as defined in Equation \eqref{eq:definition:coupled-shuffle-1} is a $\Big( \bU\big( \Omega; \cR \big), +, \centerdot \Big)$ module. 
			\item $\big( \scH_I(\Omega, \Omega'), \circledast , \rId \big)$ is a \emph{associative} \emph{unital} algebra over the ring $\bU(\Omega; \cR)$. 
			\item If $\bU(\Omega; \cR)$ is commutative, then $\big( \scH_I(\Omega, \Omega'), \circledast , \rId \big)$ is a commutative algebra over the ring. 
		\end{enumerate}
	\end{proposition}

	\begin{proof}
		The proof is similar to that of Proposition \ref{proposition:associativity(1)} where we replace references to Assumption \ref{notation:V-(2)} with Assumption \ref{notation:V-(3)} and references to Proposition \ref{proposition:Shuffle=Assoc} with Proposition \ref{proposition:Prod=Assoc}. 
	\end{proof}
	
	\subsubsection{The coupled tensor product of $\scH_I$}
	
	Following in the footsteps of Definition \ref{definition:M-coupledTensorModule}, we define the following:
	\begin{definition}
		\label{definition:H-coupledTensorModule}
		Let $(\Omega, \cF, \bP)$ and $(\Omega', \cF, \bP')$ be probability spaces. Let $(\cR, +, \centerdot)$ be a unital normed ring and let $I$ be an index set. 
		
		Suppose that $\bU\big( \Omega; \cR \big)$ satisfies Assumption \ref{notation:U-(1)} and $\big( \bV^T \big)_{T\in \scF_{0, d}[I]}$ is a collection of modules that satisfies Assumption \ref{notation:V-(3)}. We define
		\begin{equation}
			\label{eq:definition:H-coupledTensorModule}
			\scH_I \tilde{\otimes} \scH_I(\Omega, \Omega') = \bU\Bigg( \Omega; \bigoplus_{\hat{\Upsilon} \in \scF_{0,d}[I]} \bV^{\hat{\Upsilon}} \bigg( \Omega'; \bigoplus_{\substack{\bar{\Upsilon} \in \scF_{0,d}[\hat{\Upsilon}] }} \bV^{\bar{\Upsilon}} \Big( \Omega'; \cR \Big) \bigg) \Bigg). 
		\end{equation}
		We use the representation that for $X \in \scH_I\tilde{\otimes}\scH_I(\Omega, \Omega')$, we write
		\begin{align*}
			X(\omega_I) =& \sum_{(\bar{\Upsilon}, \hat{\Upsilon}) \in \scF_{0, d} \tilde{\times} \scF_{0, d}[I] } \Big\langle X, (\bar{\Upsilon}, \hat{\Upsilon}) \Big\rangle(\omega_I, \cdot)
		\end{align*}
		where 
		\begin{align*}
			&\Big\langle X, (\bar{\Upsilon}, \hat{\Upsilon}) \Big\rangle(\omega_I, \omega_{H^{\hat{\Upsilon}}}', \cdot ) 
			\in 
			\bV^{\bar{\Upsilon}}\Big( \Omega'; \cR \Big),
			\\
			\sum_{\bar{\Upsilon} \in \scF_{0,d}[\hat{\Upsilon}]} &\Big\langle X, (\bar{\Upsilon}, \hat{\Upsilon}) \Big\rangle(\omega_I, \omega_{H^{\hat{\Upsilon}}}', \cdot ) 
			\in 
			\bigoplus_{\bar{\Upsilon} \in \scF_{0,d}[\hat{\Upsilon}]} \bV^{\bar{\Upsilon}}\Big( \Omega'; \cR \Big),
		\end{align*}
		and
		\begin{align*}
			\sum_{(\bar{\Upsilon}, \hat{\Upsilon}) \in \scF_{0,d}\tilde{\times} \scF_{0, d}[I] } & \Big\langle X, (\bar{\Upsilon}, \hat{\Upsilon}) \Big\rangle (\omega_I, \cdot, \cdot)
			\in 
			\bigoplus_{\hat{\Upsilon} \in \scF_{0,d}[I]} \bV^{\hat{\Upsilon}} \bigg( \Omega'; \bigoplus_{\bar{\Upsilon} \in \scF_{0,d}[\hat{\Upsilon}]} \bV^{\bar{\Upsilon}} \Big( \Omega'; \cR \Big) \bigg),
			\\
			\sum_{(\bar{\Upsilon}, \hat{\Upsilon}) \in \scF_{0,d}\tilde{\times} \scF_{0, d}[I] } & \Big\langle X, (\bar{\Upsilon}, \hat{\Upsilon}) \Big\rangle \in \bU\Bigg( \Omega; \bigoplus_{\hat{\Upsilon}\in \scF_{0,d}[I]} \bV^{\hat{\Upsilon}} \bigg( \Omega'; \bigoplus_{\bar{\Upsilon} \in \scF_{0,d}[\hat{\Upsilon}]} \bV^{\bar{\Upsilon}} \Big( \Omega'; \cR \Big) \bigg) \Bigg). 
		\end{align*}
	\end{definition}
	
	Further, we also define for any $n\in \bN$ 
	\begin{align*}
		\scH_I^{\tilde{\otimes}n}(\Omega, \Omega') =& \bU\Bigg( \Omega; \bigoplus_{\hat{\Upsilon}_n \in \scF_{0, d}[I]} \bV^{\hat{\Upsilon}_n}\bigg( \Omega'; \bigoplus_{\hat{\Upsilon}_{n-1} \in \scF_{0, d}[\hat{\Upsilon}_n]} \bV^{\hat{\Upsilon}_{n-1}}\Big( \Omega';... \bigoplus_{\hat{\Upsilon}_1 \in \scF_{0, d}[\hat{\Upsilon}_2]} \bV^{\hat{\Upsilon}_1}\big( \Omega'; \cR \big) \Big) \bigg) \Bigg). 
	\end{align*}
	
	\subsubsection{The coupled coproduct on $\scH_I$}
	
	The next step of our agenda is to introduce the coupled Connes-Kreimer coproduct:
	\begin{definition}
		Let $(\Omega, \cF, \bP)$ and $(\Omega', \cF, \bP')$ be probability spaces. Let $(\cR, +, \centerdot)$ be a commutative unital normed ring and let $I$ be an index set. 
		
		Suppose that $\bU\big( \Omega; \cR \big)$ satisfies Assumption \ref{notation:U-(1)} and $\big( \bV^W \big)_{W\in \scW_{0, d}[I]}$ is a collection of modules that satisfies Assumption \ref{notation:V-(3)}. We define
		\begin{equation*}
			\Delta: \scH_I(\Omega, \Omega') \to \scH_I \tilde{\otimes} \scH_I(\Omega, \Omega')
			\quad \mbox{and}\quad
			\epsilon: \scH_I(\Omega, \Omega') \to \bU(\Omega; \cR)
		\end{equation*}
		for $X\in \scH_I(\Omega, \Omega')$ and $(\Upsilon, Y) \in \scF_{0, d} \tilde{\times} \scF_{0, d}[I]$ by
		\begin{equation*}
			\Big\langle \Delta\big[ X \big], (\Upsilon,Y) \Big\rangle(\omega_I,  \omega'_{H^{Y}},\omega'_{H^{\Upsilon}}) = \sum_{T \in \scF_{0,d}[I]} c_I\Big( T, \Upsilon, Y \Big) \cdot \Big\langle X, T\Big\rangle(\omega_I, \omega'_{\phi^{T, \Upsilon, Y}[T]})
		\end{equation*}
	\end{definition}
	
	Following in the same ideas as Proposition \ref{proposition:M-coassociativity}, we have the following:
	\begin{proposition}
		\label{proposition:H-coassociativity}
		Let $(\Omega, \cF, \bP)$ and $(\Omega', \cF', \bP')$ be probability spaces and let $(\cR, +, \centerdot)$ be a commutative, unital normed ring. 
		
		Suppose that $\bU\big( \Omega; \cR \big)$ satisfies Assumption \ref{notation:U-(1)} and $\big( \bV^T \big)_{T\in \scF_{0, d}[I]}$ is a collection of modules that satisfies Assumption \ref{notation:V-(3)}. Then
		\begin{equation}
			\label{eq:proposition:H-coassociativity}
			\fI \tilde{\otimes} \Delta \circ \Delta = \Delta \tilde{\otimes} \fI \circ \Delta
			\quad \mbox{and} \quad 
			\centerdot \circ \epsilon \tilde{\otimes} \fI \circ \Delta = \centerdot \circ \fI \tilde{\otimes} \epsilon \circ \Delta = \fI. 
		\end{equation}
		More specifically, $\big( \scH_I(\Omega, \Omega'), \Delta, \epsilon \big)$ is a coassociative coupled coalgebra over the ring $\big( \bU(\Omega; \cR), +, \centerdot \big)$. 
	\end{proposition}
	
	\begin{proof}
		The proof follows in the same fashion as Proposition \ref{proposition:M-coassociativity} where Proposition \ref{proposition:M-coassociativity*} is replaced by Proposition \ref{proposition:H-coassociativity*} and Assumption \ref{notation:V-(2)} is replaced by Assumption \ref{notation:V-(3)}. 
	\end{proof}
	
	\subsubsection{The coupled bialgebra over $\scH_I$}
	
	Recalling Definition \ref{definition:Twist} and Theorem \ref{theorem:M-coupledBialgebra}, we continue with the following:
	\begin{theorem}
		\label{theorem:H-coupledBialgebra}
		Let $(\Omega, \cF, \bP)$ and $(\Omega', \cF', \bP')$ be probability spaces. Let $(\cR, +, \centerdot)$ be a commutative unital normed ring and let $I$ be an index set. 
		
		Suppose that $\bU\big( \Omega; \cR \big)$ satisfies Assumption \ref{notation:U-(1)} and $\big( \bV^W \big)_{W\in \scW_{0, d}[I]}$ is a collection of modules that satisfies Assumption \ref{notation:V-(3)}. Then 
		\begin{equation}
			\label{eq:proposition:H-coupledBialgebra}
			\begin{aligned}
				\Delta \circ \circledast =& \Big( \circledast \tilde{\otimes} \circledast \Big) \circ \overline{\mbox{Twist}} \circ \Big( \Delta \otimes \Delta\Big),
				\\
				\epsilon \circ \circledast =& \centerdot \circ \epsilon \otimes \epsilon,
				\quad 
				\Delta \circ \rId \circ \centerdot = \rId \otimes \rId,
				\quad
				\epsilon \circ \rId = \fI_{\cR}. 
			\end{aligned}
		\end{equation}
		We say that $\big( \scH_I(\Omega, \Omega'), \circledast, \rId, \Delta, \epsilon \big)$ is a coupled bialgebra over the ring $\big( \bU(\Omega; \cR), +, \centerdot \big)$. More specifically, $\big( \scH_I(\Omega, \Omega'), \circledast, \rId \big)$ are associative algebras and $\big( \scH_I(\Omega, \Omega'), \Delta, \epsilon \big)$ is a co-associative coupled coalgebras.
	\end{theorem}

	\begin{proof}
		The proof follows in the same fashion as Theorem \ref{theorem:M-coupledBialgebra} where Theorem \ref{theorem:M-coupledBialgebra-} is replaced by Proposition \ref{proposition:Coproduct-Equivalence} and Assumption \ref{notation:V-(2)} is replaced by Assumption \ref{notation:V-(3)}. 
	\end{proof}
	
	\subsubsection{The grading over $\scH_I$}
	
	In the same style as Section \ref{subsubsection:grading-M}, we demonstrate a grading on our coupled bialgebra:
	\begin{definition}		
		Let $(\Omega, \cF, \bP)$ and $(\Omega', \cF', \bP')$ be probability spaces. Let $(\cR, +, \centerdot)$ be a commutative unital normed ring and let $I$ be an index set. 
		
		Suppose that $\bU\big( \Omega; \cR \big)$ satisfies Assumption \ref{notation:U-(1)} and $\big( \bV^T \big)_{T\in \scF_{0, d}[I]}$ is a collection of modules that satisfies Assumption \ref{notation:V-(3)}.
		
		For $(k, n) \in \bN_0^{\times 2}$, we define the $\bU(\Omega; \cR)$-module
		\begin{align*}
			\scH_I^{(k, n)}\big( \Omega, \Omega'\big) 
			:=& 
			\bU\bigg( \Omega; \bigoplus_{\substack{T\in \scF_{0,d}[I] \\ \scG^I[T] = (k,n)}}
			\bV^T\Big( \Omega'; \cR \Big) \bigg). 
		\end{align*}
		while for $(k_1, n_1), (k_2, n_2) \in \bN_0^{\times 2}$, we define the $\bU(\Omega; \cR)$-module
		\begin{equation*}
			\scH_{I}^{(k_1, n_1)} \tilde{\otimes} \scH_{I}^{(k_2, n_2)} (\Omega, \Omega'):= \bU \bigg( \Omega; \bigoplus_{Y \in \scF_{0, d}^{(k_2, n_2)}} \bV^{Y}\Big( \Omega'; \bigoplus_{\Upsilon \in \scF_{0, d}^{(k_1, n_1)}[Y]} \bV^{\Upsilon}\big( \Omega'; \cR \big) \Big) \bigg). 
		\end{equation*}
	\end{definition}
	
	\begin{proposition}
		\label{proposition:H-Grading}
		Let $(\Omega, \cF, \bP)$ and $(\Omega', \cF', \bP')$ be probability spaces. Let $(\cR, +, \centerdot)$ be a unital normed ring and let $I$ be an index set. 
		
		Suppose that $\bU\big( \Omega; \cR \big)$ satisfies Assumption \ref{notation:U-(1)} and $\big( \bV^T \big)_{T\in \scF_{0, d}[I]}$ is a collection of modules that satisfies Assumption \ref{notation:V-(3)}. Then 
		\begin{equation}
			\label{eq:proposition:H-Grading}
			\begin{split}
				&\scH_I^{(0_I,0)}(\Omega, \Omega') = \bU\big( \Omega; \cR \big), 
				\\
				&\scH_I^{(k_I^1, n^1)}(\Omega, \Omega') \circledast \scH_I^{(k_I^2, n^2)}(\Omega, \Omega') \subseteq  \scH_I^{(k_I^1+k_I^2, n^1+n^2)}(\Omega, \Omega'),  
				\\
				&\Delta\Big[ \scH_I^{(k_I, n)}(\Omega, \Omega') \Big] \subseteq  \bigoplus_{\substack{(k_I', n') \in \bN_0^{\times |I|} \times \bN_0}} \Big(  \scH_I^{(k_I - k_I', n - n')}(\Omega, \Omega') \Big) \tilde{\otimes} \scH_I^{(k_I', n')}(\Omega, \Omega'). 
			\end{split}
		\end{equation}
		That is, the function $\scG^I:\scT_{0,d}[I] \to \bN_0^{\times |I|} \times \bN_0$ as defined in Equation \eqref{eq:lemma:grading(1)} describes a $\bN_0^{\times |I|} \times \bN_0$-grading on the connected coupled bialgebras $\big( \scH_I(\Omega, \Omega'), \circledast, \rId, \Delta, \epsilon \big)$. 
	\end{proposition}
	
	\begin{proof}
		The proof follows in the same fashion as Proposition \ref{proposition:M-Grading} where Proposition \ref{proposition:M-Grading-} is replaced by Proposition \ref{proposition:H-Grading-} and Assumption \ref{notation:V-(2)} is replaced by Assumption \ref{notation:V-(3)}. 
	\end{proof}
	
	Following on from Equation \eqref{eq:truncatedWord}, we define
	\begin{equation}
		\label{eq:truncatedForest-Mod}
		\begin{aligned}
			\scH_I^{g, -}(\Omega, \Omega'):=& \bU\bigg( \Omega; \bigoplus_{T \in \scF_{0, d}^{g,-}[I]} \bV^{T}\Big(\Omega'; \cR \Big) \bigg)
			\\
			\scH_I^{g, +}(\Omega, \Omega'):=& \bU\bigg( \Omega; \bigoplus_{T \in \scF_{0, d}^{g,+}[I]} \bV^{T}\Big(\Omega'; \cR \Big) \bigg)
		\end{aligned}
	\end{equation}
	
	\begin{corollary}
		\label{lemma:H-Finite-grading}
		Let $(\Omega, \cF, \bP)$ and $(\Omega', \cF', \bP')$ be probability spaces. Let $(\cR, +, \centerdot)$ be a unital normed ring and let $I$ be an index set. 
		
		Suppose that $\bU\big( \Omega; \cR \big)$ satisfies Assumption \ref{notation:U-(1)} and $\big( \bV^T \big)_{T\in \scF_{0, d}[I]}$ is a collection of modules that satisfies Assumption \ref{notation:V-(3)}.
		
		Let $g: \bN_0^{\times |I|} \times \bN_0 \to \bR$ be a monotone increasing function such that $g(0_I, 0) \leq 0$. Then 
		\begin{enumerate}[label=(\ref*{lemma:H-Finite-grading}.\roman*)]
			\item
			\label{enum:lemma:H-Finitegrading-1}
			The $\bU(\Omega; \cR)$-module
			\begin{equation*}
				\scH_I^{g, +}(\Omega, \Omega')
				\quad \mbox{is an algebra ideals of } \quad
				\Big( \scH_I(\Omega, \Omega'), \circledast, \rId \Big)
			\end{equation*}
			and we can identify the algebra over the $\bU(\Omega; \cR)$-module quotient by
			\begin{align*}
				\scH_I^{g,-}(\Omega, \Omega') =& \scH_I(\Omega,\Omega') / \scH_I^{g,+}(\Omega,\Omega') 
			\end{align*}
			\item 
			\label{enum:lemma:H-Finitegrading-2}
			The coupled coproduct and counit $(\Delta, \epsilon)$ restricted to the sub-module
			\begin{equation*}
				\Big( \scH_I^{g,-}(\Omega,\Omega') , \Delta, \epsilon \Big)
			\end{equation*}
			is a co-associative sub-coupled coalgebras of $\big( \scH_I(\Omega,\Omega'), \Delta, \epsilon \big)$. 
			\item 
			\label{enum:lemma:H-Finitegrading-3}
			By pairing the quotient algebra and unit $(\circledast, \rId)$ to the restriction of the coupled coproduct and counit $(\Delta, \rId)$, we obtain
			\begin{equation*}
				\Big( \scH_I^{g,-}(\Omega,\Omega'), \circledast, \rId, \Delta, \epsilon \Big)
			\end{equation*}
			satisfies the commutative identities of Equation \eqref{eq:theorem:H-coupledBialgebra-}. 
		\end{enumerate}
	\end{corollary}
	
	\begin{proof}
		The proof follows in the same fashion as Corollary \ref{lemma:M-Finite-grading} where Proposition \ref{proposition:M-Grading} is replaced by Proposition \ref{proposition:H-Grading} and Assumption \ref{notation:V-(2)} is replaced by Assumption \ref{notation:V-(3)}. 
	\end{proof}
	}
	
\begin{bibdiv}
\begin{biblist}

\bib{Ambrosio2008Gradient}{book}{
      author={Ambrosio, Luigi},
      author={Gigli, Nicola},
      author={Savar\'{e}, Giuseppe},
       title={Gradient flows in metric spaces and in the space of probability
  measures},
     edition={Second},
      series={Lectures in Mathematics ETH Z\"{u}rich},
   publisher={Birkh\"{a}user Verlag, Basel},
        date={2008},
        ISBN={978-3-7643-8721-1},
      review={\MR{2401600}},
}

\bib{Bailleul2015Flows}{article}{
      author={Bailleul, Isma\"{e}l},
       title={Flows driven by rough paths},
        date={2015},
        ISSN={0213-2230},
     journal={Rev. Mat. Iberoam.},
      volume={31},
      number={3},
       pages={901\ndash 934},
         url={https://doi.org/10.4171/RMI/858},
      review={\MR{3420480}},
}

\bib{2019arXiv180205882.2B}{article}{
      author={Bailleul, Isma\"{e}l},
      author={Catellier, R\'{e}mi},
      author={Delarue, Fran\c{c}ois},
       title={Solving mean field rough differential equations},
        date={2020},
     journal={Electron. J. Probab.},
      volume={25},
       pages={Paper No. 21, 51},
         url={https://doi.org/10.1214/19-ejp409},
      review={\MR{4073682}},
}

\bib{Bretto2014Hypergraph}{book}{
      author={Bretto, Alain},
       title={Hypergraph theory},
      series={Mathematical Engineering},
   publisher={Springer, Cham},
        date={2013},
        ISBN={978-3-319-00079-4; 978-3-319-00080-0},
         url={https://doi.org/10.1007/978-3-319-00080-0},
        note={An introduction},
      review={\MR{3077516}},
}

\bib{CarmonaDelarue2017book1}{book}{
      author={Carmona, Ren\'{e}},
      author={Delarue, Fran\c{c}ois},
       title={Probabilistic theory of mean field games with applications. {I}},
      series={Probability Theory and Stochastic Modelling},
   publisher={Springer, Cham},
        date={2018},
      volume={83},
        ISBN={978-3-319-56437-1; 978-3-319-58920-6},
        note={Mean field FBSDEs, control, and games},
      review={\MR{3752669}},
}

\bib{CarmonaDelarue2017book2}{book}{
      author={Carmona, Ren\'{e}},
      author={Delarue, Fran\c{c}ois},
       title={Probabilistic theory of mean field games with applications.
  {II}},
      series={Probability Theory and Stochastic Modelling},
   publisher={Springer, Cham},
        date={2018},
      volume={84},
        ISBN={978-3-319-56435-7; 978-3-319-56436-4},
        note={Mean field games with common noise and master equations},
      review={\MR{3753660}},
}

\bib{connes1999hopf}{incollection}{
      author={Connes, A.},
      author={Kreimer, D.},
       title={Hopf algebras, renormalization and noncommutative geometry},
        date={1999},
   booktitle={Quantum field theory: perspective and prospective ({L}es
  {H}ouches, 1998)},
      series={NATO Sci. Ser. C Math. Phys. Sci.},
      volume={530},
   publisher={Kluwer Acad. Publ., Dordrecht},
       pages={59\ndash 108},
      review={\MR{1725011}},
}

\bib{CassLyonsEvolving}{article}{
      author={Cass, Thomas},
      author={Lyons, Terry},
       title={Evolving communities with individual preferences},
        date={2015},
        ISSN={0024-6115},
     journal={Proc. Lond. Math. Soc. (3)},
      volume={110},
      number={1},
       pages={83\ndash 107},
         url={https://doi.org/10.1112/plms/pdu040},
      review={\MR{3299600}},
}

\bib{cartier2021hopf}{book}{
      author={Cartier, Pierre},
      author={Patras, Fr\'{e}d\'{e}ric},
       title={Classical {H}opf algebras and their applications},
      series={Algebra and Applications},
   publisher={Springer, Cham},
        date={2021},
      volume={29},
        ISBN={978-3-030-77844-6; 978-3-030-77845-3},
         url={https://doi.org/10.1007/978-3-030-77845-3},
      review={\MR{4369962}},
}

\bib{deuschel2017enhanced}{article}{
      author={Deuschel, Jean-Dominique},
      author={Friz, Peter~K.},
      author={Maurelli, Mario},
      author={Slowik, Martin},
       title={The enhanced {S}anov theorem and propagation of chaos},
        date={2018},
        ISSN={0304-4149},
     journal={Stochastic Process. Appl.},
      volume={128},
      number={7},
       pages={2228\ndash 2269},
         url={https://doi.org/10.1016/j.spa.2017.09.010},
      review={\MR{3804792}},
}

\bib{2021Probabilistic}{article}{
      author={Delarue, Francois},
      author={Salkeld, William},
       title={Probabilistic rough paths {I} {L}ions trees and coupled {H}opf
  algebras},
        date={2021},
     journal={arXiv preprint},
      eprint={2106.09801v2},
}

\bib{salkeld2021Probabilistic2}{article}{
      author={Delarue, Francois},
      author={Salkeld, William},
       title={Probabilistic rough paths {II} lions-taylor expansions and random
  controlled rough paths},
        date={2022},
     journal={arXiv preprint},
      eprint={2203.01185v1},
}

\bib{salkeld2022ExamplePRP}{article}{
      author={Delarue, Francois},
      author={Salkeld, William},
       title={An example driven introduction to {P}robabilistic rough paths},
        date={2023},
     journal={arXiv preprint, accepted in {S}{\'e}minaire de Probabilit{\'e}s},
      eprint={2106.09801v3},
}

\bib{GangboDifferentiability2019}{article}{
      author={Gangbo, Wilfrid},
      author={Tudorascu, Adrian},
       title={On differentiability in the {W}asserstein space and
  well-posedness for {H}amilton-{J}acobi equations},
        date={2019},
        ISSN={0021-7824},
     journal={J. Math. Pures Appl. (9)},
      volume={125},
       pages={119\ndash 174},
         url={https://doi.org/10.1016/j.matpur.2018.09.003},
      review={\MR{3944201}},
}

\bib{hairer2014theory}{article}{
      author={Hairer, M.},
       title={A theory of regularity structures},
        date={2014},
        ISSN={0020-9910},
     journal={Invent. Math.},
      volume={198},
      number={2},
       pages={269\ndash 504},
         url={https://doi.org/10.1007/s00222-014-0505-4},
      review={\MR{3274562}},
}

\bib{Jordan1998variation}{article}{
      author={Jordan, Richard},
      author={Kinderlehrer, David},
      author={Otto, Felix},
       title={The variational formulation of the {F}okker-{P}lanck equation},
        date={1998},
        ISSN={0036-1410},
     journal={SIAM J. Math. Anal.},
      volume={29},
      number={1},
       pages={1\ndash 17},
         url={https://doi.org/10.1137/S0036141096303359},
      review={\MR{1617171}},
}

\bib{kac1956foundations}{inproceedings}{
      author={Kac, M.},
       title={Foundations of kinetic theory},
        date={1956},
   booktitle={Proceedings of the {T}hird {B}erkeley {S}ymposium on
  {M}athematical {S}tatistics and {P}robability, 1954--1955, vol. {III}},
   publisher={University of California Press, Berkeley and Los Angeles},
       pages={171\ndash 197},
      review={\MR{0084985}},
}

\bib{LionsVideo}{misc}{
      author={Lions, Pierre-Louis},
       title={Cours du coll{\`e}ge de france},
        date={2006},
  url={https://www.college-de-france.fr/site/pierre-louis-lions/_course.htm},
}

\bib{lyons1998differential}{article}{
      author={Lyons, Terry~J.},
       title={Differential equations driven by rough signals},
        date={1998},
        ISSN={0213-2230},
     journal={Rev. Mat. Iberoamericana},
      volume={14},
      number={2},
       pages={215\ndash 310},
         url={https://doi.org/10.4171/RMI/240},
      review={\MR{1654527}},
}

\bib{McKean1966}{article}{
      author={McKean, H.~P., Jr.},
       title={A class of {M}arkov processes associated with nonlinear parabolic
  equations},
        date={1966},
        ISSN={0027-8424},
     journal={Proc. Nat. Acad. Sci. U.S.A.},
      volume={56},
       pages={1907\ndash 1911},
         url={https://doi-org.ezproxy.is.ed.ac.uk/10.1073/pnas.56.6.1907},
      review={\MR{221595}},
}

\bib{Melancon1989Lyndon}{article}{
      author={Melan\c{c}on, Guy},
      author={Reutenauer, Christophe},
       title={Lyndon words, free algebras and shuffles},
        date={1989},
        ISSN={0008-414X},
     journal={Canad. J. Math.},
      volume={41},
      number={4},
       pages={577\ndash 591},
         url={https://doi.org/10.4153/CJM-1989-025-2},
      review={\MR{1012617}},
}

\bib{reutenauer2003free}{incollection}{
      author={Reutenauer, Christophe},
       title={Free {L}ie algebras},
        date={2003},
   booktitle={Handbook of algebra, {V}ol. 3},
      series={Handb. Algebr.},
      volume={3},
   publisher={Elsevier/North-Holland, Amsterdam},
       pages={887\ndash 903},
         url={https://doi.org/10.1016/S1570-7954(03)80075-X},
      review={\MR{2035110}},
}

\bib{salkeld2022Lions}{article}{
      author={Salkeld, William},
       title={Higher order lions-taylor expansions},
        date={2023},
     journal={arXiv preprint},
      eprint={2303.17571},
}

\bib{villani2008optimal}{book}{
      author={Villani, C\'{e}dric},
       title={Optimal transport},
      series={Grundlehren der Mathematischen Wissenschaften [Fundamental
  Principles of Mathematical Sciences]},
   publisher={Springer-Verlag, Berlin},
        date={2009},
      volume={338},
        ISBN={978-3-540-71049-3},
         url={https://doi.org/10.1007/978-3-540-71050-9},
        note={Old and new},
      review={\MR{2459454}},
}

\bib{Wallis2012Beginners}{book}{
      author={Wallis, W.~D.},
       title={A beginner's guide to discrete mathematics},
     edition={Second},
   publisher={Birkh\"{a}user/Springer, New York},
        date={2012},
        ISBN={978-0-8176-8285-9},
      review={\MR{2866528}},
}

\end{biblist}
\end{bibdiv}

\end{document}